  \crefname{theorem}{Theorem}{Theorems}
  \crefname{thm}{Theorem}{Theorems}
  \crefname{thm*}{Theorem*}{Theorems}
  \crefname{lemma}{Lemma}{Lemmas}
  \crefname{lem}{Lemma}{Lemmas}
  \crefname{remark}{Remark}{Remarks}
  \crefname{prop}{Proposition}{Propositions}
\crefname{notation}{Notation}{Notations}
\crefname{claim}{Claim}{Claims}
  \crefname{defn}{Definition}{Definitions}
  \crefname{corollary}{Corollary}{Corollaries}
  \crefname{section}{Section}{Sections}
  \crefname{figure}{Figure}{Figures}
    \crefname{assumption}{Assumption}{Assumptions}
\newtheorem{thm}{Theorem}[section]
\newtheorem{thm*}{Theorem*}[section]
\newtheorem{lemma}[thm]{Lemma}
\newtheorem{corollary}[thm]{Corollary}
\newtheorem{prop}[thm]{Proposition}
\newtheorem{defn}[thm]{Definition}
\numberwithin{equation}{section}
\theoremstyle{definition}
\newtheorem{remark}[thm]{Remark}
\def\cZ{\mathcal{Z}}
\def\cY{\mathcal{Y}}
\def\cV{\mathcal{V}}
\def\cT{\mathcal{T}}
\def\cS{\mathcal{S}}
\def\cR{\mathcal{R}}
\def\cQ{\mathcal{Q}}
\def\cP{\mathcal{P}}
\def\cM{M}
\def\cL{\mathcal{L}}
\def\cI{\mathcal{I}}
\def\cH{\mathcal{H}}
\def\cG{\mathcal{G}}
\def\cE{\mathcal{E}}
\def\cD{\mathcal{D}}
\def\cC{\mathcal{C}}
\def\cA{\mathcal{A}}
\def \ve {\varepsilon}
\def \Gd {G^{\#\delta}}
\def\P{\mathbb{P}}
\def\Q{\mathbb{Q}}
\def\E{\mathbb{E}}
\def\C{\mathbb{C}}
\def\R{\mathbb{R}}
\def\Z{\mathbb{Z}}
\def\D{\mathbb{D}}
\def\H{\mathbb{H}}
\def  \p- {p\textunderscore}
\def \bs {\boldsymbol}
\def\eps{\varepsilon}
\def \d {{\# \delta}}
\def\Euc{\textsf{euc}}
\DeclareMathOperator{\Ptemp}{\mathbb{P}_{Temp}}
\DeclareMathOperator{\Pwils}{\mathbb{P}_{Wils}}
\DeclareMathOperator{\Ewils}{\mathbb{E}_{Wils}}
\DeclareMathOperator{\Pwwils}{{\mathbb{P}}_{CRSF}}
\DeclareMathOperator{\Ztemp}{Z_{Temp}}
\DeclareMathOperator{\Zwils}{Z_{Wils}}
\DeclareMathOperator{\Zwwils}{ {Z}_{CRSF}}
\DeclareMathOperator{\dist}{dist}
\newcommand{\red}[1]{{\color{red}{#1}}}
\DeclareMathOperator{\diam}{Diam}
\DeclareMathOperator{\arcch}{arccosh}
\newcommand{\note}[1]{{\color{red}{[note: #1]}}}
\newcommand{\sep}{\mathtt{Sep}}
\def\ul{\underline{\lambda}}
\def\mass{\Lambda}
\def\masss{\mass^{{\not \circlearrowright}}}
\begin{document}
\title{Dimers on Riemann surfaces II:\\
 conformal invariance and scaling limit}

\author{Nathana\"el Berestycki\thanks{University of Vienna. Email: nathanael.berestycki@univie.ac.at} \and Benoit Laslier\thanks{Université Paris Cité and Sorbonne Université, CNRS, Laboratoire de Probabilités, Statistique et Modélisation. Email: laslier@lpsm.paris} \and Gourab Ray\thanks{University of Victoria.  Email:gourabray@uvic.ca }}

\maketitle

\begin{abstract}
Given a bounded Riemann surface $M$ of finite topological type, we show the existence of a universal and conformally invariant scaling limit for the Temperleyan cycle-rooted spanning forest on any sequence of graphs which approximate $M$ in a reasonable sense (essentially, the invariance principle holds and the walks satisfy a crossing assumption). In combination with the companion paper \cite{BLR_Riemann1}, this proves the existence of a universal, conformally invariant scaling limit for the height function of the Temperleyan dimer model on such graphs. Along the way, we describe the relationship between Temperleyan CRSFs and loop measures, and develop tools of independent interest to study the latter using only rough control on the random walk.
\end{abstract}

\tableofcontents

\section{Introduction}

\subsection{Motivations and main results}

This paper is the second of a series of two about the Temperleyan dimer model on Riemann surfaces. As discussed in the first paper of this series \cite{BLR_Riemann1}, the combined result of these two papers is the existence of a universal and conformally invariant scaling limit for the height function associated to the dimer model subject to Temperleyan conditions. We refer to \cite{BLR_Riemann1} for some preliminary discussions and introduction to the problem. 

In \cite{BLR_Riemann1}, the Temperleyan dimer model on a Riemann surface was shown to be equivalent via an extension of Temperley's bijection to a notion which we called Temperleyan cycle-rooted spanning forests, or Temperleyan CRSF for short. These are locally tree-like spanning subgraphs, such that every cycle is noncontractible, and subject to a further topological condition. Furthermore, in \cite{BLR_Riemann1} we reduced the question of convergence of the height function for the dimer model to showing the existence of a universal and conformally invariant scaling limit for this Temperleyan CRSF. This scaling limit result is the heart of the proof, and is in our view the more challenging and also the more novel part of this programme. Its proof is contained in the present paper. Although this result is motivated by our work on the dimer model (as explained in \cite{BLR_Riemann1}), all statements and proofs can be read independently of \cite{BLR_Riemann1}.

Let $M$ be a connected Riemann surface of genus $g$ \footnote{The genus of a connected, orientable real two-dimensional manifold -- which includes Riemann surfaces -- $M$ is the maximum number of simple closed curves that can be drawn on $M$ so that cutting along those curves does not disconnect $M$.} and with $b$ boundary components; we assume that $M$ is bounded and that locally around every point of $M$, there is a local chart homeomorphic to a domain of the complex plane or of the upper half plane (i.e., the boundary components are macroscopic: they are not reduced to a point). Let $\chi = 2- 2g -b $ denote the Euler characteristic of $M$. We suppose that $M$ is neither the sphere nor a simply connected domain (so $\chi \le 0$) and let $ \mathsf{k} = |\chi|$. Fix $\mathsf{k}$ distinct points $x_1, \ldots, x_{\mathsf k}\in M$ called the \textbf{punctures} (if $\mathsf{k} =0$, i.e. if $M$ is a torus or an annulus, there are no punctures). This choice will be made once and for all throughout the paper.  

\begin{figure}[h]
    \centering
    \includegraphics[scale = 0.7]{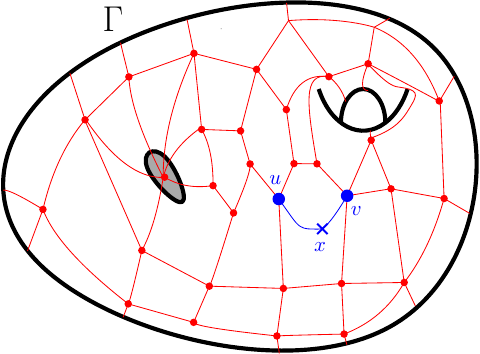}
    \caption{The graph $\Gamma$ in red. Here the surface is a torus with one hole so $g = 1, b =1$ and $\chi = -1$. A single puncture $x$ has been removed from the surface, and a path $e$ connecting $x$ with two adjacent vertices $u$ and $v$ in the face containing $x$ has been drawn in blue.}
    \label{fig:Gamma}
\end{figure}
Fix a sequence of (possibly weighted and directed, simple) graphs $(\Gamma^\d)_{\delta >0}$ embedded in $M$, which conformally approximate $M$ in a suitable sense described more precisely in Section \ref{sec:setup}; for each boundary component add a boundary vertex, and connect it to all the vertices that are adjacent to it in the planar sense (these are the vertices which are adjacent to a face containing the boundary, see Figure \ref{fig:Gamma}). 

For every puncture $x_i \in M$ ($1\le i \le \mathsf{k}$), fix a pair of distinct vertices $(u_{i}, v_i)= (u_i^\d, v_i^\d)$ closest to $x_i$ (i.e., they are adjacent to the face containing $x_i$ -- we assume that no puncture lies on an edge without loss of generality)\footnote{For context, we point out that the puncture corresponds to removing a white vertex in the Temperleyan version of $\Gamma$ which is formed by superimposing $\Gamma$ and its dual $\Gamma^\dagger$. Without referring to $\Gamma^\dagger$, one may think of the puncture being on an edge of $\Gamma$ which is then removed, which is equivalent to the above condition on the location of the puncture. 
See \Cref{rmk:puncture} for further details and references.}.
We recall the notion of wired cycle-rooted spanning forest (introduced in \cite{K_laplacian}, see also \cite{F_laplacian}) and of Temperleyan forests, as introduced in \cite{BLR_Riemann1}. 

\begin{defn} \label{def:temperleyan}
A \textbf{wired, oriented cycle-rooted spanning forest} (or wired CRSF for short) $\mathbf{t}$ on $\Gamma$ is an oriented subgraph of $\Gamma = \Gamma^\d$ such that: 
\begin{itemize}
\item out of every vertex (except boundary vertices) there is a unique outgoing edge. 

    \item all the cycles of $\mathbf{t}$ are noncontractible.
    
\end{itemize}
For any fixed non-boundary vertex $v$, let $\gamma_v (\mathbf{t})$ denote the branch containing $v$, that is, the path obtained by following the forward edges starting from $v$ in $\mathbf{t}$. A \textbf{Temperleyan CRSF} is a wired, oriented CRSF which satisfies the following further topological condition. Consider the branches $\gamma_{u_i} (\mathbf{t}), \gamma_{v_i} (\mathbf{t}), 1\le i \le \mathsf{k}$ emanating out of the $k$ punctures, to which we add a fixed smooth simple path called $e_i$ connecting $u_i, x_i, v_i$ in the face containing the puncture. Let 
\begin{equation}\label{E:skeleton}
\mathfrak{s} = \mathfrak{s}(\mathbf{t}) = \bigcup_{i=1}^{\mathsf{k}} \gamma_{u_i} (\mathbf{t}) \cup e_i \cup \gamma_{v_i} (\mathbf{t}),
\end{equation}
which we call the \textbf{skeleton} of $\mathbf{t}$.
The Temperleyan condition for a CRSF is:
\begin{itemize}
\item $M\setminus \mathfrak{s}$ has the topology of a finite union of disjoint annuli.
\end{itemize}
\end{defn}
The last assertion about $M \setminus \mathfrak{s}$ means that if we cut the surface $M$ along $\mathfrak s$, the surface $M$ decomposes into finite number of components, each of which is  homeomorphic to an annulus. See \Cref{F:pants} for an example.

We stress that $\mathbf{t}$ is oriented even if $\Gamma$ is not. Note that, if $M$ is homeomorphic either to a torus or to an annulus, every CRSF is Temperleyan, since $\mathsf{k}=0$. The notion of CRSF was introduced by Kassel and Kenyon \cite{KK12} who provided a Wilson-type algorithm and considered the scaling limit of its non trivial cycles under rather strong assumptions on the graph. They also related the partition function of CRSFs in which every noncontractible cycle receives a certain weight to determinants of ``twisted'' Laplacians (see \cite{KK12} for details).

There are two measures which we consider in this paper. The first more tractable law (which is the main law we work with in this paper) is the law $\Pwils$ defined as follows:
\begin{equation}
\Pwils ( \cT = \mathbf t) = \frac{1}{\Zwils} 1_{\{\text{$\mathbf t$ Temperleyan}\}} \prod_{e \in \mathbf  t} w(e), \label{eq:pwils}
\end{equation}
where $w(e)$ denotes the weight of the directed edge $e$. This law is tractable because it can be sampled using the Wilson-type algorithm described in \cite{KK12}, although the restriction to Temperleyan forests induces an asymptotically degenerate conditioning. From the point of view of the dimer model however, the most relevant law (see \cite{BLR_Riemann1} for an explanation) is a variant which we call $\Ptemp$ and is defined as follows:
\begin{equation}
\Ptemp ( \cT =  \mathbf t) = \frac{1}{\Ztemp} 1_{\{\text{$\mathbf t$ Temperleyan}\}} 2^{K^\dagger}\prod_{e \in \mathbf t} w(e), \label{eq:ptemp}
\end{equation}
where $K^\dagger$ denotes the number of cycles in the planar dual of $\mathbf{t}$. The reason for introducing $\Pwils$ is technical: it is amenable to Wilson's algorithm. However, as we will see, these measures are very similar for all practical purposes, as their Radon-Nikodym derivative is bounded from above and below. With these definitions, our main result for this paper (stated somewhat informally), is the following.

 \begin{thm}\label{thm:CRSF_universal_intro}
 	Let $M$ be a connected Riemann surface of finite topological type with Euler characteristic $\chi \le 0$, and let $(\Gamma^\d)_{\delta >0}$ be a sequence of graphs embedded in $M$ as above. If the random walk on $\Gamma^\d$ converges to Brownian motion on $M$ and satisfies the assumptions of \cref{sec:setup}, then the Temperleyan CRSF (under either $\Pwils$ or $\Ptemp$) on $\Gamma^\d$ have scaling limits as $\delta \to 0$ in the Schramm topology. Furthermore, the laws of the limits do not depend on the sequence of graphs chosen and are also invariant under conformal transformations.
 \end{thm}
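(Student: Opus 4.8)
The plan is to build the scaling limit one branch at a time via a Wilson-type exploration, transferring convergence of the random walk to convergence of loop-erased walks and then to the whole forest. I would work primarily under $\Pwils$, since it is exactly the law produced by the Kassel--Kenyon algorithm, and treat $\Ptemp$ afterwards by a change of measure. The first step is to set up the right coupling: run the Wilson algorithm rooted at the boundary and at the punctures in a fixed order, so that $\cT$ is generated as a concatenation of loop-erased random walk branches. Because the walk converges to Brownian motion on $M$ and satisfies the crossing assumption, each individual loop-erased branch should converge (in the topology of curves up to reparametrization) to a conformally invariant limit --- morally an $\mathrm{SLE}_2$-type curve on the surface --- by the now-standard argument that loop-erasure is a continuous functional of the path in the crossing regime, together with tightness of the branches. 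The crossing condition is precisely what rules out pathological pinching and gives equicontinuity of the exploration. I would then argue that finitely many such branches converge jointly, so the skeleton $\mathfrak{s}$ and hence the annular decomposition of $M \setminus \mathfrak{s}$ converge; conditionally on the skeleton, the remaining branches are loop-erased walks in the complementary annuli, killed on the skeleton, and one iterates.

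The key structural input I would invoke is the relationship between Temperleyan CRSFs and loop measures developed (as promised in the abstract) earlier in the paper: the law $\Pwils$ can be described through a Poissonian loop soup together with a spanning-tree-like object, and the ``rough control on the random walk'' tools let one estimate loop lengths, winding, and the probability of various topological configurations using only the invariance principle plus crossing estimates, rather than sharp local estimates (Green's function asymptotics, etc.). This is what makes the argument universal: every quantity that needs to converge is expressed as a continuous functional of the Brownian motion on $M$, so the limit is automatically independent of the approximating sequence $(\Gamma^\d)$. Schramm topology convergence then follows by a standard reduction: to identify the limit law it suffices to show that the trace of the branch from a fixed (random uniform) vertex converges, jointly with the branch from a second vertex, and so on, and the tower of conditionings above delivers exactly this.

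Conformal invariance is then a consequence rather than an extra step: if $\varphi : M \to M'$ is a conformal equivalence, then $\varphi$ pushes forward an approximating sequence for $M$ to one for $M'$ (the invariance principle and the crossing condition are conformally natural, modulo the usual time-change), so the two limit laws are related by $\varphi$; and since the limit does not depend on the chosen sequence, applying this with $M' = M$ and $\varphi$ a conformal automorphism shows the limit is invariant. For $\Ptemp$, I would note that the Radon--Nikodym derivative $2^{K^\dagger}$ relative to $\Pwils$ depends only on the number of dual cycles, which is controlled by the topology of $\mathfrak{s}$ together with the noncontractible cycles of $\cT$; since these converge under $\Pwils$ and the weight is bounded on the relevant events, the change of measure passes to the limit and yields the corresponding statement for $\Ptemp$.

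The main obstacle, I expect, is controlling the topology of the branches near the punctures --- i.e., showing that the skeleton $\mathfrak{s}$ converges to a limiting skeleton whose complement is (with probability one) a disjoint union of annuli, with no degenerate ``pinch points'' appearing in the limit. This requires a priori estimates, uniform in $\delta$, that two loop-erased branches emanating from nearby punctures do not come within microscopic distance of each other in a way that would change the homotopy type, and that branches do not accumulate on themselves or on the boundary; all of this must be extracted from the crossing condition alone. A secondary difficulty is handling the conditioning at each stage of the Wilson exploration: one must show that conditioning on an approximately-converged partial skeleton only perturbs the law of the next branch by a vanishing amount, which is where the ``rough'' loop-measure estimates do the real work.
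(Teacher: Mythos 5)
Your outline captures the macro-architecture of the proof (Wilson's algorithm, branch-by-branch convergence, Schramm finiteness to close the argument, conformal invariance as a corollary of universality), but it misses — or mischaracterizes — the two steps that carry essentially all of the technical weight.

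First, you assert that ``loop-erasure is a continuous functional of the path in the crossing regime,'' and propose to deduce convergence of branches directly from the invariance principle. This is false in general: loop erasure is a highly discontinuous operation, and the failure of continuity at the level of raw walks is precisely why one cannot simply push the invariance principle through. The paper instead decomposes the exploration into a Markov chain indexed by exit times from a fixed atlas of neighbourhoods, and on each step applies Uchiyama's theorem (a generalization of Yadin--Yehudayoff handling rough boundaries created by the already-discovered part of the LERW) to obtain convergence of the loop-erased piece in that neighbourhood to a chordal $\mathrm{SLE}_2$. The Carath\'eodory convergence of the local domains, the a.s. simplicity of the limiting curve, and the uniform local connectedness all have to be tracked through the induction; none of this follows from the invariance principle alone.

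Second, and more seriously, you treat the Temperleyan conditioning $\cA_\cM$ (that the skeleton $\mathfrak{s}$ cuts $M$ into annuli) as a soft perturbation: ``conditioning on an approximately-converged partial skeleton only perturbs the law of the next branch by a vanishing amount.'' For $\chi<0$ this conditioning is \emph{singular} — the probability that the two skeleton branches issued from a given puncture avoid each other vanishes as $\delta\to 0$ — so it does not pass to the limit by any bounded change-of-measure argument. The paper's main technical contribution is handling this singularity: one writes the Radon--Nikodym derivative of the conditioned pair against an independent pair in terms of the random walk loop measure (generalizing Lawler's two-sided LERW construction), proves a scaling limit for the conditioned pair in a simply connected disc, and then transfers to the surface via a ``local-to-global'' comparison. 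Making this work on a surface also forces the introduction of a refined notion of $\eta$-contractibility for loops, since the clean identity between Wilson-algorithm loops and the loop soup breaks down in the non-simply-connected setting — a subtlety your proposal does not engage with. Finally, establishing the loop measure estimates themselves (Theorem~\ref{T:loopsoupRWintro}) using only rough random walk control is a separate piece of new machinery; your invocation of ``rough loop-measure estimates'' names the right object but gives no indication of how these bounds would be obtained without fine Green's function asymptotics, which is the whole point.

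On $\Ptemp$: the statement that $2^{K^\dagger}$ ``is bounded on the relevant events'' is not correct as written; what one actually needs (and the paper proves) is that $K^\dagger$ has uniformly bounded exponential moments. With that substitution your argument for passing from $\Pwils$ to $\Ptemp$ is essentially the intended one.

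In summary, your proposal is a correct high-level description of the strategy, and the conformal-invariance-from-universality step is exactly as in the paper, but the proposal does not contain the two ideas that actually prove the theorem: the Markov chain decomposition with Uchiyama's input (to circumvent the discontinuity of loop erasure), and the Lawler-style loop-measure treatment of the singular conditioning at the punctures (with $\eta$-contractibility and the random-walk-to-loop-soup comparison).
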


See \cref{sec:main_result} for a discussion of the Schramm topology. The conformal invariance statement in the above theorem should be interpreted as the limiting law being invariant under conformal bijection between Riemann surfaces with $\sf k$ marked points.
 As mentioned in \cite{BLR_Riemann1}, in combination with the results in that paper \Cref{thm:CRSF_universal_intro} proves the following result:
 
 \begin{thm}\label{T:main_intro}
	Let $M$ denote a bounded, connected Riemann surface, possibly with a boundary, which is not the sphere nor a simply-connected domain. Let $G^\d$ be a sequence of Temperleyan discretisations of $M$ satisfying the invariance principle and a crossing estimate\footnote{The most important of these assumptions are also described in \cref{sec:setup} here; compared to those, the assumptions in \cite{BLR_Riemann1} also includes assumptions on the winding of edges in the embedding of the graph and assumptions on the embedding of the dual graph.} for random walk described in Section 2.4 of \cite{BLR_Riemann1}. Let $h^\d$ denote the height function of the dimer model on $G^\d$. Then $h^\d - \E(h^\d)$ has a limit in law as $\delta \to 0$, which is independent of the sequence $G^\d$ and independent of the sequence $G^\d$ and is invariant under conformal transformations.
\end{thm}
Using Proposition 5.10 in the companion paper \cite{BLR_Riemann1}, the limiting field restricted to a small enough neighbourhood of $M$ away from the punctures can be coupled to the restriction of a Gaussian free field in a slightly bigger neighbourhood, with positive probability; in particular this field is nontrivial. We conjecture that it is a \textbf{compactified Gaussian free field} with appropriate parameters depending only on the surface $M$ and the positions of the punctures $(x_1, \ldots, x_k)$ on $M$.

The precise assumptions about $M$ and the graph embedding can be found in \cref{sec:surface_embedding,sec:setup} and a precise version of \cref{thm:CRSF_universal_intro} can be found in  \cref{sec:main_result}. 
We refer to \cite{BLR_Riemann1} for the meaning of various words in \cref{T:main_intro}, such as Temperleyan discretisation, height function (which is really a closed one-form on $M$ rather than a function) and the topology with respect to which the convergence holds, as well as a precise version of the statement (see in particular Theorem 6.1 in \cite{BLR_Riemann1}).

\medskip Furthermore, as part of our proof, we obtain the following result which may be of independent interest, and which gives the existence of a universal, conformally invariant scaling limit  for the oriented wired CRSF, under mild assumptions on the graph, thereby generalising the result of Kassel and Kenyon \cite{KK12}. Let $\Pwwils$ denote the law on CRSFs as in \eqref{eq:pwils} but without the indicator that $\mathbf{t}$ is Temperleyan:
\begin{equation}
    \label{eq:law_CRSF}
    \Pwwils ( \cT =  \mathbf t) = \frac{1}{\Zwwils}   \prod_{e \in \mathbf t} w(e), 
\end{equation}

\begin{thm}\label{thm:CRSF_nopuncture_intro}
In the same setup as above, the wired, oriented CRSF under $\Pwwils$ on $\Gamma^\d$ has a scaling limit as $\delta \to 0$ in the Schramm topology. Furthermore, the law of the limit does not depend on the sequence of graphs chosen. Consequently, this limit law is also conformally invariant.
\end{thm}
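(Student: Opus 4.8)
The plan is to use Wilson's algorithm for CRSFs (due to \cite{KK12}) as the bridge between the discrete model and a limiting object built from loop-erased random walk and the loop soup on $M$. Recall that under $\Pwwils$, the wired oriented CRSF on $\Gamma^\d$ can be sampled as follows: pick an arbitrary ordering of the vertices, and successively run random walks from each unvisited vertex, stopping either when the walk hits the part of the forest already built (including the wired boundary vertices) \emph{or} when it closes a noncontractible loop; erase loops along the way, but retain a noncontractible loop if and when one is created. Since random walk on $\Gamma^\d$ converges to Brownian motion on $M$ (and satisfies the crossing assumption of \cref{sec:setup}), each loop-erased branch converges to a loop-erased Brownian trajectory on $M$, and the crossing assumption provides the tightness/regularity needed to make the Schramm-topology convergence uniform and to control the behaviour near the punctures, the boundary, and the thin parts of $M$.

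First I would set up the candidate scaling limit intrinsically: run a continuum analogue of Wilson's algorithm driven by Brownian motion on $M$, erasing Brownian loops and retaining the first noncontractible Brownian loop encountered along each branch, using the mass of Brownian loops in each free homotopy class (equivalently, the relevant loop measure from the companion discussion in this paper relating CRSFs and loop measures) to decide which homotopy classes of cycles appear and with what weight. The key point, which I would prove using the tools developed in the paper for analyzing loop measures ``using only rough control on the random walk,'' is that the discrete loop-erasure procedure, including the \emph{first noncontractible loop} formed, converges: the probability that a discrete branch wraps around a given handle before hitting the existing forest converges to the corresponding Brownian quantity, and conditionally on the homotopy data the erased path converges to loop-erased Brownian motion. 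This gives convergence of the finitely many ``topological'' branches (those that produce the noncontractible cycles), and then the remaining mass of the forest is a spanning forest wired to this topologically-determined skeleton, whose convergence follows from the now-standard convergence of loop-erased random walk and uniform spanning trees/forests in the plane, transported chart by chart to $M$.

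Second, since Wilson's algorithm produces the same law regardless of the vertex ordering, the limiting object is well-defined independently of all the discretization choices, and in particular independent of the sequence $(\Gamma^\d)$; conformal invariance is then immediate because Brownian motion on $M$ is conformally invariant (up to time change, which does not affect trajectories, loop-erasure, or homotopy classes). To upgrade finite-dimensional convergence (convergence of finitely many branches) to convergence in the Schramm topology of the whole forest, I would use a tightness argument: the crossing assumption gives an equicontinuity estimate on branches uniformly over starting points, so with high probability all branches are uniformly ``nice'' away from a set of small measure, and any subsequential limit is characterized by its restriction to finitely many branches, which we have already identified.

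The main obstacle I expect is controlling the \emph{topological} part of Wilson's algorithm in the limit — precisely, showing that ``the first noncontractible loop erased along a branch'' is a continuous functional in the Schramm sense, and that its discrete and continuum laws match. The difficulty is that noncontractibility is a global/topological condition, so small perturbations of a branch that comes close to being noncontractible can flip the outcome; one must show such near-degenerate configurations have vanishing probability, which is exactly where the rough random-walk estimates and the analysis of loop measures near thin parts of $M$ are needed. Once this topological continuity is established, the rest of the argument is assembling known planar LERW/UST convergence results along the charts of $M$.
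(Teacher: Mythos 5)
There is a genuine gap in the construction of the candidate limit. You write ``each loop-erased branch converges to a loop-erased Brownian trajectory on $M$'' and propose to define the continuum object by erasing Brownian loops and selecting homotopy classes via a Brownian loop measure. But two-dimensional Brownian motion intersects itself on a dense set of times, so ``loop-erased Brownian motion'' is not a directly defined object; making sense of it \emph{is} the content of LERW $\to$ SLE$_2$. The paper instead defines the continuum branch as the output of a Markov chain: each step is confined to a simply connected neighbourhood lifted to the universal cover, the new piece is a radial or chordal SLE$_2$ in the appropriate slit domain, and the convergence of each discrete step is obtained from Uchiyama's extension of LERW $\to$ SLE$_2$ to domains with rough boundaries (needed because the slit domain's boundary is itself a previously grown LERW). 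Loop measures play no role at all in the proof of this particular theorem — they enter only later, for the \emph{Temperleyan} conditioning — so building the unconditioned CRSF limit from a loop-measure weighting of homotopy classes is not the route the paper takes, and you would still face the problem of defining the branches intrinsically.

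A second, smaller gap is in the passage from finitely many branches to the whole forest in the Schramm topology. Your appeal to ``equicontinuity from the crossing assumption'' is the right instinct, but the argument requires the explicit dyadic exhaustion (``good algorithm'') behind Schramm's finiteness lemma, and — as the paper emphasises — the genuinely new difficulty on a surface with boundary is ruling out an accumulation of \emph{noncontractible} cycles near $\partial M$, which is handled via the boundary Beurling estimate. Your proposal doesn't address this boundary-accumulation issue, and ``near-degenerate configurations have vanishing probability'' is asserted rather than derived; in the paper this is precisely where the crossing estimate plus the finiteness lemma plus the a priori lower bound on the length of any noncontractible loop are combined.
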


See \cref{thm:CRSF_universal} for a precise statement.

Our assumptions on the graph are much weaker than those in \cite{KK12} {(in particular, see the definition of \emph{Conformal approximation} in Section 4.1 of that article)}. This is closely related to the fact that the approach in \cite{KK12} is algebraic (in particular, the authors rely on a deep theorem of Fock and Goncharov \cite{FockGoncharov}, see \cite[Lemma 16]{KK12}).  Our approach in contrast is purely probabilistic, and uses a novel connection to loop measures, for which estimates of independent interest are also obtained. An example is the following result (stated informally in this introduction):
\begin{thm}\label{T:loopsoupRWintro}
Suppose $\Gamma = \Gamma^\d$ is a sequence of planar graphs approximating a domain $D \subset \C$ and suppose the random walk on $\Gamma^\d$ satisfy the crossing assumption of \cite{BLR16}. Let $\cA$ be a set of macroscopic loops in a planar graph $\Gamma$ satisfying a crossing assumption. We write $\Lambda$ for the loop measure in $\Gamma$, $X_y$ for the random walk started from a point $y \in \Gamma$ and $\ell( X_y)$ for the loops erased in the loop erasure of $X_y$. Then the following holds uniformly in $\delta$.
{	\begin{enumerate}
		\item 
  For all $\eps, r > 0$, there exists $C < \infty$ such that, if $y$ is a point at distance at least $r$ such that $\P( \ell( X_y) \cap \cA = \emptyset) \geq \eps$, then $\Lambda( \cA) \leq C$.
		\item For all $\eta>0, r>0$, there exists $\eps>0$ such that, if for all $y$ at distance at least $r$ from the boundary, $\P( \ell( X_y) \cap \cA \neq \emptyset) \leq \eps$ then $\Lambda( \cA)\leq \eta$. 
	\end{enumerate}
 }
\end{thm}
In other words, it is enough to understand the loops coming from a single random walk to bound the loop measure. See \cref{sec:loop_measure} and more precisely \cref{T:loopsoupRW} for a precise statement. 
In particular, unlike in previous works such as \cite{Lawler2sided}, we require no fine estimates on the Green function, which are unavailable on general Riemann surfaces to the best of our knowledge. 

\medskip It is well-known that loop measures are closely connected to uniform spanning trees, and it is therefore natural to try and develop an analogous connection for CRSFs and Temperleyan CRSFs. This is however far from straightforward, and we will see that this will require a new notion for loops which we call \textbf{$\bs \eta$-contractibility}. See \cref{sec:loop_measure} and in particular \cref{density_non_contractible} for more details.

\medskip The price to pay for this generality is that the theorems do not provide any concrete description of the scaling limits. It is unfortunately a feature of our method that we have very little in terms of usable description of the limit, either through a variant of the Loewner equation suited to a surface or through an SLE partition function and Radon-Nikodym derivative with respect to a reference measure. However we think that it is also a strength since our approach separates the proof of the existence of a scaling limit from its identification, and let us note that it uses neither any observable nor any exact combinatorics. We also use only weak assumptions on the underlying sequence of graphs. See for example \cite{ray2021quantitative} where a similar quenched version of {\Cref{T:main_intro}} was proved for dimers in a random environment in the plane, a notable example being high-density Poisson--Voronoi triangulations. (We believe a similar result will hold on surfaces too). In fact, as highlighted in the statement of \cref{thm:CRSF_universal_intro}, conformal invariance appears as a corollary of this quite strong form of \emph{universality} which is in agreement with the general intuition in theoretical physics.


\begin{figure}
    \centering
    \includegraphics[width = 1\textwidth]{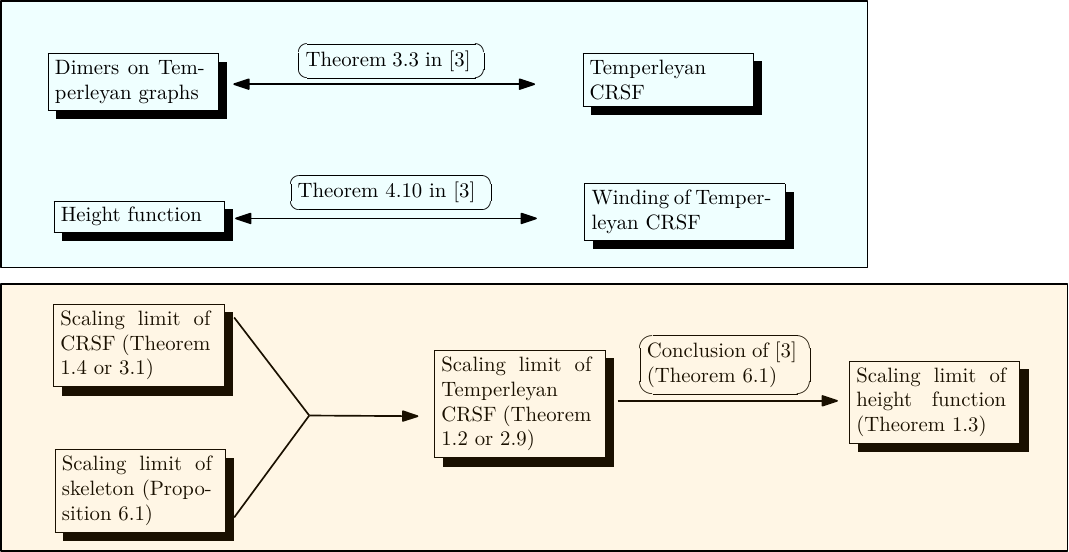}
    \caption{\textbf{Top} (in cyan): A schematic view of the generalised Temperley bijection in \cite{BLR_Riemann1}. \textbf{Bottom} (in orange): A schematic view of the proof of Theorem \ref{T:main_intro} and other main results in this paper.}
    \label{fig:schematic}
\end{figure}

\medskip We conclude by mentioning that a notion of SLE$_2$ loop measure on general Riemann surfaces  is also present in the article \cite{BD16}, but it is unclear to us how this relates to the noncontractible loops in the CRSF or in the Temperleyan CRSF.



\subsection{Sketch of the proof and organisation of the paper}


\cref{sec:background} presents the precise setup for the rest of the paper as well as a quick reminder of the facts about Riemann surfaces that will be needed later. In the notation of \cref{thm:CRSF_universal_intro}, the full assumptions on the sequence of graphs $\Gamma^\d$ appear in \cref{sec:setup}. 

 The first step of the proof is to establish the scaling limit result for the CRSF with no conditioning {\Cref{thm:CRSF_nopuncture_intro}}). The main idea for this step is to bootstrap from the simply connected case by giving a description of Wilson's algorithm where all small loops are already erased without affecting the large scale behaviour. As already mentioned before, our assumptions are considerably weaker than the \emph{conformal approximation} assumption in \cite[Section 4.1]{KK12}, where convergence of the discrete derivatives of the Green's function in the graph to that of the continuous Green's function was assumed. Furthermore, the results in \cite{KK12} only talks about the convergence of the loops, not the whole CRSF in Schramm topology.
The rest of the paper is devoted to the same result for a \emph{Temperleyan} CRSF (i.e., sampled from either $\Pwils$ or $\Ptemp$). Recall that {from Definition \ref{def:temperleyan}} we may see it as a version of the {uniform CRSF, conditioned on an (asymptotically singular) event}. {As it turns out}, roughly speaking this singular conditioning essentially boils down to the requirement that {for each $1\le i \le \sf k$, a pair of paths emanating out of either side of the puncture $x_i$ are disjoint. In other words, the conditioning requires that the branches of the CRSF from two adjacent vertices do not merge for a macroscopic  distance (by `macroscopic' we mean distance at least $c>0$ with respect to a natural metric on the surface, see e.g. \Cref{sec:surface_embedding}) from their starting points. It is easy to see that this makes the laws $\Ptemp,\Pwils$ singular with respect to $\P_{\text{CRSF}}$ in the $\delta \to 0$ limit (however they remain mutually absolutely continuous with respect to one another in the limit).} {To prove the existence of the scaling limit of the Temperleyan CRSF,} we will follow an approach from Lawler \cite{Lawler2sided}, initially used to establish the existence of an infinite, two-sided LERW on $\Z^d$ (this is equivalent to the existence of a local limit for a pair of infinite loop-erasures of random walks under the conditioning that the second walk avoids the loop-erasure of the first one). {Lawler's idea was to write the Radon--Nikodym derivative of the conditioned path with respect to independent paths in term of the \emph{random walk loop measure}. We generalise this approach to the CRSF setup where this derivative can be written in terms of the $\eta$-contractible loops {mentioned earlier and which will be introduced in Section \ref{sec:loop_measure}}. We study this in \cref{sec:loop_measure0}. More precisely, we generalise to surfaces and (Temperleyan) CRSF the expression of the Radon-Nikodym derivative (obtained by Lawler \cite{Lawler2sided}) in \cref{sec:law_branch,sec:law_marginal} and we establish the general bounds on the mass of loops such as \cref{T:loopsoupRWintro} in \cref{sec:finiteness}.}

Our next step (\cref{S:Lawler}) is to obtain the existence of a scaling limit in the simply connected setting already considered by Lawler \cite{Lawler2sided} (the difference being that Lawler's result is a local limit whereas we obtain a scaling limit, and more importantly we obtain these results without fine control on e.g. the Green function). 

Finally we conclude in \cref{sec:localglobal} with the ``local to global argument'', where we compare the true law of a Temperleyan CRSF with the law obtained in the previous step. Again this uses exact expressions for the Radon--Nikodym derivatives and the control of the loop measure from \cref{sec:loop_measure0}. {The proof of the main theorem (Theorem \ref{thm:main_Temp_CRSF}) is complete at the end of Section \ref{S:local_global}. Section \ref{sec:quant_special} contains some technical estimate which follow from the work in this paper and are also needed in \cite{BLR_Riemann1}.}

\subsection*{Acknowledgements}

The authors are grateful to Greg Lawler for some useful discussions regarding \cite{Lawler2sided}. The authors are also grateful to the anonymous referees for making many useful comments which improved the exposition of the paper greatly.

NB's research is supported by FWF grant P33083, ``Scaling limits in random conformal geometry''. GR's research is supported by NSERC 50311-57400. BL's research is supported by ANR-18-CE40-0033 “Dimers”.
The paper was finished while NB was in residence at the Mathematical Sciences Research Institute in Berkeley, California, during
the Spring 2022 semester on \emph{Analysis and Geometry of Random Spaces}, which was supported by the National Science Foundation under Grant No. DMS-1928930.
\section{Background and setup}\label{sec:background}
\subsection{Riemann surfaces and embedding}\label{sec:surface_embedding}
In this article, we work with a {connected} {Riemann surface $M$ (a connected, one dimensional complex manifold)}, 
satisfying the following properties:
\begin{itemize}
\item {The surface $M$ is of finite topological type, meaning that it has finite genus (i.e., finitely many handles) and finitely many boundary components. This means that
  the fundamental group $\pi_1(M)$ is finitely generated.} 
\item  The surface $M$ can be compactified by
  specifying a \textbf{boundary} $\partial M$. We denote by $\overline
  M$  the compactified Riemann surface with the boundary. More
  precisely, every point is either in the interior and hence has a
  local chart homeomorphic to $\C$, or is on the boundary and has a
  local chart homeomorphic to the closed upper half plane $\bar \H$. Also there are finitely many such charts which cover the boundary. Note that this condition implies that $M$ has no punctures. (However for future reference, we note here that we will later introduce punctures on $M$, corresponding with the removed vertices of Temperleyan graphs. The resulting punctured surface will be denoted by $M'$.)
\end{itemize}

We say $M$ is \textbf{nice} if $M $ satisfies the above properties. 

\medskip
{ Given such a Riemann surface $M$, it is possible to equip it with a Riemannian metric $g$ (with associated distance function denoted by $d_M$) which extends continuously to the boundary, and turns $M$ into a smooth Riemannian real manifold of dimension two; with an abuse of notation we still write $d_M$ for the distance function on $\bar M$ induced by this metric. (This Riemannian metric can be constructed by considering the hyperbolic metric on the universal cover of the surface if it has no boundary, see \cite[Section 2.4]{Jost} in combination with the uniformisation theorem (\cite[Theorem 4.4.1]{Jost}) also recalled and discussed below.
If the surface has boundary components, we may apply the same result to the surface obtained by gluing it to itself along the boundary -- also called the double.) 

Note that the assumption about the boundary means we exclude surfaces such as the hyperbolic plane. This will simplify certain topological issues later when we deal with the Schramm topology.}

\medskip \paragraph{Classification of surfaces.}

Riemann surfaces can be classified into the following classes depending on their conformal type (see e.g. \cite{donaldson2011riemann} for an account of the classical theory):

\begin{itemize}

\item Elliptic: this class consists only of the Riemann sphere, i.e., $M \equiv \hat{  \mathbb{C}}$

\item Parabolic: this class includes the torus, i.e., $ M \equiv \mathbb T =  \C / (\Z + \tau \Z)$ where $\Im(\tau >0)$, the cylinder $M \equiv \mathbb{C} \setminus \{0\}$, and the complex plane itself (or the Riemann sphere minus a point), $M \equiv \C$.

\item Hyperbolic: this class contains everything else. This includes examples such as the two-torus, the annulus, as well as proper simply connected domains in the complex plane, etc.
\end{itemize}
The proofs in this paper (and its companion \cite{BLR_Riemann1}) are concerned with the hyperbolic case (subject to the above conditions) as well as the case of the torus. So from now on we always assume that $M$ is such a surface. We note that the case of simply connected proper domain in $\C$ is covered in our previous work \cite{BLR16}. 


\subsection{Universal cover}
\label{SS:universalcover}

{ The universal cover $\tilde M$ of the Riemann surface $M$ will play an important role in our analysis.
Recall that any Riemann surface admits a (regular) covering map by a covering space which is simply connected. This covering space may furthermore be endowed with a conformal structure, with respect to which the covering map is then analytic. Being simply connected, this covering space is (by the uniformisation theorem, see Theorem 4.4.1 and Theorem 2.4.3 in \cite{Jost}) conformally equivalent to a Riemann surface $\tilde M$, which is either the unit disc $\D \subset \C$ (hyperbolic case), the whole plane (parabolic case), or the Riemann sphere $\hat \C$ (elliptic case). Altogether we obtain a map\footnote{For concreteness one can fix a point $x_0$ in $M$ and consider $p$ such that $p(0) = x_0$. In the hyperbolic case fix an arbitrary choice of rotation as well.} $p:\tilde M \to M$ which is both analytic and a regular covering map of $M$. By definition of a covering map, $p$ is a local homeomorphism: for every $z \in M$, there exists a neighbourhood $N$ containing $z$ so that $p$ is injective in every component of $p^{-1}(N)$.}
        

{
 Recall further 
 that 
 in each of the three cases (elliptic, parabolic, hyperbolic), $\tilde M$
 can be endowed with a metric (compatible with the complex structure) of constant Gaussian curvature equal to $1,0$ and $-1$ respectively. 
 Furthermore, there is a subgroup $F$ of conformal automorphisms on $\tilde M$ acting  properly discontinuously and freely -- i.e., without fixed points --  (see, e.g., \cite[Section 2.4]{Jost} for definitions),
 such that $p$ descends to a conformal equivalence between $\tilde M/F$ and $M$. Put it more simply, $F$ is a discrete subgroup of the corresponding set of M\"obius transformations describing the conformal automorphisms of $\tilde M$, and $M$ is conformally equivalent to  $\tilde M /F$ (i.e., to either $\hat \C / F$, $\C/F$ or  $\D/F$).




In case of the torus, this discrete subgroup is isomorphic to $\Z^2$ and the generators specify translations in the two directions of the torus. In the hyperbolic case, this class of subgroups is much bigger and are known as \textbf{Fuchsian groups} (see e.g. \cite{donaldson2011riemann} for a general account). This particular representation of a hyperbolic Riemann surface, sometimes called a Fuschian model, will be particularly convenient because it allows us to describe the scaling limits of Temperleryan or cycle rooted spanning forests \emph{in the universal cover}, rather than on the surface itself. This will allow us to import directly a number of the ideas and result from \cite{BLR16} on the simply connected case. The Fuchsian structure itself plays a more technical role, see for instance \cref{app:RW}.}  

\subsection{Graph discretization and assumptions}
\label{sec:setup}


We say that a graph $\Gamma$ is \textbf{faithfully} embedded in a nice Riemann surface $M$ if it satisfies the following (see \cref{fig:Gamma}):
\begin{itemize}
	\item the embedding is proper, i.e. edges do not cross; 
	\item $\Gamma$ is connected;
	\item $\Gamma$ has a marked vertex (called a boundary vertex) for each component of $\partial M$, {we think of this vertex as located `inside' the hole formed by the boundary (see \Cref{fig:Gamma});}
	\item Every face of $\Gamma$ not containing a boundary vertex has the topology of a disc.
\end{itemize}
Note that with this convention, we can think of each boundary vertex as being `delocalised' along the whole boundary component and indeed later we will consider CRSF with a wired condition on each boundary component.


We will also consider $\Gamma$ to be endowed with a (continuous time) Markov chain $X$ respecting the graph structure and let us emphasise that we allow nonreversible chains. In other words each oriented edge has a weight $w_{(x,y)}$ with no particular relation between these weights. With a slight abuse of notation, we call this chain the \textbf{random walk on $\Gamma$} from now on. Also with a slight abuse of notation, we will in general think of the trajectory of a random walk (or of its loop-erasure) as the continuous path generated by the edges used in order and we will, depending on the context, identify the trajectory either as a set or a continuous path. We write $X[s,t]$ for the path followed between times $s$ and $t$.
 As mentioned in the previous paragraph, we will analyse the random walk until it hits the boundary, so we will not be interested by the behaviour of the walk after the first time it hits a boundary. We can therefore without loss of generality assume that the outgoing weights from any boundary vertex is $0$. Let us also note that we think of $X$ as a continuous time process for simplicity only since all properties of interest to us concern the geometry of its path up to time change, and so the precise time-parametrisation is completely irrelevant.

The embedded graph $\Gamma$ can naturally be lifted to the universal cover {$\tilde M$ of $M$} and we will call this lift $\tilde\Gamma$. More precisely by the uniformisation theorem (\Cref{SS:universalcover}) {we can always find a regular and analytic covering map $p$ from  $\tilde M$ to $M$ and we will always assume that $\tilde \Gamma$ is obtained by such a lift}. 
Recalling that $M$ will always be either the torus or a non simply-connected hyperbolic surface, $\tilde \Gamma$ will always be a planar graph embedded either in the plane or the disc. Furthermore $\tilde \Gamma$ is ``periodic'' in the sense that it is invariant under the action of the group $F$. {We lift the weights of $\Gamma$ to turn $\tilde \Gamma$ into a weighted graph, and call 
 the {corresponding} walk $\tilde X$.}

When considering scaling limits, we will take a sequence of graphs $(\Gamma^\d)_{\delta>0}$ embedded faithfully on a fixed nice Riemann surface $M$. Because of this, our assumptions relate to the sequence rather than an individual graph $\Gamma^\d$ for a fixed $\delta>0$, even though we will sometimes omit the dependence on $\delta$ and write $\Gamma$ for $\Gamma^\d$. 
We assume the following about $\Gamma^\d$ apart from it being embedded faithfully (see \cref{sec:surface_embedding}). 

{
Let $p: \tilde M \to M$ be a map which is both analytic and a regular covering of $M$  by its universal cover $\tilde M$, which is either the unit disc $\mathbb D \subset \C$ or the whole plane $\C$ (recall the discussion in  \Cref{SS:universalcover} for the existence of this map.)
 In the end the choice of this covering map does not affect the following assumptions, see \cref{lem:conf_inv_assumption} for a precise discussion about this covering map.}



{Recall from \Cref{sec:surface_embedding} that we equipped $M$ with a distance function $d_M$ deriving from a Riemannian metric, which extends to the closure $\bar M$; we still denote this extended distance function by $d_M$.}

\begin{enumerate}[{(}i{)}]
	\item \label{boundeddensity} \textbf{(Bounded density)} { We assume that there exists a constant
	$C$ independent of $\delta$ such that for any $x \in M$, the number
	of vertices of $\Gamma^\d$ in the ball $\{z \in M: d_M(x,z) <\delta\}$ is smaller
	than $C$.}
	
	\item \label{InvP} \textbf{(Invariance principle)} As $\delta
	\to 0$, the continuous time random walk $\{\tilde X_t\}_{t \ge 0}$ on
	$\tilde \Gamma^\d$
	started from the nearest vertex to $0$ converges to Brownian motion in the following sense. For every compact set $K \subset \D$ containing $0$ in its interior, let $\tau_K$ denote the exit time of $K$, we assume that for all $K$
	$$
	{( \tilde X_{t})_{0 \leq t \le \tau_K}} \xrightarrow[\delta \to 0 ]{(d) }
	(B_{t})_{0 \leq t \le \tau_K}
	$$
	where $(B_t, t \ge 0)$ is a two dimensional standard Brownian motion in $\tilde M$ (killed when it leaves $\tilde M$, if $\tilde M = \D$) started from
	$0$. The convergence above is in law and using the uniform topology on curves up to parametrisation.
	
	We remark that the above condition is equivalent to asserting that simple random walk from some fixed vertex converges to Brownian motion on the Riemann surface itself up to time parametrisation (see e.g. \cite{hsu2002}). Note that since we are only concerned with the trajectory up to its first exit from a compact set, there is no assumption on the asymptotic behaviour of the random walk on $\tilde \Gamma^\d$.

	\item \label{crossingestimate} \textbf{(Uniform crossing estimate).}
	Let $R$
	be the horizontal rectangle $[0,.3]\times
	[0,.1]$ and $R'$ be the vertical
	rectangle $[0,.1]\times [0,.3]$.
	Let $B_1 :=
	B((.05,.05),0.025)$ be the
	\emph{starting ball} and $B_2:=
	B((.25,.05),.025) $ be the \emph{target ball}.

	{The uniform crossing condition is the following. There exist universal constants $\delta_0,\alpha_0>0$ such that for every compact set $K  \subset  M$, there exists a $\delta_K$ such that for all $\delta \in (0,\delta_K)$  the following is true. Let $\tilde K = p^{-1}(K)$ be the lifts of $K$. Let $R'' \subset K$ be a set of the form $cR+z$, where $c \ge \delta/\delta_0$ and $z \in \R^2$ (i.e. a scaling and translate of $R$). }
 Let $B_1'' = cB_1+z$ and  $B_2'' = cB_2+z$. For all $v \in \tilde \Gamma^\d \cap B_1''$,
	\begin{equation}
	\P_{v}(\tilde X \text{ hits }B''_2 \text{ before exiting } R'')
	>\alpha_0.\label{eq:cross_left_right}
	\end{equation}
	{We emphasise that this crossing condition is defined in the Euclidean metric in the disc, and not the more standard hyperbolic metric when dealing with the universal cover of a hyperbolic surface.

	In what follows, sometimes for a compact set $S \subset \tilde M$, we will write $\delta_S$ to mean $\delta_{p(S)}$ as defined above.}

 { \item \textbf{(punctures)} \label{punctures} Recall that we remove $\sf k = |\chi|$ many points, denoted $x_1,\ldots,x_{\sf k}$, from (the interior of) $M$ where $\chi$ is the Euler characteristic of $M$. Recall the vertices ${(u_i, v_i) =} (u_i^\d,v_i^\d)$ which are fixed closest to $x_i$.
 and assume that both $u_i^\d,v_i^\d$ converge to $x_i$ as $\delta \to 0$. We also assume without loss of generality that 
for all $\delta$, the punctures are sufficiently far (say at graph distance at least some large fixed constant) from the boundary and from each other.

 Finally, we also suppose that the following holds. There exist paths $\gamma_{u_i}, \gamma_{v_i}$ (viewed as a ordered collection of adjacent edges $((x_1,x_2), (x_2,x_3),\ldots, (x_{r-1},x_{r}))$ or as a continuous path depending on context) satisfying the following. The paths start with $x_1 = u_i$ or $x_1=v_i$ respectively for $\gamma_{u_i}, \gamma_{v_i}$, and we assume
\begin{equation}
M \setminus \bigcup_{1 \le i \le \sf k} (\gamma_{u_i} \cup \gamma_{v_i } \cup (u_i,v_i))\label{eq:necessary_cond}\end{equation}
is topologically a disjoint union of annuli.}

\begin{figure}[h]
		\centering
		\includegraphics[scale = 0.5]{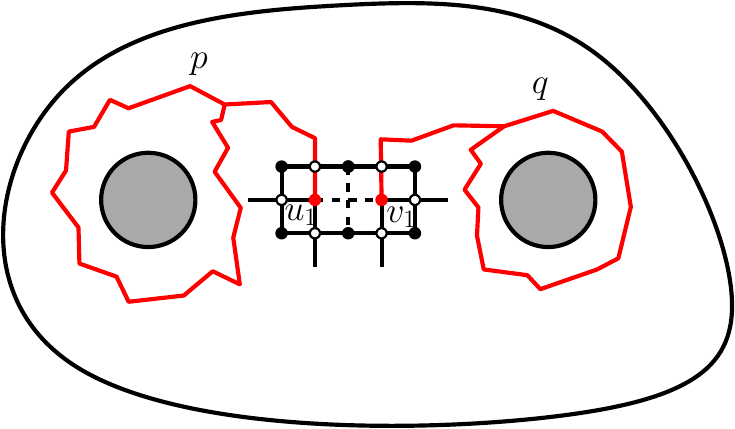}\hspace{.5cm}
		\includegraphics[scale = 0.5]{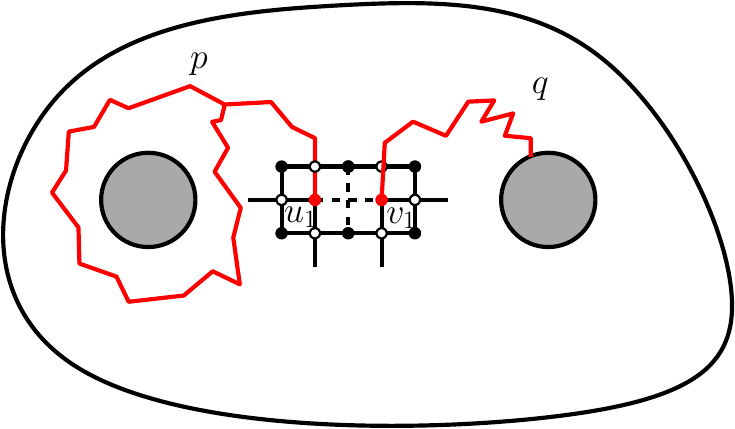}
		\caption{ {An illustration of the assumption (\ref{punctures}). The paths $\gamma_{u_1}, \gamma_{v_1}$ consists of $p$ and $q$, which decompose the surface into a number of disconnected annuli: three in the first case, two in the second.
			The dotted edges are the ones removed from $G$ to get $G'$.}} \label{F:pants}
\end{figure}

\end{enumerate}
{\begin{remark}
Typically the existence of the paths $\gamma_{u_i}$ and $\gamma_{v_i}$ for $\delta $ small enough is guaranteed by the crossing condition and the \emph{pants decomposition} of $M$. However, it is possible to construct pathological examples (for large $\delta$) where there is no paths such that \eqref{eq:necessary_cond} holds, so this needs to be made as a separate assumption.
\end{remark}}
{\begin{remark}\label{rmk:start_point}
		We also remark here that the invariance principle assumption (\ref{InvP}) actually implies something stronger: for any point $x$ in $\tilde M$, the random walk started from a vertex $x^\d$ nearest to $x$ converges to a Brownian motion started from $x$ up to a time change as above. This is a consequence of the fact that random walk from 0 comes close to $x$ with uniformly positive probability (using the crossing estimate) and the strong Markov property of Brownian motion. More precisely, by the strong Markov property, the random walk from 0 cannot visit points $y$ for which the harmonic measure in a given domain $D$ from $y$ deviates by a positive amount from that of Brownian motion (i.e., the probability that it does so tends to zero). Since the walk from zero can make a loop around $x$ with uniformly positive probability,  it follows that there are loops arbitrarily close to $x$ consisting only of points $y$ for which the harmonic measure is close to that of Brownian motion. This implies that the random walk from $x^\d$ has a harmonic measure in $D$ which is also close to that of Brownian motion. Since $D$ is arbitrary, the walk converges to a Brownian motion up to a time-change. (A similar claim is implicit in \cite{YY}.)   
	\end{remark}}
{One consequence of the Invariance principle assumption (item (\ref{InvP})) is that the vertices
adjacent to a boundary vertex converge in the Hausdorff metric (induced by $d_{ M}$) to the associated component of $\partial M$.
}
{\begin{remark}\label{rmk:puncture}
  In \cite{BLR_Riemann1} we needed to consider both a graph $\Gamma^\d$ and its dual $(\Gamma^\dagger)^\d$ faithfully embedded in the surface. The Temperleyan forest that we needed to consider was actually defined on a punctured graph $(\Gamma')^\d$, which is obtained from $\Gamma^\d$ by removing an edge from it (where the puncture or white monomer is located) for each $1\le i \le \mathsf{k}$: see Figure 2 in \cite{BLR_Riemann1}. This is only necessary in order to make the connection with the dimer model and in particular for Temperley's bijection. These minor complications are however irrelevant for this paper, so we dispense {with} them. Still, it may be worth noting for the application to \cite{BLR_Riemann1} that the graph $\Gamma^\d$ here corresponds to $(\Gamma')^\d$ in \cite{BLR_Riemann1}. In the companion article \cite{BLR_Riemann1}, we also need to consider windings of paths and their scaling limits, and to that end we need to assume piecewise smoothness of the embedding  of the graph. However this is not required for the proofs in this paper, so we do not mention it here.
\end{remark}
}

{\begin{lemma}[Conformal invariance of assumptions]\label{lem:conf_inv_assumption}
The assumptions (\ref{InvP}),(\ref{crossingestimate}) on the sequence $(\Gamma^\d)_{\delta>0}$ are invariant with respect to the choice of the covering map $p$, in the sense that if $p'$ is any other analytic and regular covering of $M$ then both the assumptions are satisfied for some (possibly different) choices of $\delta_0,\alpha_0,(\delta_K)_{K \subset M}$.

Furthermore, if $(\Gamma^\d)_{\delta}$ satisfies the assumptions (\ref{boundeddensity}),(\ref{InvP}),(\ref{crossingestimate}),(\ref{punctures}) for a choice of covering map $p$ and constants $C,\delta_0,\alpha_0, (\delta_K)_{K \subset M}$ then these assumptions are invariant with respect to conformal transformations in the sense that if $\psi: (M,x_1,x_2,\ldots, x_{\sf k}) \to (N,x'_1,x_2',\ldots, x'_{\sf k})$ is a conformal bijection mapping $M$ to $N$ and $x_i$ to $x'_i$ for $1 \le i \le \sf k$, then
$(\psi(\Gamma^\d))_{\delta>0}$ also satisfies assumptions (\ref{boundeddensity}),(\ref{InvP}),(\ref{crossingestimate}),(\ref{punctures}) for the choice of the covering map $p\circ \psi$ and constants $C,\delta_0,\alpha_0, (\delta_{\psi^{-1}(K)})_{K \subset M'}$.
\end{lemma}
\begin{proof}
Let us start with the first assertion. Note that by the uniformisation theorem, there exists a conformal bijection (M\"obius map) $\phi: \tilde M \to \tilde M$ such that $p\circ \phi = p'$. Note $\phi$ is a M\"obius map from the unit disc to itself in the hyperbolic case and from the complex plane to itself in the torus case.  In assumption (\ref{InvP}), we only require convergence of the random walk up to time change which is preserved under M\"obius maps.  Assumption (\ref{crossingestimate}) is easily seen to be preserved in the  torus case as a M\"obius map maps a rectangle to another rotated and translated rectangle, which can be crossed by concatenating bounded number of vertical and horizontal rectangles of smaller scales. In the hyperbolic case, a rectangle is mapped to a domain bounded by four circular arcs, and the starting and target discs are mapped to discs inside this domain. Furthermore, compact sets are mapped to compact sets. Thus concatenating domains of this type, it is easy to see that uniform crossing estimate holds for $M$, perhaps for different positive constants $\delta_0, \alpha_0, (\delta_K)_{K \subset M}$.

We now prove the second assertion, for which items (\ref{InvP}),(\ref{crossingestimate}),(\ref{punctures}) are trivial. For (\ref{boundeddensity}), if $M$ has no boundary then $d_M (x,y) = d_N (\psi(x), \psi(y))$ (essentially because M\"obius maps are isometries with respect to the hyperbolic metric on the unit disc). Thus (i) holds. 
If $M$ has a boundary, then note that the conformal bijection $\psi$ between $M$ and $N$ extends to a conformal bijection $\bar \psi$ between the doubles $\bar M,\bar N$; and we are back to the previous case.
\end{proof}
}

\begin{remark}\label{rem:lift} 
 Most of this paper is devoted to the proof of Theorem \ref{thm:CRSF_universal_intro}, for which it is not necessary to consider the lift of the punctured surface $M'$. For the application to Theorem \ref{T:main_intro} it becomes however relevant to consider this lift. While we have stated the assumptions (\ref{InvP}) and (\ref{crossingestimate}) on the universal cover of $M$, {these assumptions are also valid for the sequence $(\Gamma^\d)_{\delta>0}$ on the universal cover of the punctured surface $M'$ as well}. This can be checked using the fact that there is a map from the universal cover of $M'$ to $\tilde M \setminus p^{-1} (\{x_1, \ldots, x_{\sf k}\})$ (where $x_1, \ldots, x_{\sf k}$ is the set of punctures and $p:\tilde M \to M$ is a covering map of $M$) which is locally a conformal bijection. { Also note that (\ref{boundeddensity}) is also satisfied in $M'$ if we endow it with the metric it inherits from $M$ via inclusion.} 
 \end{remark}

{\color{gray} 
}

	


We finish this section with some estimates which are immediate consequences of our assumptions. {This type of estimate is pretty standard in the literature, but we still include a quick proof here for completeness.}

\begin{lemma}[Beurling type estimate]\label{lem:Beurling}
	For all $r,\ve>0$ there exists $\eta>0$ such that for any $\delta <\delta(\eta)$ and for any vertex $v \in \Gamma^\d$ such that $\eta/2 < d_M(v,\partial M) <\eta$, the probability that a simple random walk exits $B_M(v,r ) := \{z\in M: d_M(z,v) <r\}$ before hitting $\partial \Gamma^\d$ is at most $\ve$.
\end{lemma}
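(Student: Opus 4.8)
The plan is to derive this Beurling-type estimate directly from the uniform crossing assumption \eqref{crossingestimate}, by showing that a random walk started near the boundary must cross many ``annular'' regions before it can travel a macroscopic distance $r$, and that each such crossing has a uniformly positive chance of instead sending the walk into $\partial M$ (more precisely, onto a boundary vertex). Since the crossings are, up to the strong Markov property, roughly independent, the probability of surviving all of them decays geometrically in the number of scales, which can be made as large as we like by taking $\eta$ small.

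Here is how I would carry it out. First I would work locally near the relevant boundary component of $\partial M$: by the continuous extension of the metric to $\partial M$ and compactness, one can fix a boundary chart in which $\partial M$ looks (bi-Lipschitzly, via the isothermal coordinate) like a segment of $\partial\H$, and in which $B_M(v,r)$ contains a Euclidean half-disc of definite size around the image of $v$. Next, for a vertex $v$ with $\eta/2 < d_M(v,\partial M) < \eta$, I would set up a dyadic family of concentric half-annuli $A_j$ centred at the foot of $v$ on $\partial M$, at scales between order $\eta$ and order $r$, so that there are $N \asymp \log_2(r/\eta) \to \infty$ of them as $\eta\to 0$. The key geometric point is that inside each half-annulus $A_j$ one can fit a rotated/translated copy of the crossing configuration ($cR$ or $cR'$ with $c \asymp$ scale of $A_j$) positioned so that the ``target ball'' $B_2''$ lies on (or immediately adjacent to) the boundary $\partial M$; because the vertices adjacent to a boundary vertex accumulate on that boundary component (a consequence noted right after assumption \eqref{InvP}), hitting that target region forces the walk onto a boundary vertex, i.e. kills it. Thus on each scale, conditionally on reaching the inner edge of $A_j$, the walk has probability at least $\alpha_0$ (for $\delta$ small enough, uniformly — using that $\delta_K$ can be chosen for the compact set which is the closure of the region we work in) of being absorbed before exiting $A_j$, hence before reaching the next scale out. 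By the strong Markov property applied at the successive first-hitting times of the inner boundaries of $A_N, A_{N-1}, \dots$, the probability that the walk reaches scale $r$ (i.e. exits $B_M(v,r)$) before hitting $\partial\Gamma^\d$ is at most $(1-\alpha_0)^N$, which is $\le \ve$ once $\eta = \eta(r,\ve)$ is small enough; then choose $\delta(\eta)$ so that all the finitely many crossing estimates used are valid.

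The main obstacle, I expect, is purely geometric/topological bookkeeping rather than probabilistic: one must make sure the crossing configuration from assumption \eqref{crossingestimate} can genuinely be inscribed in each half-annulus with its target ball flush against $\partial M$, despite the conformal distortion between $M$ and its universal cover and the fact that the rectangles $R,R'$ are axis-aligned. This is handled exactly as in Remark \ref{rem:lift}: the image of a rectangle under the relevant conformal (Möbius) change of coordinates is a curvilinear quadrilateral made of circular arcs, crossed by Brownian motion — and hence, in the limit, by random walk — with the same probability, so one only needs the configuration at the level of Brownian motion, plus the invariance principle \eqref{InvP} and the crossing estimate to transfer it to the walk uniformly in $\delta$. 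A secondary point to be careful about is the word ``before hitting $\partial\Gamma^\d$'': one should phrase absorption in terms of the walk reaching a vertex adjacent to the boundary component (which is then connected to the boundary vertex), and use that these vertices converge in Hausdorff distance to $\partial M$ so that for small $\delta$ they do populate the target region on every relevant scale. Finally one records that only finitely many scales and finitely many compact sets are involved, so a single $\delta(\eta)$ suffices.
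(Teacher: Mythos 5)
Your proposal takes a genuinely different route from the paper. The paper fixes a finite atlas of boundary charts, uses a gambler's ruin estimate for Brownian motion in the upper half-plane (a Brownian path started within distance $\eta'$ of $\partial\H$ hits $\partial\H$ before its trace attains diameter $r'$ with high probability), transfers this back to $M$ using continuity of the charts up to the boundary, and only then passes from Brownian motion to the random walk via the invariance principle \eqref{InvP}. You instead try to derive the estimate from the uniform crossing assumption \eqref{crossingestimate} through a chain of dyadic half-annuli near $\partial M$, which is closer in spirit to a discrete Beurling argument and, if it worked, would avoid invoking Brownian motion at all.

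However, the step in which \eqref{crossingestimate} is supposed to produce absorption has a genuine gap, not merely bookkeeping. First, the crossing assumption applies only to rectangles $R''$ lying inside the lift $\tilde K$ of a compact set $K \subset M$, so the target ball $B_2''$ necessarily sits at strictly positive distance from $\partial M$; it cannot be placed ``flush against'' the boundary, and Remark \ref{rem:lift} addresses M\"obius distortion inside the surface, not rectangles abutting $\partial M$. Second, and more fundamentally, the claim ``hitting that target region forces the walk onto a boundary vertex, i.e.\ kills it'' is false: the walk is killed only when it steps onto $\partial\Gamma^\d$, and a vertex of $\Gamma^\d$ that is geometrically close to $\partial M$ need not be adjacent to the boundary vertex. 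The Hausdorff-convergence observation after \eqref{InvP} says that boundary-adjacent vertices accumulate at $\partial M$; it does not say that every vertex near $\partial M$ is boundary-adjacent, nor that a walk entering a small ball near $\partial M$ must have passed through such a vertex and been absorbed. Converting ``close to $\partial M$'' into ``absorbed at $\partial\Gamma^\d$'' is precisely the substance of the lemma; the crossing estimate gives reachability of interior target balls but says nothing about killing. This is exactly what \eqref{InvP} (which explicitly includes killing the limiting Brownian motion when it leaves $\tilde M$) supplies and the crossing estimate does not, and it is why the paper routes through \eqref{InvP}. To salvage your scheme you would still need the invariance principle, or some other input, to supply absorption on at least one scale.
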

\begin{proof}
	Choose an open cover $(U_i)_{i\in I}$ of $\partial M$ so that the elements of the cover are disjoint if they belong to different components of $\partial M$ {(that is, $U_i \cap U_j =\emptyset$ if $U_i \cap \partial M$ and $U_j \cap \partial M$ belong to different connected components of $\partial M$)}. By compactness, choose a finite subcover $\{U_i,\phi_i\}_{1 \le i \le k}$ where $\phi_i$ are charts. Note that $\phi_i$ maps $U_i$ to a subset of the upper half plane $\H$. In the upper half plane, by gambler's ruin estimates, for all $r',\ve$, we can choose an $\eta'$ small enough so that the diameter of the trace of a Brownian motion in $\H$ starting from a point within distance $\eta'$ of the boundary until it hits the boundary is less than $r'$ with probability at least $1- \ve$.
	We now use continuity of $\phi_i$ in both directions, all the way to the boundary. Using this, we can choose $r'$ and then $\eta'$ small enough depending on $\phi_i$ so that if $d( \phi_i(x) , \partial \H) \le \eta')$ for all $i$ such that $x \in U_i$ then $d_M(x, \partial M) \le \eta$ and if a set $X \subset \phi_i(U_i) \subset \H $ has (Euclidean) diameter less than $r'$ then also $\diam_M( \phi_i^{-1}(X)) \le r$.
	The proof is now complete by the assumption of invariance principle (\ref{InvP}). \end{proof}


The next lemma says that random walk on $\Gamma^\d$ has uniformly positive probability in $\delta$ of creating a noncontractible loop while staying within a bounded set. The basic point is that the random walk can follow any continuous noncontractible loop in the manifold by crossing finitely many rectangles.

\begin{lemma}\label{lem:uniform_avoidance}
	Let $K_0 \subset K'_0 \subset M$ be open connected sets and let $K,K'$ be compact sets which are the closures of $K_0,K_0'$ respectively. Further assume $K \subset K_0'$. Also assume $K$ contains a loop which is noncontractible in $M$. Then there exists a $\delta_{K,K'}>0$ and $\alpha_{K,K'}>0$ depending only on $K,K'$ such that for all $\delta<\delta_{K,K'}$ and all $v \in \Gamma^\d$ such that $v \in K$, simple random walk started from $v$ has probability at least $\alpha_{K,K'}$ of forming a noncontractible loop before exiting $K'$.
\end{lemma}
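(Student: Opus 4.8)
The plan is to reduce the statement to a finite concatenation of the uniform crossing estimates of assumption \ref{crossingestimate}, using compactness of $K$ to make the construction uniform. First I would fix, once and for all, a continuous noncontractible loop $\ell$ in $M$ contained in $K$ (which exists by hypothesis). Since $\ell$ is a compact subset of the open set $K'_0$, there is $\rho>0$ such that the $\rho$-neighbourhood of $\ell$ (in $d_M$) is still contained in $K'_0$, hence in $K'$. Working in local charts, I would cover $\ell$ by finitely many small squares $Q_1,\dots, Q_m$ (images under charts of Euclidean squares), chosen fine enough that each $Q_j$ and its successor $Q_{j+1}$ overlap substantially, the whole chain $\bigcup_j Q_j$ stays within the $\rho$-neighbourhood of $\ell$, and the chain is cyclic (i.e. $Q_{m+1}=Q_1$) and ``goes around'' $\ell$, so that any loop in $\Gamma^\d$ which successively visits nested sub-balls of the $Q_j$ in cyclic order is forced to be homotopic to $\ell$, hence noncontractible.

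Next I would invoke the uniform crossing estimate. Each pair of overlapping squares $Q_j, Q_{j+1}$ can be arranged, after an affine change of coordinates in the chart, to contain a translated-and-scaled copy of the configuration $(R''\text{ or }R''{}', B_1'', B_2'')$ from assumption \ref{crossingestimate}, with a ``target ball'' inside $Q_j \cap Q_{j+1}$ and the ``starting ball'' inside $Q_{j-1}\cap Q_j$; the scale $c$ of these boxes is bounded below by a constant depending only on the chosen cover (hence only on $K, K'$), so for $\delta$ smaller than some $\delta_{K,K'}$ the hypothesis $c \ge \delta/\delta_0$ is met and each such crossing happens with probability at least $\alpha_0$, uniformly over the starting vertex in the relevant ball and uniformly over which square in the chain we are at. Here I would also need that from an \emph{arbitrary} vertex $v \in K$ the walk first reaches one of the balls $B_j''$ without leaving $K'$: this follows by the same crossing-estimate mechanism applied along a finite chain of rectangles joining $v$ to the loop $\ell$ inside $K'_0$ (again the lengths and scales of this chain are bounded in terms of $K, K'$ only, using compactness of $K$ to get a single $m_0$ and single lower bound valid for all $v$). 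Composing these at most $N = N(K,K')$ independent crossings via the strong Markov property gives a lower bound $\alpha_{K,K'} \ge \alpha_0^{N}$ on the probability that the walk, started from any $v \in K$, traces out a closed loop winding around $\ell$ — and in particular returns to complete a cycle in $\Gamma^\d$ — before ever exiting $K'$; in the last step I would note that once the walk has gone cyclically through all the $Q_j$'s it has in particular revisited the first ball, so its trajectory contains a loop, and the homotopy argument shows that loop is noncontractible in $M$.

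The one genuinely delicate point, and the place I'd expect to spend the most care, is the topological bookkeeping: arranging the finite cyclic chain of charts/squares so that (a) the chain is contained in $K'$, (b) it is ``thin'' enough that a walk threading it in order is genuinely pinned to the homotopy class of $\ell$ (one must rule out the walk doing something homotopically trivial despite visiting all the boxes — this is why the successive overlaps and the nesting of sub-balls must be set up carefully, and why one wants the whole chain to lie in a thin tubular neighbourhood of $\ell$ on which $\pi_1$ of the neighbourhood injects appropriately, or more simply to argue that the trajectory together with short arcs inside each $Q_j\cap Q_{j+1}$ is freely homotopic to $\ell$), and (c) all scales are bounded below uniformly in $\delta$ and uniformly over $v \in K$ by compactness. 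A secondary, more routine, technical nuisance is transferring the crossing estimate — which is stated on the universal cover $\tilde M$ for translated/scaled Euclidean rectangles — to the curved setting of $M$ via charts, but this is handled exactly as in the proof of \cref{lem:Beurling}: the charts $\phi_i$ are bi-continuous up to the boundary and distort rectangles into uniformly nice quadrilaterals, and one can always fit a slightly smaller genuine rectangle (in the cover coordinates) inside the image, at a scale comparable to $\delta$ with a constant depending only on the fixed finite atlas and on $K,K'$. Once these two ingredients are in place, the proof is a finite product of at least-$\alpha_0$ events, exactly as sketched.
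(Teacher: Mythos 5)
Your framework — cover a noncontractible loop $\ell\subset K$ by a finite chain of rectangles on which the uniform crossing estimate of assumption \ref{crossingestimate} applies, so that the probability of following the chain is at least $\alpha_0^N$ with $N$ depending only on $K,K'$ — is the paper's, and your secondary concerns (uniformity over $v\in K$ by compactness, transferring the crossing estimate through charts exactly as in \cref{lem:Beurling}) are handled correctly. But the final step, which you rightly flag as the delicate one, is resolved incorrectly: you claim that once the walk has gone cyclically through all the $Q_j$'s it has ``in particular revisited the first ball, so its trajectory contains a loop.'' Re-entering the ball $B_1''\subset Q_1$ means re-entering a macroscopic set, not revisiting a vertex; the walk will typically re-enter $B_1''$ at a vertex $u_1\neq u_0$, and its trajectory is then a path from $u_0$ to $u_1$ containing no cycle of $\Gamma^\d$ at all. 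Since ``forming a noncontractible loop'' (the event driving the stopping time $\tau_{NC}$ in the CRSF version of Wilson's algorithm) requires an actual vertex revisit, your one-circuit construction does not close up, and the homotopy argument you apply afterwards has nothing to apply to.

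This is precisely why the paper's proof considers the curve obtained by going \emph{twice} around $\ell$ and lifts it to the universal cover. After the first circuit the trajectory $P_1$, together with a short arc $\gamma\subset B_1''$ joining its endpoints, is a closed curve winding once around the thin annular tube $T$ built from the rectangles, hence separating $T$; during the second circuit (arranged, by the choice of rectangles, to stay in $T$ and to avoid $B_1''$ until its end) the walk must cross $P_1\cup\gamma$ outside $B_1''$, and therefore cross $P_1$ and revisit some vertex $w$. The angular bookkeeping (first visit to $w$ at lifted angle $\theta_1\in(0,2\pi)$, second at $\theta_2\in(2\pi,4\pi)$, with $\theta_2-\theta_1$ necessarily an integer multiple of $2\pi$) forces the resulting cycle to wind exactly once around $T$, so it is noncontractible. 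Adding this two-circuit device — or some equivalent argument forcing the walk to actually intersect its past trajectory — is what is needed to close your proof.
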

\begin{proof}
	In this proof we will use terminology from the crossing assumption (assumption (\ref{crossingestimate})). Consider a curve formed by going twice around the noncontractible loop $\ell$ in $M$
	and let $\tilde \ell$ be the lift of this curve. Using compactness, for every $v \in \tilde \ell$, let $B_v$ be a translation of $cB_1$, where $B_1$ is as in the crossing estimate (assumption (\ref{crossingestimate})) and $c >\delta/\delta_0$ where $\delta < \delta_{K} \wedge \delta_{K'}$ as in assumption (\ref{crossingestimate}). Also pick $B_v$ such that $p(B_v ) \subset K'$.  Let $R_v$ be a rectangle which is a suitable scaling and translation of $R$ and containing $B_v$ so that $B_v$ is the starting ball of $R_v$ and $p(R_v)$ is in $K'$. Clearly $\{B_v\}_{v \in \tilde \ell}$ forms a cover of $\tilde \ell$ which has a finite sub-cover by compactness. From this, it is easy to see that we can move by crossing rectangles from the neighbourhood containing any point of $\tilde \ell$ to a neighbourhood containing its copy, and then form a noncontractible loop. Furthermore, since $p(R_v)$ is in $K'$, we can ensure that this walk will never leave $K'$ with a probability which is uniform over the starting vertex.
\end{proof}

\subsection{Wilson's algorithm to generate wired oriented CRSF}\label{sec:Wilson}

We now describe Wilson's algorithm to generate a wired (but not necessarily Temperleyan)
oriented CRSF on $\Gamma$. We
prescribe an ordering of the vertices $(v_0,v_1, \ldots )$ of $\Gamma$.
\begin{itemize}
	\item We start from $v_0$ and perform a loop-erased random walk until
	a noncontractible cycle is created or a boundary vertex (i.e., a vertex in $\partial \Gamma$) is hit.
	\item We start from the next vertex in the ordering which is not
	included in what we sampled so far and start a loop-erased random
	walk from it. We stop if we create a noncontractible cycle or hit
	the part of vertices we have sampled before.
\end{itemize}
There is a natural orientation of the subgraph created since from every non-boundary vertex there is exactly one outgoing edge through which the loop erased walk exits a vertex after visiting it. 
\begin{prop}\label{prop:unoriented_to_oriented}
The resulting law is $\Pwwils$ defined in \eqref{eq:law_CRSF}.
\end{prop}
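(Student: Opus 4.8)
The plan is to run the standard cycle-popping analysis of Wilson's algorithm, in the form adapted to surfaces by Kassel and Kenyon \cite{KK12}, keeping careful track of the fact that on $M$ only \emph{contractible} cycles are ever erased. First I would replace the randomness of the algorithm by a stack representation: to each non-boundary vertex $v$ attach an i.i.d.\ stack $(\se_v(1),\se_v(2),\dots)$ of oriented edges out of $v$, where the edge $(v,y)$ appears with probability $p(v,y):=w_{(v,y)}/\sum_{y'}w_{(v,y')}$, the stacks being independent across $v$, and boundary vertices being sinks. Reading the top entries $\se_v(1)$ gives a current functional graph in which every non-boundary vertex has out-degree one, and running a loop-erased walk amounts to uncovering stack entries as one walks. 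Declare a cycle of the current graph \emph{poppable} if it is contractible in $M$; popping it discards the top entry of every vertex on it. Note that the ``natural orientation'' appearing in the proposition is exactly the orientation carried by these stack entries (each non-boundary vertex keeps its unique uncovered out-edge), so there is nothing extra to say about orientations.

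Next I would establish the usual confluence (``cycle-popping lemma''): the multiset of layered poppable cycles removed, and the configuration that remains once no poppable cycle is left, do not depend on the order of popping. The key observation is that in a functional graph the cycle through a vertex is unique, so two distinct cycles of a current graph are vertex-disjoint; hence popping a contractible cycle never destroys a disjoint noncontractible one, and the diamond lemma goes through verbatim once we simply never declare noncontractible cycles poppable. Combined with a.s.\ termination of the loop-erased walks under our standing assumptions --- each walk hits the boundary or closes a noncontractible cycle in finitely many steps --- this shows the procedure stops a.s., and that the remaining configuration has out-degree one at every non-boundary vertex with all its cycles noncontractible, i.e.\ it is a wired oriented CRSF $\cT$ whose law is independent of the popping order. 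I would then check that Wilson's algorithm as described in \cref{sec:Wilson} is one legal popping order: the loop-erased walk from a vertex uncovers and pops precisely the contractible cycles it creates; stopping when a noncontractible cycle or the boundary is hit leaves that piece with no poppable cycle; and restarting from the next uncovered vertex continues popping elsewhere. Hence the algorithm outputs $\cT$.

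Finally I would compute the law of $\cT$. Write $\mathcal{O}$ for the layered collection of popped (contractible) cycles; the stack entries that have been consulted split into those forming $\mathcal{O}$ and, for each non-boundary vertex $v$, the single entry $e_v$ lying just above $\mathcal{O}$, which is the unique outgoing edge of $v$ in $\cT$. By independence of the stacks and the fact that, conditionally on $\mathcal{O}$, the entry just above $\mathcal{O}$ at each vertex is a fresh sample from $p(v,\cdot)$, one obtains $\Pwwils(\cT=\mathbf t)=c\,\prod_{v}p(e_v)$, where $c=\sum_{\mathcal{O}}\prod_{(v,\ell)\in\mathcal{O}}p(\se_v(\ell))$ is a sum over \emph{internally consistent} layered collections of contractible cycles and therefore does not depend on $\mathbf t$ (any such $\mathcal{O}$ can be followed by any CRSF). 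Since $v\mapsto e_v$ is a bijection between non-boundary vertices and edges of $\mathbf t$,
\begin{equation*}
\prod_{v}p(e_v)=\prod_{v}\frac{w(e_v)}{\sum_{y}w_{(v,y)}}=\Big(\prod_{v}\sum_{y}w_{(v,y)}\Big)^{-1}\prod_{e\in\mathbf t}w(e),
\end{equation*}
and the first factor is independent of $\mathbf t$, so $\Pwwils(\cT=\mathbf t)$ is proportional to $\prod_{e\in\mathbf t}w(e)$, which is exactly \eqref{eq:law_CRSF}.

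The only non-classical step, and the one requiring care, is the restriction of popping to contractible cycles: one must verify that this restricted popping is still confluent and terminating, and that $c$ above is genuinely $\mathbf t$-independent even though a noncontractible cycle may appear and freeze some vertices before all contractible cycles have been removed. As indicated, disjointness of cycles in a functional graph is precisely what makes this work; the rest is Wilson's argument as presented in \cite{KK12}.
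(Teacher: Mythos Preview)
Your proposal is correct and follows exactly the cycle-popping argument of Kassel--Kenyon~\cite{KK12}; the paper's own proof consists of a single sentence citing Theorem~1 and Remark~2 of that reference, so you have essentially reconstructed what is behind that citation. Your identification of the only genuinely new point --- that restricting popping to contractible cycles preserves confluence because distinct cycles of a functional graph are vertex-disjoint, whence the constant $c$ is $\mathbf t$-independent --- is precisely the content of their Theorem~1.
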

\begin{proof}
	This follows from Theorem 1 and Remark 2 of Kassel--Kenyon \cite{KK12}.
\end{proof}

As a corollary, since the order of the vertices can be chosen arbitrarily we can sample a Temperleyan CRSF (under $\Pwils$) by the following procedure. First sample the skeleton $\mathfrak{s}$, i.e a finite number of LERW conditioned to satisfy the condition of \cref{def:temperleyan}. Then complete the rest with the ``standard'' algorithm (i.e., with loop-erasures of random walks without any conditioning: in other words, given the skeleton, the rest of the Temperleyan CRSF is an (ordinary) CRSF in each of the component of $M\setminus \mathfrak{s}$. This fact naturally 
allows us to consider separately the scaling limit of $\mathfrak{s}$ and of the rest of the Temperleyan CRSF. We start with the easier step, which is the second one: we prove in \cref{sec:universal} that CRSFs have a universal and conformally invariant scaling limit.  Then in  \cref{S:Lawler,sec:localglobal} we deal with the skeleton, which is the harder part.

\subsection{Precise statement of the main result}\label{sec:main_result}
We are ready to state more formally the main result of this article. Since the forests under consideration becomes space-filling as $\delta \to 0$, we need to define a suitable topology for this convergence ({{in other words, if we simply use Hausdorff topology induced by $d_M$  as described in \Cref{sec:surface_embedding}, the CRSF converges to $\bar M$ itself}}). Such a topology was already proposed by Schramm in his original paper \cite{SLE}. We call this the \textbf{Schramm topology}; it is defined as follows. {Let $\cP(z,w,\bar M)$ be the space of all continuous paths in $\bar M$
from a point $z\in \bar M$ to $w\in  \bar M$ oriented from $z$ to $w$. 
We consider the Schramm space
$\cS = \bar M \times \bar M \times \cup_{z,w \in \overline{M}} \cP(z,w, \bar M)$. For any metric space
$X$, let $\mathcal H (X)$ denote the space of compact subsets of $X$ equipped with the
Hausdorff metric. We view the Schramm space $\cS$ as a subset of the
compact space $  \overline M \times \overline M \times \cH
(\overline M)$ equipped with its product metric. The Schramm topology is simply the topology generated by the Hausdorff metric on the space of compact subsets of $\cS$, which is  simply denoted by  $\cH(\cS)$.

To any (discrete) CRSF we can associate an element of $\cS$ which encodes all the relevant information of the CRSF as follows:
namely, we consider $\cup_{z,w}\{(z,w,p(z,w))\}$ where the union is over vertices $z,w$ of the graph, and $p(z,w)$ is the oriented path in the CRSF between them, if it exists (recall that from any given vertex there is by definition a single arrow pointing outward, so we include $p(z,w)$ if following the arrows from $z$ eventually leads us to $w$ via the path $p(z,w)$).

 For example if $z,w$ are in a loop, then either there is a path from $z$ to $w$ or from $w$ to $z$, but not both.}
{\begin{remark}\label{rmk:schramm}
    It is standard to check that the Schramm topology is compatible with continuous maps (and consequently, for our applications, with conformal maps which extend continuously to the boundary of $M$). Indeed suppose that $M,N$ are two Riemann surfaces satisfying our assumptions. If $f:\bar M \to \bar N$ is a continuous map and $\cS_M$ and $\cS_N$ are the associated Schramm spaces, then $f$ induces a continuous map between $\cH(\cS_M)$ and $\cH(\cS_N)$. This follows from two observations. Firstly, if $f:(X,d_X) \to (Y,d_Y)$ is a continuous map between two metric spaces, then it induces a continuous map between $\cH(X)$ and $\cH(Y)$. Secondly, if $f_i:(X_i,d_{X_i}) \to (Y_i, d_{Y_i})$ are two continuous maps for $i \in \{1,2\}$ then $(f_1,f_2)$ is a continuous map between the product metric spaces $(X_1 \times X_2 , d_{X_1} + d_{X_2})$ and $(Y_1 \times Y_2, d_{Y_1} + d_{Y_2})$. {Finally if $f$ is a conformal isomorphism between two nice Riemann surfaces $M$ and $ N$
    which extends continuously to the boundary, then $f$ is continuous with respect to $d_M$ and $d_N$.}
    \end{remark}
}
{
\begin{thm}[Universality of the Temperleyan forest scaling limit]\label{thm:main_Temp_CRSF}
Let $M$ be a Riemann surface satisfying the assumptions of \Cref{sec:surface_embedding}, which we equip with the distance $d_M$ described in \Cref{sec:surface_embedding}.
 Let $(\Gamma^\d)_{\delta >0}$ be a sequence of graphs with boundary $\partial \Gamma^\d$ faithfully embedded in $M$ satisfying the assumptions of \cref{sec:setup}.  Then the
limit in law as $\delta \to 0$ of the Temperleyan CRSF sampled using $\Pwils$ or $\Ptemp$ exists
in the Schramm topology relative to $d_M$. Both limits are independent of the sequence $\Gamma^\d$ subject to the assumptions in \cref{sec:setup}. These limits are also conformally invariant.

Furthermore, let $K$ (resp. $K^\dagger$) denote the number of noncontractible loops in the CRSF (resp. its dual). Then for any $q>1$ there exists a constant $C_q>0$ independent of $\delta$ such that
$$\Ewils(q^{K}) \le C_q,$$
 where $\Ewils$ denotes the expectation under $\Pwils$. The same bound holds also with $K$ replaced by $K^\dagger$.
\end{thm}
In \Cref{thm:main_Temp_CRSF}, we emphasise that the set of non-contractible loops can be viewed as a measurable function of the limiting CRSF viewed as an element in the Schramm space, so that $K$ and $K^\dagger$ are indeed random variables under $\Pwils$ and $\Ptemp$. 
Finally, we note that while it is elementary to associate to a discrete CRSF an elemant of the Schramm space (described earlier), for the limiting continuum CRSF our description is rather implicit.  
}

\section{Universality of cycle rooted spanning forests}\label{sec:universal}

In this section, we prove that the wired oriented CRSF on  a graph satisfying an invariance principle and an RSW-type crossing condition converges to a universal limit in the Schramm sense. This section can be read independently of the rest of the paper.  We only need to recall \cref{def:temperleyan} and the assumptions in \Cref{sec:setup}.



\subsection{Scaling limit and universality of cycle-rooted spanning forest}

 In this section, we give a precise statement and begin the proof of one of the main results of the paper, which shows the existence of a scaling limit for a uniform (oriented) cycle-rooted spanning forest on a nice Riemann surface. The main part of the proof consists in showing the convergence of a finite number of branches, after which a version of Schramm's ``finiteness lemma" (\cref{lem:Schramm_finiteness}) concludes the proof. In this subsection and the next, we deal with the main part of the argument, which is the convergence of a finite number of branches. The conclusion of the proof, based on Schramm's finiteness lemma, will be provided in \cref{finiteness}.


\begin{thm}[Universality of the CRSF scaling limit]\label{thm:CRSF_universal}
Let $\Gamma^\d$ be a sequence of graphs, all of which are  faithfully embedded in a Riemann surface $M$, and satisfying the assumptions of \cref{sec:setup}. Then the
limit in law as $\delta \to 0$ of the wired oriented CRSF sampled using \eqref{eq:law_CRSF} on $\Gamma^\d$ exists
in the Schramm topology and is independent of the sequence $\Gamma^\d$ subject to the assumptions in \cref{sec:setup}. This limit is also conformally invariant.

Furthermore, let $K^\d$ be the number of noncontractible loops in the CRSF of $\Gamma^\d$. Then for any $q>1$ there exists a constant $C_q>0$ independent of $\delta$ such that
$$\E(q^{K^\d}) \le C_q.$$
\end{thm}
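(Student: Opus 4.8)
The plan is to reduce the full Schramm-topology convergence to the convergence of finitely many branches plus a tightness/finiteness statement, following the strategy outlined in the introduction: bootstrap from the simply connected case by running Wilson's algorithm in a way that ``pre-erases'' microscopic loops. Concretely, I would fix a finite set of points $z_1,\dots,z_n$ in $M$ and first describe the joint law of the branches $\gamma_{z_1},\dots,\gamma_{z_n}$ of the CRSF under $\Pwwils$. Each such branch is a loop-erased random walk run until it either closes a noncontractible cycle or merges with the part already sampled. The key observation is that a branch, viewed up to its large-scale geometry, can be generated by lifting the walk to the universal cover $\tilde M$ (the plane or the disk), where it becomes an ordinary planar loop-erased walk run until it exits a compact set or its projection closes a noncontractible loop. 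Using the invariance principle on $\tilde\Gamma^\d$ (Assumption (ii)) together with the known convergence of planar LERW to SLE$_2$ / the scaling limit of the simply connected CRSF from \cite{BLR16}, one gets that, away from the (random) time at which a noncontractible loop is formed, each branch converges; the crossing estimate (Assumption (iii)) and Lemma 2.9 (uniform avoidance) control the location where noncontractible loops appear and show no mass escapes to microscopic or degenerate cycles.

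The steps, in order, would be: (1) set up the ``pre-erased'' version of Wilson's algorithm — show that the large-scale CRSF can be sampled by repeatedly running LERW on $\tilde\Gamma^\d$ and projecting, and that erasing loops of diameter $<\eps$ does not affect the Schramm-topology limit (this uses the Beurling-type estimate Lemma 2.8 and the uniform crossing estimate to rule out accumulation of mesoscopic loops near the boundary or near where branches merge); (2) prove joint convergence of $\gamma_{z_1},\dots,\gamma_{z_n}$ by induction on $n$, where at each stage the target domain is the complement in the universal cover of the previously drawn branches, and one invokes the invariance principle plus the simply connected scaling limit in that (random, converging) domain, using the crossing estimate for a priori regularity of the domains; (3) derive the uniform exponential tail $\E(q^K)\le C_q$ for the number $K$ of noncontractible cycles: each time Wilson's algorithm starts a new branch, there is a probability bounded away from $1$ (uniformly in $\delta$, by the invariance principle and the fact that a Brownian motion/LERW from a typical point hits the boundary before winding) that it does \emph{not} close a noncontractible loop; since the homotopy type of $M$ is fixed there can be at most a bounded number of ``independent'' noncontractible cycles before the forest is forced to be wired, giving geometric decay; (4) conclude the full convergence in the Schramm topology via Schramm's finiteness lemma (Lemma \ref{lem:Schramm_finiteness}), which upgrades convergence of finitely many branches to convergence of the whole space-filling object, and deduce conformal invariance from independence of the approximating sequence by comparing two sequences of embeddings of the same $M$.

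The main obstacle I expect is step (2), and within it the treatment of the random domains: when branches are drawn one after another, the domain in which the next branch lives is the universal-cover complement of a random set that is itself only known to converge in the Schramm topology, so one must show that LERW in these domains converges \emph{jointly} and that the limit does not depend on fine features of the boundary (this is exactly where having only rough control on the random walk — rather than Green's function estimates — makes the argument delicate). The second delicate point is controlling the interface between "microscopic loops we erase" and "the macroscopic cycle that becomes part of the CRSF": one needs a quantitative dichotomy, uniform in $\delta$, between a loop-erased walk closing a \emph{macroscopic} noncontractible loop and it staying contractible, which is supplied by Lemma 2.9 but must be combined carefully with the stopping rule of Wilson's algorithm so that the limiting object is well-defined and the number of cycles does not blow up.
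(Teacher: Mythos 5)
Your high-level strategy (bootstrap from the simply connected case via the universal cover, converge finitely many branches, then apply Schramm's finiteness lemma) matches the paper's, and you correctly identify the main obstacle in step (2). However, two concrete points need attention.

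First, for step (2) you flag that the domain in which the next LERW piece lives is a random rough domain "only known to converge in the Schramm topology," but you do not say how to overcome this. The paper's resolution is Uchiyama's theorem (a strengthening of Yadin--Yehudayoff to domains with no boundary regularity, only Carath\'eodory convergence and uniform local connectedness), combined with a decomposition of each branch into a Markov chain of LERW pieces confined to small balls where the covering map is injective. Without identifying a convergence result that tolerates the rough (LERW-trace) boundaries, step (2) remains an assertion rather than an argument.

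Second, your step (3) argument for $\E(q^K)\le C_q$ is not correct as stated. You claim a per-branch probability bounded away from $1$ of \emph{not} forming a noncontractible cycle and then say "there can be at most a bounded number of independent noncontractible cycles before the forest is forced to be wired." Neither half holds: the number of noncontractible cycles in a CRSF is \emph{not} bounded deterministically by the topology of $M$ (e.g.\ the annulus CRSF can have arbitrarily many parallel noncontractible cycles), and the per-branch probability of forming such a cycle is not uniformly bounded away from $0$ or $1$ across starting vertices and conditionings. The paper's actual argument (Corollary~\ref{lem:non-contractible_cont}) is different: Schramm's finiteness lemma gives a deterministic finite set of branches $\mathsf B$ after which, with probability $\ge 1-\ve$, every subsequent branch has diameter $< \ve$; since noncontractible loops have diameter bounded below by a constant depending only on $M$, none of the subsequent branches can be noncontractible, and iterating gives the geometric tail. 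You should replace your step (3) by this argument; as written it has a genuine gap.
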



We explain briefly the ideas behind the proof of this theorem. As explained in \cref{sec:Wilson}, branches of the oriented wired CRSF can be sampled according to a version of Wilson's algorithm: namely, we do successive loop-erased random walks until we hit the boundary or create noncontractible loops. It is not hard to convince oneself that these should have a scaling limit. Indeed, the most difficult aspect of the scaling limit of LERW is to deal with small loops. However, locally, such a loop-erased random walk will behave as if on a portion of the plane where the scaling limit is known. Indeed in this situation, the assumptions on $\Gamma^\d$ in \cref{sec:setup} and a result of Yadin and Yehudayoff \cite{YY} as well as Uchiyama \cite{uchiyama} guarantee the convergence of a small portion of the path towards an SLE$_2$-type curve (we need Uchiyama's result to deal with rough boundaries induced by the past of loop-erased random walk itself). The remaining task is to glue these pieces together.

{Once we have convergence of one branch, by iterating Wilson's algorithm, one obtains convergence of a large but finite number of branches. After this, to obtain weak convergence in the Schramm topology, the idea is similar to Schramm's original article \cite{SLE}: we apply a version of Schramm's finiteness lemma (\Cref{lem:Schramm_finiteness}), which proves that once branches are sampled from finite but dense set of branches, all the remaining branches are of small diameter with high probability (even though there are technically `infinitely' many branches left to sample as $\delta \to 0$.)}

Kassel and Kenyon \cite{KK12} considered measures on loops of cycle-rooted spanning forests arising from a generalization of Wilson's algorithm (we use a special case of that algorithm in \cref{sec:Wilson}) on a Riemann surface. They consider scaling limits of the loops associated with this measure. However, their measures are more general as one can assign a certain holonomy to every loop. On the other hand, they assume that the embedded graphs on the surface must \emph{conformally approximate} the surface, in the sense that the derivative of the discrete Green's function converges to the derivative of the continuum Green's function. This assumption is much stronger than our assumption of invariance principle and the crossing assumption.

We also mention here a robust approach to the convergence of loop-erased random walk by Kozma \cite{kozma2002}, which could probably be used as an alternative approach to proving \cref{thm:CRSF_universal}, and could also be useful for instance in extending this result to the case where instead of all boundaries on the surface being wired, we have mixed wired and free boundary conditions.

\subsection{A discrete Markov chain}\label{sec:discreteMC}
Let $\tilde \Gamma^\d$ denote the natural lift of $\Gamma^\d$ to the universal cover $\tilde M$.
To start the proof of \cref{thm:CRSF_universal}, we now consider the {continuous time} simple random walk $\{X_t\}_{t \ge 0}$ on $\Gamma^\d$ starting from some vertex. We are
going to define certain stopping times $\{\tau_k\}_{k \ge 1}$ for this random walk. If $t \ge 0$, we denote by $\cY^t$ the loop
erasure of $X[0,t]$. In other words, if we chronologically erase the loops
of $X[0,t]$ then we obtain an ordered collection of vertices, which we denote by
$\cY^t$. Using compactness and the definition of a cover, we can find a finite cover $\cup_i N_i$ of $\bar M$ so that $p$ is injective in every component of $p^{-1}(N_i)$ for all $i$ {(recall $\bar M$ is the closure of $M$)}. {For $v \in \bar M$, pick an $N_i$ from the finite cover containing $v$ (choose arbitrarily) and call it $N_v$. For $v \in \tilde M$, let $N_v$ be one of the preimages of $N_{p(v)}$ containing $v$.
} For $v,w \in \tilde M$, define $N_{v \setminus w}$ to be $N_v \setminus p^{-1}(B(p(w), \frac12 d_M(p(v),p(w))))$ (this definition is intended to find a large enough neighbourhood of $v$ not containing $w$). The following lemma is immediate now.
\begin{figure}
\centering
\includegraphics[scale = 1.4]{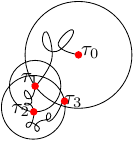}
\caption{An illustration of the stopping times $\tau_k$.}\label{stopping_times}
\end{figure}
\begin{lemma}\label{lem:BM_hit}
Suppose $\partial M \neq \emptyset$. Let $B(t)$ be a standard Brownian motion on $M$ started from some point. Assume $\tau_0=0$ and inductively define $\tau_k$ to be the exit time from $N_{B(\tau_{k-1})}$. Let $\sf N$ be the smallest $k$ such that $B(\tau_k)$ is in the boundary of $M$. Then $\sf N$ is finite a.s. and has exponential tail.
\end{lemma}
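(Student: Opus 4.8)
The plan is to show that at each step, independently of the past and uniformly over the starting point, the Brownian motion $B$ has probability bounded below by some $c>0$ of exiting its current neighbourhood $N_{B(\tau_{k-1})}$ through (or near) the boundary $\partial M$, from which both finiteness and the exponential tail of $N$ follow by a standard geometric-series argument. First I would observe that the cover $\{N_i\}$ is finite, so there is a uniform lower bound $\eta_0>0$ on the ``size'' of each $N_i$ (e.g. each $N_i$ contains a ball of radius $\eta_0$ in the metric $d_M$, and each is contained in a ball of radius $R_0$). Since $\partial M\neq\emptyset$ and $M$ is connected and of finite topological type with $\overline M$ compact, the diameter of $\overline M$ is finite, say $D$; hence from any point $x\in M$ the boundary $\partial M$ is within distance $D$.

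Next I would use the strong Markov property at the times $\tau_k$: conditionally on $B(\tau_{k-1}) = x$, the future is a fresh Brownian motion from $x$, and $\tau_k - \tau_{k-1}$ is its exit time from $N_x$. The key claim is that there exists $c>0$, independent of $x$, such that with probability at least $c$ the Brownian motion started at $x$ hits $\partial M$ before performing more than $\lceil D/\eta_0\rceil + 1$ further steps of this neighbourhood-exploration procedure — equivalently, before time $\tau_{k-1+m}$ for a fixed $m$. To see this one chains together a bounded number (at most $m = \lceil D/\eta_0\rceil+1$) of excursions: since each $N_x$ contains a ball of radius $\eta_0$ around $p(x)$ but has bounded diameter, with uniformly positive probability the walk exits $N_x$ having moved a definite distance toward $\partial M$; after boundedly many such steps it is within $\eta_0$ of $\partial M$, and then (again by a uniform Beurling/gambler's-ruin type estimate for Brownian motion near a boundary chart, as in the proof of \cref{lem:Beurling}) it hits $\partial M$ before the next exit time with probability bounded below. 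Compactness of $\overline M$ and continuity of the metric up to the boundary make all these lower bounds uniform in $x$.

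Granting the claim, for each integer $j\ge 0$ we have $\P(N > (j+1)m \mid N > jm) \le 1-c$, so $\P(N > jm) \le (1-c)^j$, which gives both a.s.\ finiteness and the exponential tail. The main obstacle is establishing the uniform-in-$x$ lower bound on the probability of reaching the boundary in boundedly many neighbourhood-steps; this requires controlling the interplay between the (arbitrary, fixed) cover $\{N_i\}$, the metric $d_M$, and the boundary charts, but it is of the same nature as (and can be reduced to) the estimates already proved in \cref{lem:Beurling} together with a routine connectedness/compactness argument. Everything else is the standard geometric-tail bootstrap and poses no difficulty; in particular no discrete random walk enters the argument here — this lemma is purely about Brownian motion on $M$.
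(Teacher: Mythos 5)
Your proposal is correct and follows essentially the same approach as the paper: exploit the finiteness of the cover $\{N_i\}$ and compactness of $\overline M$ to get a uniform positive probability, in a bounded number of neighbourhood-steps, of reaching a boundary chart and then hitting $\partial M$ there (via a gambler's-ruin/Beurling estimate for Brownian motion), and conclude with the standard geometric-tail iteration. The paper's proof is just a terser version of this chain-plus-Beurling argument, also invoking \cref{lem:Beurling} ``in a weak sense'' for the final boundary-hitting step.
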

\begin{proof}
Since {the closure $\bar M$ of $M$} can be covered by a finite cover $\{N_i\}_i$ there is a uniformly positive probability to go from any of $N_i$ to $N_j$ where $N_j$ is a neighbourhood containing a boundary point (uniformity is over all the neighbourhoods $\{N_i\}_i$). Once the Brownian motion is in a neighbourhood containing a boundary point, it has a uniformly positive probability to hit the boundary before leaving the neighbourhood (for example, using \cref{lem:Beurling} in a weak sense). The geometric tail of $\sf N$ is immediate and the lemma follows.
\end{proof}

%
Let $\partial $ be a specific subset of edges of $\Gamma^\d$. Let $\tilde \partial $ be its lift to $\tilde M$ {(i.e. $\tilde \partial = p^{-1}(\partial)$)}, and assume that the connected components of $\tilde M \setminus \tilde \partial $ are simply connected.
We define a sequence of stopping times $\tau_1, \ldots, $ as follows. We start from an arbitrary vertex $v_0 \in \Gamma^\d \setminus \partial $ and fix one pre-image $\tilde v_0=p^{-1}(v_0)$ in the covering map (in the end this choice is going to be irrelevant). Let $\tilde X$ the unique lift of $X$ starting from $\tilde v_0$. Observe that loops of $\tilde X$ correspond to contractible loops of $X$. On the other hand, noncontractible loops of $X$ do not give rise to loops for $\tilde X$ (we will stop when the first noncontractible loop is formed in the description below.) See \Cref{stopping_times} as a reference.

\begin{enumerate}[{(}i{)}]
 \item Define $\tau_1$ to be the first
time the walk $\tilde X$ leaves $N_{\tilde v_0}$ or intersects $\tilde \partial$.

\item Having defined $\tau_1, \ldots, \tau_k$, we define $\tau_{k+1}$ as
follows. Let $N = N_{\tilde X_{\tau_k} \setminus \tilde v_0}$ and let $\tilde A_k$ be the portion of $p^{-1}(\cY^{\tau_k})$ in $N$
 such that if $\tilde X$ intersects $\tilde A_k$ after $\tau_k$ but before exiting $N$,  a noncontractible loop will have been formed: in other words, $\tilde A_k$ consists of all other preimages of $\cY^{\tau_k}$ intersected with $N$, except the one started from $\tilde v_0$. Note that by assumption of injectivity on $N$, this is the only way a non-contractible loop can be formed.

 We
continue the simple random walk from $\tilde X_{\tau_k} $ until we intersect $\tilde A_k
\cup \tilde \partial $ or exit $N$ and  call that time
$\tau_{k+1}$.\label{main_step}

\item We stop if $\tilde X$ intersects $\tilde A_k \cup \tilde \partial$, otherwise we continue and perform step (ii) again with $k$ changed into $k+1$.
\end{enumerate}

We actually want to see $k \mapsto \cY^{\tau_k}$ as a Markov chain on
curves in $M$ because later we will provide a continuum description of this chain. It is easy to
see that the transitions of that chain are given by the following.
\begin{enumerate}[{(}i{)}]
 \item In the first step, $\cY^{\tau_1}$ is a loop-erased walk from $v_0$
stopped at a time it either exits $p(N_{v_0})$ or intersects $\partial$.

\item Define $A_k = p(\tilde A_k)$. Given $\cY^{\tau_{k}}$, we start a random walk from its endpoint until
we either intersect $A_k \cup \partial$ or exit $p(N)$ where $N = N_{\tilde X_{\tau_k} \setminus \tilde v_0}$ as above. Let $V_k$
be the last vertex of $\cY^{\tau_{k}}$ which is not erased and let $\theta_k$
be the random walk time of the last visit to $V_k$. Let $\gamma$ be
the loop-erasure of $X[\theta_k, \tau_{k+1}]$ and $\cZ^{k}$ be the portion
of $\cY^{\tau_{k}}$ which was not erased by $X[\tau_{k}, \theta_k)$. We have clearly
$$
\cY^{\tau_{k+1}} = \cZ^{k} \cup \gamma.
$$
\end{enumerate}

This transition law can be simplified using the following lemmas. Let
$(Y_i^{\tau_k})_{i \ge 1}$ be the ordering of the vertices of $\cY^{\tau_k}$
which it inherits from the random walk.

\begin{lemma}\label{lem:Vk}
In the above construction, for any realisation of $X$ and $\cY$, we have $V_k
= Y^{\tau_{k}}_m$ with
$$
m = \inf \{ i : Y_i^{\tau_{k}} \in X[\tau_{k},
\tau_{k+1}] \}.
$$
\end{lemma}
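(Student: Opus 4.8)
The plan is to reduce this to the standard ``growth rule'' for chronological loop erasure. Note first that the statement is purely deterministic and pathwise: it concerns only the chronological loop erasure of the fixed trajectory $X$, so we may discard all probabilistic structure, replace $X$ by the sequence of vertices it visits (so the continuous time plays no role), and argue for an arbitrary finite path. Write $\mathrm{LE}(\cdot)$ for the chronological loop erasure of a path.

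The first step is to record the following elementary fact (this is standard; see e.g.\ \cite{Lawler2sided}, but it is short enough to include a proof). Let $\lambda$ be a finite path ending at a vertex $b$, let $\eta$ be a finite path starting at $b$, and write $\mathrm{LE}(\lambda) = (\gamma_0, \dots, \gamma_n)$. Set
\[
m := \min\{\, i \ge 0 : \gamma_i \in \eta \,\},
\]
which is well defined because $\gamma_n = b \in \eta$, and let $\theta$ be the time of the last visit of $\eta$ to $\gamma_m$. Then the longest initial segment of $\mathrm{LE}(\lambda)$ that survives (is never erased) in $\mathrm{LE}(\lambda \oplus \eta)$ is exactly $(\gamma_0, \dots, \gamma_m)$, and
\[
\mathrm{LE}(\lambda \oplus \eta) = (\gamma_0, \dots, \gamma_m) \ \oplus\ \mathrm{LE}\big(\eta[\theta, \cdot]\big),
\]
where $\eta[\theta, \cdot]$ is the part of $\eta$ after time $\theta$.

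The second step is to prove this growth rule by induction on the number of steps of $\eta$. The base case ($\eta$ a single vertex) is trivial; for the inductive step one processes $\eta$ one edge at a time and observes that appending a new vertex $w$ to the current loop erasure either coincides with no currently retained vertex — leaving the retained prefix of $\mathrm{LE}(\lambda)$ unchanged — or equals some currently retained vertex $\gamma_i$, necessarily of index no larger than the previous cut point, in which case the portion of the path after $\gamma_i$ is erased and the retained prefix becomes $(\gamma_0, \dots, \gamma_i)$; tracking the smallest index at which such a cut ever occurs produces $m$ as above. (Alternatively one can use the last-visit characterisation of loop erasure together with the monotonicity of the successive last-visit times.) The third step is then to apply this with $\lambda = X[0, \tau_k]$, so that $\mathrm{LE}(\lambda) = \cY^{\tau_k}$ with last vertex $X_{\tau_k}$, and $\eta = X[\tau_k, \tau_{k+1}]$: since $V_k$ is, by definition, the last vertex of $\cY^{\tau_k}$ not erased when the walk is run up to $\tau_{k+1}$, i.e.\ the last vertex of the surviving initial segment $\cZ^k$ of $\cY^{\tau_{k+1}} = \mathrm{LE}(X[0, \tau_{k+1}])$, the growth rule gives $V_k = Y_m^{\tau_k}$ with $m = \min\{\, i : Y_i^{\tau_k} \in X[\tau_k, \tau_{k+1}] \,\}$, which is the claim. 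The minimum is over a non-empty set, since it contains the index of the endpoint $X_{\tau_k}$ of $\cY^{\tau_k}$; and as a by-product one re-confirms, consistently with the construction above, that $\theta_k$ is the time of the last visit to $V_k$ and $\gamma = \mathrm{LE}(X[\theta_k, \tau_{k+1}])$.

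I do not expect a genuine obstacle here: this is a routine loop-erasure identity. The only point requiring care is the bookkeeping in the inductive proof of the growth rule — in particular, making sure one ends up with the \emph{minimal} cut index taken over all of $\eta$, rather than merely the index of the first place where $\eta$ meets $\mathrm{LE}(\lambda)$.
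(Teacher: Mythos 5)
Your proof is correct and takes essentially the same approach as the paper: the paper's proof of this lemma is the single sentence ``This follows from the definition,'' and the growth rule for chronological loop erasure that you state, prove, and then apply (with $\lambda = X[0,\tau_k]$ and $\eta = X[\tau_k,\tau_{k+1}]$) is precisely the content that the paper is appealing to. You have supplied the details the paper elides, but there is no genuine difference in strategy.
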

\begin{proof}
 This follows from the definition.
\end{proof}
\begin{lemma}\label{lem:independent_RW}
 Conditioned on $V_k, X_{\tau_{k+1}}$ and $\cY^{\tau_{k}}$, the law of
$X[\theta_k,\tau_{k+1}]$ is a random walk from $V_k$
conditioned to first hit $p(N)^c \cup A_k \cup \partial \cup \cZ_k$ at the point $X_{\tau_{k+1}}$.
\end{lemma}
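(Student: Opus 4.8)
The plan is to derive this from a standard last-exit decomposition of the random walk, carried out inside the topologically trivial region $N$. By the strong Markov property at time $\tau_k$, conditioning on $\cY^{\tau_k}$ is the same as running a fresh walk $X$ from the endpoint $w_k$ of $\cY^{\tau_k}$ until the time $\tau_{k+1}$ at which it first hits $A_k\cup\partial$ or exits $p(N)$; since $p$ is injective on $N$ we may lift everything to $\tilde X$ inside $N$, which is simply connected, and argue as on a planar graph, so no contractibility issue enters. By \cref{lem:Vk}, if $V_k=Y^{\tau_k}_m$ then $m=\inf\{i:Y^{\tau_k}_i\in X[\tau_k,\tau_{k+1}]\}$; hence on $\{V_k=Y^{\tau_k}_m\}$ the path $X[\tau_k,\tau_{k+1}]$ hits $Y^{\tau_k}_m$ but avoids $Y^{\tau_k}_1,\dots,Y^{\tau_k}_{m-1}$, so it avoids $\cZ_k\setminus\{V_k\}$ throughout, and since $V_k$ is visited during $[\tau_k,\tau_{k+1}]$ we have $\theta_k\ge\tau_k$, so $X[\theta_k,\tau_{k+1}]$ is a sub-path of $X[\tau_k,\tau_{k+1}]$.

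Next I would split $X[\tau_k,\tau_{k+1}]$ at the last visit time $\theta_k$ to $V_k$. The key elementary observation is that $\{\theta_k=t\}=\{X_t=V_k\}\cap\{X_s\ne V_k\ \forall s\in(t,\tau_{k+1}]\}$, the first event being measurable with respect to the past of $t$ and the second with respect to the future. Consequently, writing $\P_x(\omega)$ for the probability that a random walk started at $x$ follows a given finite path $\omega$, the Markov property (applied at the end of $\nu$) gives
$$
\P\big(X[\tau_k,\theta_k]=\nu,\ X[\theta_k,\tau_{k+1}]=\mu\ \big|\ \cY^{\tau_k}\big)\;=\;\P_{w_k}(\nu)\,\P_{V_k}(\mu)
$$
for every path $\nu$ from $w_k$ to $V_k$ that avoids $Y^{\tau_k}_1,\dots,Y^{\tau_k}_{m-1}$ and stays in $p(N)\setminus(A_k\cup\partial)$, and every path $\mu$ from $V_k$ that avoids $\cZ_k$ at all positive times, stays in $p(N)\setminus(A_k\cup\partial)$ except for its final vertex, and ends at a point $z\in p(N)^c\cup A_k\cup\partial$; the point is that, given such a $\mu$, the end of $\nu$ is automatically the last visit of the concatenation to $V_k$, so nothing further couples $\nu$ and $\mu$.

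Summing this identity over all admissible $\nu$ — which produces a normalising constant depending only on $\cY^{\tau_k}$ and $V_k$, not on $\mu$ or $z$ — shows that, conditionally on $\cY^{\tau_k}$, on $V_k$ and on $X_{\tau_{k+1}}=z$, the conditional law of $X[\theta_k,\tau_{k+1}]$ gives each admissible $\mu$ a probability proportional to $\P_{V_k}(\mu)$. This is exactly the law of a random walk started from $V_k$ conditioned to hit $p(N)^c\cup A_k\cup\partial\cup\cZ_k$ for the first time at $z$, with the convention that for a walk starting inside $\cZ_k$ the first hit of $\cZ_k$ is read at strictly positive times (so the conditioning forces the walk to leave $V_k$ and then avoid $\cZ_k$). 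As this description involves neither $\nu$ nor the order of visits to $\cY^{\tau_k}$ beyond $V_k$, and holds for every value of $V_k$ and $z$, the lemma follows.

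The one step that is not purely bookkeeping is the last-exit decomposition — namely the passage from conditioning on the non-stopping time $\theta_k$ to the clean statement above. This is the classical mechanism underlying last-exit decompositions for (loop-erased) random walk, as used e.g. in \cite{Lawler2sided}, and the surface plays no role here since the whole argument stays inside the simply connected lift of $N$.
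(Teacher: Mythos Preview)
Your argument is correct and is a careful expansion of the paper's one-line proof, which simply observes that, conditioned on $V_k$ and $X_{\tau_{k+1}}$, the path $X[\theta_k,\tau_{k+1}]$ has probability proportional to the random walk weight over all trajectories from $V_k$ to $X_{\tau_{k+1}}$ avoiding $A_k\cup\partial\cup\cZ_k$ and staying in $p(N)$ until the last step; your last-exit decomposition at $\theta_k$ is precisely the standard way to make this rigorous. The aside that $N$ is simply connected is neither needed nor quite accurate (since $N=N_{\tilde X_{\tau_k}\setminus\tilde v_0}$ is a neighbourhood with a ball removed), but this is harmless as your computation is purely combinatorial.
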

\begin{proof}
 This lemma follows from the definition since conditioned on $V_k, X_{\tau_{k+1}}$, the law of
 $X[\theta_k,\tau_{k+1}]$ is uniform over all random walk trajectories starting at $V_k$ and ending at $X_{\tau_{k+1}}$
 which do not intersect $A_k \cup \partial \cup \cZ_k$ or leave $p(N)$
 except at $X_{\tau_{k+1}}$.
 \end{proof}
Clearly, if in the algorithm above we have $\partial = \partial \Gamma^\d$, the resulting curve $ \cY = \cup_{k\ge 1} \cY^{\tau_k}$, either intersects $\partial \Gamma^\d$ or forms a noncontractible loop (this terminates with probability one). We can now repeat the above algorithm this time taking
$\partial$ to be $\partial \Gamma^\d$ together with the curve $\cY$ discovered in the previous step. Repeating this algorithm until all vertices of $\Gamma^\d$ are in $\partial$, this generates a random oriented subgraph of $\Gamma^\d$ which by the generalisation of Wilson's algorithm (Section \ref{sec:Wilson}) has the same law as an oriented CRSF of $\Gamma^\d$.
The above procedure then gives a convenient breakdown of each step of Wilson's algorithm into a Markov chain that is going to have a nice continuum description.

There is one small caveat. We need to show the following:

\begin{lemma}\label{L:simply_connected}
After every step of the above algorithm (i.e. after a full running of the Markov chain until termination), {every connected component of} $\tilde M \setminus \tilde \partial$ is simply connected.
\end{lemma}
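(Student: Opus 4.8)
The plan is to argue by induction on the number of steps of the outer loop (each step being one full run of the Markov chain of Section \ref{sec:discreteMC}), showing that the invariant ``$\tilde M \setminus \tilde\partial$ has all components simply connected'' is preserved. The base case is $\partial = \partial\Gamma^\d$: here $\tilde\partial$ is the lift of the boundary, and by the assumption in Section \ref{sec:surface_embedding} that $\tilde M$ is either $\C$ or $\D$ (with the Fuchsian group acting so that $\tilde M$ covers $M$), the lift of $\partial M$ cuts $\tilde M$ into simply connected pieces — indeed each lift of a boundary component is a properly embedded arc (or the whole boundary circle of $\D$), and removing the countable union of these arcs, invariant under $F$, leaves simply connected components. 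For the inductive step, suppose $\tilde M \setminus \tilde\partial$ is simply connected (component-wise) and we run the Markov chain to produce the curve $\cY = \bigcup_k \cY^{\tau_k}$ in $M$, with lift $\tilde\cY$ the concatenation of the lifted walk $\tilde X$ from $\tilde v_0$. We must show that adjoining $\cY$ to $\partial$ keeps the complement of the lift simply connected.

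First I would recall how the chain terminates: by construction it stops either when $\tilde X$ hits $\tilde\partial$ (in the lift), or when it hits some $\tilde A_k$, which by definition means a noncontractible loop has formed in $M$. In the first case, the lifted curve $\tilde\cY$ is an arc in $\tilde M$ with one endpoint $\tilde v_0$ and the other on $\tilde\partial$; cutting a simply connected surface along an arc from an interior point to the boundary leaves the pieces simply connected. But we must be careful: adding $\tilde\cY$ alone is not the same as adding the full lift $p^{-1}(\cY)$, and $\partial$ grows by $\cY$, so $\tilde\partial$ grows by $p^{-1}(\cY) = \bigcup_{f\in F} f(\tilde\cY)$. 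So the real claim is that removing from a simply connected domain the $F$-orbit of an arc joining an interior point to the boundary still leaves simply connected components. This follows because each translate $f(\tilde\cY)$ is again such an arc, the translates are disjoint (they project to the same simple curve $\cY$ in $M$, and distinct lifts of a simple curve are disjoint by the injectivity-on-small-neighbourhoods property of the covering), and cutting a planar domain successively along disjoint properly embedded arcs each of which meets the current boundary preserves simple connectivity; an induction (or a direct Euler-characteristic / Jordan-arc argument in the plane) finishes it.

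The second termination case — a noncontractible loop is formed — is where the genuine content lies, and I expect this to be the main obstacle. Here $\cY$ is a curve in $M$ that contains a noncontractible cycle together with a (possibly empty) tail; its lift $\tilde\cY$ is a bi-infinite-looking path in $\tilde M$ that, because the loop is noncontractible, joins $\tilde v_0$ to a translate under some nontrivial $f \in F$ of a point on $\cY$ — equivalently, two lifts of $\cY$ that were previously in the same component of $\tilde M \setminus \tilde\partial$ get glued. The point is that $p^{-1}(\cY)$ consists of disjoint arcs, but they are chained together: arc, its $f$-translate, its $f^2$-translate, etc., forming a properly embedded bi-infinite curve in $\tilde M$, and the $F$-orbit of this is a locally finite family of disjoint properly embedded bi-infinite curves. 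Removing one such bi-infinite curve from a simply connected planar domain leaves (at most) two simply connected components — this is just the Jordan curve theorem for properly embedded lines in the plane (or the disk). Removing the whole locally finite disjoint family, by the same successive-cutting induction, keeps every component simply connected. I would organize this as: (a) distinct lifts of the simple curve $\cY$ are disjoint; (b) the lift is locally finite in $\tilde M$ because $\cY$ is a compact subset of $M$ and $F$ acts properly discontinuously; (c) each connected ``chain'' of lifts is a properly embedded copy of $\R$; (d) cutting a simply connected planar domain along a locally finite disjoint union of properly embedded arcs and lines, each component of which meets the (current) boundary or separates, yields simply connected pieces — this last step is the one to state carefully, and the noncontractibility is exactly what guarantees each chain is an embedded line (not a loop) so that no new handle is created.
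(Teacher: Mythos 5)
Your proof takes essentially the same route as the paper's: the key geometric input is that the lift of a noncontractible simple cycle in $M$ is a disjoint union of properly embedded bi-infinite simple paths (a fact the paper cites from the appendix of the companion article), and that cutting a simply connected planar domain along arcs from the interior to the boundary and along properly embedded lines preserves simple connectivity of the components. The paper's version is terse and leans on the companion paper and on the dual-graph formulation, whereas you spell out the successive-cutting induction and the locally finite family structure in more detail, which is a genuine clarification.

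One small inaccuracy worth flagging: when the chain terminates by forming a noncontractible loop, the branch $\cY$ is generally a ``lasso'' — a simple tail ending in a simple noncontractible cycle $C$ — so a connected component of $p^{-1}(\cY)$ is not literally a single bi-infinite path but a bi-infinite path (the lift of $C$) with a discrete family of arcs (the lifts of the tail) hanging off it, i.e.\ a properly embedded topological tree. This does not affect the conclusion (cutting along a properly embedded tree still preserves simple connectivity), but the phrase ``each connected chain of lifts is a properly embedded copy of $\R$'' should be adjusted. The paper silently sidesteps this by reinterpreting tails as paths to the dual boundary stopped at a mid-edge, which is roughly the same dodge.
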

\begin{proof}
At the first step, recall that every component of $(\partial \Gamma^\dagger)^\d$ is a noncontractible loop by assumption. It is shown in the appendix of \cite{BLR_Riemann1} that every component of the lift of a noncontractible loop is a bi-infinite simple path (this is a deterministic fact coming from Riemannian geometry). Furthermore these paths connect two boundary points of $\partial \D$ in the hyperbolic case, and go to infinity in a particular direction in the torus case. {Also note that in one step of the algorithm (say starting from $v\in M\setminus \partial$) we add either a simple path from $v$ to $\partial$ or a non-contractible simple loop disjoint from $\partial$, together with a simple path from $v$ to it. Call $\cY$ the added path. In the former case any component of $p^{-1}(\cY)$ has to be a simple path from some $\tilde v \in \tilde M \setminus \tilde \partial$ to $\tilde\partial$ so this cannot create any non-simply connected component. In the latter case, a component of $p^{-1}(\cY)$ will be the union of a simple path connecting two boundary points of $\partial \D$, together with simple paths attached to it. This will divide the connected component of $\tilde v$ in exactly two new components, both of which are simply connected.}
\end{proof}

\subsection{Continuum version of Wilson's algorithm to generate CRSF.}
\label{SS:wilsoncontinuous}\medskip We now describe the continuum process. The main technical input is a result of Uchiyama \cite[Theorem 5.7]{uchiyama}, which itself is a generalisation of a result of Suzuki \cite{Suzuki}. Suzuki proved convergence of a loop-erased random walk excursion to chordal SLE$_2$ subject to an assumption that the boundary is piecewise analytic (while Yadin and Yehudayoff \cite{YY} dealt with the radial case).

We define a \textbf{discrete domain} to be a union of faces of $\Gamma^\d$ along with the edges and vertices incident to them. We specify certain edges and vertices of the domain to be the boundary of the domain. We say a discrete domain is \textbf{simply connected} if the union of its faces and non-boundary edges and vertices (called its \textbf{interior}) form an open, connected and simply connected domain in $\C$.

\begin{thm}[Uchiyama \cite{uchiyama}]\label{lem:scaling_limit_main}
Let $D$ be a simply connected domain such that $\bar D \subset \D$. Let $\bar D^\d$ be a sequence of simply connected discrete domains with $D^\d$ being its interior. Let $p_0 \in D$
and suppose that $D^\d$ converges in the Carath\'eodory sense to $D$: if $\phi$ (resp. $\phi_\d$) is the unique conformal map sending $D$ (resp. $D^\d$) to the unit disc $\D$ such that $\phi(p_0) = 0$ and $\phi'(p_0)>0$ (resp. $\phi_\d(p_0) = 0$ and $\phi_\d'(p_0)>0$), then $\phi_\d$ converges to $\phi$ uniformly over compact subsets of $D$.  Suppose that $a^\d, b^\d$ are two boundary points on $\bar D^\d$ (understood as prime ends) such that $\phi_\d(a^\d) \to \tilde a \in \partial \D$ and $\phi_\d(b^\d) \to \tilde b \in \partial \D$ with $\tilde a \neq \tilde b$.

Let $\tilde X^\d$ be a random walk subject to the assumptions in \cref{sec:setup} from $a^\d$ conditioned to take its first step in $D^\d$ and to leave $D^\d$ at $b^\d$. Then the loop erasure of $\phi_\d(X^\d)$ converges to
chordal SLE$_2$ from $\tilde a$ to $\tilde b$ in $\D$ for the Hausdorff topology on
compact sets (and in fact in the stronger, uniform sense modulo reparametrisation).
\end{thm}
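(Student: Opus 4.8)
The plan is to deduce this from the Loewner/martingale-observable machinery for loop-erased random walk, in the robust form due to Uchiyama \cite{uchiyama} (which refines the analytic-boundary theorem of Suzuki \cite{Suzuki} and the radial result of Yadin--Yehudayoff \cite{YY}). Concretely, I would verify that the abstract hypotheses of \cref{sec:setup} --- the invariance principle and the uniform crossing estimate --- are precisely what that machinery requires, and then the statement follows, together with conformal invariance of chordal SLE$_2$. Since erasing loops commutes with the injective map $\phi_\d$, the curve in question is $\phi_\d$ applied to the loop-erasure of $X^\d$, a chord in $\D$ joining $\phi_\d(a^\d)\to\tilde a$ to $\phi_\d(b^\d)\to\tilde b$. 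I would parametrise it by capacity in $\D$ and let $W^\d$ be its Loewner driving function; using $\phi_\d\to\phi$ locally uniformly (Carath\'eodory convergence) one then reduces convergence of the curve to convergence of $W^\d$, provided one also knows the curve is continuous up to and including its two endpoints, which I would obtain from a Beurling-type estimate in the spirit of \cref{lem:Beurling}.

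First I would prove tightness. The uniform crossing estimate (\cref{crossingestimate}) yields, through the Aizenman--Burchard criterion, equicontinuity modulo reparametrisation of the law of the loop-erased curve, and, through a Beurling-type bound, an a priori modulus of continuity for $W^\d$; hence every subsequential limit of $W^\d$ is continuous and the corresponding curve is a Loewner chain driven by it. It then remains to identify the subsequential limits of $W^\d$.

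This identification is the heart of the matter and follows the Lawler--Schramm--Werner scheme. Using the domain Markov property of loop-erased walk --- conditionally on the first $k$ edges $\lambda$ of the loop-erasure, the remaining piece is the loop-erasure of a walk from the current tip to $b^\d$ inside $D^\d\setminus\lambda$, a property valid for general, possibly non-reversible walks --- I would build a discrete martingale $(M^\d_k)_k$ from a harmonic observable evaluated at a fixed interior point $z_0$ (a harmonic-measure or Poisson-kernel ratio), stopped once the curve comes near $z_0$. Because $D^\d$ is a planar simply connected domain, the invariance principle forces the relevant discrete harmonic functions to converge, uniformly on compacts, to their conformally covariant continuum counterparts --- this is exactly the setting, unlike on the surface itself, where such convergence (of harmonic measure and of the normalised Green's function) is available. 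Consequently, along any subsequential limit, $M^\d_k$ converges to a martingale of the limiting Loewner chain, and comparing it with the It\^o expansion of the Loewner flow pins the driving function down to $\sqrt 2$ times a standard Brownian motion run at the appropriate speed. Thus the subsequential limit is chordal SLE$_2$ from $\tilde a$ to $\tilde b$; since it is the same along every subsequence the full convergence holds, and the convergence modulo reparametrisation again comes from the Aizenman--Burchard bounds.

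The hard part will be the uniformity with respect to the shape of $D^\d$, which is exactly why Suzuki's smooth-boundary theorem does not suffice: in our applications the boundary arc of $\partial D^\d$ between $a^\d$ and $b^\d$ will itself be (part of) a previously sampled loop-erased random walk, an extremely rough random curve about which only Carath\'eodory convergence of the domain is known. One must show, uniformly over all such admissible rough domains, both (i) that the loop-erased excursion does not linger near this rough boundary --- so that $W^\d$ has no macroscopic jumps and the capacity parametrisation behaves --- and (ii) that the harmonic observable still converges in the absence of any regularity of $\partial D^\d$; the point is that both reduce to quantitative random-walk estimates that follow from the crossing assumption alone. A minor additional point is the possible non-reversibility of the walk: one should choose the observable so that it involves only the forward walk (or else note that reversing the chain with respect to a stationary measure changes the scaling limit at most by a time change), so that non-reversibility does not affect the argument.
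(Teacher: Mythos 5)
The paper's proof of this statement is a direct citation to Uchiyama: it simply checks that Uchiyama's Hypothesis~(H) --- a locally uniform invariance principle --- is satisfied under the assumptions of \cref{sec:setup} (specifically via \cref{rmk:start_point}), and that is all. You instead set out to re-derive the result from scratch by the Lawler--Schramm--Werner scheme (tightness via Aizenman--Burchard, a domain-Markov martingale observable built from a Poisson-kernel/harmonic-measure ratio, Loewner driving-function identification). That is the machinery underlying Uchiyama's theorem, so your outline is not wrong, but it does not constitute a proof: the two hard points you flag --- (i) uniform control of the loop-erased excursion near the rough boundary $\partial D^\d$ so that the driving function is well behaved, and (ii) convergence of the harmonic observable uniformly over Carath\'eodory-close domains with no boundary regularity --- are precisely the content of Uchiyama's paper, and you state them as consequences ``of the crossing assumption alone'' without proof. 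Had those steps been easy to supply, the authors would not have cited Uchiyama; they would have cited Suzuki or Yadin--Yehudayoff.

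A further soft spot is your handling of non-reversibility. The paper works throughout with general (non-reversible) weighted walks, and the key point, delegated to Uchiyama, is that his hypotheses accommodate this. Your suggested fixes --- ``choose the observable so that it involves only the forward walk'' or ``reverse with respect to a stationary measure, which changes the limit at most by a time change'' --- are not obviously sufficient: for non-reversible chains the loop-erasure of the walk and of its time-reversal are genuinely different in law, and the standard LSW observable (a Green's-function ratio) loses its symmetry. This is exactly the kind of delicate point one wants to avoid re-arguing, which is why the paper's approach is to verify Hypothesis~(H) and defer the rest to \cite{uchiyama} rather than rebuild the argument.
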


\begin{proof}
 This follows from the work of Uchiyama. We emphasise here that Uchiyama uses a locally uniform invariance principle (called hypothesis {\bf(H)} in \cite{uchiyama}, Section 2), which says that for any compact set, a random walk run up to the time it leaves the compact set is uniformly close to a Brownian motion (up to reparametrised curves) and the uniformity is over any starting point in the compact set. This is clearly satisfied in our case, see \cref{rmk:start_point}.
\end{proof}
Note that the above theorem is for the conformal image of the loop erasure of the walk in the unit disc. To transfer the results to the domains of interest, we employ the following corollary. If $D^\d$ is such that $\C \setminus D^\d$ is uniformly locally connected then the conformal map $\phi_\d^{-1}$ extends continuously to $\bar \D$. Furthermore, by assumption on the Carath\'eodory convergence, we know that $\phi_\d^{-1} (z) \to \phi^{-1} (z)$ for all $ z \in \D$. Hence we deduce from the Carath\'eodory kernel theorem (see Corollary 2.4 in \cite{Pommerenke}) that $\phi_\d^{-1}$ converges in fact \emph{uniformly} to $\phi^{-1}$ over $\bar \D$; furthermore it is easy to see that also $\C\setminus D$ is locally connected. Hence we obtain the following corollary:

\begin{corollary}\label{cor:Uchiyama}
Under the assumptions of \cref{lem:scaling_limit_main}, and if $\C \setminus D^\d$ is uniformly locally connected, we have that the loop-erasure of $\tilde X^\d$ converges to chordal SLE$_2$ from $a$ to $b$, where $a = \phi^{-1} (\tilde a)$ and $b = \phi^{-1} (\tilde b)$.
\end{corollary}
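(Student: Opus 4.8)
\section*{Proof proposal}

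The plan is to transport the disc-level convergence of \cref{lem:scaling_limit_main} back to $D$ by post-composing with the inverse conformal maps $\phi_\d^{-1}$, using that chronological loop-erasure commutes with the injective map $\phi_\d$. Since $\phi_\d$ is a conformal bijection of $D^\d$ onto $\D$, it is injective on the vertices visited by $\tilde X^\d$, so loop-erasing the vertex sequence of $\tilde X^\d$ and then applying $\phi_\d$ produces the same ordered set of points as applying $\phi_\d$ and then loop-erasing; realised as curves this gives
\[
\mathrm{LE}(\tilde X^\d)=\phi_\d^{-1}\!\bigl(\mathrm{LE}(\phi_\d(\tilde X^\d))\bigr)
\]
(the interpolation used to turn a point sequence into a curve being irrelevant for the scaling limit). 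By \cref{lem:scaling_limit_main}, $\mathrm{LE}(\phi_\d(\tilde X^\d))$ converges, in the uniform topology modulo reparametrisation (and a fortiori for the Hausdorff metric on compact sets), to chordal $\mathrm{SLE}_2$ from $\tilde a$ to $\tilde b$ in $\D$.

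The main input is then to upgrade the Carath\'eodory convergence $\phi_\d^{-1}\to\phi^{-1}$, which a priori holds only pointwise on $\D$, to \emph{uniform} convergence on the closed disc $\bar\D$. This is precisely the role of the hypothesis that $\C\setminus D^\d$ is uniformly locally connected: by the standard correspondence between local connectedness of the complement and boundary behaviour of conformal maps (Chapter 2 of \cite{Pommerenke}), each $\phi_\d^{-1}$ extends continuously to $\bar\D$, and uniform local connectedness makes its modulus of continuity uniform in $\delta$; combined with the pointwise convergence inside $\D$, the Carath\'eodory kernel theorem (Corollary 2.4 in \cite{Pommerenke}) promotes this to uniform convergence of $\phi_\d^{-1}$ to $\phi^{-1}$ on $\bar\D$. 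In particular $\C\setminus D$ is itself locally connected, $\phi^{-1}$ extends continuously to $\bar\D$, and the prime ends $a=\phi^{-1}(\tilde a)$, $b=\phi^{-1}(\tilde b)$ of $D$ are well defined.

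Finally I would assemble the two ingredients. Letting $\gamma_\d:=\mathrm{LE}(\phi_\d(\tilde X^\d))$ with limit $\gamma$, pick parametrisations realising the convergence $\gamma_\d\to\gamma$ and estimate, for each time $t$,
\[
\bigl|\phi_\d^{-1}(\gamma_\d(t))-\phi^{-1}(\gamma(t))\bigr|\le\sup_{\bar\D}\bigl|\phi_\d^{-1}-\phi^{-1}\bigr|+\bigl|\phi^{-1}(\gamma_\d(t))-\phi^{-1}(\gamma(t))\bigr|,
\]
where the first term tends to $0$ by the uniform convergence on $\bar\D$ and the second by uniform continuity of $\phi^{-1}$ on the compact set $\bar\D$. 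Hence $\mathrm{LE}(\tilde X^\d)=\phi_\d^{-1}(\gamma_\d)\to\phi^{-1}(\gamma)$ in the same topologies, and since $\phi^{-1}\colon\D\to D$ is conformal, $\phi^{-1}(\gamma)$ is, by conformal invariance of $\mathrm{SLE}_2$, precisely chordal $\mathrm{SLE}_2$ from $a$ to $b$ in $D$. The only genuinely delicate point is obtaining the \emph{uniform-in-$\delta$} boundary regularity of $\phi_\d^{-1}$ needed for uniform convergence on $\bar\D$; everything else — composing uniform limits and invoking conformal invariance of $\mathrm{SLE}$ — is soft.
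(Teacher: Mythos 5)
Your proposal is correct and follows essentially the same route as the paper: uniform local connectedness of $\C\setminus D^\d$ plus the Carath\'eodory kernel theorem (Corollary 2.4 in Pommerenke) give uniform convergence of $\phi_\d^{-1}$ to $\phi^{-1}$ on $\bar\D$, which is then composed with the disc-level SLE$_2$ limit of \cref{lem:scaling_limit_main} and combined with conformal invariance of SLE$_2$. The only additions in your write-up — the remark that loop-erasure commutes with the injective map $\phi_\d$, and the explicit triangle-inequality assembly at the end — are correct but left implicit in the paper.
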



We now describe a continuum version of the discrete Markov chain described in the previous section.
Suppose we start with $\partial \subset \bar M$ and let $\tilde \partial$ be $p^{-1} (\partial \cap M) \cup \partial \D$ in the hyperbolic case, otherwise in the parabolic case we simply take $\tilde \partial = p^{-1} (\partial)$.
We will assume that $\partial$ is such that every connected component of $\tilde M \setminus \tilde \partial$ is simply connected. (This will be almost surely satisfied after every step of the continuum Wilson's algorithm; we verify this in \cref{lem:sc} later though this is essentially the same as the discrete \cref{L:simply_connected} proved above).
We now define a random curve starting from a point $z \in M$ and ending in $\partial$ or in a noncontractible loop. Fix a pre-image $\tilde z = p^{-1}(z)$ (again, in the end, the choice of pre-image is irrelevant). {See Figure \ref{F:CW} for an illustration. }

\begin{figure}
\centering
\includegraphics[scale = 1]{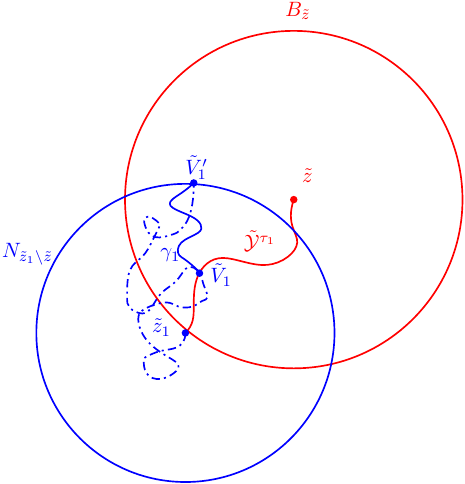}
\caption{{An illustration of the first two steps of the continuum Wilson algorithm. $\tilde \cY^{\tau_1}$ is a radial SLE$_2$ between $\tilde z$ and $\tilde z_1$ (in red). conditional on this, we start a Brownian motion (dashed blue) from $\tilde z_1$ until it leaves $N_{\tilde z_1 \setminus z}$ (or it hits the boundary $\tilde \partial$, not illustrated).  It hits $\tilde \cY^{\tau_1} $ at many points, the earliest of which is called $\tilde V_1$, and leaves $N_{\tilde z_1 \setminus z}$ at $\tilde V'_1$. We then let $\gamma_1$ be an independent chordal SLE$_2$ between $\tilde V_1$ and $\tilde V'_1$ (in solid blue). The next state of the algorithm, $\tilde Y^{\tau_2}$ is obtained by concatenating the portion of $\tilde \cY^{\tau_1}$ between $\tilde z$ and $\tilde V_1$ and $\gamma_1$.}}\label{F:CW}
\end{figure}

\begin{enumerate}[{(}i{)}]
 \item Let $B_{\tilde z}$ be the connected component of $N_{\tilde z} \setminus \tilde \partial$ containing $\tilde z$. Then note that $B_{\tilde z}$ is simply connected since $N_{\tilde z}$ is simply connected, and it is an easy exercise to check that if $A, B$ are bounded, simply connected sets in $\C$ then every connected component of $A \cap B$ is also simply connected (e.g. using Jordan's theorem).
     In the first step, we define $\cY^{\tau_1}$ to be the image under $p$ of a radial SLE$_2$ in $B_{\tilde z}$ targeted to $\tilde z$
from a point chosen from $\partial B_{\tilde z} $ according to the
harmonic measure seen from $\tilde z$.

\item Suppose we have defined the continuum curves up to step $k$ and call
it $\cY^{\tau_{k}}$. Let $\tilde z_k$ be the end point of $\tilde \cY^{\tau_{k}}$ where $\tilde \cY^{\tau_{k}}$ is the unique lift of $ \cY^{\tau_{k}}$ starting from $\tilde z$. Let $\tilde A_k = p^{-1} (\cY^{\tau_{k}}) \setminus \tilde \cY^{\tau_{k}}$, so $\tilde A_k$ consists of all the other preimages of $\cY^{\tau_{k}}$ except for $\tilde \cY^{\tau_{k}}$.

We start a Brownian
motion independent of everything else from $\tilde z_k$ until it either intersects $\tilde A_k \cup \tilde \partial$ or exits $N_{\tilde z_k \setminus \tilde z}$. Call the point where we stop
the Brownian motion $\tilde V_k'$. Let us parametrise  $\cY^{\tau_{k}}$ using some choice of continuous parametrisation and lift it to $\tilde \cY^{\tau_{k}}$. Let $\tilde V_k$ be the infimum of the set of  points in  $ \tilde \cY^{\tau_{k}}$ which
the Brownian motion intersects (this makes sense for any choice of
parametrisation of $\cY^{\tau_{k}}$ starting from $z$ and ending at $z_k$ and $\tilde V_k$ is independent of
the choice of parametrisation).\label{step2}

\item   Let $\tilde \cZ^{k}$ be the portion of
$\tilde \cY^{\tau_{k}}$ from its starting point to $\tilde V_k$ and let $\cZ^k = p(\tilde \cZ_k)$. Let $D_k$ be the connected component of $N_{\tilde z_k \setminus \tilde z} \setminus (\tilde A_k \cup \tilde \cZ^{k} \cup \tilde \partial)$ containing $\tilde V_k'$.
An argument similar to the above can be used to show that $D_k$ is simply connected since $\tilde z \notin N_{\tilde z_k \setminus \tilde z}$ and $p$ is injective in $ N_{\tilde z_k \setminus \tilde z}$.

Now define a chordal SLE$_2$ curve
$\gamma_k$ in $D_k$ from
from $\tilde V_k$ to $\tilde V_k'$ independent of everything else. Define $$\cY^{\tau_{k+1}}:=
\cZ^k \cup p(\gamma_k).$$
\item We stop if $\cY^{\tau_{k+1}}$ contains a noncontractible
loop or touches $\tilde \partial$. Otherwise, we return to step \ref{step2}.
\end{enumerate}
We now prove the following lemma which says that the paths $\cY^{\tau_k}$ converge to a limiting path $\cY^\infty := \lim_{k \to \infty} \cY^{\tau_k}$.
This will correspond to a branch of a continuum CRSF. In fact we will prove that $\cY^\infty$ is a union of finitely many elements in the union. Recall that we are not considering the whole complex plane case and the sphere case in this article.
\begin{lemma}\label{lem:MC_branch}
There exists a random variable $N$ with exponential tail such that $\cY^\infty := \cY^{\tau_N}$.
\end{lemma}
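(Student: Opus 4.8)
The plan is to establish a uniform ``one-block'' improvement estimate: conditionally on everything revealed so far, there is a fixed $m_0\in\N$ and a fixed $\alpha>0$ so that, with probability at least $\alpha$, the construction terminates within the next $m_0$ steps. Given this, the exponential tail of the termination time is immediate. Concretely, let $\mathcal F_k=\sigma(\cY^{\tau_1},\dots,\cY^{\tau_k})$, let $T$ be the (random) step at which the procedure stops, so that $\cY^\infty=\cY^{\tau_T}$, and set $N=T$. It suffices to prove that on $\{T>k\}$ one has $\P(T\le k+m_0\mid\mathcal F_k)\ge\alpha$ for every $k$, since then $\P(T>jm_0)\le(1-\alpha)^j$.

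The first step is to isolate the driving process. Conditionally on not yet having terminated, the only randomness in the $(k{+}1)$st step relevant to termination is the Brownian excursion run from $\tilde z_k$ until it meets $\tilde A_k\cup\tilde\partial$ or exits $N_{\tilde z_k\setminus\tilde z}$; the auxiliary SLE$_2$ arcs are irrelevant to whether termination occurs. By the strong Markov property of Brownian motion, these successive excursions concatenate into a single Brownian trajectory $B$ in $\tilde M$ (started at $\tilde z_k$), which is merely observed at the successive neighbourhood exits and is stopped the first time it hits $\tilde\partial$ or one of the sets $\tilde A_j$ --- either of which terminates the chain. The key point is that ``$B$ has triggered termination'' is \emph{implied} by either ``$p(B)$ has hit $\partial M$'' or ``$p(B)$ has traced out a noncontractible loop in $M$'', and these two events make no reference to the SLE$_2$ randomness. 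Thus it is enough to bound, uniformly over the configuration, the number of neighbourhood exits that $B$ performs before $p(B)$ does one of these two things.

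For the case $\partial M\neq\emptyset$ this is precisely \cref{lem:BM_hit}: Brownian motion on $M$, observed along a sequence of neighbourhood exits, reaches $\partial M$ within a number of steps having an exponential tail, uniformly in the starting neighbourhood, hence within $m_0$ steps with probability at least $\alpha$; and since $\tilde\partial\supseteq p^{-1}(\partial M)$, this forces the chain to stop. For the closed case $\partial M=\emptyset$ --- the torus and closed higher-genus surfaces, and in any case this only arises for the very first branch, since every later branch carries $\tilde\partial\neq\emptyset$ --- termination can only happen through the creation of a noncontractible loop, and here I would use the continuum counterpart of \cref{lem:uniform_avoidance}: Brownian motion on the compact surface $M$, started from any point and run for a fixed length of time, traces out a loop in a prescribed nontrivial free-homotopy class while staying in a fixed compact set, with probability bounded below (this is even easier for Brownian motion than for the walk, since Brownian motion follows any given continuous loop with positive probability in bounded time). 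Converting ``bounded time'' into ``boundedly many neighbourhood exits'' --- using the uniform crossing estimate of \cref{sec:setup} together with the fact that, with probability bounded below, only $O(1)$ exits are completed before the loop is closed --- yields the required lower bound, and hence the exponential tail of $N$.

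The step I expect to be the main obstacle is the behaviour of the anchor $\tilde z_k$ near the base point $z$, where the neighbourhoods are truncated to $N_{\tilde z_k\setminus\tilde z}=N_{\tilde z_k}\setminus p^{-1}\big(B(z,\tfrac12 d_M(z_k,z))\big)$; one must rule out the anchor being trapped, over many steps, in a shrinking neighbourhood of $z$, which would spoil the uniform lower bound. The mechanism I would use is a dichotomy. If $\tilde z_k$ is close to a lift $g\tilde z$ with $g\neq\mathrm{id}$, then the translate $g\tilde\cY^{\tau_k}\subset\tilde A_k$ runs right beside $\tilde z_k$, so the next excursion meets $\tilde A_k$ with probability near $1$ and the chain terminates. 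If instead $\tilde z_k$ is close to $\tilde z$ itself, then the loop obtained by closing up $\cY^{\tau_k}$ is contractible, so a neighbourhood of $z$ of fixed size contains no translate of the curve: the local picture is then that of a loop-erased random walk inside a simply connected domain, where the geometry is under control (via \cref{lem:Beurling} and the crossing estimate) and where one should be able to show that, within $O(1)$ steps, the curve either leaves this neighbourhood or terminates. Making this second alternative quantitative --- so that each visit of the anchor to a fixed neighbourhood of $z$ costs at most a bounded factor in the termination probability, and does not accumulate --- is the technical heart of the argument and the point requiring the most care.
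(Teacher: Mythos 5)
Your proposal follows essentially the same route as the paper's proof: couple the anchor points $\tilde z_k$ with a single Brownian motion $B$ observed at an increasing sequence of times $(t_k)$, reduce the bounded-case to \cref{lem:BM_hit} when $\partial M\neq\emptyset$, and use a Brownian analogue of \cref{lem:uniform_avoidance} (positive probability, uniformly over the starting point, to trace a noncontractible loop within a bounded number of neighbourhood crossings) when $\partial M=\emptyset$. This is exactly the paper's short argument.

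The extra mileage in your write-up is the final paragraph, where you flag that the Markov chain takes a step at each exit of a \emph{truncated} neighbourhood $N_{\tilde z_k\setminus\tilde z}$, from which the preimages of the ball $B\bigl(z,\tfrac12 d_M(p(\tilde z_k),z)\bigr)$ have been removed, whereas \cref{lem:BM_hit} controls exits of the \emph{full} neighbourhoods $N_{\tilde z_k}$. When the anchor comes close to a preimage of $z$, the removed ball shrinks with it, and the walk can repeatedly exit by entering that ball, producing additional steps that \cref{lem:BM_hit} does not directly count. This is a real subtlety and the paper's one-paragraph proof does not say anything about it. Your proposed dichotomy (near a nontrivial translate $g\tilde z$, the translate $g\tilde\cY^{\tau_k}\subset\tilde A_k$ is right there and termination is likely; near $\tilde z$ itself, do a local simply-connected analysis) is a sensible way to attack it, but you leave the second branch --- the one that actually has to rule out an accumulation of steps near $\tilde z$ at worse-than-exponential cost --- as "the technical heart of the argument" without carrying it out. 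A back-of-the-envelope gambler's-ruin computation suggests that consecutive inward exits from an annulus of ratio $2$ have only a polynomially decaying probability, so any completed proof must explain why such runs cannot in fact dominate; for example, one would want to argue that hitting the previously drawn curve or a translate terminates the chain before the spiral can go deep, or else modify the truncation rule. As it stands your proposal reconstructs the paper's argument faithfully and correctly isolates the one place where more care is needed, but it does not close that case, so it is not yet a complete proof.
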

\begin{proof}
We claim that we can couple a standard planar Brownian motion $B(t)$ from $\tilde z$ with the above Markov chain such that the endpoint of $\tilde \cY^{\tau_k}$ (i.e., $\tilde z_k$) is $ B(t_k)$ for some increasing sequence $(t_k)_{k \ge 1}$. {Indeed,  we can sample a Brownian motion   from $\tilde z_k$ (independently of everything else), and run it until it either intersects $\tilde A_k \cup \tilde \partial$ or exits $N_{\tilde z_k \setminus \tilde z}$}. By the strong Markov property, if we concatenate these Brownian paths, the whole trajectory has the law of a Brownian motion with the required property.


Let us consider the case where $\partial M \neq \emptyset$, which means we are in the hyperbolic setting. The fact that $N$ has exponential tail is now an easy consequence of  \cref{lem:BM_hit}.

Otherwise, if there is no boundary, we can cover $M$ by finitely many neighbourhoods, take a noncontractible loop $\ell$ and join a point from every neighbourhood to $\ell$. Using an argument similar to \cref{lem:uniform_avoidance}, we see that there is a positive probability for the Brownian motion to follow a path to $\ell$ and then move along $\ell$ to form a noncontractible loop for the Markov chain, and this probability is uniformly bounded below over the starting point. This event can happen in every step of the Markov chain with uniform positive probability, thereby concluding the proof.
\end{proof}

The above algorithm gives a recipe to sample one branch. To sample finitely many branches $\mathsf B_1,\mathsf B_2,\ldots$ from points $z_1,z_2,\ldots$, we continue sampling branches by updating $\tilde \partial$ to include the portion of the CRSF sampled in the previous step and applying the previous algorithm. Recall that we require $\tilde M \setminus \cup_{j=1}^k \mathsf B_j$ to be simply connected at every step to make sense of the algorithm. The proof is exactly the same as in \cref{L:simply_connected} which we now record.
\begin{lemma}\label{lem:sc}
For every $k$, every component of $\tilde M \setminus \cup_{j=1}^k \mathsf B_j$ is a.s. simply connected.
\end{lemma}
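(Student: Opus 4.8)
The plan is to argue by induction on $k$, mirroring the proof of \cref{L:simply_connected} with the discrete loop-erased walks replaced by the continuum branches produced by the algorithm of \cref{SS:wilsoncontinuous}. Write $\tilde\partial_k$ for the lift to $\tilde M$ of $\partial M\cup\bigcup_{j=1}^k\mathsf B_j$ (so $\tilde\partial_0=p^{-1}(\partial M)\cup\partial\D$ in the hyperbolic case and $\tilde\partial_0=\emptyset$ in the torus case); the components of $\tilde M\setminus\bigcup_{j\le k}\mathsf B_j$ appearing in the statement are exactly the components of $\tilde M\setminus\tilde\partial_k$. For the base case I would invoke the deterministic fact, established in the appendix of \cite{BLR_Riemann1}, that every component of the lift of a noncontractible simple loop is a bi-infinite simple arc whose two ends converge to distinct points of $\partial\D$ (or escape to infinity in a fixed direction, in the torus case). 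Applying this to each component of $\partial M$ and using the elementary planar fact that a simple arc in $\bar\D$ joining two prime ends of $\partial\D$ separates $\D$ into two simply connected pieces, one gets that every component of $\tilde M\setminus\tilde\partial_0$ is simply connected.

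For the inductive step, assume every component of $\tilde M\setminus\tilde\partial_k$ is simply connected; then the algorithm of \cref{SS:wilsoncontinuous} is well-posed at step $k+1$ (the sets $B_{\tilde z}$ and $D_j$ occurring there are simply connected, as already noted there), and by \cref{lem:MC_branch} the branch $\mathsf B_{k+1}$ is a.s.\ a finite concatenation of simple SLE$_2$-type arcs, each grown in the complement of everything sampled so far and so meeting it only at its endpoints. Thus $\mathsf B_{k+1}$ is a.s.\ either (i) a simple arc from its starting point $z_{k+1}$ to a point of $\partial M\cup\bigcup_{j\le k}\mathsf B_j$, or (ii) such an arc together with a simple noncontractible loop $\ell$ attached at its far end. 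In case (i) each preimage in $p^{-1}(\mathsf B_{k+1})$ is a simple arc lying in a single component $U$ of $\tilde M\setminus\tilde\partial_k$, with one endpoint on $\partial U$ and the rest in $U$; removing such a slit from a simply connected open set leaves it simply connected, and this persists for the (locally finite in $\tilde M$) collection of all these slits. In case (ii) one applies the same reasoning to the arc part, and to the loop part the base-case argument: $p^{-1}(\ell)$ is a union of bi-infinite simple arcs joining pairs of ideal boundary points, each of which, intersected with a component of $\tilde M\setminus\tilde\partial_k$, either misses it or cuts it into two simply connected pieces. Either way every component of $\tilde M\setminus\tilde\partial_{k+1}$ is simply connected, which closes the induction.

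I expect the only genuine content here to be the topological bookkeeping, relying on two ingredients used as black boxes: the statement from the appendix of \cite{BLR_Riemann1} on lifts of noncontractible loops, and the standard planar facts that slitting a simply connected domain along an arc with an interior tip preserves simple connectedness while cutting along an arc with both ends on the boundary subdivides it into simply connected pieces. The mild care needed is to check local finiteness of the relevant families of arcs away from the ideal boundary, which holds because $\mathsf B_{k+1}$ is (a.s.) a finite union of arcs and the Fuchsian group $F$ acts properly discontinuously; one should also note, as in \cref{L:simply_connected}, that ``ending on $\partial M$'' should be read via the midpoint-of-edge convention so that the slit really does attach to $\tilde\partial_k$.
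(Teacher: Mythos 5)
Your proposal is correct and follows essentially the same route as the paper: the paper gives \cref{lem:sc} no separate argument but simply states that the proof is identical to that of \cref{L:simply_connected}, and your induction — base case via the deterministic lift-of-a-noncontractible-loop fact from the appendix of \cite{BLR_Riemann1}, inductive step by slitting along a simple arc or cutting along a bi-infinite lifted noncontractible loop, with the mid-edge convention handling attachment — is exactly the content of that proof, just spelled out in more detail.
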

%



\begin{thm}\label{thm:main_tech}
Let $\partial^\d$ be a set of edges in $\Gamma^\d$ and assume that $\partial^\d$ converges in the Hausdorff sense to some set $\partial \subset M$. Let $\tilde \partial $ be the lift of $\partial$ to $\tilde M$. Assume $\tilde M \setminus \tilde \partial$ is locally connected and all the connected components of $\tilde M \setminus \tilde \partial$ are simply connected.
Then the Markov chain $(\cY^{\tau_k})^\d$ converges to the Markov chain $\cY^{\tau_k}$ as $\delta \to 0$.
More precisely this means that for any $k \ge 1$, the joint law of
$((\cY^{\tau_j})^\d:1 \le j \le k)$ converges to the joint law of $(\cY^j: 1
\le j \le k)$ (the convergence is in product of Hausdorff topology).
\end{thm}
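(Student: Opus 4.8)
The plan is to prove convergence by induction on $k$, using the description of the discrete Markov chain in \cref{sec:discreteMC} and the continuum chain in \cref{SS:wilsoncontinuous}, with the key analytic input being \cref{cor:Uchiyama} (LERW-excursion $\to$ chordal SLE$_2$ under our weak assumptions) together with the invariance principle and crossing estimate from \cref{sec:setup}. The base case $k=1$: the discrete curve $(\cY^{\tau_1})^\d$ is a loop-erased walk from $v_0$ stopped when it exits $p(N_{v_0})$ or hits $\partial^\d$; lifting to the universal cover this is a LERW in the simply connected discrete domain $B_{\tilde v_0}^\d$ (the component of $N_{\tilde v_0}\setminus\tilde\partial^\d$ containing $\tilde v_0$), which by hypothesis converges in the Carathéodory sense to $B_{\tilde z}$. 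A radial variant of \cref{cor:Uchiyama} — or a standard decomposition of radial LERW into a chordal piece from a harmonic-measure-distributed boundary point, exactly as in the continuum step (i) — then gives convergence to the image under $p$ of radial SLE$_2$ in $B_{\tilde z}$ targeted at $\tilde z$. Local connectedness of the complements (needed to upgrade Carathéodory convergence to uniform convergence of the uniformizing maps up to the boundary) is provided by the hypothesis that $\tilde M\setminus\tilde\partial$ is locally connected, and the crossing estimate \cref{crossingestimate} guarantees the LERW does not get trapped, so the stopping time is reached.

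For the inductive step, assume the joint law of $((\cY^{\tau_j})^\d : 1\le j\le k)$ converges to that of $(\cY^j : 1\le j\le k)$; by Skorokhod representation we may assume this convergence is almost sure in the (product) Hausdorff topology, and also couple the driving random walk excursions with driving Brownian excursions. We must then produce $(\cY^{\tau_{k+1}})^\d \to \cY^{\tau_{k+1}}$. First, from $\tilde X_{\tau_k}^\d \to \tilde z_k$ (endpoint convergence, which follows from Hausdorff convergence of the curves together with the fact that the curves are simple paths up to microscopic loops, and the invariance principle), we run an independent random walk from $\tilde X_{\tau_k}^\d$ until it hits $\tilde A_k^\d\cup\tilde\partial^\d$ or exits $N_{\tilde X_{\tau_k}\setminus\tilde v_0}$; by the invariance principle this converges to the Brownian motion of continuum step (ii), so the hitting point $\tilde V_k'^{,\d}\to \tilde V_k'$ and — using \cref{lem:Vk} to identify $V_k$ as the first vertex of $\cY^{\tau_k}$ hit, together with Hausdorff/parametrised convergence of $\cY^{\tau_k}$ — the branch point $\tilde V_k^\d \to \tilde V_k$ and hence $\cZ^{k,\d}\to\cZ^k$. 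Then \cref{lem:independent_RW} identifies the law of $X[\theta_k,\tau_{k+1}]^\d$ as a random walk from $V_k^\d$ conditioned to exit the discrete domain $D_k^\d := $ (component of $N_{\tilde X_{\tau_k}\setminus \tilde v_0}\setminus(\tilde A_k^\d\cup\tilde\cZ^{k,\d}\cup\tilde\partial^\d)$ containing $\tilde V_k'^{,\d}$) at $\tilde V_k'^{,\d}$; its loop-erasure, by \cref{cor:Uchiyama}, converges to chordal SLE$_2$ in $D_k$ from $\tilde V_k$ to $\tilde V_k'$, which is exactly $\gamma_k$. Pushing forward by $p$ and concatenating gives $(\cY^{\tau_{k+1}})^\d \to \cY^{\tau_{k+1}}$; the independence of $\gamma_k$ from the past is preserved in the limit because the conditional law depends on the past only through the domain $D_k^\d$ and endpoints, which converge.

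The main obstacle is verifying the hypotheses of \cref{cor:Uchiyama} for the \emph{random} domains $D_k^\d$: their boundaries are built from the loop-erased random walk $\tilde\cZ^{k}$ (and previous branches), which is a rough, fractal-like curve rather than a piecewise-analytic one, so one must check (a) that $D_k^\d$ converges in the Carathéodory sense to $D_k$ — this requires the almost-sure convergence $\cZ^{k,\d}\to\cZ^k$, $\tilde A_k^\d\to\tilde A_k$ in a strong enough sense, plus the fact (from \cref{lem:MC_branch}'s geometry and simple connectivity via \cref{L:simply_connected}, \cref{lem:sc}) that the relevant component is well-defined and non-degenerate — and (b) uniform local connectedness of $\C\setminus D_k^\d$, which must be extracted from the crossing estimate \cref{crossingestimate} applied near the LERW boundary (LERW has no small ``fjords'' uniformly in $\delta$ — essentially a Beurling-type/RSW input). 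A secondary subtlety is ensuring the conditioning in \cref{lem:independent_RW} (walk conditioned on its exit point) behaves well in the limit; here one uses that $\tilde V_k$ and $\tilde V_k'$ are, almost surely, distinct prime ends of $D_k$ at positive harmonic distance, so that the conditioned law is non-singular and the convergence $a^\d\to a$, $b^\d\to b$ in \cref{cor:Uchiyama} applies. Handling the boundary case $\partial M = \emptyset$ versus $\partial M\neq\emptyset$ is routine given \cref{lem:BM_hit} and \cref{lem:uniform_avoidance}, which already appear in \cref{lem:MC_branch}.
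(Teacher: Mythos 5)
Your proposal is correct and follows essentially the same route as the paper: induction on $k$ with the base case handled via the Yadin--Yehudayoff radial convergence, and the inductive step via Skorokhod coupling, endpoint convergence, \cref{lem:Vk}, \cref{lem:independent_RW}, and finally \cref{cor:Uchiyama} applied to the random simply connected domains $D_k^\d$, checking Carath\'eodory convergence and uniform local connectedness exactly as you list. The one place you pass over quickly is the convergence of the hitting point $\tilde V_k'^{,\d}\to\tilde V_k'$, which the paper isolates as a separate lemma (Lemma \ref{L:exit_convergence}) and proves with the Beurling-type estimate \cref{lem:Beurling} rather than the invariance principle alone, since the hitting point of a random set is not a continuous functional of the trajectory.
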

 We are going to use induction and prove at the same time that $\cY^{\tau_k}$ is a simple curve a.s. at every step $k$. We use the notations used in the description of the continuous and the discrete Markov chains. In the first step, the proof is just an application of the result of Yadin and Yehudayoff \cite{YY} and the fact that radial SLE$_2$ is a.s. a simple curve. Suppose we condition on $(\cY^{\tau_k})^\d$ for $k\ge 1$.

 Let $D_k^\d$ be the connected component containing $\tilde X^\d_{\tau_k}$ of $\tilde N_{\tilde X_{\tau_k} \setminus \tilde v_0} \setminus (\tilde \partial \cup \tilde \cZ^k \cup \tilde A_k )^\d$. Recall that $D_k^\d$ is simply connected.
Hence we can apply Uchiyama's result (\cref{lem:scaling_limit_main}) and conclude via induction.

Let us elaborate this application of Uchiyama's result.
Assume that $(\cY^{\tau_k})^\d$ is within distance $\ve$ of the law of $\cY^{\tau_k}$ in L\'evy--Prokhorov metric (with an underlying topology generated by the Hausdorff metric) for small enough $\delta$. We first need the following lemma.

\begin{lemma}\label{L:exit_convergence}
$\tilde X^\d_{\tau_{k+1}}$ converges in law to $\tilde V'_k$.
\end{lemma}

\begin{proof} From the invariance principle we know that the random walk $X^\d$ can be coupled with a reparametrised Brownian motion $B_{\phi^{-1}(t)}$ so that they are uniformly close on compact time intervals. Thus let $\tau$ be the time at which the Brownian motion hits $\tilde A_k \cup \tilde \partial \cup (N_{\tilde z_k \setminus \tilde z})^c$ (call this set $\cH_k$). Therefore, at time $\phi^{-1}(\tau)$, the random walk $\tilde X^\d$ is uniformly close to $\cH_k^\d : = \tilde A_k^\d \cup \tilde \partial^\d \cup ((N_{\tilde z_k \setminus \tilde z})^c)^\d$. Applying the Beurling-type estimate \cref{lem:Beurling} shows that with high probability the random walk will next intersect $\cH_k^\d$ after time $\tau$ at a position close to $B_{\phi^{-1}(\tau)} = \tilde V'_k$. Conversely, applying \cref{lem:Beurling} to Brownian motion (which follows e.g. by letting $\delta \to 0$ in this lemma) we see that when the random walk hits $\cH_k^\d$ at time $\tau_{k+1}$, the Brownian motion will also next hit $\cH_k$ after that time at a nearby position. Altogether this shows that $X^\d_{\tau_{k+1}}$ converges to $V'_k$.
\end{proof}

\begin{proof}[Proof of Theorem \ref{thm:main_tech}.]
From the invariance principle in our assumption, \cref{lem:Vk} and \cref{L:exit_convergence} above, it is not hard to see that $\tilde V_{k}^\d$ converges to $\tilde V_{k}$ in law since the point $\tilde V_{k}$ is an a.s. continuous function of the Brownian path $X[\tau_{k},\tau_{k+1}]$ and $\cY^{\tau_k}$.
Using \cref{lem:independent_RW} we deduce that conditioned on $X[0,\theta_{k}]^\d, (\cY^{\tau_k})^\d, X_{\tau_{k+1}}^\d$, 
 the law of $\tilde X[\theta_{k},\tau_{k+1}]^\d$ is the same as a simple random walk starting from $\tilde V_{k}^\d$ conditioned to exit $D_k^\d$
 at $\tilde X_{\tau_{k+1}}^\d$. Since $D_k^\d$ is simply connected, now apply Uchiyama's result (\cref{cor:Uchiyama}) to conclude that the law of the loop erasure of $X[\theta_{k},\tau_{k+1}]^\d$ conditioned on $X[0,\theta_{k}]^\d, (\cY^{\tau_k})^\d,X^\d_{\tau_{k+1}}$ converges as $\delta \to 0$ to an independent chordal SLE$_2$ in $D_k$ from $\tilde V_{k}$ to $\tilde V_{k}'$. Here $D_k$ is as in Step 3 of the continuum Wilson algorithm for generating CRSF: that is, let $\cY^{\tau_k}$ be the limiting simple curve of $(\cY^{\tau_k})^\d$ which is at most $\eps$ away in the Hausdorff sense from it (which exists by assumption). Then $D_k$ is the connected component containing $\tilde X^{\tau_k}$ in $\tilde N_{\tilde X^{\tau_k} \setminus v_0} \setminus(\tilde \partial  \cup \tilde \cZ_k \cup \tilde A_k)$.

 To see that we can apply \cref{cor:Uchiyama}, we need to verify that $D_k^\d$ converges to $D_k$ in the Carath\'eodory sense. To see that the loop-erasure converges, suppose without loss of generality that the convergence of $\tilde \partial^\d$ and $(\cY^\tau_k)^\d$ holds almost surely, by the induction hypothesis. Hence $\partial D_k^\d$ converges in the Hausdorff sense to $\partial D_k$, almost surely. Moreover, for any point $p_0$ in $D_k$ we have $p_0 \in D_k^\d$ for $\delta$ small enough and furthermore we can find an open neighbourhood of $p_0$ which is contained in $D_k^\d$  for small enough $\delta$. In other words, $D_k^\d$ converges in the sense of kernel convergence (\cite[Section 1.4]{Pommerenke}). Consequently, applying the Carath\'eodory kernel theorem (Theorem 1.8 in \cite{Pommerenke}), we deduce that for some fixed $p_0 \in D_k$, the Riemann map $\phi_\d$ in the assumptions of Theorem \ref{lem:scaling_limit_main} converges uniformly to $\phi$ on compact subsets of $D_k$.  Also note that since $\tilde \cY^{\tau_k}$ is a simple curve by the induction hypothesis, $\C \setminus D_k^\d$ is locally connected, uniformly in $\delta$. Hence the application of Corollary \ref{cor:Uchiyama} is justified and the proof {of \Cref{thm:main_tech}} is complete.
 %
 %
\end{proof}

Using \cref{thm:main_tech} and \cref{lem:MC_branch}, we know that the portion of the discrete CRSF sampled in any finite number of Wilson algorithm steps (i.e., a finite number of macroscopic branches in the CRSF) converges in law to a subset of $M$ sampled using the continuum algorithm described above. {In fact, since in {the discrete version of} Wilson's algorithm, the order in which the branches are sampled does not alter their joint law, the same is true for the continuum Wilson's algorithm using \Cref{thm:main_tech}.} To complete the proof of \cref{thm:CRSF_universal}, we need a version of Schramm's finiteness lemma (originally proved in \cite{SLE}). This is achieved in \cref{lem:Schramm_finiteness}, which introduces ideas (especially the ``good algorithm") which will be important for the local coupling argument later. We start to discuss this below.

%

\subsection{Schramm's finiteness lemma}\label{finiteness}

We start with a lemma on hitting probabilities which was proved {in} the simply connected case in
\cite{BLR16} using only the uniform crossing assumption; this is in a similar spirit to Lemma 2.1 in Schramm \cite{SLE}. Essentially, this provides a non-quantitative Beurling estimate and the proof is exactly the same as in \cite{BLR16}.

\begin{lemma}[Lemma 4.15 in \cite{BLR16}]
  \label{lem:SRW_hit}
There exist constants $c_0,c_1,C$ depending only on the constants in the crossing assumptions of \cref{sec:setup}
such that the following holds. Let $K \subset K' \subset \tilde M$ be a connected set such that the Euclidean diameter of $K$ is at least $R$. {Assume that there exists a vertex }$v \in \tilde \Gamma^\d$ be such that $B_{\Euc}(v,R) \subset K'$ where $B_{\Euc}$ denotes the Euclidean ball. Let $\dist(v,K)$ be the Euclidean distance between $v$ and $K$. Then
for all $\delta \in (0,\delta_{K'} \wedge C\delta_0 \dist(v,K))$,
$$
\P(\text{simple random walk from $v$ exits $B(v,R)^\d$ before hitting $K^\d$})
\le c_0 \left(\frac{ \dist(v,K)}{R}\right)^{c_1}
$$
\end{lemma}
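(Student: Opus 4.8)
The plan is to deduce this Beurling-type estimate purely from the uniform crossing assumption of \cref{crossingestimate}, following the strategy of Lemma 4.15 in \cite{BLR16}; since all the hypotheses here concern only the lifted graph $\tilde\Gamma^\d$ inside $\tilde M$ (a planar graph in $\C$ or $\D$), the argument is essentially a planar RSW-to-Beurling argument and there is no surface-specific complication. First I would set up a dyadic annular decomposition: let $d = \dist(v,K)$ and consider the annuli $A_j = \{z : 2^j d \le |z-v| < 2^{j+1} d\}$ for $j = 0, 1, \ldots, \lfloor \log_2(R/d)\rfloor - 1$, all of which are contained in $B_{\Euc}(v,R) \subset K'$. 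The key observation is that $K$ is connected and has diameter at least $R$ while containing a point within distance $d$ of $v$; hence $K$ crosses each annulus $A_j$, i.e. $K \cap A_j$ contains a connected component joining the inner to the outer boundary circle of $A_j$.

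The core step is to show there is a universal constant $p_0 > 0$ (depending only on $\alpha_0,\delta_0$) such that, for $\delta$ small enough relative to the scale $2^j d$ of the annulus, a random walk started anywhere on the inner circle of $A_j$ hits $K^\d$ before exiting the outer circle of $A_j$ with probability at least $p_0$, uniformly in $j$. This is where the crossing assumption does the work: the crossing hypothesis, iterated over $O(1)$ overlapping translated/rotated copies of the rectangles $cR, cR'$ at scale $c \asymp 2^j d$, lets one propagate the walk around and across the annulus (a standard RSW-type chaining), and since $K^\d$ separates the two boundary circles of the annulus, any such crossing path must meet $K^\d$; one gets a scale-invariant lower bound $p_0$ because the crossing probability $\alpha_0$ in \cref{crossingestimate} does not depend on the scale $c$ (as long as $c \ge \delta/\delta_0$, which is exactly the condition $\delta \lesssim \delta_0 d$ at the innermost annulus and is only weaker at larger annuli). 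The constraint $\delta < \delta_{K'}$ ensures the crossing estimate is applicable on copies of rectangles whose projection lies in $p(K')$.

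Given this, I would conclude by the strong Markov property: for the walk from $v$ to reach $\partial B_{\Euc}(v,R)^\d$ it must cross every annulus $A_0, \ldots, A_{m-1}$ with $m = \lfloor \log_2(R/d)\rfloor$, and at each crossing it has probability at least $p_0$ of being absorbed by $K^\d$ first, so
\[
\P(\text{walk from } v \text{ exits } B(v,R)^\d \text{ before hitting } K^\d) \le (1-p_0)^{m} \le c_0 \left(\frac{d}{R}\right)^{c_1}
\]
with $c_1 = -\log_2(1-p_0) > 0$ and $c_0$ absorbing the additive $O(1)$ error in $m$. The main obstacle is the careful chaining in the core step: one must verify that finitely many (a number depending only on the fixed aspect ratios $0.3/0.1$ of $R,R'$, hence universal) translated and rotated copies of the crossing rectangles, at a common scale comparable to the annulus radius, genuinely tile an annular shell so that success of all the associated crossing events forces the walk to traverse the shell and therefore to meet the separating set $K^\d$ — and that the $B_1''$-to-$B_2''$ crossing events can be concatenated via the strong Markov property with the walk always restarting inside the next starting ball. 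Since this chaining is identical to the one already carried out in \cite{BLR16} in the simply connected case, and the present setting differs only in that the ambient planar graph $\tilde\Gamma^\d$ is periodic rather than a fixed discretization of a domain, I would simply invoke that argument, noting that it uses nothing beyond the uniform crossing assumption which we have retained verbatim.
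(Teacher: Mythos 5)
Your overall plan matches what the paper actually does: the paper states that ``the proof is exactly the same as in \cite{BLR16}'' and does not re-derive the estimate, and you likewise reduce to the dyadic-annulus RSW argument and defer the chaining to \cite{BLR16}. The dyadic decomposition, the use of the fact that the crossing constant $\alpha_0$ is scale-free, the identification of the constraint $\delta \lesssim \delta_0 \dist(v,K)$ with the applicability of \cref{crossingestimate} at the innermost annulus, the strong Markov iteration and the resulting bound $(1-p_0)^m$ are all correct and standard.

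However, the justification you offer for the core step is topologically wrong as stated. You assert that ``$K^\d$ separates the two boundary circles of the annulus,'' so that ``any such crossing path must meet $K^\d$.'' A connected set $K$ that merely \emph{crosses} the annulus $A_j$ from its inner to its outer circle does not separate those two circles: cutting an annulus along a radial arc produces a topological disk in which the two former boundary circles are joined, so a path can travel from inner to outer alongside $K$ without touching it. If $K$ did separate, a radial crossing would hit $K^\d$ with probability $1$ and you would not need any RSW input, which contradicts your own conclusion $p_0<1$. The correct topological fact is that because $K$ crosses $A_j$, any closed loop that winds once around the annulus while staying inside it must intersect $K$ (lift to the universal cover of the annulus and use that $K$ becomes a periodic family of crossings of the strip). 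The RSW chaining in \cite{BLR16} (following Schramm's Lemma~2.1) is therefore arranged to force, with probability at least $p_0$, a \emph{full winding} of the walk around the annulus before it exits the outer circle; on that event the walk hits $K^\d$ by planarity of $\tilde\Gamma^\d$ and the fact that $K^\d$ is a subgraph. If you replace the ``separating set'' claim with this winding argument, the core step is sound. (One minor additional point: $K$ is only guaranteed to cross the annuli of outer radius at most $R-d$, since $\diam K\ge R$ and $\dist(v,K)=d$ only give points of $K$ at distance $\approx R-d$; this shaves one annulus off your count and is absorbed in $c_0$.)
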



{We will now need a version of  Schramm's finiteness lemma in
our setting of Riemann surfaces. The algorithm is almost identical to the one in \cite{BLR16}, however later in the proof of \Cref{thm:CRSF_universal}, we need to combine this with an additional argument to rule out accumulation of cycles near the boundary.}
\begin{description}\item[The good algorithm.]
Suppose we have specified a (possibly empty) set $\partial$ of boundary vertices. (In applications later, $\partial$ will consist of the natural boundary of the manifold $\partial \Gamma^\d$, which may be empty, and possibly a finite set of branches already discovered in the CRSF, including some noncontractible cycles.) 
 Suppose $v \in \tilde M$ and choose $r$ small enough so that $p$ is injective in $B(v,r)$ and also $B(v,r)^\d \cap \partial  = \emptyset$.
Let $H$ be a subset of vertices of  $B(v,r/2)^\d$.
Using the generalised Wilson's
algorithm, we now prescribe a way to sample the portion of the CRSF formed by branches starting from vertices in
$p(H)$ with the specified set of boundary vertices $\partial $. Consider a sequence of scalings $\{\frac{r}{2} 6^{-j} \Z^2\}_{j
\ge 1}$. Such a scaling divides the plane into squares of sidelength $6^{-j} r/2$ which we will refer to as cells. Let $H(s)$ be the subgraph induced by all vertices within Euclidean distance $s$ (in $\tilde M$) from $H$.  Define $\cQ_j = \cQ_j(H)$ as follows. Pick one vertex from $ H(2^{-j} r)$ in each cell of $\frac{r}{2} 6^{-j} \Z^2$ so that it is farthest from $v$ (break ties arbitrarily). Now we define the \textbf{good algorithm} which proceeds as follows. Sample
branches of the CRSF from the vertices of $p(\cQ_j)$ (in some arbitrary order) using
Wilson's algorithm to obtain $\cT^\d_j$. Then increase the value of $j$ to $j+1$ and repeat
 until we exhaust
all vertices in $H$. We denote this good algorithm by $GA_{\tilde \Gamma^\d,\partial, H}(v,r)$.
{Let $\cT_H^\d$ the set of branches containing $H$. It is clear that the branches sampled with the good algorithm $GA_{\tilde \Gamma^\d,\partial, H}(v,r)$ contains $\cT_H^\d$.}
\end{description}

\medskip The proof of the following lemma is exactly the same as in Lemma 4.18
in \cite{BLR16} hence we do not provide a proof here and simply refer to that paper.

\begin{lemma}[Schramm's finiteness lemma \cite{BLR16,SLE}]\label{lem:Schramm_finiteness}
 Fix $\ve>0$ and let $v,r,H$ be as above. Then there exists an integer $j_0 =
j_0(\ve)$ depending solely on $\eps$ and the crossing constants from \cref{sec:setup} such that for all $j \ge j_0$ and all $\delta \le \min\{\delta_{{\bar B(v,r)}} ,C6^{-j_0}\delta_0 r\}$, where $\delta_{{\bar B(v,r)}},\delta_0$ are as in assumption (\ref{crossingestimate}), the
following holds with probability at least $1-\ve$:
\begin{itemize}
 \item The random walks emanating from all vertices in $\cQ_j(H)$ for $j > j_0$ stay in
$\cup_{z\in H }B(z,r/4)$.
\item All the branches of $\cT^\d$ sampled from vertices in $\cQ_j
\cap B(v,r/2)$ for $j > j_0$ until they hit $\cT^\d_{j_0}
\cup \partial$ or complete a noncontractible loop have Euclidean
diameter at most $\ve r$. More precisely, the connected components of
$\cT_H^\d \setminus \cT^\d_{j_0}$ within $B(v, r/2)$ have Euclidean
diameter at most $\eps r$.
\end{itemize}
\end{lemma}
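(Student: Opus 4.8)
The idea is to follow the same structure as in \cite{BLR16}: the good algorithm $GA_{\tilde \Gamma^\d,\partial,H}(v,r)$ processes vertices scale by scale, so it suffices to control, for each scale $j>j_0$, the probability that \emph{some} branch emanating from $\cQ_j(H)$ has Euclidean diameter exceeding $\eps r$ or leaves $\cup_{z\in H}B(z,r/4)$, and to sum these probabilities over $j$ with enough room to spare. The key point is that by the time we reach scale $j$, all of $\cT^\d_{j_0}$ (and in particular a dense-enough net of previously sampled branches at coarser scales, plus $\partial$) has already been sampled; so a branch started at a vertex of $\cQ_j$ is a loop-erased random walk that gets absorbed as soon as it hits this already-sampled set (or forms a noncontractible loop). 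Since a loop-erasure is contained in the range of the underlying walk, it is enough to bound the diameter of the random walk trajectory itself before absorption.

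First I would set up the counting: for $j>j_0$ there are at most $O(6^{2j})$ cells of $\frac r2 6^{-j}\Z^2$ meeting $H(2^{-j}r)$ (using the bounded density assumption (i) and that $H\subset B(v,r/2)$), hence at most $O(6^{2j})$ starting vertices in $\cQ_j$. For each such vertex $w$, I want to show that a random walk from $w$, before traveling Euclidean distance $\eps r$ (say), either hits $\cT^\d_{j_0}\cup\partial$ or closes a noncontractible loop, except with probability at most $c_0(6^{-j}r/(\eps r))^{c_1}$ for $j$ large — this is exactly where \cref{lem:SRW_hit} enters. The reason such an absorbing set is close to $w$ is the construction of $\cQ_{j}$ at coarser scales: the vertices chosen at scale $j'\le j$ are within distance $2^{-j'}r$ of $H$ and one per cell of sidelength $6^{-j'}r/2$, so the branches already grown at scales up to $j-1$ (or the set $\partial$, or a noncontractible loop guaranteed locally by \cref{lem:uniform_avoidance} when no boundary is near) form a connected set of diameter $\gtrsim 6^{-(j-1)}r$ within distance $O(6^{-j}r)$ of $w$; \cref{lem:SRW_hit} with $R\asymp \eps r$ then gives the polynomial bound. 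Summing $6^{2j}\cdot (6^{-j}/\eps)^{c_1}$ over $j\ge j_0$ converges and is $<\eps$ once $j_0=j_0(\eps)$ is large, provided $c_1>2$; if $c_1\le 2$ one first iterates the crossing estimate to boost the exponent (as in \cite{BLR16}), or equivalently applies \cref{lem:SRW_hit} across several dyadic scales at once. The two bulleted conclusions are then both consequences of this estimate: staying in $\cup_z B(z,r/4)$ follows since each branch has diameter $\le \eps r$ and starts within $2^{-j}r\le r/8$ of $H$, and the statement about connected components of $\cT_H^\d\setminus\cT^\d_{j_0}$ within $B(v,r/2)$ follows because any such component is built entirely from branches sampled at scales $j>j_0$, each of small diameter, and chained together only through vertices lying in $\cup_{z\in H}B(z,r/4)$.

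The main obstacle, and the reason this is not completely immediate from \cite{BLR16}, is the topological bookkeeping introduced by noncontractibility: a branch may terminate not by hitting the already-sampled forest but by closing a noncontractible loop, and one must make sure this does not produce a large excursion. This is handled by working in the universal cover $\tilde M$, where, by \cref{L:simply_connected}/\cref{lem:sc}, $\tilde M\setminus\tilde\partial$ is simply connected at every stage, so closing a noncontractible loop downstairs corresponds to the lifted walk hitting a \emph{different} preimage sheet of the already-sampled set; since $p$ is injective on $B(v,r)$ and $r$ was chosen so small, these other sheets are far from $w$ on the scale $\eps r$, so \cref{lem:SRW_hit} applies verbatim with the lifted walk and the bound is unaffected. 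Modulo this point, every step is a routine adaptation of Lemma 4.18 in \cite{BLR16}, which is why we only sketch it here and refer to that paper for the remaining details.
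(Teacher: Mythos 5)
The paper itself does not supply a proof of this lemma; it simply refers to Lemma~4.18 of \cite{BLR16} and states the proof is identical. Your sketch correctly reconstructs the Schramm-style argument from \cite{BLR16}---scale-by-scale decomposition via the good algorithm, counting $O(6^{2j})$ cells at scale $j$, applying the soft Beurling estimate of \cref{lem:SRW_hit}, and summing over scales with exponent boosting if $c_1\le 2$---and your observation that the whole analysis lives in the universal cover (so that, by \cref{L:simply_connected}, the complement of the already-sampled set stays simply connected and the planar argument of \cite{BLR16} applies verbatim to the lifted walk) is precisely what makes the paper's one-line reduction to \cite{BLR16} legitimate.

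There is, however, an internal inconsistency in the step where you invoke \cref{lem:SRW_hit}. You assert that the absorbing connected set within distance $O(6^{-j}r)$ of $w$ has ``diameter $\gtrsim 6^{-(j-1)}r$'' and then apply \cref{lem:SRW_hit} with $R\asymp\eps r$. But \cref{lem:SRW_hit} requires the target set $K$ to have Euclidean diameter at least $R$: with diameter only $\asymp 6^{-(j-1)}r$ you would be entitled to take $R\asymp 6^{-(j-1)}r$, giving a bound $\asymp 6^{-c_1}$ \emph{independent of $j$}, which does not survive the union bound over $\asymp 6^{2j}$ cells. What you actually need, and what is true, is that the connected set $\tilde\partial\cup\tilde\cT^\d_{j_0}$ already has Euclidean diameter of order $r$ (hence $\gg\eps r$): the vertices of $\cQ_{j_0}$ are spread over all of $H$, each lifted branch terminates either on $\tilde\partial$ or on a different preimage sheet of the sampled set (both macroscopically far from $w$, since $p$ is injective on $B(v,r)$), and these pieces are chained together into a single connected macroscopic set. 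With that correction one applies \cref{lem:SRW_hit} with $R\asymp\eps r$ and $\dist(w,K)\asymp 6^{-j}r$ and recovers the summable bound. Relatedly, the phrase ``a noncontractible loop guaranteed locally by \cref{lem:uniform_avoidance}'' is a misreading: that lemma produces a noncontractible loop for the \emph{walk} with positive probability; it does not place one in the already-sampled absorbing set. The macroscopic structure comes from $\tilde\partial$ together with the branches sampled at scale $j_0$, as above, not from \cref{lem:uniform_avoidance}.
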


\begin{proof}[Proof of \cref{thm:CRSF_universal}]
 {We first explain the main idea.} Using \cref{thm:main_tech} and \cref{lem:MC_branch}, we know that the portion of the discrete wired oriented CRSF sampled in any finite number of Wilson algorithm steps (i.e., after a finite number of branches) converges in law to a subset of $M$ sampled using the continuum algorithm described in \cref{SS:wilsoncontinuous}. Fix $\ve >0$. We will prescribe a way to sample the finite number of branches $\mathsf B$ so that the diameter of each remaining branch is at most $\ve$ with probability at least $1-\ve$ for all small enough $\delta$. This will complete the proof as this implies that the law of the discrete CRSF is Cauchy in the L\'evy--Prokhorov metric associated with the Schramm topology. Also, we know from \cref{thm:main_tech} that the law of the finite number of branches sampled above is close to the continuum branches sampled using the continuum Wilson's algorithm which does not depend on the sequence $\Gamma^\d$ chosen as long as it satisfies the conditions in \cref{sec:setup}. Hence the limiting law also is independent of the choice of $\Gamma^\d$.

\medskip
{Let us now give more details.} We concentrate on the hyperbolic case; the parabolic case (i.e., a torus under our assumptions) is almost exactly the same and in fact a little simpler.
An essential difference from the simply connected case consists in ruling out an accumulation of noncontractible loops near the boundary. For points in the bulk, one can simply invoke \cref{lem:Schramm_finiteness}. Let $K\subseteq \bar M$ be a compact subset (which may include portions of the boundary of $M$). Given $c>0$, consider an open cover of $K$ with $\cup_{z \in K} B_{M}(z,r_z)$ such that $r_z <c$ for all $z$ and $p$ is injective on every component of $p^{-1}(B_{M}(z,r_z))$ (recall $B_M$ is the ball induced by the metric in $M$ and this can include a portion of the boundary of $M$). Call a finite subcover $C(K,c)$ for any such choice of $K,c $ and some fixed choice of $r_z$.

Let $\eps>0$.
Consider a finite cover $C(\overline M,\eps)$, with balls $B_1, \ldots, B_\ell$ (so $\ell$ is the number of sets in the cover). Now by the ``boundary Beurling" estimate (showing the boundary is hit quickly if one starts close to it) \cref{lem:Beurling}, we can choose a small $\beta = \beta( \eps)<\eps$ and $\delta=\delta(\beta)$ small so that for every vertex $v \in \Gamma^\d$ with $d_{M}(v,\partial M) < \beta$, the diameter (in the metric of $\bar M$) of a branch sampled from $v$ is at most $\ve/2$ with probability at least $1-\ve/\ell$.  Let $C'$ be the
set of balls in $C(\bar M, \eps)$ that intersect $\partial M$; call $\ell'$ the number of such balls and say without loss generality they are $B_1, \ldots, B_{\ell'}$. In each such ball $B_i$ ($1\le i \le \ell'$), let $v_i$ be a vertex such that $\beta / 2 \le d_M(v_i, \partial M) \le \beta$.

Let  $\cE_1$ be the event that none of the branches sampled from $\cV' = \{v_1, \ldots, v_{\ell'}\}$ has diameter more than $\ve$. By a union bound, $\cE_1$ has probability at least $1-\eps$ since $\ell' \le \ell$. Since every noncontractible loop in the manifold has $d_M$-length lower bounded by $\lambda>0$ which depends only on $M$ (as it has no punctures and finitely many holes), we can assume without loss of generality that $\ve$ is small enough so that on $\cE_1$, all the branches sampled from $\cV'$ don't form a noncontractible loop and hit the unique boundary component within distance $\eps>0$.

\medskip Let $D_{\beta} =\{z:d_{M}(z,\partial M) \ge \beta/2  \}$. Let $C(\overline D_\beta,\beta/4) = \cup_{i=1}^k B_{M}(z_i,r_{z_i})$. Pick $\eta>0$ (depending only on $\beta$ and hence only on $\eps$) small enough such that $p^{-1}(B_{M}(z_i,r_{z_i}))$ has at least one component in $(1-\eta) \D$ for all $1\le i\le k$. Note that the number of such components contained in $(1-\eta) \D$ is finite.
For each $1\le i \le k$, we fix $K_i$ to be one such pre-image of $B_M(z_i, r_{z_i})$ contained in $(1-\eta) \D$ and let $w_i:= p^{-1}(z_i) \cap K_i$.

Let $K_i^\d$ be the set of vertices of $\tilde \Gamma^\d$ in $K_i$.
For each $K_i$, we can define $\cQ_{j,w_i}:= \cQ_j(K_i^\d)$ as in \cref{lem:Schramm_finiteness}. We now sample the branches from  $(\cQ_{j,w_i})_{j \ge 1}$ as prescribed by the good algorithm. Since the Euclidean metric is conformally equivalent to the lift of the metric $d_M$ to $\D$, we see that there is an $\tilde \ve$ (depending only on $\eta$ and $\eps$, and hence only on $\eps$) such that if the Euclidean diameter of a connected set $X$ intersecting $(1-\eta)\D $ is less than $\tilde \ve$ in $\D$ then the diameter of $p(X) $ is at most $\ve$. Using \cref{lem:Schramm_finiteness}, we pick a $j_0$ depending on $\tilde \eps$ and $\beta$ (and so only on $\eps$)
such that for all $\delta < \delta(\beta)$, the following event $\cE_2$ holds with probability at least $1-\eps$: the Euclidean diameter of all the branches starting from vertices from $(\cQ_{j,w_i})_{1 \le i \le k,j>j_0}$ sampled according to the good algorithm is at most $\tilde \ve$.

\medskip Now consider the finite set of branches $\mathsf B$ consisting of all the branches starting from $v_i$, $1\le i \le \ell'$ and all the branches in $\cQ_{j, w_i}$, $j \le j_0$ and $1\le i \le k$. (This is the set $\mathsf B$ discussed at the start of the proof).
To finish the proof it therefore remains to point out that, once the branches $\mathsf B$ have been sampled,
conditional on the event $ \cE_1\cap \cE_2$, by planarity all the other branches deterministically have diameter in $M$ smaller than $6\eps$. Indeed, all the other branches are trapped in a cell of diameter at most $6 \ve$ on $\cE_1\cap \cE_2$. For $\delta < \delta(\beta)$ (and so $\delta$ small enough depending only on $\eps$), $\P( \cE_1 \cap \cE_2) \ge 1 - 2\eps$. This completes the proof of convergence in \cref{thm:CRSF_universal}. The proof of superexponential tail of the number of noncontractible cycles is immediate from  \cref{lem:non-contractible_cont} below.
\end{proof}

\begin{corollary}\label{lem:non-contractible_cont}
Let $(C_1^\d,\ldots, C_{K^\d}^\d)$ be the set of noncontractible cycles of
a wired oriented CRSF. Then
$$
 (C_1^\d,\ldots, C_{K^\d}^\d )
\xrightarrow[\delta \to 0]{(d)} (C_1,C_2, \ldots, C_{K})
$$
where $C_1, \ldots, C_{K}$ are almost surely disjoint noncontractible simple loops in $M$.  
Furthermore, for all $\ve>0$, there exists $C(\ve)<\infty$ such that for all $\delta$ small enough (depending on $\ve$), for all $k \ge 1$,
\begin{equation}
 \P(K^\d>k) \le C(\ve) \ve^k. \label{eq:non-contractible_tail}
\end{equation}
\end{corollary}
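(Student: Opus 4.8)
\emph{Proof proposal.}
The convergence statement should be extracted from \cref{thm:CRSF_universal}. That theorem gives convergence in law, in the Schramm topology, of the whole wired oriented CRSF $\cT^\d$ to a limit $\cT$, and the noncontractible cycles $(C^\d_1,\dots,C^\d_{K^\d})$ can be read off from $\cT^\d$ as exactly its cycles (in a CRSF every cycle is noncontractible and distinct cycles are vertex-disjoint). Working on a probability space where the convergence holds almost surely, I would argue: (i) each $C^\d_i$ has a subsequential limit $C_i$ which is again a loop, since the Schramm topology controls the path up to reparametrisation; (ii) $C_i$ is noncontractible, because $M$ is nice and hence admits $\rho=\rho(M)>0$ such that every ball of radius $\rho$ is simply connected, so each $C^\d_i$ has $d_M$-diameter at least $\rho$, this passes to the limit, and a loop everywhere within distance $<\rho$ of a noncontractible loop is itself noncontractible; (iii) the $C_i$ are pairwise disjoint and simple, which I would obtain from the Beurling-type separation estimate \cref{lem:SRW_hit} applied (inside Wilson's algorithm) to the branch producing one cycle in the presence of another, ruling out two cycles, or a cycle and its own past, coming within distance $o(1)$ along a macroscopic portion. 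The tightness implicitly needed to carry out this extraction, and to know how many $C_i$ there are, is precisely \eqref{eq:non-contractible_tail}, to which I turn now.

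For \eqref{eq:non-contractible_tail}, fix $\ve>0$ and run Wilson's algorithm in the order dictated by the good algorithm of \cref{lem:Schramm_finiteness}: cover $\overline M$ by finitely many small balls on which $p$ is injective and of diameter $<\rho$, fix in the universal cover a corresponding finite family of balls, and sample branches from their nets $\cQ_j$, coarse scales $6^{-1},6^{-2},\dots$ first, dealing with branches started near $\partial M$ separately as in the proof of \cref{thm:CRSF_universal} (such branches hit $\partial M$ and form no cycle). Since a branch completing a noncontractible cycle must reach $d_M$-diameter $\ge\rho$, \cref{lem:Schramm_finiteness} provides $j_0=j_0(\ve)$ such that, with probability $\ge1-\ve$, no branch started from $\cQ_j$ with $j>j_0$ completes a cycle; on this event $K^\d\le C_0(\ve)$, the total number of vertices in the nets of scales $\le j_0$, a finite number independent of $\delta$. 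To promote this to super-geometric decay I would iterate, using that the threshold $j_0(\ve)$ depends only on $\ve$ and the crossing constants, and that \cref{lem:Schramm_finiteness} is already formulated so as to apply conditionally on an arbitrary set of previously discovered branches (including noncontractible cycles): if an extra cycle is nonetheless discovered, one conditions on all branches sampled so far and re-applies \cref{lem:Schramm_finiteness} to the remaining vertices, so the conditional probability of discovering yet another cycle is again $\le\ve$. Compounding these estimates over the (at most $t$) scales on which a configuration with $K^\d>C_0(\ve)+t$ is forced to produce an extra cycle gives $\P(K^\d>C_0(\ve)+t)\le\ve^{t}$, i.e.\ \eqref{eq:non-contractible_tail} with $C(\ve)=\ve^{-C_0(\ve)}$.

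The crux is this last step: converting the single-scale ``finiteness'' output of \cref{lem:Schramm_finiteness} into genuine super-geometric decay in $k$. A naive union bound over the $\asymp 6^{2j}$ vertices of the scale-$6^{-j}$ net against the Beurling escape probability $\asymp 6^{-jc_1}$ fails, since the Beurling exponent satisfies $c_1<2$; the whole point of the good algorithm — which is what makes \cref{lem:Schramm_finiteness} true — is that the structure it builds at coarser scales blocks most branches at finer scales, so the relevant escapes telescope rather than merely adding up. One therefore has to re-run that argument while bookkeeping the number of cycles that appear and check that the geometric-in-$j$ decay of the per-scale ``bad'' probability survives the $\binom{\cdot}{t}$ choices of the $t$ offending scales to leave a factor $\ve^{t}$. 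The remaining ingredients — robustness of noncontractibility under path limits on a nice surface, and a.s.\ simplicity and disjointness of the limiting cycles — are soft, relying only on the macroscopic lower bound $\rho(M)>0$ on the diameter of noncontractible loops and on \cref{lem:SRW_hit}.
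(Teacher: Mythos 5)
Your argument matches the paper's: convergence is extracted from \cref{thm:CRSF_universal}, and the tail bound is obtained by iterating Schramm's finiteness, conditioning on the branches already sampled whenever a new noncontractible cycle appears, so that each extra cycle costs a conditional factor $\ve$. The hand-wringing in your final paragraph about ``surviving the $\binom{\cdot}{t}$ choices of offending scales'' is unnecessary: the paper's proof simply observes that the probability that all remaining branches stay small (hence contractible) is \emph{non-decreasing} in the set of branches already discovered, since enlarging $\partial$ can only shrink escape probabilities and the threshold $j_0(\ve)$ in \cref{lem:Schramm_finiteness} is uniform in $\partial$. This gives a conditional bound of $\ve$ per additional cycle regardless of the past, i.e.\ stochastic domination by a geometric random variable, whence $\P(K^\d > |\mathsf B|+t)\le\ve^t$ directly, with no union bound over scales and no combinatorial loss. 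The version you wrote in your middle paragraph (``condition on all branches so far and re-apply \cref{lem:Schramm_finiteness}'') is already the right argument and does not suffer the defect you fear.
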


We remark that Lemma 10 of Kenyon and Kassel \cite{KK12} provides a proof of  \eqref{eq:non-contractible_tail}, but we still include a proof for completeness and since it is rather short in our setting given the developed technology.
\begin{proof}
First note that the convergence in law of the set of noncontractible loops is a direct consequence of the convergence of the whole CRSF (\cref{thm:CRSF_universal}).

For the tail bound, notice that in the proof of \cref{thm:CRSF_universal}, once we have sampled $\mathsf B$, none of the rest of the branches have diameter more than $\ve$ with probability at least $1-\ve$. Since a noncontractible loop must have a diameter which is uniformly lower bounded, this proves that none of the rest of the branches sampled is a noncontractible loop with probability at least $1-\ve$, and hence trivially, the same statement is true for the next branch sampled. Since this probability is non-decreasing in the number of branches sampled, the proof is completed by iterating this bound.
\end{proof}
%

\begin{remark}
The proof of the convergence of the wired oriented CRSF in \cref{thm:CRSF_universal} goes through if the manifold has a finite number of punctures. Indeed in that case, we do not need to worry about branches within distance $\ve$ of the punctures once the other branches have been sampled as they all must have diameter less than $\ve$. However, \eqref{eq:non-contractible_tail} is not true as there can be a lot of small noncontractible loops around the punctures.
\end{remark}

\section{CRSF and loop measures on surfaces }\label{sec:loop_measure0}
We now concentrate on convergence of the skeleton under $\Pwils$ and $\Ptemp$. This will be done in two steps. In \cref{S:Lawler} we prove a scaling limit result for two walks conditioned on an event of a similar nature as the Temperleyan condition, but in a simply connected domain (which one may think of as being a small disc centered at the puncture).
Then in \cref{sec:localglobal}, we see how this can be used to study the whole skeleton in a Temperleyan forest, without neglecting global topological issues; we call this step ``local to global''. In both these sections, we extensively use the connection between loop measures and spanning forests. We need an analogous theory for cycle rooted forests, which we develop in this section. We refer to the book of Lawler and Limic \cite{Lawlerbook} for a pedagogical introduction to the topic.

\subsection{Loop measures on surfaces}\label{sec:loop_measure}

It is well-known that loop-erased random walk and uniform spanning tree are related to loop measure and loop soup. In this section we review this relation in the classical case, give the extension we will need for surfaces and conclude with the main tools we will use to control the loop measure in the absence of fine estimates on the Green function. {Some of the results in this section work for any graph, not just the ones satisfying the assumptions in \Cref{sec:setup}. In this section we assume the latter is the case by default unless mentioned otherwise. We use the notation $\Gamma  = \Gamma^\d$ for the sequence of graphs embedded in a surface which fall into our setup in \Cref{sec:setup}}.

Consider a simple graph $G$, possibly oriented weighted and infinite, and let $q$ be the transition probabilities of a discrete time random walk on $G$. We emphasize that $q$ is defined on oriented edges of $G$ and that it is normalised with for all $v$, $\sum_{v'} q(v \to v') =1$. We naturally call a path in $G$ a sequence $(v_0, \ldots, v_n)$ of vertices such that $q(v_i \to v_{i+1} ) > 0$ for all $i$ and we extend the definition of $q$ to paths by simply saying that
\[
q( v_0, \ldots, v_n) = \prod_{i=0}^{n-1} q( v_i \to v_{i+1} ).
\]
We call \emph{rooted loop} a finite path $(v_0, \ldots, v_n)$ with $v_n = v_0$. We call length of a path the number of moves it makes and we denote it by $|.|$, i.e, if $\gamma = (v_0, \ldots, v_n)$ then $|\gamma| = n$ (a single vertex has length 0).
The rooted loop measure of $G$ is the measure $\mass$ defined on the set of all rooted loops by
\[
\mass( \ell ) = \frac{q(\ell)}{|\ell|}.
\]
Note that since $\mass (\ell)$ is invariant by re-rooting $\ell$, we can also see $\mass$ as a measure on \emph{unrooted loop} (formally equivalence class of rooted loops) and we will mostly think of loops as unrooted in the future.

\begin{remark}\label{rmk:loop_measure_intuition}
At this stage, it is useful to discuss briefly the qualitative behaviour that we expect from the measure $\mass$ for the sequence $\Gamma^\d$ from our setup (\Cref{sec:setup}), at least in the case where $\delta$ is very small compared to the whole manifold.
\begin{itemize}
\item The first thing we expect is that $\mass$ should satisfy some kind of scale invariance and that loops of all scale should have $O(1)$ mass:  For example, loops that have a diameter between $ R/2$ and $2R$ and intersect a given line of length $R$ should have a $O(1)$ mass independently of $R$. We will prove this in \cref{T:loopsoupRW} and more precisely \cref{cor:long_loops}.

\item There are loops of all sizes that surround a given point, and in total these have a mass which may be very large and is even infinite if the surface has no boundary. These can be extremely complicated (analogous to a Brownian motion on the torus run for time close to $\infty$, and hence essentially space-filling).

\item On the other hand, if ${M}$ has a (macroscopic) boundary, then macroscopic loops have bounded mass and very long loops (say whose length is a multiple of the typical hitting time of the boundary) have exponentially small mass.

\end{itemize}
The first two points suggest that we should not be able to really control the mass of loops if we cannot give both an upper and lower bound on their size. 
\end{remark}

\begin{remark}
Note that if $\tilde \ell$ is a simple unrooted loop, we simply have $\mass (\tilde \ell ) = q( \tilde \ell)$ so one might think that we can directly define $\mass$ on unrooted loops. There are however some minor combinatorial complications such as the following: if $\tilde \ell$ repeats the same trajectory several times (say twice) there will be a combinatorial factor to account for the fact that the loop is of length $2n$ but there are only $n$ possible roots.
\end{remark}

The main reason for us to introduce this loop-measure is that it allows us to write in a simple form the density of the uniform spanning tree measure.

\begin{prop}\label{density_exit}\label{lem:law_pair}
Let $G$ be a finite, simple, connected graph and let $\partial$ be a set of vertices of $G$. Let $\tau_\partial$ denote the hitting time of $\partial$ and assume that $\tau_\partial$ is almost surely finite independently of the starting point of the random walk. For any $v$, and for any path $\eta$ from $v$ to $\partial$ we have
\begin{equation}\label{LE1}
\P\Big( LE (X[0, \tau_\partial]) = \eta \Big) = q(\eta) \exp \mass \Big( \{\ell : \ell \cap \eta \neq \emptyset, \ell \cap \partial = \emptyset \} \Big).
\end{equation}
More generally for any $v_1, \ldots, v_n$, consider $Y_1, \ldots, Y_n$ the set of branches generated by running Wilson's algorithm from $v_1, \ldots, v_n$ and stopping at time $\tau_\partial$.
For any paths $\eta_1, \ldots, , \eta_n$ with $\eta_i$ from $v_i$ to $\partial$ and such that the $\eta_i$ merge when meeting each other, we have
\begin{equation}\label{LEn}
\P\Big( (Y_1, \ldots, Y_n) = (\eta_1, \ldots ,\eta_n) \Big) = q( \cup \eta_i ) \exp \mass \Big(\{\ell : \ell \cap (\cup \eta_i) \neq \emptyset, \ell \cap \partial = \emptyset  \}  \Big),
\end{equation}
where by $q( \cup \eta_i )$ we mean the product of the transitions probabilities over all edges of $\cup \eta_i$ (if a given edge appears in more than one path, its transition is counted just once).
\end{prop}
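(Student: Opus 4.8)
The plan is to reduce the general $n$-branch statement \eqref{LEn} to the single-branch statement \eqref{LE1} by induction on $n$, and to establish \eqref{LE1} by the classical "loop-erasure = weight $\times$ loop term $\times$ escape term" computation, combined with an observation that cancels the escape term. First I would recall the standard identity (as in Lawler--Limic \cite{Lawlerbook}, Prop.~9.5.2): for a fixed self-avoiding path $\eta = (v_0=v, v_1, \ldots, v_m)$ ending at $v_m$, the loop erasure of random walk run forever (or until some stopping set) satisfies
\[
\P\big(LE(X[0,\sigma]) = \eta\big) = q(\eta)\, \exp\!\Big(\mass\big(\{\ell : \ell\cap\eta\neq\emptyset,\ \ell\cap\partial=\emptyset\}\big)\Big)\, g_\eta,
\]
where $g_\eta$ is the probability that a walk started from $v_m$ (in our setting, $v_m\in\partial$) hits $\partial$ without returning to $\eta$. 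The key point for \eqref{LE1} is that here $\eta$ ends \emph{on} $\partial$, so the walk is already stopped; hence $g_\eta = 1$ and the escape factor disappears. This reduces the proof to the standard combinatorial expansion: enumerate random walk trajectories $X[0,\tau_\partial]$ whose loop erasure is $\eta$, decompose each such trajectory uniquely as the concatenation of loops rooted at the successive vertices of $\eta$ (the loops erased at step $i$), sum the geometric series of loop weights at each vertex $v_i$, and identify the product of these sums with $\exp\mass(\{\ell : \ell\cap\eta\neq\emptyset, \ell\cap\partial=\emptyset\})$ via the identity $\sum_k \tfrac1k (\text{loop weight})^k$-type expansion, i.e. $\exp\big(\sum_{\ell}\tfrac{q(\ell)}{|\ell|}\big) = \prod_{\text{rooted loops }\ell'} (1-q(\ell'))^{-1}$ restricted appropriately. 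I would be careful that the loops counted are exactly those meeting $\eta$ but avoiding $\partial$, which matches the stopping rule.

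For the general statement \eqref{LEn} I would induct on $n$. Write $\eta = \eta_1$ and let $\partial' = \partial \cup \eta_1$. Running Wilson's algorithm, $Y_1 = LE(X^{(1)}[0,\tau_\partial])$ has law given by \eqref{LE1}, and then conditionally on $\{Y_1 = \eta_1\}$, the branches $Y_2, \ldots, Y_n$ are exactly the branches produced by Wilson's algorithm from $v_2, \ldots, v_n$ with absorbing set $\partial' = \partial\cup\eta_1$ (this is the defining property of Wilson's algorithm, and holds regardless of the order of the $v_i$). By the induction hypothesis applied to the graph with enlarged boundary $\partial'$,
\[
\P\big((Y_2,\ldots,Y_n) = (\eta_2,\ldots,\eta_n) \,\big|\, Y_1=\eta_1\big) = q\big(\textstyle\bigcup_{i\ge 2}\eta_i \setminus \eta_1\big)\,\exp\mass\Big(\{\ell : \ell\cap(\textstyle\bigcup_{i\ge 2}\eta_i)\neq\emptyset,\ \ell\cap\partial'=\emptyset\}\Big),
\]
where $q(\bigcup_{i\ge2}\eta_i\setminus\eta_1)$ accounts for edges of the later paths not already in $\eta_1$ (since once a branch merges into $\eta_1$ it contributes no new transitions). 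Multiplying by \eqref{LE1} for $Y_1$, the edge-weight factors combine to $q(\bigcup_i \eta_i)$ with each edge counted once, and the two loop terms combine: $\mass(\{\ell : \ell\cap\eta_1\neq\emptyset, \ell\cap\partial=\emptyset\}) + \mass(\{\ell : \ell\cap(\bigcup_{i\ge2}\eta_i)\neq\emptyset, \ell\cap(\partial\cup\eta_1)=\emptyset\})$. Since the two loop sets are disjoint (the first forces $\ell\cap\eta_1\neq\emptyset$, the second forces $\ell\cap\eta_1=\emptyset$) and their union is precisely $\{\ell : \ell\cap(\bigcup_i\eta_i)\neq\emptyset, \ell\cap\partial=\emptyset\}$, additivity of $\mass$ gives exactly the exponent in \eqref{LEn}. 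I would note that the hypothesis that the $\eta_i$ merge upon meeting is exactly what guarantees that the configuration $(\eta_1,\ldots,\eta_n)$ is a possible output of Wilson's algorithm and that the decomposition above is consistent.

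The main obstacle I anticipate is purely bookkeeping rather than conceptual: getting the loop-set decomposition exactly right when the $\eta_i$ share edges, and verifying the base case's combinatorial series expansion carefully (the passage from summing over random walk bridges to the exponential of the loop measure). I would handle the first by always phrasing everything in terms of the geometric \emph{trace} $\bigcup_i \eta_i$ rather than the individual paths, and the second by citing the standard computation in \cite{Lawlerbook} and only indicating the one modification needed here, namely that the terminal escape factor $g_\eta$ equals $1$ because all paths end on the absorbing set $\partial$. A secondary subtlety is making sure the almost-sure finiteness of $\tau_\partial$ (assumed in the statement) is what is needed to justify that the relevant loop measures are finite and the geometric series converge; I would invoke this at the start of the base case.
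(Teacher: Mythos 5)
Your proposal is correct and follows essentially the same route as the paper: cite the Lawler--Limic loop-measure identity for a single loop-erased branch ending on the absorbing set, and then induct on $n$ using the Markovian structure of Wilson's algorithm (conditionally on $Y_1=\eta_1$, the remaining branches form a Wilson sample with boundary $\partial\cup\eta_1$), together with the disjoint decomposition of the relevant loop sets. The only cosmetic difference is that the paper invokes \cite[Prop.~9.5.1]{Lawlerbook} directly (which already treats paths terminating on the boundary), whereas you route through the version with an escape factor $g_\eta$ and then observe $g_\eta=1$; both are fine.
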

\begin{proof} The identity \eqref{LE1} is  Proposition 9.5.1 in \cite{Lawlerbook} (note that the setting includes nonreversible chains as needed here). The identity \eqref{LEn} then follows by induction and by the definition of Wilson's algorithm.
\end{proof}

To simplify notations when using expressions similar to the one in \cref{density_exit}, we introduce the following.
If $G$ is a graph and $U$ is a subset of its vertices, we write
\begin{equation}\label{eq:massU}
    \mass_U( \cA ) = \sum_{\ell \in \cA : \ell \subset U} \mass(\ell).
\end{equation}
When $G = \Gamma$ is embedded on a surface $M$, and if $D \subset {\cM}$ is a portion of that surface, we also write with a slight abuse of notation
$\mass_D( \cA ) = \sum_{\ell \in \cA : \ell \subset D} \mass(\ell)$.
We also write for a set $S \subset {\cM}$ the loops that intersect $S$ as
\[
\cI( S ) = \{ \ell : \ell \cap S \neq \emptyset \}
\]
where with a small abuse of notation we identify here $\ell$ with its range $\ell [ 0, |\ell|]$. 
\begin{remark}
	If $\eta_1$ and $\eta_2$ are disjoint paths, we can compare the joint law with the independent case in a particularly simple way
	\begin{equation}\label{LEnbis}
	\P((Y_1, Y_2) = (\eta_1, \eta_2) ) = e^{ - \mass_{D \setminus \partial} ( \cI(\eta_1) \cap \cI(\eta_2) )} \prod_i \P( Y_i = \eta_i).
	\end{equation}
	This observation will be used extensively in the rest of the article.
\end{remark}

\subsection{Law of CRSF branch}\label{sec:law_branch}

A crucial ingredient for this paper will be to have an analogue of \eqref{LEnbis} for branches of a {Cycle-Rooted Spanning Forest} on our surface (or more generally a manifold). We first recall that this is the law on spanning subgraphs with exactly one outgoing edge out of every vertex, and no contractible loops: recall a loop on $\Gamma^\d$ is called \textbf{contractible} if when lifted to the universal cover it is still a loop, and note that contractibility is invariant by re-rooting.
Recall from \cref{sec:Wilson} that the CRSF law can be sampled by the following adaptation of Wilson's algorithm, first described (implicitly) by Kassel and Kenyon \cite{KK12}. We start with an arbitrary vertex $v$, and run a random walk; let $Y^t = LE (X[0,t])$. We stop the random walk at the time $\tau_{NC}$  given by
\begin{equation}\label{tauNC}
  \tau_{NC} = \inf\{ t \ge 0: Y^{t-1}\oplus \{X_t\} \text{  contains a noncontractible loop} \} \wedge \tau_\partial
\end{equation}
where we identify the concatenated path $Y^{t-1}\oplus \{X_t\}$ with the sequence of edges traversed by it, and $\tau_\partial$ is the first time that the walk hits boundary $\partial$ (when $\partial=\emptyset$, $\tau_\partial = \infty$). In other words, if we erase (simple) loops as we make them, $\tau_{NC}$ is the first time where such a simple noncontractible loop is created, or we hit the boundary. (It is easy to see using \Cref{lem:Beurling,lem:uniform_avoidance} that $\tau_{NC}$ is finite almost surely). Then the branch of the CRSF $\eta_v$ containing $v$ is defined to be $Y^{\tau_{NC} -1} \oplus X_{\tau_{NC}}$. We then iterate this algorithm, adding $\eta_v$ to the boundary, until every vertex has been covered. To describe the law of a branch of the CRSF, a key complication of this paper is that we will need a more subtle notion of contractibility of a loop, which is as follows.


\begin{defn}
Suppose $\ell$ is a loop rooted at $v$.  We decompose $ \ell$ into simple loops by considering the chronological loop-erasure of $\ell$ starting at $v$. We say that $\ell$ is \textbf{Wilson-contractible} if all of these simple loops are contractible.
\end{defn}
Observe that if a rooted loop is Wilson contractible, and it visits its root $v$ several times, then re-rooting the loop at any other copy of $v$ keeps it Wilson contractible.
\begin{defn}\label{defn:eta_contractible}

Given a path $\eta$, we say that a loop $\ell$ is {\bf $\eta$-contractible} if $\ell$ intersects $\eta$ and the following holds. Let $v$ denote the point where $\eta$ hits $\ell$ for the first time, i.e $v = \eta_{\tau_\ell}$. Then $\ell$ rooted at (any copy of) $v$ is Wilson contractible. 

\begin{figure}[t]
\begin{center}
\includegraphics[scale=.3]{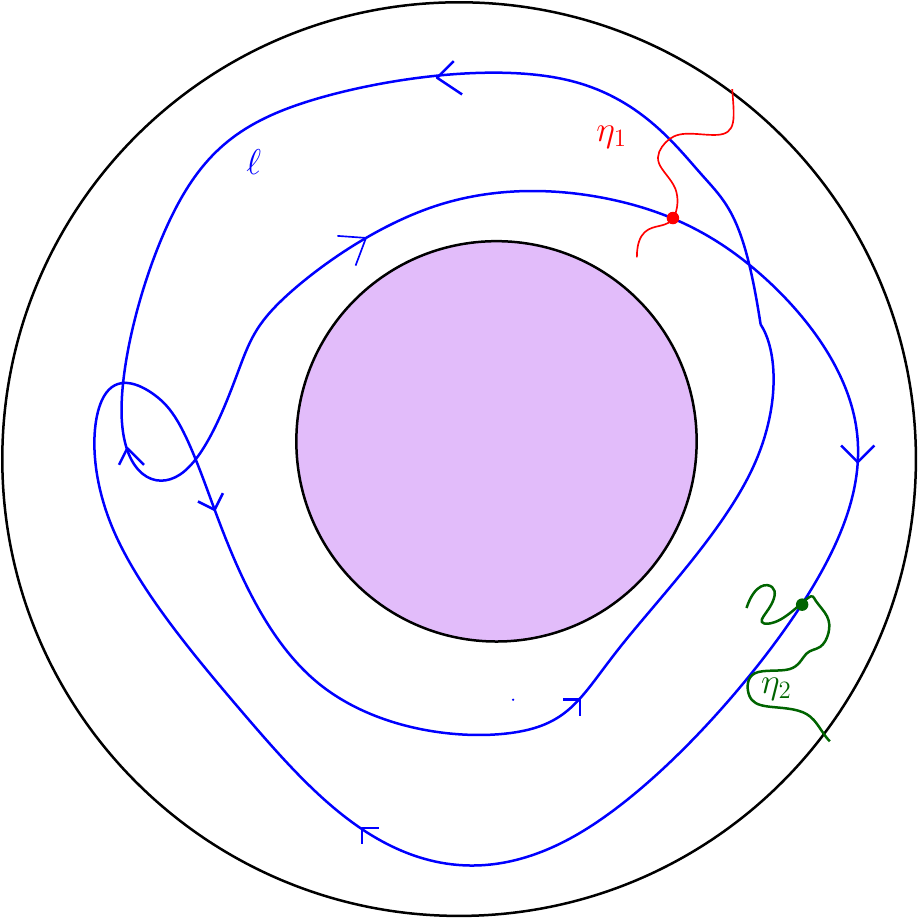}
\caption{Illustration of the difference between contractibility of a loop and the notion of $\eta$-contractible. The blue loop $\ell$ is contractible in itself. However, it is $\eta_1$-contractible and not $\eta_2$-contractible. The first intersections of $\eta_1$ and $\eta_2$ with $\ell$, which are relevant for this notion, are marked on the picture (the paths $\eta_1$ and $\eta_2$ terminate on the outer boundary of the annulus).}
\label{F:contractible}
\end{center}
\end{figure}

More generally, if $(\eta_1, \ldots, \eta_n)$ are ordered paths on the surface, then we say a loop $\ell$ is $(\eta_1, \ldots, \eta_n)$\textbf{-contractible} if the same condition holds, where we choose $v$ to be the first chronological intersection of $(\eta_1, \ldots, \eta_n)$ with $\ell$ in this order. We write $\cC_{\bs \eta}$ for the set of $\bs \eta$-contractible loops.
\end{defn}

See Figure \ref{F:contractible} for an illustration of what can go wrong. Note however that if the support of a loop is contained in a simply connected domain, then this loop is $\eta$-contractible for any $\eta$. We can now state the equivalent of \cref{density_exit} for the CRSF:

\begin{thm}\label{density_non_contractible}
For all $v$, let $Y$ denote the law of a LERW started at $v$ and stopped at $\tau_{NC}$. For any simple path $\eta$ either connecting $v$ to a boundary of $\Gamma^\d$ or finishing in a non-contractible loop, we have
\begin{equation}\label{LE_surf_1}
\P( Y = \eta ) = q(\eta) e^{\mass \{ \ell \in \cI(\eta), \eta\text{-contractible} \} }.
\end{equation}
Likewise, if $\eta_1, \ldots, \eta_n$ are candidate paths for the first $n$ steps in Wilson's algorithm,
\begin{equation}\label{LE_surf_n}
  \P( Y_1 = \eta_1, \ldots, Y_n = \eta_n ) = q( \cup_i \eta_i)e^{\mass ( \{ \ell \in \cI(\cup_i \eta_i) : (\eta_1 , \ldots, \eta_n)\text{-contractible}\})}.
\end{equation}
\end{thm}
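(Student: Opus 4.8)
\medskip\noindent
The plan is to mimic the proof of \cref{density_exit} (i.e. Proposition 9.5.1 in \cite{Lawlerbook}), tracking carefully which loops get erased during the loop-erasure procedure that is stopped at $\tau_{NC}$ rather than at $\tau_\partial$. First I would treat the single-branch case \eqref{LE_surf_1}. Write $\eta = (v_0, \ldots, v_m)$ with $v_0 = v$ and $v_m$ either on $\partial$ or the endpoint of a noncontractible loop, and decompose the random walk $X[0, \tau_{NC}]$ according to the last-exit decomposition at the successive vertices $v_0, v_1, \ldots$ of the loop-erasure. As in the classical argument, the event $\{Y = \eta\}$ factorises as $\prod_{i=0}^{m-1} q(v_i \to v_{i+1})$ times a product of contributions, where the $i$-th contribution is the sum of $q(\cdot)$ over all paths from $v_i$ to $v_i$ that avoid $\{v_0, \ldots, v_{i-1}\}$, avoid $\partial$, and — this is the key new constraint — do not, when appended to the current loop-erasure, create a noncontractible loop. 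The crucial observation is that a loop $\omega$ from $v_i$ to $v_i$ (avoiding the earlier vertices and $\partial$) can be added during this stage of the construction if and only if $\omega$ is Wilson-contractible when rooted at $v_i$: indeed, the loops erased during the excursions from $v_i$ are precisely the simple loops appearing in the chronological loop-erasure of $\omega$, and $\tau_{NC}$ is not triggered exactly when each of these simple loops is contractible. Since $v_i$ is the first point at which $\eta$ meets any loop $\ell$ contributing here (the earlier vertices $v_0, \ldots, v_{i-1}$ are forbidden), ``Wilson-contractible rooted at $v_i$'' is literally the $\eta$-contractibility condition from \cref{defn:eta_contractible}.

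\medskip\noindent
Next I would invoke the standard loop-measure identity: for a fixed vertex $v_i$ and a fixed forbidden set $S$ (here $S = \{v_0,\ldots,v_{i-1}\} \cup \partial$) together with a constraint $\mathcal{P}$ on loops that is closed under the combinatorial operations used in the proof, one has
\begin{equation}\label{eq:loopmeasexp}
\sum_{\omega : v_i \to v_i,\ \omega \cap S = \emptyset,\ \omega \text{ satisfies } \mathcal{P}} q(\omega) = \exp \mass\Big( \{ \ell : v_i \in \ell,\ \ell \cap S = \emptyset,\ \ell \text{ satisfies } \mathcal{P} \} \Big),
\end{equation}
which is proved exactly as in \cite[Section 9.5]{Lawlerbook} by expanding the exponential and matching rooted-loop decompositions; the only thing to check is that the property ``$\ell$ Wilson-contractible rooted at $v_i$'' behaves well under concatenation and re-rooting at copies of $v_i$, which is precisely the content of the Observation following the definition of Wilson-contractibility. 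Multiplying the identities \eqref{eq:loopmeasexp} over $i = 0, \ldots, m-1$, the forbidden sets are nested and increasing, so a loop $\ell$ intersecting $\eta$ gets counted exactly once — at the index $i$ equal to the first time $\eta$ hits $\ell$ — and the contractibility condition imposed there is $\eta$-contractibility. This yields \eqref{LE_surf_1}. A small point to address: one must confirm that the final excursion, the one that actually creates the noncontractible loop or hits $\partial$ at $v_m$, contributes no extra factor (it is deterministic given that we end at $v_m$), exactly as the ``$g_\eta$''-type term is handled in the classical statement — here there is no such term since we condition on the full branch including its terminal vertex.

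\medskip\noindent
For the $n$-branch case \eqref{LE_surf_n} I would proceed by induction on $n$ using the definition of Wilson's algorithm: given $Y_1 = \eta_1, \ldots, Y_{n-1} = \eta_{n-1}$, the $n$-th branch is generated by the same stopped-LERW procedure but now with $\partial$ enlarged to $\partial \cup \eta_1 \cup \cdots \cup \eta_{n-1}$. Applying \eqref{LE_surf_1} in this enlarged graph and using the induction hypothesis, the combined density is $q(\cup_i \eta_i)$ times the exponential of the $\mass$-mass of loops intersecting $\cup_i \eta_i$, avoiding $\partial$, with the contractibility constraint dictated by whichever $\eta_j$ first hits the loop in the order $(\eta_1, \ldots, \eta_n)$ — which is exactly $(\eta_1, \ldots, \eta_n)$-contractibility. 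One has to be slightly careful that a loop hitting $\eta_n$ but also hitting some earlier $\eta_j$ is \emph{not} counted again in the $n$-th step (it was already counted, with its correct contractibility label, in an earlier step), which follows because in the $n$-th step such a loop touches the enlarged boundary and is therefore excluded from $\cI$ there; this is the same bookkeeping as in \cref{density_exit}.

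\medskip\noindent
The main obstacle I anticipate is the verification of \eqref{eq:loopmeasexp} with the Wilson-contractibility constraint, i.e. checking that the combinatorial identity between "sum over constrained loops from $v_i$ to $v_i$" and "exponential of constrained loop measure" is not broken by the nonlinearity of the constraint. The classical proof works because the constraint "avoid $S$" is preserved under the operations of splitting a rooted loop at a vertex and of concatenating loops at a common vertex. For Wilson-contractibility rooted at $v_i$ one needs: (a) if $\ell$ is Wilson-contractible rooted at $v_i$ and visits $v_i$ several times, each re-rooting at a copy of $v_i$ and each splitting at $v_i$ yields Wilson-contractible pieces; and (b) concatenation at $v_i$ of Wilson-contractible loops is Wilson-contractible. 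Statement (a) for re-rooting is the Observation in the text; splitting and concatenation need a short argument about how the chronological loop-erasure of a concatenation relates to those of the pieces (the simple loops erased are a disjoint union, up to one extra simple loop straddling the splice point, which is itself contractible precisely when no noncontractible loop is created at the splice). Getting this bookkeeping exactly right — so that every rooted loop with a marked root at $v_i$ is counted with the right multiplicity in both sides — is the delicate part; everything else is a routine adaptation of \cite{Lawlerbook}.
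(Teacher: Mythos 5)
Your plan follows the paper's proof essentially step for step: decompose the walk at the successive vertices $v_0, v_1, \ldots$ of the loop erasure, observe that the excursions allowed at $v_i$ before $\tau_{NC}$ fires are exactly those that are Wilson-contractible when rooted at $v_i$ (hence the $\eta$-contractibility condition, since $v_i$ is the first hit of $\eta$ on any such loop), and resum these excursions via the identity $g(A,x) = 1/(1-f(A,x))$ and $\sum_j f^j/j = -\log(1-f) = \log g$, matching the loop-measure exponential after re-rooting at copies of $x$. The one place you over-worry is the concatenation/splitting step needed for $\sum_{\ell : d(\ell)=j} q(\ell) = f(A,x)^j$: the decomposition actually used is into first-return excursions at $v_i$, and for those the chronological loop erasure collapses back to $\{v_i\}$ at each return, so the erased simple loops of the concatenation are exactly the union of those of the pieces with no extra loop straddling the splice point — which is why the identity holds with the Wilson-contractibility constraint unchanged.
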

\begin{proof}
      The proof follows Lemma 9.3.2 in \cite{Lawlerbook} closely but with the difference that we have to keep track of non-contractible loops. Variations of this argument are also used in what follows; hence we provide a sketch of proof for completeness. Let $A $ be a subset of vertices not containing any boundary vertex and pick $x\in A$. Let $\cL_x(A)$ denote the set of rooted loops $\ell$ supported in $A$, intersecting $x$  such that if we reroot $\ell$ at (any copy of) $x$, it becomes Wilson contractible. Given a loop $\ell$, let $\ell_0$ denote its root. Let $d(\ell)$ denote the number of times $x$ is visited in a loop.

      Define 
    \begin{equation*}
        g(A,x)  = \sum_{\ell \in \cL_x(A), \ell_0 =x} q(\ell);\qquad f(A,x) = \sum_{\ell \in \cL_x(A), \ell_0 =x, d(\ell)=1} q(\ell)
    \end{equation*}
    It is straightforward to check that $g(A,x)$ is the expected number of visits to $x$ by the random walk before exiting $A$ or creating a non-contractible loop, and $f(A,x)$ is the probability of returning to $x$ before exiting $A$ or creating a non-contractible loop. It is now standard to check that $g(A,x) =1+f(A,x)g(A,x)$.

     Now note the following combinatorial fact:
    \begin{equation*}
    \sum_{\ell \in \cL_x(A)} \frac{q(\ell)}{|\ell|}  = \sum_{\ell \in \cL_x(A), \ell_0 = x} \frac{q(\ell)}{d(\ell)}
        \end{equation*}
        and
        \begin{equation*}
    \sum_{\ell \in \cL_x(A), \ell_0 = x, d(\ell) = j} q(\ell) = \left(\sum_{\ell \in \cL_x(A), \ell_0 = x, d(\ell) = 1} q(\ell) \right)^j.
            \end{equation*}
    Therefore 
    \begin{equation*}
        \sum_{\ell \in \cL_x(A), \ell_0 = x} \frac{q(\ell)}{d(\ell)} = \sum_{j=1}^\infty\frac1j\Big(\sum_{\ell \in \cL_x(A), \ell_0 = x, d(\ell) =1} q(\ell)\Big)^j = -\log(1-f(A,x)) = \log (g(A,x)).
    \end{equation*}
        
   Therefore for any $\eta$ represented as an ordered sequence of vertices $ \eta= (x_0,x_1,\ldots, x_n)$, and letting $A_j = V \setminus (\partial \cup (x_0,x_1,\ldots, x_{j-1}))$ for $j \ge 1$ and $A_0 = V \setminus \partial$,
   \begin{equation*}
       \exp\left({\mass \{ \ell \in \cI( \eta), \eta\text{-contractible} \} }\right) = \prod_{i=0}^n \exp(\mass( \cL_{x_j}(A_j))) = \prod_{i=0}^n g(A_j, x_j). 
   \end{equation*}
   Indeed the first equality follows from the observation that the set of all $\bs \eta$ contractible loops can be partitioned into those which are Wilson contractible rooted at $x_0$, and then those which are Wilson contractible when rooted at $x_1$ and does not intersect $x_0$, and so on.
   Finally observe that (by definition of $g$),
   \begin{equation*}
          \P( Y = \eta ) = q(\eta) \prod_{i=0}^n g(A_j, x_j),
   \end{equation*}
   which completes the proof.
  \end{proof}

\begin{remark}
  Note that although the order of the vertices is irrelevant in Wilson's algorithm, it is \emph{a priori} not obvious that the expression in the identity \eqref{LE_surf_n} gives the same answer when we switch the order of the paths. {However in light of Wilson's algorithm, the loop measure term is invariant under reordering of the paths. Also for two different $\bs \eta$'s the loops which are $\bs \eta$-contractible for one but not for the other must have a non-contractible support, and we will see later in \cref{L:fewcontractible} that this set of loops has bounded mass.}
\end{remark}

  In Wilson's algorithm for a Uniform Spanning Tree, a remarkable and extremely useful feature is that the loops generated along the way form a realisation of the Poissonian loop soup with intensity $\mass$ (and are furthermore independent of the UST). 
This relationship is in turn a convenient way of proving estimates about the loop measure $\mass$, as we will see (Theorem \ref{T:loopsoupRW}). For what follows it is useful to explain this in a bit more detail. Given a random walk trajectory $(X_t)_{0 \le t \le \tau_D}$ considered up until its hitting time of the boundary, there is a well defined operation (involving extra randomness), which associates to $X$ a collection $\ell (X)$ of unrooted loops. This collection is obtained from merging the (necessarily simple) loops erased from $X$ in Wilson's algorithm, and resplitting them appropriately with the help of a suitable Poisson--Dirichlet distribution; see for instance \cite[Proposition 5.4 and Remark 15]{Chang2016} for details. We skip the details of the description as we will not be needing the exact description of the surgery behind this merging and splitting procedures. Rather, the fact that there is a Poissonian description of these loops is all we will use later.

We will state an analogue of this fact for CRSFs. However, in this case, because of the $\eta$-contractibility condition in Theorem \ref{density_non_contractible}, the relationship between loops coming from Wilson's algorithm and the Poissonian loop soup is more subtle. We can summarise this relationship as follows.
 
  \begin{thm}
    \label{T:loopsNC}
     Let $\cT$ be the CRSF obtained by performing Wilson's algorithm (under no conditioning) on $\Gamma^\d$ satisfying the assumptions of \Cref{sec:setup}. We view $\cT$ as an ordered sequence of paths $\cT = (\eta_1, \ldots, \eta_r)$. Then given $\cT$, the set of loops $\cup_i \ell (X^i)$ erased by Wilson's algorithm applied to the successive random walk paths $X^1, X^2, \ldots,$ is a Poisson point process with intensity $\mass|_{\{\ell: \ell \text{ is $\cT$-contractible}\}}$, viewed as a collection of loops in the space of unrooted loops\footnote{This Poisson point process is usually called the \emph{loop soup} in the case of the Uniform Spanning Tree.} 
    
    The same statement holds for the measure $\Lambda$ if we replace $\Gamma^\d$ by an  infinite, transient, connected graph and $\cT$ is a wired uniform spanning forest on it. 
  \end{thm}


As mentioned above, this result is known in the setting of finite Uniform Spanning Trees (see \cite[Proposition 5.9]{lawler2018topics} for a slightly weaker version and \cite[Proposition 5.4 and Remark 15]{Chang2016} for the full result). The result extends readily to the setting of CRSFs; see \cref{density_non_contractible} for a more elaborate example of how to implement such an extension.

The wired uniform spanning forest on an infinite transient graph can also be sampled using Wilson's algorithm `wired at infinity': simply run Wilson's algorithm where random walks are run forever or until they hit the boundary, started from an arbitrary ordering of vertices (see e.g. \cite{LyonsPeres}). Thus, the theorem also extends to the setting of wired uniform spanning forest on infinite transient graphs in a straightforward manner.

{Note that \cref{T:loopsNC} shows a big difference between the loops generated by Wilson's algorithm and the loop soup: indeed, there will be only a finite number of loops of diameter greater than some fixed macroscopic value $\epsilon>0$, even on a surface with no boundary such as the torus. On the other hand, the loop soup on such a surface contains an infinite number of loops of duration many times bigger than the mixing time or even the cover time of the torus.}


\subsection{Marginal on partial paths}\label{sec:law_marginal}

In this section, the goal is to study the law of a small connected portion of an UST or a CRSF branch containing its starting point in terms of loop measures. We are also interested in expressing the joint law of the beginning portions of two branches as well, again both in the context of CRSF or UST. The relevance of these  will appear later, in particular in \cref{S:local_global} when we will transfer estimates from the simply connected to the Riemannian setting. We will, in the following, mostly use $\eta, \eta_-, \eta_+$ for partial paths, while we will use $\gamma$ and $Y$ for completed paths.

\subsubsection{Marginals on UST paths}
We start with well-known results for a single path in the classical setting of spanning trees.

\begin{prop}\label{prop:marginal_single}
	Let $G$ be a graph (finite or infinite) 
	and let $\partial$ be a set of vertices such that the hitting time of $\partial$ (call it $\tau_\partial$) is almost surely finite independently of the starting point of the random walk on $G$. Fix $x_0, x_-,x_+ \in G$ and let $Y$ denote a LERW started from $x_0$ and stopped when hitting $\partial$. Fix also $\eta_-$ a self avoiding path starting in $x_0$ and ending in $x_-$. Fix also another self-avoiding path $\eta_+$ starting from $x_+$ and ending in $\partial$. Then we have
\begin{align}
\P( \eta_- \subset Y) & = q(\eta_-)e^{\mass_{G \setminus \partial}(\cI(\eta_-))} \P_{x_-}[ \tau_\partial < \tau_{\eta_-} ] \nonumber\\
& = q(\eta_-)e^{\mass_{G \setminus \partial}(\cI(\eta_-))} \sum_{\gamma \supset \eta_-} q(\gamma \setminus \eta_-) e^{\mass_{G \setminus (\partial \cup \eta_-)} ( \cI( \gamma))},\label{partial_single_SC}
\end{align}
where we use $\tau_{\eta_-}$ for the first hitting time of $\eta_-$ or similar notations for other sets and $\P_{x_0}$, $\P_{x_-}$ are the laws of random walk started in $x_0$ and $x_-$ respectively. The sum is over all self avoiding paths $\gamma \supset \eta_-$ starting at $x_0$ and ending at $\partial$.
Likewise,
\begin{align}
\P( \eta_+ \subset Y) &= q(\eta_+) e^{\mass_{G \setminus \partial}(\cI(\eta_+))} \P_{x_0}[X(\tau_{\partial \cup \eta_+} ) = x_{+}]  \label{eq:marginal_single1} \nonumber\\
\P( \eta_- \subset Y, \eta_+ \subset Y) & =q(\eta_-) q(\eta_+) e^{\mass_{G \setminus \partial}( \cI( \eta_- \cup \eta_+))} \P_{x_-} [X(\tau_{\eta_- \cup \eta_{+} \cup \partial}) = x_{+} ]
\end{align}
\end{prop}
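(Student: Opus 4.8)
The plan is to deduce all three identities from \cref{density_exit}, together with one structural property of the loop measure which I will call the \emph{restriction identity}: for any path $\eta$, any superset $\gamma \supseteq \eta$ and any vertex set $B$,
\[
\mass_{G \setminus B}(\cI(\gamma)) = \mass_{G \setminus B}(\cI(\eta)) + \mass_{G \setminus (B \cup \eta)}(\cI(\gamma)).
\]
This is immediate from the definition of $\mass$ as a sum over loops: a loop contained in $G\setminus B$ that meets $\gamma$ either meets $\eta$ — contributing to the first term, irrespective of whether it also meets $\gamma\setminus\eta$ — or avoids $\eta$ altogether, in which case it is automatically contained in $G\setminus(B\cup\eta)$ while still meeting $\gamma$, contributing to the second term; no loop is counted twice, so the identity is exact. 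I will also use repeatedly that $q$ is multiplicative under concatenation of paths that overlap only at an endpoint.

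For the first identity: since $Y$ is self-avoiding and both $Y$ and $\eta_-$ issue from $x_0$, the event $\{\eta_- \subseteq Y\}$ is the disjoint union $\bigsqcup_\gamma \{Y = \gamma\}$ over self-avoiding paths $\gamma$ from $x_0$ to $\partial$ of the form $\gamma = \eta_- \oplus \gamma'$ with $\gamma' \cap \eta_- = \{x_-\}$. By \eqref{LE1bis} each summand equals $q(\gamma) e^{\mass_{G\setminus\partial}(\cI(\gamma))}$; applying the restriction identity with $\eta = \eta_-$, $B=\partial$, together with $q(\gamma) = q(\eta_-)q(\gamma')$, this factors as $q(\eta_-)e^{\mass_{G\setminus\partial}(\cI(\eta_-))}\cdot q(\gamma')e^{\mass_{G\setminus(\partial\cup\eta_-)}(\cI(\gamma'))}$. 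Summing over $\gamma'$, the remaining sum is, by a version of \eqref{LE1bis} for a walk started at $x_-$ with boundary enlarged to $\partial\cup\eta_-$, exactly the probability that a random walk from $x_-$ reaches $\partial$ before returning to $\eta_-$; here one must take a single explicit first step out of $x_-\in\eta_-$ and only then invoke \eqref{LE1bis} (the walk being now genuinely outside $\partial\cup\eta_-$). This yields \eqref{partial_single_SC}, with $\tau_{\eta_-}$ read as the first \emph{return} time to $\eta_-$. The same scheme gives the other two displays. For $\P(\eta_+\subseteq Y)$, self-avoidance forces $\eta_+$ to be a terminal segment of $Y$, so $Y = \gamma'' \oplus \eta_+$ with $\gamma''$ from $x_0$ to $x_+$; factoring out $\eta_+$ via the restriction identity leaves $\sum_{\gamma''} q(\gamma'')e^{\mass_{G\setminus(\partial\cup\eta_+)}(\cI(\gamma''))}$, which by \eqref{LE1bis} for the walk from $x_0$ with boundary $\partial\cup\eta_+$ equals $\P_{x_0}[X(\tau_{\partial\cup\eta_+}) = x_+]$. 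For the joint event, $Y = \eta_- \oplus \gamma' \oplus \eta_+$ with $\gamma'$ from $x_-$ to $x_+$; one now pulls $\mass_{G\setminus\partial}(\cI(\eta_-\cup\eta_+))$ out of the exponent (restriction identity with $\eta=\eta_-\cup\eta_+$) and recognises $\sum_{\gamma'} q(\gamma')e^{\mass_{G\setminus(\partial\cup\eta_-\cup\eta_+)}(\cI(\gamma'))}$, via \eqref{LEn}/\eqref{LE1bis} for the walk from $x_-$ with boundary $\partial\cup\eta_-\cup\eta_+$, as $\P_{x_-}[X(\tau_{\eta_-\cup\eta_+\cup\partial}) = x_+]$.

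I do not expect a genuine obstacle; the content is bookkeeping organised around \cref{density_exit}. The two points that need care are: (i) justifying that the set-inclusion $\eta\subseteq Y$ really pins $\eta$ down as a prefix (respectively a suffix, respectively both), which is where self-avoidance of $Y$ enters and which fixes the index set of the decomposition as precisely the admissible extensions of $\eta$; and (ii) the consistent treatment of the tip vertices $x_\pm$ when a path is adjoined to the boundary — the free portion of $Y$ then starts (or ends) on the enlarged boundary, so the relevant hitting times of $\eta_\pm$ become return times, and the reduction to \eqref{LE1bis} must be routed through one explicit first step. Once these conventions are in place the factorisations are exact and the three identities follow.
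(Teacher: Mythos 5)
Your proof is correct, and it takes a genuinely cleaner route than the paper's for the $\eta_+$ and joint cases. For the two expressions for $\P(\eta_-\subset Y)$ you do essentially what the paper does: decompose over completions, factor the exponent via your restriction identity, and recognise the resulting sum via \eqref{LE1bis}. Where you diverge is \eqref{eq:marginal_single1} and the joint formula: the paper's proof appeals to the lemma that chronological and reverse-chronological loop erasure of the random walk path give the same law (Lawler, Lemma~7.2.1), so that a suffix constraint on $Y$ can be recast as a prefix constraint after reversal; you instead decompose $Y=\gamma''\oplus\eta_+$ directly, use the restriction identity with $\eta=\eta_+$ (resp. $\eta=\eta_-\cup\eta_+$) to peel off $\mass_{G\setminus\partial}(\cI(\eta_+))$, and resum the free part via \eqref{LE1bis} with boundary enlarged to $\partial\cup\eta_+$, yielding $\P_{x_0}[X(\tau_{\partial\cup\eta_+})=x_+]$ because the loop-erasure's terminal point is the walk's first hitting point of the boundary. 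This is an honest, self-contained derivation that avoids the time-reversal lemma entirely; it is also more uniform in that prefix, suffix and sandwich cases are handled by the same manipulation. Two small points you handle correctly and which are left implicit in the paper: the observation that $\eta_\pm\subset Y$ \emph{as paths} pins $\eta_\pm$ down as a prefix/suffix (self-avoidance of $Y$ plus the fact that $\eta_+$ and $Y$ both terminate on $\partial$, which $Y$ only meets at its last vertex), and the reading of $\tau_{\eta_-}$ and $\tau_{\eta_-\cup\eta_+\cup\partial}$ as strictly positive return times when the walk starts on the corresponding set.
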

\begin{proof}
The first two identities easily follow from \cite[Proposition 9.5.1]{Lawlerbook}. For \eqref{eq:marginal_single1} we need the fact that the law of the path is unchanged when loops are erased in reverse chronological order \cite[Lemma 7.2.1]{lawler2013intersections} and then applying \cite[Proposition 9.5.1]{Lawlerbook}.
\end{proof}

We now state a few propositions  regarding the marginals of paths and pairs of paths in a uniform spanning tree.   The proofs of \cref{cor:conditional_single,prop:extension,prop:marginal_pair,C:conditionalpair} again are simple applications of \cite[Proposition 9.5.1]{Lawlerbook}, we leave the details to the reader.

\begin{prop}\label{cor:conditional_single}
	In the same setting as \cref{prop:marginal_single}, for any pair of self avoiding paths $\eta_-, \gamma$ such that $\eta_- \subset \gamma$ and $\gamma$ connects $x_0$ to $\partial$,
	\begin{equation}\label{partial_single_cond_SC}
\P( Y = \gamma | \eta_- \subset Y) = \frac{1}{\P_{x_-}[ \tau_\partial < \tau_{\eta_-} ]} q(\gamma \setminus \eta_- ) e^{\mass_{G \setminus (\eta_- \cup \partial)} ( \cI(\gamma))},
\end{equation}
and given $\eta_- \subset Y$, $Y$ has the law of the loop-erasure of an excursion from $x_-$ to $\partial$. Here by excursion we mean a random walk conditioned to hit $\partial $ before returning to $\eta_-$.
\end{prop}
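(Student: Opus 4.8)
The plan is to obtain the conditional density simply by dividing the unconditional densities that have already been computed. By \eqref{partial_single_SC} in \cref{prop:marginal_single},
\[
\P(\eta_- \subset Y) = q(\eta_-)\, e^{\mass_{G\setminus\partial}(\cI(\eta_-))}\, \P_{x_-}[\tau_\partial < \tau_{\eta_-}],
\]
while \eqref{LE1bis} (i.e. \cref{density_exit}) gives $\P(Y = \gamma) = q(\gamma)\, e^{\mass_{G\setminus\partial}(\cI(\gamma))}$. Since $\eta_- \subset \gamma$ implies $\{Y = \gamma\} \subset \{\eta_- \subset Y\}$, Bayes' rule yields
\[
\P(Y = \gamma \mid \eta_- \subset Y) = \frac{q(\gamma)}{q(\eta_-)\,\P_{x_-}[\tau_\partial < \tau_{\eta_-}]}\, e^{\mass_{G\setminus\partial}(\cI(\gamma)) - \mass_{G\setminus\partial}(\cI(\eta_-))}.
\]

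The two factors then simplify separately. First, since $\eta_-$ is an initial segment of $\gamma$ and $q$ is multiplicative over edges, $q(\gamma)/q(\eta_-) = q(\gamma \setminus \eta_-)$. Second, because $\eta_- \subset \gamma$ we have $\cI(\eta_-) \subset \cI(\gamma)$, so the difference of loop masses equals $\mass_{G\setminus\partial}(\cI(\gamma) \setminus \cI(\eta_-))$; and a loop confined to $G\setminus\partial$ belongs to $\cI(\gamma)\setminus\cI(\eta_-)$ precisely when it meets $\gamma$ and avoids $\eta_-$, i.e. precisely when it is a loop of $G\setminus(\eta_-\cup\partial)$ meeting $\gamma$. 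Hence this difference is $\mass_{G\setminus(\eta_-\cup\partial)}(\cI(\gamma))$, and combining the two simplifications gives exactly the claimed identity \eqref{partial_single_cond_SC}.

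For the excursion description, the plan is to recognize the right-hand side as a loop-erasure law. Write $\gamma = \eta_- \oplus \gamma'$ with $\gamma'$ a self-avoiding path from $x_-$ to $\partial$ meeting $\eta_-$ only at $x_-$. Every loop of $G\setminus(\eta_-\cup\partial)$ avoids $\eta_-$, so on such loops $\cI(\gamma)$ agrees with $\cI(\gamma')$, and \eqref{partial_single_cond_SC} becomes $\frac{1}{\P_{x_-}[\tau_\partial<\tau_{\eta_-}]}\, q(\gamma')\, e^{\mass_{G\setminus(\eta_-\cup\partial)}(\cI(\gamma'))}$. Applying \eqref{LE1bis} to the graph $G$ with absorbing set $\eta_-\cup\partial$ and starting point $x_-$, the loop erasure of $X[0,\tau_{\eta_-\cup\partial}]$ from $x_-$ assigns to a path $\gamma'$ from $x_-$ to $\eta_-\cup\partial$ probability $q(\gamma')\, e^{\mass_{G\setminus(\eta_-\cup\partial)}(\cI(\gamma'))}$; restricting to the event $\{\tau_\partial<\tau_{\eta_-}\}$ (equivalently: this loop erasure exits onto $\partial$, not onto $\eta_-$) and renormalizing by $\P_{x_-}[\tau_\partial<\tau_{\eta_-}]$ produces exactly the displayed conditional law. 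Thus, given $\eta_-\subset Y$, the remaining portion $\gamma\setminus\eta_-$ of $Y$ has the law of the loop erasure of a random walk from $x_-$ conditioned to hit $\partial$ before returning to $\eta_-$, as asserted.

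This argument is essentially bookkeeping on top of \cref{density_exit} and \cref{prop:marginal_single}; I expect the only point needing care to be the loop-measure restriction identity $\mass_{G\setminus\partial}(\cI(\gamma)) - \mass_{G\setminus\partial}(\cI(\eta_-)) = \mass_{G\setminus(\eta_-\cup\partial)}(\cI(\gamma))$, which rests on partitioning the loops in $G\setminus\partial$ into those that hit $\eta_-$ and those that do not, the latter being exactly the loops confined to $G\setminus(\eta_-\cup\partial)$. One should also note that the two probability measures identified in the final step live on the same space of self-avoiding paths from $x_-$ to $\partial$ avoiding $\eta_-$, so agreement of their (unnormalized) weights together with the fact that both are genuine conditional laws forces them to coincide.
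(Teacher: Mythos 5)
Your proof is correct, and it takes the same route the paper intends: the paper leaves \cref{cor:conditional_single} as a "simple application" of Lawler's Proposition 9.5.1, which is exactly \eqref{LE1bis} together with \eqref{partial_single_SC}, and you have filled in precisely that bookkeeping — the Bayes quotient, the multiplicativity of $q$, the loop-mass restriction identity $\mass_{G\setminus\partial}(\cI(\gamma)\setminus\cI(\eta_-))=\mass_{G\setminus(\eta_-\cup\partial)}(\cI(\gamma))$, and the recognition of the result as a renormalized loop-erasure law with absorbing set $\eta_-\cup\partial$. The only point to be explicit about (and it is already implicit in \eqref{partial_single_SC}, so no real gap) is the convention that, since $x_-\in\eta_-$, the hitting time $\tau_{\eta_-}$ in $\P_{x_-}[\tau_\partial<\tau_{\eta_-}]$ and in the excursion conditioning is the first time $\ge 1$ the walk returns to $\eta_-$, so that the excursion takes at least one step before the conditioning is in force.
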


Although not actually needed in the following, 
it is useful to mention the marginal law of a pair of (complete) branches in a UST on a graph $G$ with boundary $\partial$. We believe it should be beneficial for the reader when they read the analogous result for the CRSF case.

\begin{prop}\label{prop:extension}
	Let $G, \partial$ be as in $\cref{prop:marginal_single}$ and fix $x^1, x^2$ in $G$. Let $Y_1, Y_2$ denote the two UST branches started from $x^1$ and $x^2$ respectively with the convention that each connects $x^i$ to $\partial$. Let $\eta_1$ and $\eta_2$ be \textbf{disjoint} self avoiding paths starting in $x^1$ and $x^2$ respectively. Then
	\begin{align}
	\P( \eta_1 \subset Y_1, \eta_2 \subset Y_2) &
	= q(\eta_1)q(\eta_2) e^{\mass_{G \setminus \partial} (\cI(\eta_1 \cup \eta_2))} \sum_{\gamma_1\supset \eta_1, \gamma_2 \supset \eta_2} q((\gamma_1 \cup \gamma_2) \setminus( \eta_1 \cup \eta_2)) e^{\mass_{G \setminus (\eta_1 \cup \eta_2 \cup \partial)} (\cI( \gamma_1 \cup \gamma_2))} \\
	&  = q(\eta_1)q(\eta_2) e^{\mass_{G \setminus \partial} (\cI(\eta_1 \cup \eta_2))} \P(\text{loop erasures compatible with $(G, \partial)$}), \label{eq:unconditioned_domain}
	\end{align}
	where in the last line, the probability term is interpreted as follows: we run two random walks from the endpoints of $\eta_1$ and $\eta_2$ while performing Wilson's algorithm with boundary $\partial \cup \eta_1 \cup \eta_2$ and we consider the event that the resulting two paths, when combined with $\eta_1$ and $\eta_2$, have positive probability under Wilson's algorithm in $G, \partial$.
\end{prop}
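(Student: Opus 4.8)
The plan is to obtain \cref{prop:extension} by summing the exact Wilson-algorithm identity \eqref{LEn} of \cref{density_exit} over all admissible completions of $\eta_1$ and $\eta_2$, and then factoring out the contribution coming from $\eta_1 \cup \eta_2$. Concretely, I would start from the fact that, by \eqref{LEn}, for any pair of self-avoiding paths $\gamma_1 \supset \eta_1$, $\gamma_2 \supset \eta_2$ with $\gamma_i$ running from $x^i$ to $\partial$ and merging once they meet,
\[
\P\big((Y_1, Y_2) = (\gamma_1, \gamma_2)\big) = q(\gamma_1 \cup \gamma_2)\, e^{\mass_{G \setminus \partial}(\cI(\gamma_1 \cup \gamma_2))},
\]
so that summing over all such $(\gamma_1, \gamma_2)$ gives $\P(\eta_1 \subset Y_1, \eta_2 \subset Y_2)$.

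The next step is a routine rearrangement of each summand. Since $\eta_1$ and $\eta_2$ are disjoint and repeated edges are counted once, $q(\gamma_1 \cup \gamma_2) = q(\eta_1) q(\eta_2)\, q((\gamma_1 \cup \gamma_2) \setminus (\eta_1 \cup \eta_2))$. For the loop term I would partition the loops in $G \setminus \partial$ meeting $\gamma_1 \cup \gamma_2$ into those that also meet $\eta_1 \cup \eta_2$ (total mass $\mass_{G \setminus \partial}(\cI(\eta_1 \cup \eta_2))$, independent of the completion) and those that avoid $\eta_1 \cup \eta_2$; the latter necessarily stay in $G \setminus (\partial \cup \eta_1 \cup \eta_2)$ and then meet $(\gamma_1 \cup \gamma_2) \setminus (\eta_1 \cup \eta_2)$, so their total mass is $\mass_{G \setminus (\partial \cup \eta_1 \cup \eta_2)}(\cI(\gamma_1 \cup \gamma_2))$. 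By additivity of $\mass$ this splits the exponent, and pulling the common factor $q(\eta_1)q(\eta_2)\,e^{\mass_{G \setminus \partial}(\cI(\eta_1 \cup \eta_2))}$ out of the sum yields the first displayed identity.

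For the second form \eqref{eq:unconditioned_domain} I would write $\gamma_i = \eta_i \oplus \beta_i$, observe that (since the loops in question avoid $\eta_1 \cup \eta_2$) the remaining loop mass equals $\mass_{G \setminus (\partial \cup \eta_1 \cup \eta_2)}(\cI(\beta_1 \cup \beta_2))$ and $q((\gamma_1 \cup \gamma_2)\setminus(\eta_1 \cup \eta_2)) = q(\beta_1 \cup \beta_2)$, and then recognise, via \eqref{LEn} applied on $G$ with boundary $\partial \cup \eta_1 \cup \eta_2$, that each summand is exactly $\P((Z_1, Z_2) = (\beta_1, \beta_2))$, where $(Z_1, Z_2)$ is the pair of branches produced by running Wilson's algorithm from the endpoints of $\eta_1, \eta_2$ with that enlarged boundary. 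Summing over the admissible $(\beta_1, \beta_2)$ then identifies the sum with the probability that these loop erasures, concatenated with $\eta_1$ and $\eta_2$, form a legal output of Wilson's algorithm on $(G, \partial)$, which is the assertion of \eqref{eq:unconditioned_domain}.

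I expect the only real obstacle to be the combinatorial description of which completions $(\gamma_1, \gamma_2)$ — equivalently which pairs $(\beta_1, \beta_2)$ — are admissible: this is dictated by the merging rule of Wilson's algorithm (each branch reaches $\partial$ or merges into the other, but they cannot merge into one another so as to close a loop), and it is precisely this constraint that is meant by the two paths being ``compatible with $(G,\partial)$''. Once the bijection $\gamma_i \leftrightarrow \beta_i$ between admissible completions and valid Wilson outputs on $(G, \partial \cup \eta_1 \cup \eta_2)$ is set up carefully, everything else is bookkeeping with the already-established identity \eqref{LEn} and the additivity of the loop measure, exactly as in \cite[Proposition 9.5.1]{Lawlerbook}.
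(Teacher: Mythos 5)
Your proof is correct, and it is essentially the route the paper intends: the paper explicitly omits the proof of this proposition, stating only that it is ``a simple application of \cite[Proposition 9.5.1]{Lawlerbook}'' (equivalently, of the paper's own \cref{density_exit}), and that is precisely what you carry out — sum the exact Wilson identity \eqref{LEn} over admissible completions, split $q(\gamma_1\cup\gamma_2)$ using disjointness of $\eta_1,\eta_2$, decompose the loop mass $\mass_{G\setminus\partial}(\cI(\gamma_1\cup\gamma_2))$ according to whether or not the loop meets $\eta_1\cup\eta_2$, and then re-read the inner sum as a Wilson probability with enlarged boundary $\partial\cup\eta_1\cup\eta_2$.

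One small point worth being careful about, which you essentially flag but phrase a bit loosely: writing $\gamma_i=\eta_i\oplus\beta_i$ is not quite literal once branches are allowed to merge — if $\gamma_2$ merges into $\gamma_1$, then $\gamma_2$ as a tree branch continues along $\gamma_1$ past the point where the Wilson branch $\beta_2$ (with boundary $\partial\cup\eta_1\cup\eta_2\cup\beta_1$) stops. However, the quantities that actually appear, namely $q((\gamma_1\cup\gamma_2)\setminus(\eta_1\cup\eta_2))$ and $\mass_{G\setminus(\partial\cup\eta_1\cup\eta_2)}(\cI(\gamma_1\cup\gamma_2))$, depend only on the edge set $\gamma_1\cup\gamma_2=\eta_1\cup\eta_2\cup\beta_1\cup\beta_2$, so they coincide with $q(\beta_1\cup\beta_2)$ and $\mass_{G\setminus(\partial\cup\eta_1\cup\eta_2)}(\cI(\beta_1\cup\beta_2))$ as you claim, and the identification with \eqref{LEn} on $(G,\partial\cup\eta_1\cup\eta_2)$ goes through. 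The ``compatible'' event is then exactly the condition that the pair $(\beta_1,\beta_2)$ produced by this enlarged-boundary Wilson run, when concatenated with $\eta_1,\eta_2$, is a legal output of Wilson's algorithm on $(G,\partial)$, as described after the statement.
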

\begin{figure}
\centering
\includegraphics[scale = 0.5]{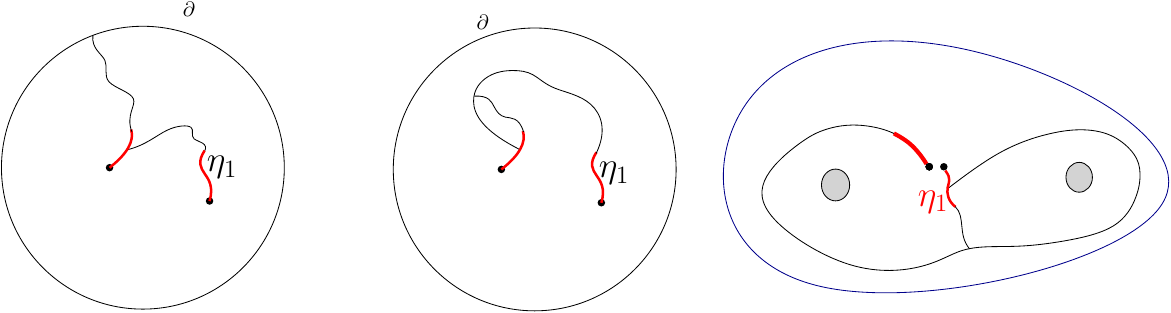}
\caption{The red curves are $\eta_1,\eta_2$ in \cref{prop:extension} (left, middle) and \cref{prop:marginalM} (right) for $k=1$. Left: A compatible loop erasure. Middle: An incompatible loop erasure. Right: An incompatible loop erasure in a pair of pants.}\label{fig:compatibility}
\end{figure}

Note that in the first line above, implicit in the notation (and consistent with our convention), $\gamma_1$ and $\gamma_2$ are complete paths which end in $\partial$; these are of course not necessarily disjoint. See \cref{fig:compatibility}.


\medskip Remarkably, the situation is not much more complicated when we look at the law of two branches conditioned not to merge. In particular it becomes practical to compare the joint law of two branches with the independent case. We state this in the following propositions (\cref{prop:marginal_pair,C:conditionalpair}).
\begin{prop}\label{prop:marginal_pair}
	Let $G, \partial$ be as in \cref{prop:marginal_single} and let $Y_1, Y_2, \eta_1, \eta_2$ be respectively the two UST branches and two disjoint self-avoiding paths starting from $x^1, x^2$ as above. We have
	\begin{align}
	& \P( \eta_1 \subset Y_1, \eta_2 \subset Y_2 | Y_1 \cap Y_2 = \emptyset) \\
&		= \frac{q(\eta_1)q(\eta_2)}{\P( Y_1 \cap Y_2 = \emptyset)} e^{\mass_{G \setminus \partial} (\cI(\eta_1 \cup \eta_2))} \sum_{\gamma_1 \cap \gamma_2 = \emptyset} q(\gamma_1 \setminus \eta_1)  q(\gamma_2 \setminus \eta_2)  e^{\mass_{G \setminus (\eta_1 \cup \eta_2 \cup \partial)} \cI( \gamma_1 \cup \gamma_2)} \\
	&  = \frac{q(\eta_1)q(\eta_2)}{\P( Y_1 \cap Y_2 = \emptyset)} e^{\mass_{G \setminus \partial} (\cI(\eta_1 \cup \eta_2))} \P\Big(\tau^1_{\partial} < \tau^1_{\eta_1 \cup \eta_2},\tau^2_{\partial} < (\tau^2_{\eta_1 \cup \eta_2} \wedge \tau^2_{LE(X^1)}) \Big),\label{eq:RN_start2}\\
	& = \frac{e^{- \mass_{G \setminus \partial}(\cI(\eta_1) \cap \cI(\eta_2))}}{\P( Y_1 \cap Y_2 = \emptyset)} \P(\eta_1 \subset Y_1 ) \P( \eta_2 \subset Y_2) \P\Big(\tau^1_{\partial} < \tau^1_{\eta_2},\tau^2_{\partial} < (\tau^2_{\eta_1} \wedge \tau^2_{LE(X^1)}) \mid \tau^1_\partial < \tau^1_{\eta_1}, \tau^2_\partial < \tau^2_{\eta_2}  \Big) \label{eq:RNstart}
	\end{align}
	where the event in the second line is that while running sequentially two independent random walks starting from the endpoints of the $\eta_i$, both walks must reach $\partial$ before hitting $\eta_1 \cup \eta_2$ and additionally the second walk cannot hit the loop-erasure of the first one. In the sum of the first line we implicitly restrict to $\gamma_i \supset \eta_i$.
\end{prop}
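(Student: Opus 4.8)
The plan is to imitate the proof of \cref{prop:extension}, the only genuinely new point being that the non-merging constraint $\{Y_1 \cap Y_2 = \emptyset\}$ has to be propagated through Wilson's algorithm. I would run Wilson's algorithm from $x^1$ and then from $x^2$, and write the conditional probability as $\P(\eta_1 \subset Y_1, \eta_2 \subset Y_2, Y_1 \cap Y_2 = \emptyset)$ divided by $\P(Y_1 \cap Y_2 = \emptyset)$. Since $\eta_1,\eta_2$ are disjoint, the numerator is the sum, over all pairs of disjoint self-avoiding paths $(\gamma_1,\gamma_2)$ with $\gamma_i \supset \eta_i$ ending on $\partial$, of $\P((Y_1,Y_2)=(\gamma_1,\gamma_2))$, and \eqref{LEn} evaluates each such term as $q(\gamma_1)q(\gamma_2)\, e^{\mass_{G\setminus\partial}(\cI(\gamma_1\cup\gamma_2))}$ (the merging clause in \eqref{LEn} being vacuous for disjoint paths, and $q$ factorising since they share no edge). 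Factoring $q(\gamma_i)=q(\eta_i)q(\gamma_i\setminus\eta_i)$ and splitting the loops that hit $\gamma_1\cup\gamma_2$ into those that meet $\eta_1\cup\eta_2$ and those that do not (the latter being exactly loops supported in $G\setminus(\eta_1\cup\eta_2)$ which hit $\gamma_1\cup\gamma_2$) yields the additive decomposition $\mass_{G\setminus\partial}(\cI(\gamma_1\cup\gamma_2)) = \mass_{G\setminus\partial}(\cI(\eta_1\cup\eta_2)) + \mass_{G\setminus(\eta_1\cup\eta_2\cup\partial)}(\cI(\gamma_1\cup\gamma_2))$, and summing over $(\gamma_1,\gamma_2)$ gives the first displayed identity.

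For the second identity I would evaluate the remaining sum by a sequential two-walk construction. Run $X^1$ from the endpoint of $\eta_1$ away from $x^1$; applying \eqref{LE1bis} in the graph $G\setminus(\eta_1\cup\eta_2)$ with boundary $\partial$, the event $\{LE(X^1[0,\tau_\partial])=\gamma_1\setminus\eta_1,\ \tau^1_\partial<\tau^1_{\eta_1\cup\eta_2}\}$ has probability $q(\gamma_1\setminus\eta_1)\, e^{\mass_{G\setminus(\eta_1\cup\eta_2\cup\partial)}(\cI(\gamma_1))}$ (loops in this subgraph hitting $\gamma_1\setminus\eta_1$ being the same as those hitting $\gamma_1$). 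Conditionally on $LE(X^1)=\gamma_1\setminus\eta_1$, run $X^2$ from the endpoint of $\eta_2$; in $G\setminus(\gamma_1\cup\eta_2)$ the analogous event has probability $q(\gamma_2\setminus\eta_2)\, e^{\mass_{G\setminus(\gamma_1\cup\eta_2\cup\partial)}(\cI(\gamma_2))}$. Multiplying, and splitting loops of $G\setminus(\eta_1\cup\eta_2\cup\partial)$ hitting $\gamma_1\cup\gamma_2$ according to whether they meet $\gamma_1$ or only $\gamma_2$, recovers exactly $e^{\mass_{G\setminus(\eta_1\cup\eta_2\cup\partial)}(\cI(\gamma_1\cup\gamma_2))}$ (here using $\eta_1\subset\gamma_1$ so that "supported in $G\setminus(\eta_1\cup\eta_2)$ and avoiding $\gamma_1$" is the same as "supported in $G\setminus(\gamma_1\cup\eta_2)$"); since the walk conditions force $\gamma_1\cap\eta_2=\emptyset$ and $\gamma_2\cap\gamma_1=\emptyset$, precisely the disjoint pairs contribute, so the sum equals the claimed random walk probability. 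The third identity is then pure bookkeeping: use \cref{prop:marginal_single} to write $\P(\eta_i\subset Y_i)=q(\eta_i)\, e^{\mass_{G\setminus\partial}(\cI(\eta_i))}\,\P(\tau^i_\partial<\tau^i_{\eta_i})$, inclusion--exclusion $\mass_{G\setminus\partial}(\cI(\eta_1\cup\eta_2)) = \mass_{G\setminus\partial}(\cI(\eta_1)) + \mass_{G\setminus\partial}(\cI(\eta_2)) - \mass_{G\setminus\partial}(\cI(\eta_1)\cap\cI(\eta_2))$, and the set identity $\{\tau^1_\partial<\tau^1_{\eta_1\cup\eta_2}\}=\{\tau^1_\partial<\tau^1_{\eta_1}\}\cap\{\tau^1_\partial<\tau^1_{\eta_2}\}$ together with the analogous splitting of the $X^2$-event (now also involving $\tau^2_{LE(X^1)}$) and the independence of $X^1$ and $X^2$, to factor out the stated conditional probability.

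The genuinely delicate part is the loop-measure bookkeeping: every step rests on partitioning the loops counted by some $\mass_{D\setminus\partial}(\cI(\cdot))$ according to which of the paths $\eta_1,\eta_2,\gamma_1,\gamma_2$ they first meet, and checking that the resulting restricted loop measures telescope correctly — in particular that a loop avoiding $\gamma_1$ and supported in $G\setminus(\eta_1\cup\eta_2)$ is the same object as a loop supported in $G\setminus(\gamma_1\cup\eta_2)$, which is where $\eta_1\subset\gamma_1$ is used. All of these are elementary once one keeps track of the inclusions among the path sets, and (as the authors note for the companion statements) are most transparently organised by viewing the erased loops as a Poisson point process of intensity $\mass$ restricted to the relevant contractibility/support class, cf. \cref{T:loopsNC}; no probabilistic input beyond \cref{density_exit}, \cref{prop:marginal_single} and the density formula of Lawler is required.
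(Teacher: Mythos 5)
Your argument is correct and is exactly what the authors have in mind: they leave the proof of \cref{prop:marginal_pair} to the reader as a "simple application of [Proposition 9.5.1, \cite{Lawlerbook}]," and your derivation — expanding the numerator via \eqref{LEn}, telescoping the loop measure $\mass_{G\setminus\partial}(\cI(\gamma_1\cup\gamma_2))=\mass_{G\setminus\partial}(\cI(\eta_1\cup\eta_2))+\mass_{G\setminus(\eta_1\cup\eta_2\cup\partial)}(\cI(\gamma_1\cup\gamma_2))$, evaluating the remaining sum by two sequential applications of \eqref{LE1bis} with the boundary augmented by the paths already laid down, and then factoring out $\P(\eta_i\subset Y_i)$ via \cref{prop:marginal_single} together with the inclusion--exclusion on $\cI(\eta_1)\cup\cI(\eta_2)$ — is precisely that application. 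The one point worth checking carefully, which you do handle, is that the partitions of loop sets telescope because $\eta_1\subset\gamma_1$, so that "supported in $G\setminus(\eta_1\cup\eta_2\cup\partial)$ and avoiding $\gamma_1$" is the same class as "supported in $G\setminus(\gamma_1\cup\eta_2\cup\partial)$."
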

The equation \eqref{eq:RNstart} allows us to compare the marginal law of the beginning of the pair of paths $\bs Y = (Y_1, Y_2)$ under the conditioning $Y_1 \cap Y_2 = \emptyset$ to the independent situation.
(Note that the conditional probability in the right hand side of \eqref{eq:RNstart} can also be naturally expressed in terms of a pair of excursions from the endpoints of the $\eta_i$ to $\partial$ conditioned to avoid $\eta_i$.) 
This comparison with independent paths can be extended to the conditional law of the remaining portion of $\bs Y$ (still under the conditioning $Y_1 \cap Y_2 = \emptyset$) given an early portion of these paths. This is the content of the next proposition and follows easily from the expressions in \cref{prop:marginal_pair}.
\begin{prop}\label{C:conditionalpair}
	Let $G, \partial, Y_i, \eta_i, x_i$ be as in \cref{prop:marginal_pair} for $i = 1, 2$ and let $\bs \gamma := (\gamma_1, \gamma_2)$ be disjoint paths connecting $x_i$ to $\partial$ with $\gamma_i \supset \eta_i$, we have
	\begin{equation}\label{E:lerwpartialDomain}
	\P( \bs Y = \bs \gamma | Y_1 \cap Y_2 = \emptyset, \bs \eta \subset \bs Y) = \frac{q(\gamma_1 \setminus \eta_1)q(\gamma_2 \setminus \eta_2)}{\P(\tau^1_{\partial} < \tau^1_{\eta_1 \cup \eta_2},\tau^2_{\partial} < \tau^2_{\eta_1 \cup \eta_2} \wedge \tau^2_{LE(X^1)})} e^{\mass_{G \setminus(\eta_1 \cup \eta_2 \cup \partial)} \cI( \gamma_1 \cup \gamma_2)}.
	\end{equation}
	where $\bs Y = (Y_1,Y_2)$. Furthermore if $X_1$ and $X_2$ denote independent random walks started from the endpoints of $\eta_1$ and $\eta_2$, we have
	\begin{align}
	&\P( \bs  Y= \bs \gamma | Y_1 \cap Y_2 = \emptyset, \bs \eta \subset \bs Y) \nonumber  \\
& = \frac{e^{-\mass_{G \setminus (\eta_1 \cup \eta_2 \cup \partial)} (\cI(\gamma_1) \cap \cI(\gamma_2))}}{\P(\tau^2_\partial < \tau^2_{LE(X^1)} \mid \tau^1_{\partial} < \tau^1_{\eta_1 \cup \eta_2},\tau^2_{\partial} < \tau^2_{\eta_1 \cup \eta_2}) } \prod_{i=1,2} \P( LE( X_i) = \gamma_i \setminus \eta_i \mid \tau^i_{\partial} < \tau^i_{\eta_1 \cup \eta_2}).\label{eq:RNend}
	\end{align}
\end{prop}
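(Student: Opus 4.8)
The plan is to derive both displays by a direct application of Bayes' rule, feeding in the formulas already established in \cref{density_exit} and \cref{prop:marginal_pair}. Since $\gamma_i \supset \eta_i$ and $\gamma_1 \cap \gamma_2 = \emptyset$, the event $\{\bs Y = \bs \gamma\}$ is contained in $\{Y_1 \cap Y_2 = \emptyset\} \cap \{\bs \eta \subset \bs Y\}$, so
\[
\P(\bs Y = \bs \gamma \mid Y_1 \cap Y_2 = \emptyset, \bs \eta \subset \bs Y) = \frac{\P(\bs Y = \bs \gamma)}{\P(\bs \eta \subset \bs Y, \, Y_1 \cap Y_2 = \emptyset)}.
\]
First I would evaluate the numerator: applying \eqref{LEn} with $n=2$, together with $q(\gamma_1 \cup \gamma_2) = q(\gamma_1)q(\gamma_2) = q(\eta_1)q(\eta_2)\,q(\gamma_1 \setminus \eta_1)\,q(\gamma_2 \setminus \eta_2)$ (valid since the $\gamma_i$ are disjoint and $\eta_i \subset \gamma_i$), gives $\P(\bs Y = \bs \gamma) = q(\eta_1)q(\eta_2)\,q(\gamma_1 \setminus \eta_1)\,q(\gamma_2 \setminus \eta_2)\, e^{\mass_{G \setminus \partial}(\cI(\gamma_1 \cup \gamma_2))}$. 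For the denominator I would write $\P(\bs \eta \subset \bs Y, \, Y_1 \cap Y_2 = \emptyset) = \P(Y_1 \cap Y_2 = \emptyset)\,\P(\bs \eta \subset \bs Y \mid Y_1 \cap Y_2 = \emptyset)$ and invoke \eqref{eq:RN_start2} of \cref{prop:marginal_pair}, which yields $q(\eta_1)q(\eta_2)\, e^{\mass_{G \setminus \partial}(\cI(\eta_1 \cup \eta_2))}\,\P(\tau^1_\partial < \tau^1_{\eta_1 \cup \eta_2}, \tau^2_\partial < \tau^2_{\eta_1 \cup \eta_2} \wedge \tau^2_{LE(X^1)})$.

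Taking the ratio, the factors $q(\eta_1)q(\eta_2)$ cancel, and the loop-measure exponentials combine via the elementary identity
\[
\mass_{G \setminus \partial}(\cI(\gamma_1 \cup \gamma_2)) - \mass_{G \setminus \partial}(\cI(\eta_1 \cup \eta_2)) = \mass_{G \setminus (\eta_1 \cup \eta_2 \cup \partial)}(\cI(\gamma_1 \cup \gamma_2)),
\]
which holds because $\cI(\eta_1 \cup \eta_2) \subset \cI(\gamma_1 \cup \gamma_2)$ (as $\eta_i \subset \gamma_i$) and a loop avoiding $\partial \cup \eta_1 \cup \eta_2$ is precisely a loop staying in $G \setminus (\eta_1 \cup \eta_2 \cup \partial)$. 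This produces \eqref{E:lerwpartialDomain} directly.

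For the second display I would rewrite \eqref{E:lerwpartialDomain} so as to recognise the single-walk marginals. Applying the single-path identity \eqref{LE1bis} to a walk $X_i$ started from the endpoint of $\eta_i$ with boundary enlarged to $\partial \cup \eta_1 \cup \eta_2$, and conditioning on $\{\tau^i_\partial < \tau^i_{\eta_1 \cup \eta_2}\}$, gives $\P(LE(X_i) = \gamma_i \setminus \eta_i \mid \tau^i_\partial < \tau^i_{\eta_1 \cup \eta_2}) = q(\gamma_i \setminus \eta_i)\, e^{\mass_{G \setminus (\eta_1 \cup \eta_2 \cup \partial)}(\cI(\gamma_i))} / \P(\tau^i_\partial < \tau^i_{\eta_1 \cup \eta_2})$ (loops in $G \setminus (\eta_1 \cup \eta_2 \cup \partial)$ cannot meet $\eta_i$, so $\cI(\gamma_i)$ and $\cI(\gamma_i \setminus \eta_i)$ agree on them). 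Multiplying the $i=1,2$ versions and using inclusion--exclusion,
\[
\mass_{G \setminus (\eta_1 \cup \eta_2 \cup \partial)}(\cI(\gamma_1 \cup \gamma_2)) = \mass_{G \setminus (\eta_1 \cup \eta_2 \cup \partial)}(\cI(\gamma_1)) + \mass_{G \setminus (\eta_1 \cup \eta_2 \cup \partial)}(\cI(\gamma_2)) - \mass_{G \setminus (\eta_1 \cup \eta_2 \cup \partial)}(\cI(\gamma_1) \cap \cI(\gamma_2)),
\]
I would substitute into \eqref{E:lerwpartialDomain}. The product $\prod_{i}\P(\tau^i_\partial < \tau^i_{\eta_1 \cup \eta_2})$ that appears equals $\P(\tau^1_\partial < \tau^1_{\eta_1 \cup \eta_2}, \tau^2_\partial < \tau^2_{\eta_1 \cup \eta_2})$ by independence of $X^1, X^2$, and dividing it by the denominator of \eqref{E:lerwpartialDomain} leaves exactly $1/\P(\tau^2_\partial < \tau^2_{LE(X^1)} \mid \tau^1_\partial < \tau^1_{\eta_1 \cup \eta_2}, \tau^2_\partial < \tau^2_{\eta_1 \cup \eta_2})$. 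Collecting the remaining terms gives \eqref{eq:RNend}.

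There is no serious obstacle here: the whole argument is bookkeeping layered on \cref{density_exit} and \cref{prop:marginal_pair}. The only point needing mild care is keeping track of which restriction set ($G \setminus \partial$ versus $G \setminus (\eta_1 \cup \eta_2 \cup \partial)$) each loop-measure term refers to when cancelling exponentials, and making sure the $q$-weights of the two (possibly overlapping with the $\eta_i$ but mutually disjoint) paths $\gamma_i$ are counted correctly — both handled by the two displayed set identities above.
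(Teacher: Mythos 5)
Your proof is correct and follows the route the paper itself intends: the authors explicitly say that \cref{C:conditionalpair} "follows easily from the expressions in \cref{prop:marginal_pair}" and is a "simple application of \cite[Proposition 9.5.1]{Lawlerbook}", and your derivation via Bayes' rule combined with \eqref{LEn}, \eqref{LE1bis}, and \eqref{eq:RN_start2} is exactly that bookkeeping. The two set identities you isolate — the telescoping of loop masses when shrinking the domain from $G\setminus\partial$ to $G\setminus(\eta_1\cup\eta_2\cup\partial)$, and the inclusion–exclusion $\cI(\gamma_1\cup\gamma_2)=\cI(\gamma_1)\cup\cI(\gamma_2)$ — are indeed the only places where care is needed, and you handle them correctly.
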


\begin{remark}
	Even though the notations become a bit heavy, note that \cref{eq:RNstart,eq:RNend} are very similar to \cref{LEnbis}: the joint law of two branches is given by an independent term weighted negatively by the loops intersecting both paths, with just an additional $h$-transform term in \cref{eq:RNstart} and a change of the ``reference'' independent law in \cref{eq:RNend}: namely, these are now excursions (instead of random walks) from the endpoints of $\eta_i$, conditioned to leave $G \setminus (\eta_1 \cup \eta_2 \cup \partial)$ through $\partial$.
\end{remark}

\subsubsection{Marginals on CRSF paths}
We now prove the analogues of \cref{prop:marginal_single,cor:conditional_single,prop:marginal_pair,C:conditionalpair} but for the case of branches of a CRSF on a graph $\Gamma = \Gamma^\d$ 
 embedded in a surface $\cM$, satisfying the conditions in \Cref{sec:setup} (we also write $(\cM, \partial)$ if we want to emphasize the role of the boundary, but otherwise do not specify it). We call $\cA_{\cM}$ the event that a CRSF is Temperleyan and recall that by assumption (\ref{punctures}), $\P(\cA_M) >0$ for every $\delta$, although it converges to 0 as $\delta \to 0$. As a convention, when tracing branches of a CRSF, we trace each path until it hits the boundary $\partial$ or closes a non-contractible loop.

The case of a single path is very similar to the simply connected case (compare with \eqref{partial_single_SC} and \eqref{partial_single_cond_SC}). Let $\cC_{\eta}$ denote the set of all $\eta$-contractible loops with respect to the partial path $\eta$.

\begin{prop}\label{prop:marginal_single_M} Let $Y$ be a CRSF branch from some point $z \in G \subset \cM$, and let $\eta$ denote a partial path starting from $z$ and let $x$ be its endpoint.
  \begin{align}
\P( \eta \subset Y) & = q(\eta)e^{\mass_{\cM }(\cI(\eta) \cap \cC_{\eta})} \P_{x}[ \tau_\cM \le \tau_{\eta} ] .
\end{align}
where $\tau_{\cM}$ is the stopping time when either $\partial $ is hit or a non-contractible loop is created by the simple random walk.
Furthermore,
	\begin{equation}\label{partial_single_cond_M}
\P( Y = \gamma | \eta \subset Y) = \frac{1}{\P_{x}[ \tau_\cM \le \tau_{\eta} ]} q(\gamma \setminus \eta ) e^{\mass_{\cM \setminus \eta } ( \cI(\gamma) \cap \cC_{\gamma})}.
\end{equation}
\end{prop}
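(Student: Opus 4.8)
The plan is to mimic the proof of Theorem \ref{density_non_contractible} (the CRSF analogue of Proposition \ref{density_exit}), but stopping the loop-erased walk at the first moment it hits $\eta$ rather than running it to completion, and then to derive the conditional law by simple division. First I would write $\eta = (z = x_0, x_1, \ldots, x_n = x)$ as an ordered sequence of vertices, and set $A_0 = V \setminus \partial$ and $A_j = V \setminus (\partial \cup \{x_0, \ldots, x_{j-1}\})$ for $j \ge 1$ (where, as in the CRSF setting, "exiting $A_j$" should be understood as hitting $\partial \cup \{x_0,\ldots,x_{j-1}\}$ \emph{or} creating a non-contractible loop). The event $\{\eta \subset Y\}$ occurs exactly when the loop-erased walk started at $z$ visits $x_0, x_1, \ldots, x_n$ in order before doing anything that stops Wilson's algorithm; decomposing the walk at the last visits to $x_0, x_1, \ldots, x_{n-1}$ as in the proof of Theorem \ref{density_non_contractible}, the contribution of the excursions between consecutive $x_j$'s is exactly $\prod_{j=0}^{n-1} g(A_j, x_j)$ where $g(A,x)$ is the expected number of visits to $x$ before exiting $A$ or creating a non-contractible loop (Wilson-contractibility at the relevant root being automatic by the argument in that proof). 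The last vertex $x_n = x$ contributes not a factor $g(A_n, x_n)$ but instead the probability $\P_x[\tau_\cM \le \tau_\eta]$ that, after reaching $x$, the walk terminates Wilson's algorithm (hits $\partial$ or closes a non-contractible loop) before returning to $\eta$. Multiplying by $q(\eta)$ for the forward transitions along $\eta$ and using, as in the proof of Theorem \ref{density_non_contractible}, that $\prod_{j=0}^{n-1} g(A_j, x_j) = \exp\big(\mass(\cI(\eta) \cap \cC_\eta)\big)$ — the partition of $\eta$-contractible loops intersecting $\eta$ according to which $x_j$ they first meet, exactly as in that proof — gives the first identity.

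For the conditional law \eqref{partial_single_cond_M}, I would apply Theorem \ref{density_non_contractible} (the single-path identity \eqref{LE_surf_1}) to compute $\P(Y = \gamma) = q(\gamma) e^{\mass(\cI(\gamma) \cap \cC_\gamma)}$ for a full path $\gamma \supset \eta$ terminating at $\partial$ or in a non-contractible loop, and then divide by the marginal just established:
\[
\P(Y = \gamma \mid \eta \subset Y) = \frac{q(\gamma)\, e^{\mass(\cI(\gamma)\cap\cC_\gamma)}}{q(\eta)\, e^{\mass(\cI(\eta)\cap\cC_\eta)}\, \P_x[\tau_\cM \le \tau_\eta]}.
\]
Since $\gamma \supset \eta$ as ordered paths, $q(\gamma) = q(\eta) q(\gamma \setminus \eta)$, so the transition factors reduce to $q(\gamma \setminus \eta)$. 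For the loop measure terms, I need $\mass(\cI(\gamma) \cap \cC_\gamma) - \mass(\cI(\eta) \cap \cC_\eta) = \mass_{\cM \setminus \eta}(\cI(\gamma) \cap \cC_\gamma)$, i.e. the loops counted for $\gamma$ but not for $\eta$ are precisely the $\gamma$-contractible loops that intersect $\gamma$ but avoid $\eta$. This is the combinatorial heart of the argument and again parallels the partitioning step in Theorem \ref{density_non_contractible}: a loop $\ell$ with $\ell \cap \eta \ne \emptyset$ is $\gamma$-contractible iff it is $\eta$-contractible (the first intersection of $\gamma$ with $\ell$ lies on $\eta$, since $\eta$ is an initial segment of $\gamma$), so those loops contribute identically to both exponents and cancel; what remains is exactly $\mass$ restricted to $\{\ell : \ell \cap \gamma \ne \emptyset, \ell \cap \eta = \emptyset, \ell \text{ is }\gamma\text{-contractible}\} = \mass_{\cM\setminus\eta}(\cI(\gamma)\cap\cC_\gamma)$.

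The step I expect to require the most care is the bookkeeping of $\eta$-contractibility versus $\gamma$-contractibility under the decomposition — making sure that when the walk is decomposed at last-visit times, the excursions counted by $g(A_j, x_j)$ really are all the $\eta$-contractible loops based at $x_j$ and no others, and that no non-contractible loop can sneak in earlier and be missed. This is handled exactly as in the proof of Theorem \ref{density_non_contractible}: Wilson-contractibility of the root-$x_j$ excursions is forced because the algorithm has not yet stopped, and the "first chronological intersection" definition of $\bs\eta$-contractibility is precisely engineered so that the partition by first-hit vertex is well-defined. Everything else is a routine transcription of Proposition \ref{prop:marginal_single} and Proposition \ref{cor:conditional_single} with $\mass$ replaced by its restriction to $\eta$- (resp. $\gamma$-) contractible loops and $\tau_\partial$ replaced by $\tau_\cM$, so I would state it as "the proof follows that of Proposition \ref{prop:marginal_single} and Theorem \ref{density_non_contractible}" and only spell out the loop-measure cancellation in detail.
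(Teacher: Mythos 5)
Your overall strategy is the right one and matches the paper's (mimic the excursion decomposition used in the proof of Theorem~\ref{density_non_contractible}, with the escape probability taking the place of termination, and then derive the conditional law \eqref{partial_single_cond_M} by dividing the full-path density \eqref{LE_surf_1} by the marginal). The discussion of why $\eta$-contractibility and $\gamma$-contractibility agree for loops intersecting $\eta$, and the resulting cancellation $\mass(\cI(\gamma)\cap\cC_\gamma)-\mass(\cI(\eta)\cap\cC_\eta)=\mass_{\cM\setminus\eta}(\cI(\gamma)\cap\cC_\gamma)$, is exactly the key point the paper singles out, and you get it right.

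However, there is a concrete error in your derivation of the first identity. You write that the excursion decomposition at the last visits to $x_0,\ldots,x_{n-1}$ yields a factor $\prod_{j=0}^{n-1} g(A_j,x_j)$ and that ``the last vertex $x_n$ contributes not a factor $g(A_n,x_n)$ but instead the probability $\P_x[\tau_\cM \le \tau_\eta]$''. This is not how the decomposition works, and the claimed equality $\prod_{j=0}^{n-1} g(A_j,x_j)=\exp(\mass(\cI(\eta)\cap\cC_\eta))$ is false: partitioning the $\eta$-contractible loops that hit $\eta$ according to their first-hit vertex produces $n+1$ classes, one for each of $x_0,\ldots,x_n$, so the correct identity (as written explicitly in the proof of Theorem~\ref{density_non_contractible}) is $\prod_{j=0}^{n} g(A_j,x_j)=\exp(\mass(\cI(\eta)\cap\cC_\eta))$. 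Correspondingly, the last vertex $x_n$ contributes \emph{both} a Green-function factor $g(A_n,x_n)$ (counting excursions returning to $x_n$ inside $A_n$) \emph{and} the single-excursion escape probability $\P_x[\tau_\cM\le\tau_\eta]$ (the probability that the last departure from $x_n$ terminates Wilson's algorithm before touching $\eta$ again). Indeed $\P_{x_n}(\text{exit }A_n\text{ via }\tau_\cM)=g(A_n,x_n)\P_x[\tau_\cM\le\tau_\eta]$, and both factors must appear. You can check this on the trivial case $\eta=(z)$: the left side is $\P(z\in Y)=1$, while your version gives $\P_z[\tau_\cM\le\tau_z^+]=1-f<1$; the paper's version gives $g(A_0,z)\cdot(1-f)=1$, as required. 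The fix is simply to run the product to $j=n$ and then multiply by the escape probability as a separate final factor.
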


\begin{proof}
The proof is very similar to \cref{prop:marginal_single,cor:conditional_single}, the only difference being that we need to restrict to $\eta$-contractible paths in light of \cref{density_non_contractible}. It is immediate from the definition of $\eta$-contractibility that the loops erased must be $ \eta$-contractible. The remaining steps of the proof are identical to the proof of  \cref{prop:marginal_single,cor:conditional_single}, which in turn follow by running the argument in the proof of \cite[Proposition 9.5.1]{Lawlerbook}.

The key input for \eqref{partial_single_cond_M} is that with a single path there is compatibility between the notion of being $\eta$-contractible and $\gamma$-contractible for a given loop, as $\eta$ is the initial portion of $\gamma$: the first intersection point of the loop with $\gamma$ is in $\eta$ if it intersects $\eta$.
\end{proof}

 This is no longer true with more than one path, and the generalisations of \cref{prop:marginal_pair} and \cref{C:conditionalpair} are therefore more cumbersome (see \cref{fig:eta_contractible} and the accompanying explanations). Nevertheless, we are able to obtain up-to-constant bounds which are enough for our purpose.


\begin{prop}\label{prop:marginalM}
	Let $\bs{Y} = (Y_1, \ldots, Y_{2k})$ be the branches of the skeleton of a CRSF of $\Gamma$ (assume $k \ge1$) and let $\bs{\eta} = (\eta_1, \ldots, \eta_{2k})$ be simple \textbf{disjoint} paths starting from the marked points $(u_i, v_i)$. We have
	\begin{align}
	\P( \bs{\eta} \subset \bs{Y} | \bs{Y} \in \cA_\cM ) & = \frac{q(\cup \eta_i )}{\P(\bs{Y} \in \cA_\cM )} e^{\mass( \cI(\cup \eta_i) \, \cap \cC_{\bs{\eta}})} \sum_{ \bs{\gamma}\in \cA_\cM} e^{\mass( \cI(\cup \gamma_i)\cap \cC_{\bs{\gamma}}) - \mass( \cI(\cup \eta_i) \cap \cC_{\bs \eta})} q(\cup \gamma_i \setminus \cup \eta_i ) \label{eq:start_surface}\\
	& \asymp  \frac{q(\cup \eta_i )}{\P(\bs{Y} \in \cA_\cM )} e^{\mass( \cI(\cup \eta_i) \cap \cC_{\bs \eta})} \sum_{\bs \gamma \in \cA_\cM} e^{\mass_{\cM \setminus \cup \eta_i}( \cI( \cup \gamma_i) \cap \cC_{\bs \gamma})} q(\cup \gamma_i \setminus \cup \eta_i)\label{eq:start_surface4}\\
	& \asymp \frac{q(\cup \eta_i )}{\P(\bs{Y} \in \cA_\cM )} e^{\mass( \cI(\cup \eta_i) \cap \cC_{\bs \eta})} \P(\cA_\cM, \text{loop erasures compatible with $\bs \eta$})\label{eq:start_surface3}
	\end{align}
	Where $A \asymp B$ means $\frac{1}{C}A \leq B \leq C A$ for some $C > 0$.  
	As above, in the sums in the first two lines, the paths $\gamma_i$ are not necessarily disjoint. In the last line, the probability means that we run Wilson's algorithm starting from the endpoints of $(\eta_1, \eta_2, \ldots, \eta_{2k})$ (in this order), stopping when we either hit $\partial$ or any of the $\eta_i$ or create a non-contractible loop. The event is that the union of the $\eta_i$ and the resulting loop-erased paths forms a CRSF of $(\cM, \partial)$ and is in $\cA_\cM$. 
	
	Furthermore, the constants $C$ can be taken such that $\log( C)$ is the mass of contractible loops that intersect $\bs \eta$ and have a non-contractible support.
\end{prop}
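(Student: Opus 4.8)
The plan is to obtain \eqref{eq:start_surface} as an essentially immediate consequence of \cref{density_non_contractible}, and then to derive \eqref{eq:start_surface4} and \eqref{eq:start_surface3} by a term-by-term comparison of the sums over completions $\bs\gamma$ of $\bs\eta$. For \eqref{eq:start_surface} I would decompose the event $\{\bs\eta\subset\bs Y\}\cap\cA_\cM$ according to the value $\bs Y=\bs\gamma$, where $\bs\gamma=(\gamma_1,\dots,\gamma_{2k})$ ranges over CRSF skeletons in $\cA_\cM$ with $\gamma_i\supset\eta_i$ for all $i$; identity \eqref{LE_surf_n} gives $\P(\bs Y=\bs\gamma)=q(\cup_i\gamma_i)\,e^{\mass(\cI(\cup_i\gamma_i)\cap\cC_{\bs\gamma})}$. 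Since the $\eta_i$ are disjoint and each $\eta_i$ is the initial segment of $\gamma_i$, the edge set of $\cup_i\gamma_i$ is the disjoint union of the edges of $\cup_i\eta_i$ and those of $\cup_i\gamma_i\setminus\cup_i\eta_i$, so $q(\cup_i\gamma_i)=q(\cup_i\eta_i)\,q(\cup_i\gamma_i\setminus\cup_i\eta_i)$. Dividing by $\P(\bs Y\in\cA_\cM)$ and pulling out the common factor $q(\cup_i\eta_i)\,e^{\mass(\cI(\cup_i\eta_i)\cap\cC_{\bs\eta})}$ then yields \eqref{eq:start_surface}.

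For \eqref{eq:start_surface4} the crux is the exponent comparison
\begin{equation*}
\Bigl|\,\mass\bigl(\cI(\cup_i\gamma_i)\cap\cC_{\bs\gamma}\bigr)-\mass\bigl(\cI(\cup_i\eta_i)\cap\cC_{\bs\eta}\bigr)-\mass_{\cM\setminus\cup_i\eta_i}\bigl(\cI(\cup_i\gamma_i)\cap\cC_{\bs\gamma}\bigr)\,\Bigr|\le\log C,
\end{equation*}
uniformly over $\bs\gamma\supset\bs\eta$, with $\log C$ the mass of contractible loops meeting $\cup_i\eta_i$ that have non-contractible support; granting this, \eqref{eq:start_surface} and \eqref{eq:start_surface4} are comparable summand by summand (all terms being positive), which gives the $\asymp$ with the announced constant. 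To prove the displayed bound I would first note that a loop whose range avoids $\cup_i\eta_i$ contributes equally to the first and the third masses, and that a loop meeting $\cup_i\eta_i$ automatically meets $\cup_i\gamma_i$ (as $\eta_i\subset\gamma_i$); hence the left-hand side reduces to $\bigl|\mass(\{\ell:\ell\cap\cup_i\eta_i\ne\emptyset,\ \bs\gamma\text{-contractible}\})-\mass(\{\ell:\ell\cap\cup_i\eta_i\ne\emptyset,\ \bs\eta\text{-contractible}\})\bigr|$, which is at most the mass of the symmetric difference of these two families. It then remains to check that every loop $\ell$ in that symmetric difference is contractible in $\cM$ and has non-contractible support: Wilson-contractibility of $\ell$ for any root forces $\ell$ to be null-homotopic (it is homotopic to a concatenation of conjugates of its contractible simple sub-loops), giving contractibility; and if the range of $\ell$ lay in a simply connected subdomain, every simple sub-loop would be contractible, so $\ell$ would be Wilson-contractible for every root, hence in both $\cC_{\bs\gamma}$ and $\cC_{\bs\eta}$ (recall $\ell$ meets $\cup_i\eta_i\subseteq\cup_i\gamma_i$), contradicting membership in the symmetric difference. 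Finiteness of the resulting mass is \cref{L:fewcontractible}.

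Finally, for \eqref{eq:start_surface3} I would apply \cref{density_non_contractible}, in the form \eqref{LE_surf_n}, to the underlying graph with boundary enlarged to $\partial\cup\bigcup_i\eta_i$, keeping the notion of non-contractibility inherited from $\cM$. Then running Wilson's algorithm from the endpoints of $\eta_1,\dots,\eta_{2k}$ in this order produces the completions $\gamma_i\setminus\eta_i$ with probability $q\bigl(\cup_i(\gamma_i\setminus\eta_i)\bigr)\,e^{\mass_{\cM\setminus\cup_i\eta_i}(\cI(\cup_i(\gamma_i\setminus\eta_i))\cap\cC_{(\gamma_i\setminus\eta_i)_i})}$. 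For a loop whose range avoids $\cup_i\eta_i$, meeting $\cup_i\gamma_i$ is equivalent to meeting $\cup_i(\gamma_i\setminus\eta_i)$, and $\bs\gamma$-contractibility agrees with $(\gamma_i\setminus\eta_i)_i$-contractibility, since the first intersection of $\bs\gamma$ with such a loop already lies on some $\gamma_j\setminus\eta_j$ (as $\eta_j$ is the initial segment of $\gamma_j$) and is the same point in either ordering; hence the exponent above equals $\mass_{\cM\setminus\cup_i\eta_i}(\cI(\cup_i\gamma_i)\cap\cC_{\bs\gamma})$. Summing over $\bs\gamma\in\cA_\cM$ with $\bs\gamma\supset\bs\eta$ recovers exactly the sum in \eqref{eq:start_surface4}, while the restriction to such $\bs\gamma$ is precisely the event that the $\eta_i$ together with the loop-erased paths form a CRSF of $(\cM,\partial)$ lying in $\cA_\cM$ — the ``loop erasures compatible with $\bs\eta$'' condition, which excludes the degenerate configurations illustrated in \cref{fig:compatibility}.

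The step I expect to be the main obstacle is the $\bs\eta$-contractibility bookkeeping in the second paragraph: one has to be scrupulous that the exponent discrepancy between \eqref{eq:start_surface} and \eqref{eq:start_surface4} is not merely bounded but, in the sharp form of the statement, exactly captured by the mass of contractible loops with non-contractible support meeting $\bs\eta$. A secondary delicate point is topological, in the third paragraph: slitting $\cM$ along the arcs $\eta_i$ creates new ways for a loop to wind around an arc, so one must consistently measure (non-)contractibility of the erased loops in $\cM$ rather than in $\cM\setminus\cup_i\eta_i$, and verify that the set of completions actually realised by Wilson's algorithm on the slit surface coincides, with the correct weights, with the set of Temperleyan $\bs\gamma$ extending $\bs\eta$.
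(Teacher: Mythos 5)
Your treatment of \eqref{eq:start_surface} and of the comparison \eqref{eq:start_surface}~$\asymp$~\eqref{eq:start_surface4} is correct and matches the paper, and in fact you spell out the loop-bookkeeping in more detail than the paper: the reduction of the exponent discrepancy to the symmetric difference of $\cC_{\bs\gamma}$ and $\cC_{\bs\eta}$ on $\cI(\cup_i\eta_i)$, and the observation that any loop there is null-homotopic (so ``contractible'') yet cannot live in a simply connected subdomain (so it has ``non-contractible support''), is a valid and useful expansion of the paper's one-line assertion, and \cref{L:fewcontractible} supplies finiteness.

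The gap is in the third paragraph, and it is exactly the point the paper singles out with the rightmost panel of \cref{fig:eta_contractible}. You assert that running Wilson's algorithm on the surface with boundary enlarged to $\partial\cup\bigcup_i\eta_i$ ``produces the completions $\gamma_i\setminus\eta_i$,'' and you deduce that summing over $\bs\gamma\in\cA_\cM$ recovers the sum in \eqref{eq:start_surface4} \emph{exactly}. That is false in general: the path produced from the tip of $\eta_1$ stops on first contact with $\cup_j\eta_j$ (or when closing a non-contractible loop), not on first contact with $\eta_1$ alone; the path from the tip of $\eta_2$ stops on the previously sampled path $\gamma'_1$, not on $\gamma_1$. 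So the algorithm outputs an ordered family $\bs\gamma'=(\gamma'_1,\dots,\gamma'_{2k})$ with $\cup_i(\eta_i\cup\gamma'_i)=\cup_i\gamma_i$ in general, but with $\gamma'_i\neq\gamma_i\setminus\eta_i$ for some $i$. Your claim that ``the first intersection of $\bs\gamma$ with such a loop already lies on some $\gamma_j\setminus\eta_j$ and is the same point in either ordering'' elides precisely this: because the decompositions of $\cup_i\gamma_i$ into ordered paths differ, the first-hitting index used in \cref{defn:eta_contractible} can differ, so $\cC_{\bs\gamma}$ and $\cC_{\bs\gamma'}$ can disagree on loops supported in $\cM\setminus\cup_i\eta_i$. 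Consequently \eqref{eq:start_surface3} is \emph{not} equal to \eqref{eq:start_surface4}. The correct argument, as in the paper, is to write the probability as a sum over the families $\bs\gamma'$ weighted by $e^{\mass_{\cM\setminus\cup\eta_i}(\cI(\cup\gamma'_i)\cap\cC_{\bs\gamma'})}q(\cup\gamma'_i)$, and then observe that the loops on which $\cC_{\bs\gamma}$ and $\cC_{\bs\gamma'}$ disagree must again have non-contractible support, so the discrepancy is bounded by the same $\log C$. You do flag a ``secondary delicate point'' along these lines, but you treat it as a verification expected to succeed exactly, rather than the source of the final $\asymp$.
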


\begin{remark}\label{R:constants}
  Combined with the scaling behaviour estimates coming from Schramm's lemma in the next section (see in particular \cref{L:fewcontractible}), the constant $C$ really depends just on the surface $\cM$ and on the diameter of the paths $\eta_i$, and converges to 1 as this diameter converges to 0.
\end{remark}

Before proving the proposition, it is useful to discuss why we do not get equalities instead of the claimed approximations, and why we cannot be more explicit in the probability in the last line. For the latter, note that, as pictured in \cref{fig:eta_contractible}, the paths $\gamma_i$ do not need to be disjoint, even in the simplest situation of a Temperleyan CRSF where we have only one puncture and two skeleton branches in a pair of pants. For the former, it comes from the fact that in \cref{eq:start_surface}, the notion of $\eta$-contractibility and $\gamma$-contractibility do not match exactly, see again \cref{fig:eta_contractible}.
\begin{figure}[h]
\centering
\includegraphics[width = \textwidth]{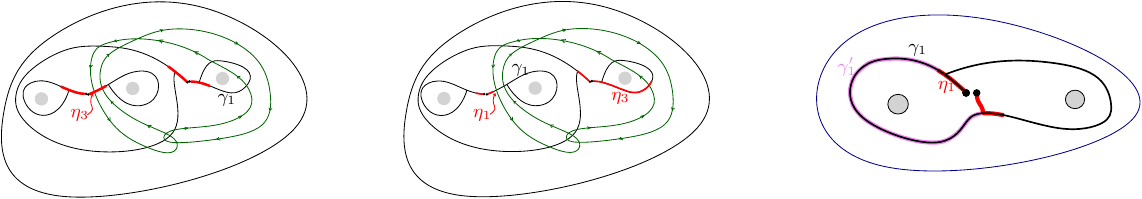}
\caption{The red curves correspond to $\bs \eta$ and the black curves correspond to $\bs \gamma$. The green loop is $\bs \gamma$-contractible but not $\bs \eta$ contractible on the left and $\bs \eta$-contractible but not $\bs \gamma$-contractible in the middle. Right: The purple path is $\gamma_1'$ and the black path is $\gamma_1$. Here $\gamma_1 \setminus \eta_1$ is not the same as $\gamma_1'$. Note that in this case $\gamma_1 \cap \gamma_2 \neq \emptyset$, it is in fact the long non-contractible cycle going around both holes.}\label{fig:eta_contractible}
\end{figure}


\begin{proof}
First observe that
\eqref{eq:start_surface} follows simply from \cref{density_non_contractible} by decomposing the loop measure terms appropriately. The difference between the expressions \eqref{eq:start_surface4} and \eqref{eq:start_surface}, come from loops which  intersects $\cup_i \eta_i$ and are either $\bs \gamma$-contractible but not $\bs \eta$-contractible or vice versa (the latter come with a negative sign, see \cref{fig:eta_contractible}). Nevertheless, the total mass of such loops is bounded by the mass of loops with non-contractible support. Hence their total mass is always upper bounded by a uniform constant $C$ depending only on $(\cM, \partial)$.

Now we turn to \eqref{eq:start_surface3}. Define $\gamma'_i \subset \gamma_i$ as follows. Let $\gamma_1'$ be a path starting from the tip of $\eta_1$ until it hits $\cup_i \eta_i$ or creates a non-contractible loop, $\gamma'_2$ is the path started from the tip of $\eta_2$ until it hits $\gamma'_1 \bigcup (\cup_i \eta_i)$ and so on. Clearly,
$$
\P(\cA_\cM, \text{loop erasures compatible with $\bs \eta$}) =  \sum_{\bs \gamma' } e^{\mass_{\cM \setminus \cup \eta_i}( \cI( \cup \gamma'_i) \cap \cC_{\bs \gamma'})} q(\cup \gamma'_i )
$$
where the sum is over all possible $\bs \gamma'$ which are in $\cM \setminus \cup \eta_i$, $\cup_i (\gamma'_i \cup\eta_i)$ forms a compatible CRSF configuration in $(M, \partial)$ and also $\cA_\cM$ is satisfied. Furthermore, the reason why this quantity is not exactly equal to \eqref{eq:start_surface4} is that $\gamma'_i$ may not be equal to $\gamma_i \setminus \eta_i$ for each $i$, even if $\cup \gamma_i  = \cup_i (\eta_i \cup \gamma'_i)$ (see rightmost figure in \cref{fig:eta_contractible}). Thus the loop measure term may include some loops in $\cM \setminus \cup_i\eta_i$ which are not $\bs \gamma$-contractible  but  $\bs \gamma'$ or vice versa (see rightmost figure in \cref{fig:eta_contractible}). However in any case, these loops must have non-contractible support, therefore must be upper bounded by the same $C$ (we do not attempt to solve the topological problem of identifying cases where exact equality holds as we do not need it).
\end{proof}

\subsection{Scaling behaviour of loop measures and Schramm's finiteness lemma}\label{sec:finiteness}

We now turn to one of the main tools that allows us to control the mass of a set of loops. On $\Z^2$ control of the loop soup measure on large scales comes from explicit and precise knowledge of Green functions, potential kernels and heat kernel estimates which are clearly not available in our setup. We will use as an effective substitute instead the following theorem which compares the loop soup measure on a given scale to random walks on that same scale. Together with the uniform crossing estimate of random walk this will often give us the required approximate ``scale invariance'' of the loop soup measure. The key idea is to use Wilson's algorithm to generate the loops together with Schramm's finiteness lemma.

For a simple random walk $X[0,t]$ run up to time $t>0$, $\ell(X_t)$ denotes the collection of loops erased when performing the loop erasure procedure, viewed according to the loop soup description of \cref{T:loopsNC}. For the following theorem, we need to be in the simply connected setup of \cite{BLR16}, which involves a sequence of graphs $\Gamma^\d$ which is faithfully embedded in $\R^2$ and satisfies the simply connected version of the uniform crossing assumption (this can be found in \cite[Section 4.1]{BLR16}). We restate it here for convenience:

\paragraph{Uniform crossing assumption of \cite{BLR16}:} Let
    $\cR$
   be the horizontal rectangle $[0,3]\times [0,1]$ and $\cR'$ be the vertical
rectangle with same dimensions, and let $B_1 := B((1/2,1/2),1/4)$ be the
\emph{starting ball} and $B_2:= B((5/2,1/2),1/4) $ be the \emph{target ball}. There
exist constants $\delta_0 >0$ and $\alpha_0>0$ such that for all $z
\in \C, \delta>0$, $\ell \ge 1/\delta_0$, $v \in \ell \delta B_1$ such that $v+z \in \Gd$,
\begin{equation}
  \P_{v+z}(X \text{ hits }(\ell \delta  B_2+z) \text{ before exiting }  (\ell \delta \cR+z))
>\alpha_0.
\end{equation}

\begin{thm}\label{T:loopsoupRW}
 Fix $0<\eps<1/100 $, $R>0 $, $x \in \R^2$ and let $D \subset \R^2$ be a possibly unbounded simply connected domain containing $B(x,3R)$. Suppose that we are given a graph sequence $\Gamma^\d$ on $\R^2$ satisfying the uniform crossing assumption of the simply connected setup as described above. Let $\cA$ be a set of loops supported in $D$. Let $\cA(\eps, R)$ denote the set of loops in $\cA$ of diameter at least $\eps R$ and which intersect $B(x, R)$. Let $\tau_D$ be the first time that a random walk $X$ on $\Gamma^\d$ leaves $D$ (assume that $\tau_D < \infty$ a.s.). The following two properties hold for all $\delta$ sufficiently small:

 (1) 
 For all $p > 0$, there exists $\eta = \eta (\eps, p)$ such that
 \begin{equation} \label{E:orderoneloops}
\Big( \exists y \in B(x, (1+\eps) R), \P_y ( \ell(X_{\tau_D}) \cap \cA(\eps, R) = \emptyset ) \geq p \Big) \Rightarrow \mass_D( \cA(\eps, R) ) \leq 1/\eta,
 \end{equation}

   (2) For all $ 0 < \eta < 1$, there exists $p = p (\eps, \eta) >0$ such that
 \begin{equation}\label{E:few loops}
 \Big( \forall y \in B(x, (1+\eps) R), \P_y ( \ell(X_{\tau_D}) \cap \cA(\eps, R) \neq \emptyset ) \leq p \Big) \Rightarrow \mass_D( \cA(\eps, R) ) \leq \eta,
 \end{equation}

 \end{thm}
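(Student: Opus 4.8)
\emph{The plan} is to work on the loop‑soup side and exploit the exact correspondence between the loops erased along Wilson's algorithm and a Poisson point process of intensity $\mass$. Since $D\subset\R^2$ is simply connected, no loop supported in $D$ is noncontractible, so the $\eta$‑contractibility subtleties of \cref{density_non_contractible} are vacuous and \cref{T:loopsNC} reduces to the classical statement: conditionally on $\eta:=LE(X[0,\tau_D])$, the family $\ell(X_{\tau_D})$ is a Poisson point process of intensity $\mass_D|_{\cI(\eta)}$, and the collection of \emph{all} loops erased while running Wilson's algorithm (wired on $\partial D$) from every vertex of $D$ is a Poisson point process of intensity $\mass_D$. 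Writing $\cB:=\cA(\eps,R)$, this yields
$$\P_y\big(\ell(X_{\tau_D})\cap\cB=\emptyset\big)=\E_y\big[e^{-\mass_D(\cB\cap\cI(\eta))}\big],\qquad \P\big(\text{the soup contains a loop of }\cB\big)=1-e^{-\mass_D(\cB)}.$$
It therefore suffices to compare a single random walk with the soup, and this is exactly what Schramm's finiteness theorem makes possible.

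\emph{Step 1 (reducing the soup to finitely many walks).} Apply the good algorithm of \cref{lem:Schramm_finiteness} centred on $B(x,R)$, with $H$ the vertices of $B(x,R)$ and $r$ slightly less than $3R$. For $\eps'$ small, Schramm's lemma provides $j_0=j_0(\eps')$ such that: (i) the coarse mesh $\cQ_{j_0}$ has at most $C(\eps')$ vertices, all contained in $B(x,(1+\eps)R)$ once $j_0$ is large; and (ii) with probability $\ge 1-\eps'$ every branch — and hence every loop — produced from the finer meshes $\cQ_j$, $j>j_0$, has Euclidean diameter $<\eps R$. On this Schramm‑good event, running the soup in the order "$\cQ_1,\cQ_2,\dots$, then the remaining vertices" forces every loop of $\cB$ that appears in the soup to be erased by one of the $\le C(\eps')$ walks from $\cQ_{j_0}$. (The delicate point is the topological bookkeeping behind this last assertion, in particular for loops of diameter $\gtrsim R$; those are handled separately, since such a loop touching $B(x,R)$ must cross every rectangle of a fixed annular necklace around $B(x,R)$ and so is caught via the uniform crossing estimate.)

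\emph{Step 2 (proof of (2)).} Assume $\P_y(\ell(X_{\tau_D})\cap\cB\neq\emptyset)\le p$ for all $y\in B(x,(1+\eps)R)$. Each walk inside Wilson's algorithm is stopped no later than $\tau_D$, so its erased loops form a sub‑collection of those erased by the free walk from the same vertex run up to $\tau_D$; hence for every $w\in\cQ_{j_0}\subset B(x,(1+\eps)R)$ one has $\P(\text{the Wilson walk from }w\text{ erases a loop of }\cB)\le\P_w(\ell(X_{\tau_D})\cap\cB\neq\emptyset)\le p$. Choosing $\eps'=\eta/4$ in Step 1 and writing $C=C(\eps,\eta)$ for the resulting number of coarse vertices,
$$1-e^{-\mass_D(\cB)}=\P\big(\text{soup contains a loop of }\cB\big)\le\eps'+\sum_{w\in\cQ_{j_0}}\P\big(\text{Wilson walk from }w\text{ erases a loop of }\cB\big)\le\tfrac{\eta}{4}+Cp .$$
Taking $p=p(\eps,\eta):=\eta/(4C)$ gives $1-e^{-\mass_D(\cB)}\le\eta/2$, i.e.\ $\mass_D(\cB)\le-\log(1-\eta/2)\le\eta$ (after replacing $\eta$ by $\min(\eta,1/2)$ first, which only strengthens the conclusion).

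\emph{Step 3 (proof of (1)), and the main obstacle.} Suppose $\P_y(\ell(X_{\tau_D})\cap\cB=\emptyset)\ge p$ for some $y\in B(x,(1+\eps)R)$. Using the uniform crossing estimate one first upgrades this to all $y'\in B(x,(1+\eps)R)$, with a worse constant $p'=p'(\eps,p)$: with probability $\ge c(\eps)>0$ a walk from $y'$ threads a chain of $O_\eps(1)$ rectangles of size $\sim\eps R$ from $y'$ to $y$ inside a corridor of width $<\eps R$ (erasing no loop of $\cB$ on the way) and then continues as a fresh walk from $y$. Next, the same estimate lets a single walk from any $w\in B(x,(1+\eps)R)$, with probability $\ge c'(\eps)>0$, visit the vertices of $\cQ_{j_0}$ in turn through such thin corridors and, near each, reproduce the excursion that makes its loop‑erasure meet every loop of $\cB$ assigned to it in Wilson's algorithm; on this event, intersected with the Schramm‑good event of Step 1, $\cI(\eta_w)$ contains all of $\cB$ up to mass $C(\eps)$, so the displayed identity gives $p'\le(1-c'(\eps))+e^{-(\mass_D(\cB)-C(\eps))}$, hence $\mass_D(\cB)\le C(\eps)+\log\tfrac1{p'-1+c'(\eps)}$ \emph{provided} $p'>1-c'(\eps)$. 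The crux of the argument is to remove this restriction: the probability $c'(\eps)$ of one prescribed "shadowing" excursion cannot be pushed close to $1$ in a single attempt, so one must amplify by letting the walk make many essentially independent attempts before it leaves $D$ — using that on $\{\ell(X_{\tau_D})\cap\cB=\emptyset\}$ every attempt it does make is automatically clean — to boost the success probability past $1-p'$ and conclude $\mass_D(\cB)\le 1/\eta(\eps,p)$. Making this amplification precise, together with the topological bookkeeping of Step 1, is the technical heart of the proof.
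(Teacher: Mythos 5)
Your Step~2 (the proof of \eqref{E:few loops}) is essentially the paper's argument and works: choose $m$ coarse starting points from Schramm's finiteness lemma, union-bound over them, and observe that on the Schramm-good event all finer walks stay $\eps R$-local. One small caution on your justification of the union bound: the phrase ``its erased loops form a sub-collection of those erased by the free walk from the same vertex'' is not literally true after the concatenation step that turns chronologically erased simple loops into a Poissonian soup realisation --- an erased loop of the short Wilson run may later be merged into a larger soup loop of the free walk, and the larger loop need not belong to the arbitrary set $\cA$. The clean justification, which the paper uses, is that the soup $\mathbf{L}$ is independent of the UST and the \emph{full UST branch} from $y_i$ to the root has the same law as a free loop-erasure, so $\P\big(\exists\,\ell\in\mathbf{L}:\ell\cap Y_{y_i}\neq\emptyset,\ \ell\in\cA(\eps,R)\big)=\P_{y_i}\big(\ell(X_{\tau_D})\cap\cA(\eps,R)\neq\emptyset\big)\le p$ exactly.

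Step~3 has a genuine gap, and the route you chose does not seem repairable. You are trying to show that with probability $>1-p'$ the single loop-erasure $\eta_w$ satisfies $\mass_D\big(\cA(\eps,R)\setminus\cI(\eta_w)\big)\le C(\eps)$, i.e.\ one thin simple path already intersects essentially all of $\cA(\eps,R)$, and then to feed this into $\P_w(\ell(X_{\tau_D})\cap\cA(\eps,R)=\emptyset)=\E_w\big[e^{-\mass_D(\cA(\eps,R)\cap\cI(\eta_w))}\big]$. But the shadowing probability $c'(\eps)$ is a fixed function of $\eps$, whereas $p'\asymp p$ can be arbitrarily small, so the constraint $c'(\eps)>1-p'$ can never be arranged; and amplification would have to produce a coverage probability that depends on $p$, which is not what repeating a geometric excursion buys you. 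More fundamentally, ``one LERW covers $\cA(\eps,R)$ up to bounded mass with high probability'' is itself a statement of the same strength as the theorem.

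The proof the paper actually gives does not try to make one loop-erasure cover $\cA(\eps,R)$; it exhibits, by bare hands, a positive-probability event on which the \emph{entire soup} misses $\cA(\eps,R)$. Run Wilson's algorithm with $y$ first; with probability $\geq p$ no loop of $\cA(\eps,R)$ is erased, and this produces a branch $\gamma_y$. Now fix \emph{deterministic} smooth curves $\gamma_1,\dots,\gamma_m$ from the coarse starting points of Schramm's lemma to a neighbourhood of $\gamma_y$ inside $B(x,3R)$, chosen so that \emph{any} discrete path staying within distance $\eps R/10$ of $\gamma_i$ must, by planarity, hit $\gamma_y$ before making a loop of diameter $\geq \eps R$. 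By the uniform crossing estimate, all $m$ walks stay in their tubes with probability $\geq p'(\eps)>0$. Apply Schramm's lemma with error $\pi=1/2$ for all remaining walks from $B(x,R)$. On the intersection of these three events no loop of $\cA(\eps,R)$ appears in $\mathbf{L}$, so $e^{-\mass_D(\cA(\eps,R))}\geq p\,p'(\eps)/2$ and $\mass_D(\cA(\eps,R))\le -\log(pp'/2)=:1/\eta(\eps,p)$. This also dispenses with your auxiliary ``upgrade from $y$ to all $y'$'': the single good starting point $y$ is used directly as the first vertex in Wilson's algorithm.
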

 
The theorem allows us to relate the total mass of loops in a loop soup to the behaviour of a single random walk. The estimate in \eqref{E:orderoneloops} says that if it is possible for the loops erased from a random walk not to satisfy some property, then there will be at most order one  loops satisfying that property in a loop soup. On the other hand the estimate in\eqref{E:few loops} says that if a property is unlikely for the loops erased from a random walk on a given scale, then there will in fact be \emph{very few} loops for which this property holds. To prove these, we will extensively use the Poisson point process description of erased loops, see \cref{T:loopsNC}.
\begin{remark}
  The condition on the diameter of loops being at least $\eps R$ is in fact necessary for the statement to hold. The problem is the following: in a loop soup there are many microscopic loops which may include some whose behaviour is markedly different from a typical random walk realisation.

  For instance, one may consider on a box of $\Z^2$ of sidelength $n$ all loops $\cA$ which do not move away from their starting point, and stay there for a duration of order $\beta$, then these have mass
  $$
  \mass (\cA) \asymp n^2  e^{- \beta}/ \beta.
  $$
  On the other hand, for the loops $\ell(X)$ created by a random walk stopped at time $\tau$ when leaving the box, we can use the connection to loop-erased random walk $Y$ and the fact that this its length is $|Y| = n^{5/4+ o(1)}$ when $n \to \infty$: thus, conditioning on $Y$ and letting $\mathbf{L}$ denote a Poisson point process of loops with intensity $\Lambda$,
  \begin{align*}
  \P ( \ell (X)  \cap \cA = \emptyset) &= \P( Y  \cap \{ \ell  \in \mathbf{L}: \ell \in \cA\}  = \emptyset) \\
  & \approx \E [(1- e^{- \beta} / \beta)^{|Y|}] \\
  & \gtrsim (1- e^{-\beta}/\beta)^{n^{5/4 + o(1)}},
  \end{align*}
  where we derived the bound in the last line by considering a typical event where $|Y|$ has size at most $n^{5/4 + o(1)}$,
  and so
  $$
  \P ( \ell (X)  \cap \cA = \emptyset) \gtrsim \exp( - n^{5/4 + o(1)} e^{- \beta} / \beta ).
  $$
  We can choose $\beta$ such that $n^2 e^{- \beta} / \beta \to \infty$ but
  $ n^{5/4+ o(1)} e^{- \beta} / \beta \to 0$; in this case $\cA$ is unlikely for the loops of a random walk but very likely for some loop in the loop soup $\mathbf{L}$. This would contradict both \eqref{E:few loops} and \eqref{E:orderoneloops}.
\end{remark}

\begin{proof}[Proof of Theorem \ref{T:loopsoupRW}]
As mentioned before the statement, the proof relies essentially on Wilson's algorithm and Schramm's finiteness lemma (\cref{lem:Schramm_finiteness}). In our context, this theorem yields the following. Let $\eps, \pi>0$ be small constants and let $R>0$ be positive. Consider the wired uniform spanning tree in $\Gamma^\d$ restricted to $D$ (i.e., we treat every edge leading out of $D$ as an edge leading to the wired boundary). In case that $D$ is unbounded, one should consider the wired uniform spanning forest rather than wired uniform spanning tree. Then there exists $m = m ( \eps, \pi)$ (in particular, $m$ does not depend on $R$ or $D$) and vertices $y_1, \ldots, y_m \in B(x, (1+\epsilon) R)^\d$ such that the following holds if $\delta$ is small enough.
Start by sampling the branches emanating from $y_1, \ldots, y_m$ in the tree, and then sample all remaining branches from $B(x, R)^\d$ before any other point, then with probability at least $1- \pi$, every single subsequent branch from $B(x, R)^\d$ will be obtained by erasing the loops of random walks which will never leave a ball of radius $\eps R$ around their starting points. This is true even when we condition on the $m$ branches from $y_1, \ldots, y_m$.

Let us start with a proof of \eqref{E:orderoneloops}. We use the fact that Wilson's algorithm actually generates not only a wired uniform spanning tree in $D^\d$ but also an independent Poisson point process on $\mathbf{L}$ with intensity $\mass_D$ (\cref{T:loopsNC}).
Let $\eps>0$ be fixed and choose $m = m(\eps,1/2) $ as in Schramm's finiteness lemma associated with error probability $\pi = 1/2$ and space scale $\eps$. Let $y_1, \ldots, y_m$ be the associated starting points {and let $y$ be a point such that $\P_{y} ( \ell(X_{\tau_D}) \cap \cA(\eps, R) = \emptyset ) \geq p$ as in the assumption of \eqref{E:orderoneloops}. We first sample the branch $\gamma_y$ of the wired UST in $D^\d$ from $y$, and assume that no loop in $\cA(\eps, R)$ was created in this process.}  

We now want to fix $m$ continuous continuous smooth, simple paths $\gamma_1, \ldots, \gamma_m$ from $y_1, \ldots, y_m$ in $B(x, 3R)$ such that if the random walk follow these paths staying at distance at most $\eps R/10$ from them, we can guarantee the following properties: on the one hand, the walks will have either hit
$\gamma_y$ (hence Wilson's algorithm will have stopped for these paths), and on the other hand, no loop of size greater than $\eps R$ will have been created in the process. Note that since $y_1, \ldots, y_m, y \in B(x, R(1+ \eps))$, the choice of such paths can be made independently of $D$: essentially we simply choose $\gamma_i$ to be a straight line segment between $y_i$ and $y$ (run until distance $ R/10$ from $y$) followed by a circular loop of radius $ R/10$ around $y$. Note that by planarity, \emph{any} path that stays at distance at most $\eps R/10$ from $\gamma_i$ will hit $\gamma_y$ before creating a loop of diameter at least $\eps R$. 


Once we have such paths, by uniform crossing, there is a positive probability $p' = p'(\eps)$ (in particular, independent of $D$ and $R$) such that $m$ independent random walks from $y_1, \ldots, y_m$ stay in a neighbourhood of size $\eps R/10$ around $\gamma_1, \ldots, \gamma_m$ respectively. On that event of probability at least $pp'$, no loop of diameter greater than $\eps R$ is created in the first $m+1$ steps of Wilson's algorithm. Furthermore, by Schramm's finiteness lemma, conditional on this event, no subsequent loop has diameter greater than $\eps R$ with probability 1/2. Thus overall
$$
\P ( \mathbf{L} \cap \cA (\eps, R)= \emptyset ) \ge pp'/2.
$$
But the left hand side is $\exp( - \mass_D(\cA(\eps, R)))$, so this implies
$$
\mass_D(\cA(\eps, R)) \le - \log (pp' / 2)  = : 1/\eta.
$$

Now let us give a proof of \eqref{E:few loops}. Let $\eta, \eps>0$ be fixed as in the statement of the theorem. We assume without loss of generality that $\eta <\eta_0$, $\eta_0$ to be chosen below. Let $m$ be as in Schramm's finiteness lemma associated with error probability $\eta/2$ and space scale $\eps/2$, so $m = m (\eps/2, \eta/2)$. Let $y_1, \ldots, y_m$ be the associated points. Let us choose the value of $p$ in the theorem as $p = \eta/(2m)$. For this value of $p$, suppose we have as in the assumption:
$$
\forall y, \quad \P_y ( \ell(X_{\tau_D}) \cap \cA (\eps, R)\neq \emptyset ) \leq p.
$$
We can rewrite
$$
\P_y ( \ell(X_{\tau_D}) \cap \cA (\eps, R)\neq \emptyset )  = \P ( \{ \ell \in \mathbf{L}: Y_y \cap \ell \neq \emptyset\} \cap \cA (\eps, R) \neq \emptyset)
$$
with $Y_y$ a loop-erased random walk run until the hitting time of $D$, by pretending that $y$ is the first point in Wilson's algorithm. (In other words, $Y_y$ is the branch emanating from $y$ in the uniform spanning tree.)
We apply this with $y = y_1 , \ldots, y_m$ and since $p = \eta/ (2m)$, taking a union bound, we obtain
$$
\P ( \{ \ell \in \mathbf{L} : \ell \cap (Y_{y_1} \cup \ldots \cup Y_{y_m}) \neq \emptyset \}\cap \cA  \neq \emptyset) \le \tfrac{\eta}{2}.
$$
On the other hand, by construction the remaining walks which generate the tree \emph{all} have diameter at most $\eps R$ with probability at least $1- \eta/2$. On this event, none of the corresponding loop can be in $\cA(\eps , R)$ since the loops of $\cA (\eps, R)$ must have diameter at least $\eps R$. Hence
$$
\P ( \mathbf{L} \cap \cA (\eps, R)\neq \emptyset ) \le \eta.
$$
But the left hand side is simply $1- \exp ( - \mass_D(\cA(\eps, R)))$. So
$$
\mass_D(\cA(\eps, R)) \le - \log (1- \eta) \le 2\eta
$$
for $\eta < \eta_0$ sufficiently small. \eqref{E:few loops} follows.
\end{proof}

To illustrate a quick application: we obtain the following ``scale invariance'' for loops:
\begin{corollary}\label{cor:long_loops}
Assume we are in the setup of \cref{T:loopsoupRW}.
For all $\epsilon > 0$, there exits a $C > 0$ such that for all $R> 0$ and $x \in M$ and for all $\delta$ small enough
\[
\mass_{B( x, R)}( \{ \ell : \diam (\ell) \geq \epsilon R \}) \leq C.
\]
\end{corollary}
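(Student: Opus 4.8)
The plan is to read the corollary off part (1) of \cref{T:loopsoupRW}. We may assume $0<\epsilon<1/100$, since for larger $\epsilon$ the bound follows from the case $\epsilon=1/100$ (there being then fewer loops). Fix $x$ and $R$, set $D=B(x,4R)$ — a bounded simply connected domain containing $B(x,3R)$, with $\tau_D<\infty$ a.s.\ by the crossing estimates — and let $\cA=\{\ell:\ell\subset B(x,R),\ \diam(\ell)\ge\epsilon R\}$. Every loop of $\cA$ has diameter $\ge\epsilon R$ and meets $B(x,R)$, so $\cA(\epsilon,R)=\cA$, and since $\cA\subset\{\ell:\ell\subset B(x,R)\}\subset\{\ell:\ell\subset D\}$ we get $\mass_D(\cA(\epsilon,R))=\sum_{\ell\in\cA}\mass(\ell)=\mass_{B(x,R)}(\{\ell:\diam(\ell)\ge\epsilon R\})$, which is exactly the quantity to be bounded. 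By part (1) of \cref{T:loopsoupRW} it therefore suffices to exhibit a single point $y\in B(x,(1+\epsilon)R)$ and a constant $p=p(\epsilon)>0$, independent of $\delta$ (and of $R$, $x$), with $\P_y(\ell(X_{\tau_D})\cap\cA=\emptyset)\ge p$; the corollary then holds with $C=1/\eta(\epsilon,p(\epsilon))$.

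I would take $y$ to be a vertex nearest to $x$ and consider the ``clean escape'' event $E$: the walk from $x$ first crosses, one at a time and without ever returning to a previously-crossed one, a fixed chain of $O(1/\epsilon)$ overlapping rectangles of diameter at most $\epsilon R/10$ that leads radially from $x$ to $\partial B(x,2R)$; and then, after the first time it reaches $\partial B(x,2R)$, it exits $B(x,4R)$ before re-entering $B(x,1.5R)$. Applying the uniform crossing estimate \cref{crossingestimate} to each of the $O(1/\epsilon)$ rectangle crossings, together with one further crossing of the annulus $B(x,4R)\setminus B(x,1.5R)$ for the last requirement, gives $\P_x(E)\ge p(\epsilon)>0$ for all $\delta$ small enough, with $p(\epsilon)$ independent of $\delta$, $R$, $x$. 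This is the same kind of event and estimate already used, for the walks following the paths $\gamma_i$, in the proof of \eqref{E:orderoneloops}.

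It then remains to check that on $E$ no loop of $\cA$ is erased from $X[0,\tau_D]$. If the loop $\ell_k$ based at the vertex $Y_k$ of the chronological loop-erasure of $X[0,\tau_D]$ lies in $B(x,R)$, then the whole excursion realising $\ell_k$ takes place during the escape phase, since after first reaching $\partial B(x,2R)$ the walk stays in $B(x,4R)\setminus B(x,1.5R)$, disjoint from $B(x,R)$. During the escape phase the walk, hence its loop erasure, moves monotonically through the rectangle chain, so the excursion realising $\ell_k$ starts and ends at $Y_k$ while the walk progresses monotonically; consequently it is trapped in a bounded number of consecutive rectangles, so $\diam(\ell_k)<\epsilon R$ and $\ell_k\notin\cA$. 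Hence $\{\ell(X_{\tau_D})\cap\cA=\emptyset\}\supseteq E$, and part (1) of \cref{T:loopsoupRW} gives $\mass_{B(x,R)}(\{\ell:\diam(\ell)\ge\epsilon R\})=\mass_D(\cA(\epsilon,R))\le 1/\eta(\epsilon,p(\epsilon))=:C$, as desired.

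The only point needing genuine care is the last one: making the ``monotone crossing of a rectangle chain'' event precise and then tracking, through the chronological loop-erasure decomposition, that each erased loop meeting $B(x,R)$ is confined to a few consecutive small rectangles. This is exactly the planar bookkeeping already carried out in the proof of \eqref{E:orderoneloops} (and in \cite{BLR16}) for the walks following the $\gamma_i$, and it introduces no new idea; everything else is just a matter of matching the setup to the hypotheses of \cref{T:loopsoupRW}.
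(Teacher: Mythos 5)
Your proof is correct and takes essentially the same route as the paper's: apply part (1) of \cref{T:loopsoupRW} with $\cA$ the loops contained in $B(x,R)$ of diameter $\ge\eps R$, and then show that a walk started near $x$ has a $\delta$-uniform positive probability of escaping $D$ without erasing any such loop, by following a chain of $O(1/\eps)$ rectangles and using the uniform crossing estimate. (The paper's one-line proof phrases this as ``move almost as a straight line'' and leaves the bookkeeping implicit; your write-up makes it explicit.) One minor remark: the condition ``without ever returning to a previously-crossed rectangle'' is slightly stronger than what the crossing estimate hands you directly, and is not actually needed — it suffices that on $\cap_i E_i$ the walk is in $R_i$ during the $i$-th crossing, so any two visits to a given vertex $Y_k$ occur within at most two consecutive (overlapping) rectangles, forcing $\diam(\ell_k)<\eps R$ provided the rectangles are chosen of diameter $<\eps R/3$.
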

\begin{proof}
By uniform crossing, a random walk started from $x$ has a positive probability to move almost as a straight line with no loops of size greater than $\epsilon R$ (as follows by considering a straight line segment from $x$ to the boundary of $B(x, R)$ and the uniform crossing assumption). This proves the assumption from \eqref{E:orderoneloops}.
\end{proof}

When we don't have an a priori bound on the size of loops, we said in \cref{rmk:loop_measure_intuition} that we expect the mass to be very large. We can however recover a finite mass if we also constrain the geometry to loops which do not surround a fixed point, which will be of special interest for us.
\begin{prop}\label{lem:loop_crossing}
Assume we are in the setup of \cref{sec:setup} and let  $N_{\max}$ be the maximum $m$ such that $B(x,2^{m})$ is simply connected.
There exists $C, c > 0$ such that for any $n, k, N_{\max }, x$ and for any $\delta$ small enough, the mass of loops in $B( x, 2^{N_{\max}})$ which intersect both $B(x, 2^n)$ and $B( x, 2^{n+k})^c$ but do not disconnect $x$ from $B( x, 2^{N_{\max }})^c$ is at most $Ce^{-ck}$.
\end{prop}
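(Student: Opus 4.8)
The plan is to combine the topological meaning of ``not disconnecting'' with a multiscale crossing estimate for random walk, the $\delta$-uniform input coming from \cref{T:loopsoupRW} (which itself rests on Schramm's finiteness, \cref{lem:Schramm_finiteness}). Write $D=B(x,2^{N_{\max}})$, which is simply connected by the choice of $N_{\max}$, so every loop in $D$ is contractible and the standard loop-soup theory of \cref{T:loopsNC} applies; let $\cA$ denote the set of loops in the statement, and for an integer $j$ put $A_j=B(x,2^{j+1})\setminus B(x,2^j)$, a ``dyadic sub-annulus'' of bounded modulus. The first step is purely topological: if $\ell\in\cA$ then $\ell$ does not disconnect $x$ from $\partial D$, and since $x$ lies strictly inside the inner circle of each $A_j\subset D$, no sub-arc of $\ell$ lying in $A_j$ can wind around that circle (such a winding would disconnect $x$ from $\partial D$); moreover $\ell$ crosses each of $A_n,\dots,A_{n+k-1}$, as it meets both $B(x,2^n)$ and $B(x,2^{n+k})^c$. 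Hence it suffices to bound $\mass_D(\cB)$, where $\cB$ is the set of loops $\ell\subset D$ that cross $A_n,\dots,A_{n+k-1}$, meet $\overline B(x,2^n)$, and whose trace in each $A_j\subset D$ does not wind around $x$.

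Next I would decompose $\cB$ by the outermost scale reached: let $\cB_m$ be the loops of $\cB$ reaching radius in $[2^m,2^{m+1})$, for $n+k\le m\le N_{\max}-1$. A loop $\ell\in\cB_m$ is contained in $D_m:=B(x,2^{\min(m+1,N_{\max})})$ (a ball of radius at most $2\cdot2^m$), has diameter at least $2^m-2^n\ge 2^{m-1}$, meets $B(x,2^n)$, and crosses each of $A_n,\dots,A_{m-1}$ without winding. So $\mass_D(\cB_m)\le\mass_{D_m}(\cB_m)$, and choosing $R$ a fixed fraction of $2^m$ (e.g.\ $R=2^{m-1}$, so $B(x,3R)\subset D_m$ while every loop of $\cB_m$ has diameter $\ge\eps R$ and meets $B(x,R)$), \cref{T:loopsoupRW}(2) reduces bounding $\mass_{D_m}(\cB_m)$ to bounding $\P_y(\ell(X_{\tau_{D_m}})\cap\cB_m\ne\emptyset)$ uniformly over starting vertices $y$ near $x$; for the finitely many $m$ within an additive constant of $n$ one may instead invoke \cref{cor:long_loops} and get a harmless $O(1)$ bound.

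For the random walk estimate: a loop erased from $X$ lying in $\cB_m$ is a stretch of $X$ between two visits to a common vertex that touches $\overline B(x,2^n)$, reaches radius $\ge 2^m$, and never winds around $x$ in any of $A_n,\dots,A_{m-1}$; in particular $X$ must make a ``clean outward deep trip'', i.e.\ go from radius $\le2^n$ up to radius $\ge2^m$ without dipping below $2^n$ and without winding around any intermediate $A_j$. Thus $\P_y(\ell(X_{\tau_{D_m}})\cap\cB_m\ne\emptyset)\le\E_y[\#\{\text{clean outward deep trips before }\tau_{D_m}\}]$, and two $\delta$-free inputs control the right side. First, since $D_m$ has radius $\le2\cdot2^m$, a gambler's ruin estimate in $\log$-distance shows that after reaching radius $2^m$ the walk leaves $D_m$ before returning to radius $2^n$ with probability bounded below, so by a standard first-passage decomposition the expected number of outward deep trips before $\tau_{D_m}$ is at most a universal constant. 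Second, by the uniform crossing assumption a walk entering the bounded-modulus annulus $A_j$ winds once around it before leaving $A_j$ with probability at least some $c_0>0$ (the planar case of \cref{lem:uniform_avoidance}), which destroys cleanliness, so by the strong Markov property at the successive first-entrance times into $A_n,\dots,A_{m-1}$ a single outward deep trip is clean with probability at most $(1-c_0)^{m-n}$. Combining, $\P_y(\ell(X_{\tau_{D_m}})\cap\cB_m\ne\emptyset)\le C(1-c_0)^{m-n}$; feeding this into \cref{T:loopsoupRW}(2) gives $\mass_{D_m}(\cB_m)\le C'q^{m-n}$ for a universal $q\in(0,1)$, and summing over $m\ge n+k$ yields $\mass_D(\cA)\le\mass_D(\cB)=\sum_{m\ge n+k}\mass_D(\cB_m)\le Ce^{-ck}$ with universal $C,c$.

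The main obstacle is the interface between this elementary random walk estimate and the scale-invariant loop measure: any direct computation of $\mass_D(\cB)$ (rooting loops and summing rooted loop measures, say) is hopelessly dependent on $\delta$ and on $N_{\max}$, so one is forced to route the estimate through a single random walk via Schramm's finiteness, and the device of decomposing by the outermost scale — so that one always works inside a \emph{bounded} domain $D_m$ — is precisely what makes the count of deep excursions, and hence the whole argument, uniform in $\delta$ and $N_{\max}$. One also has to check that the quantitative form of \cref{T:loopsoupRW}(2) (its threshold $p(\eps,\eta)$ being polynomial in $\eta$, since the number of Schramm base points is polynomial in $1/\eta$) is strong enough to turn the exponential bound $(1-c_0)^{m-n}$ on the random-walk side into an exponential, rather than merely polynomial, bound on $\mass_{D_m}(\cB_m)$; this uses slightly more quantitative information about \cref{lem:Schramm_finiteness} than is stated, but only of the standard RSW kind.
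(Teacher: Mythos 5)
Your proof routes the estimate quite differently from the paper's. The paper's proof first bounds $\E\lvert\{\ell\in\mathbf{L}:\ell\cap Y\neq\emptyset,\ \ell\in E\}\rvert\le Ce^{-ck}$ directly, by decomposing the erased loops according to the scale at which they first meet the LERW $Y$ started at $x$, and by using the explicit excursion-time skeleton $\tau_i$ from \cite{BLR16} together with the constant-per-scale cost of a walk stretch that crosses $k$ scales without disconnecting $x$. It then converts this to a bound on $\E\lvert E\cap\mathbf{L}\rvert$ \emph{not} via \cref{T:loopsoupRW}, but via the $\pm 2\pi$ winding coupling of two LERWs from \cite[Cor.~26]{Laslier_robust}: any loop in $E$ must be hit by at least one of the two coupled LERWs, so the restriction to loops meeting $Y$ costs only a uniform constant factor and the exponential rate survives intact. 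Your proposal instead decomposes $E$ by outermost dyadic scale $m$, estimates the probability $C(1-c_0)^{m-n}$ that a single walk produces a relevant loop, and feeds this into \cref{T:loopsoupRW}(2) in a domain $D_m$ of radius $2\cdot 2^m$ chosen precisely so that the gambler's ruin count of deep trips is $O(1)$; summing over $m\ge n+k$ then gives the result. This is a genuine alternative and avoids the dependence on \cite{Laslier_robust} entirely, at the cost of applying \cref{T:loopsoupRW}(2) at every scale with a $\delta$- and $N_{\max}$-independent constant.

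The cost is a real gap, which you correctly flag. As stated in the paper, both \cref{T:loopsoupRW}(2) and the Schramm finiteness input \cref{lem:Schramm_finiteness} are qualitative: the threshold $p(\eps,\eta)$ in \cref{T:loopsoupRW}(2) is $\eta/(2m(\eps/2,\eta/2))$ with $m(\cdot,\cdot)$ from Schramm's lemma, and the lemma only asserts that $j_0(\eps)$ exists. Without a polynomial bound on the number of base points, plugging $p\asymp(1-c_0)^{m-n}$ into \cref{T:loopsoupRW}(2) does not produce a geometrically decaying $\eta$, and the sum over $m$ does not yield $Ce^{-ck}$. The paper's use of the winding coupling is exactly the device that sidesteps this: it needs only a constant-factor comparison, never a scale-dependent application of the loop-soup/random-walk dictionary. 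Your claim that the quantitative Schramm is "standard RSW" is fair, but it is a genuine additional ingredient not available in the paper as written. A second, more minor, point: the assertion that "no sub-arc of $\ell$ lying in $A_j$ can wind around that circle" is false as stated — a spiralling sub-arc accumulates $2\pi$ of winding without disconnecting anything. What your argument actually needs, and what is true, is that $\ell$, being a closed curve that does not disconnect $x$ from $\partial D$, cannot contain a \emph{closed sub-loop} that separates the two boundary circles of $A_j$; the walk destroys this form of cleanliness with probability $\geq c_0$ per annulus by completing a full closed turn (as in \cite[Lemma 4.15]{BLR16}), not merely by accumulating winding.
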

\begin{proof}
We fix $n, k, N = N_{\max}$ and we let $E$ denote the set of loops in the proposition. As in the theorem above, the strategy is to control the probability that there is as least a loop in $E$ when a loop soup is generated by Wilson's algorithm. Without loss of generality we assume $x=0$.

We start Wilson's algorithm from $0$, let $X$ denote the random walk started from $x$ and let $Y$ denote its loop-erasure. As in \cite{BLR16}, on $X$ we introduce stopping times $\tau_i$ defined inductively as follow: $\tau_1$ is the first exit time of $B(0, 2^{N_{min}})$ where $N_{min}$ is chosen such that uniform crossing applies outside $B(0, 2^{N_{min}})$. Given $\tau_i$ (which will have by construction norm approximately $2^r$ for some $r$), $\tau_{i+1}$ is the first exit time after $\tau_{i}$ of $A( 0, 2^{r-1}, 2^{r+1} )$.

Given a path $Y$, we decompose the set of loops in $E$ according to the location of their first intersection point in $Y$ as follows. For a loop $\ell$, let $t_{\ell} = \inf \{ t : Y_t \in \ell \}$ and let $E_r = \{  \ell \in E : Y_{t_\ell} \in A( 0, 2^r, 2^{r+1} ) \}$ (where loops which do not intersect $Y$ are not in any $Y_{t_\ell}$).

For a given $r$, note that in order for the walk $X$ to create a loop in $E_r$, there must be times $t < t'$ such that $X_{t} \in A( 0, 2^r, 2^{r+1})$, $X_{t'} \in A( 0, 2^r, 2^{r+1})$, {$X[t, \infty]$ does not separate $0$ from $B(0, 2^{r+1})^c$}, $X[t, t']$ intersects both $B(0, 2^n)$ and $B( 0, 2^{n+k})^c$, $X[t, t']$ does not disconnect $0$ from $B(0, 2^N)^c$. Indeed the last conditions are obviously necessary to have a loop in $E$ and the first ones are necessary to have $Y_{t_\ell} \in A( 0, 2^r, 2^{r+1})$.

It is easy to see that one can further restrict ourself to times of the form $\tau_i$ such that $||X_{\tau_i}|| = 2^{r}$ or $||X_{\tau_i}|| = 2^{r+1}$, and furthermore, the number of such times has an exponential tail independent of everything (these are the times when the annulus is visited after the last full turn by the random walk around $0$ in $A( 0, 2^r, 2^{r+1})$, see Lemma 4.15 in \cite{BLR16}). Therefore the expected number of pairs $\tau_{i}, \tau_{i'}$ such that $X[\tau_i, \tau_i']$ intersects both $B(0, 2^n)$ and $B( 0, 2^{n+k})^c$ but $X[\tau_i, \infty]$ does not separate $0$ from $B( 0, 2^{r+1})^c$ is finite. If we further take into account the condition that $X[ \tau_i, \tau_{i'}]$ cannot disconnect $0$ from $B(0, 2^{N_{\max}})$, we see that the expected number of loops in $E_r$ is at most $Ce^{-c(n+k -r)}$ (resp. $Ce^{-ck}$, $Ce^{r-n}$) if $r < n$ (resp. $n \leq r \leq n+k$, $r > n+k$) by uniform crossing. Summing over $r $, the expected number of loops in $E$ created by the walk $X$ is at most $Ce^{-ck}$ for some (different) $C, c$ (since it is a Poisson with a parameter bounded by $Ce^{-ck}$).

Changing the point of view on the above bound, it means that if we sample a loop soup $\mathbf{L}$ and an independent LERW $Y$, we have
\[
\E(|\{ \ell \in \mathbf{L} : \ell \cap Y \neq \emptyset, \ell \in E \}| )\leq C e^{-ck}.
\]
On the other hand it was proved in \cite[Corollary 26]{Laslier_robust} that one can couple two LERWs $Y_1$ and $Y_2$ such that they agree up to their last exit of $B(0, 2^n)$ and also agree after their first exit of $B(0, 2^{n+k} )$ and on an event of probability $p$,
\[
W(Y_2, 0) = W( Y_1, 0) + 2\pi
\]
where $p$ is a positive constant independent of $n$ and $k$. A few comments are needed for this application of Corollary 26 in \cite{Laslier_robust}. Essentially this argument boils down to showing that conditional on the behaviour of $Y$ outside the annulus $A(0, 2^n, 2^{n+k})$ centered around zero and with radii $2^n$ and $2^{n+k}$, there is a positive, uniform in $n$, conditional probability for $Y$ to accumulate a winding of $\pm 2\pi$, but there is also a positive probability to not accumulate any winding. This positive conditional probability is shown in two stages, the first is to show that with positive probability the behaviour in scales $[2^{n-1}, 2^{n+1}]$ is in some sense ``nice'' (this is the meaning of the notion of isolated scale in \cite[Corollary 26]{Laslier_robust}), and conditional on that there is a positive chance of winding.

It is easy to check that for topological reasons, when this event happens any loop in $E$ has to be intersected by at least $Y_1$ or $Y_2$. In particular each loop in $E \cap \mathbf{L}$ has a probability at least $p/2$ to be intersected by $Y$ and $\E |E \cap \mathbf{L}| \leq 2C e^{-ck}/p$. This concludes the proof.
\end{proof}

We conclude this section by an estimate on the mass of contractible loops, which says that the mass of contractible loops on the surface which intersects a small ball, but have macroscopic diameter is bounded. This is similar in spirit to the above lemmas but more subtle due to Theorem \ref{T:loopsNC}.

\begin{lemma}\label{L:fewcontractible}
Assume $\Gamma = \Gamma^\d$ be a sequence of graphs embedded in a hyperbolic surface $M$ satisfying the assumptions in  \cref{sec:setup}.
For all $\ve > 0$ there exists $r > 0$ such that for all $\delta$ sufficiently small, for all $x \in {\cM}$,
\[
\mass (\{\ell : \ell \text{ is contractible }, \diam (\ell ) \geq \eps; \ell \cap B_{\cM}(x, r) \neq \emptyset  \}  ) \leq \eps.
\]
The same holds when replacing contractible by ``$\eta$-contractible for any $\eta$''.
\end{lemma}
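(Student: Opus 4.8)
The plan is to transfer the problem to the universal cover $\tilde M = \D$ (recall $\cM$ is hyperbolic), and there apply the ``one random walk controls the loop measure'' estimate \cref{T:loopsoupRW}(2), feeding it an a priori bound on the probability that a single random walk ever erases a macroscopic loop that also threads a tiny ball.

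First, the reduction to $\tilde M$. Since a contractible loop of $\cM$ lifts to a genuine loop of $\tilde M$, and the random-walk weights lift by definition, the restriction of $\mass$ to contractible loops is carried, in a way that preserves $q$ and the length, to the loop measure on $\tilde M$; concretely, fixing $r$ below the injectivity radius of $\cM$ so that $p^{-1}(B_\cM(x,r))$ is a disjoint union of balls $f\,B_{\tilde M}(\tilde x, r)$ ($f\in F$), and taking $\tilde x$ to be the lift of $x$ inside a fixed fundamental domain, it suffices to bound the analogous loop measure in $\tilde M$ of loops meeting the single ball $B_{\tilde M}(\tilde x, r)$. We first treat $x$ at distance at least a fixed $\rho_0>0$ from $\partial\cM$, so that $\tilde x$ ranges over a relatively compact region; the case $d_\cM(x,\partial\cM) < \rho_0$ follows from the boundary Beurling estimate (\cref{lem:Beurling}), which shows loops staying within $\rho_0$ of $\partial\cM$ carry little mass, together with the bulk case applied at a nearby point at distance $\rho_0$ from the boundary. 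A loop that travels far inside $\tilde M$ (towards $\partial\D$) must be long, and such loops carry negligible mass uniformly in $\delta$, since the random walk confined to any fixed bounded region has a $\delta$-independent spectral gap and exits it almost surely. Hence it is enough to bound, for a fixed bounded simply connected $D_0\subset\tilde M$ depending only on $\cM$ and $\rho_0$,
\[
\mass_{D_0}(\cA), \qquad \cA := \{\tilde\ell\subset D_0 : \tilde\ell\cap B_{\tilde M}(\tilde x,r)\neq\emptyset,\ \diam(\tilde\ell)\geq \ve'\},
\]
where $\ve'$ is a fixed multiple of $\ve$ chosen using that on $D_0$ the lift of $d_\cM$ is bi-Lipschitz to the Euclidean metric (with constants depending only on $\cM$). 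On $D_0$, extended arbitrarily to a graph on $\R^2$, the assumptions of \cref{sec:setup} for $M=\R^2$ hold, so all of \cref{sec:finiteness} is available.

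Now I would invoke \eqref{E:few loops} with target $\ve$: it produces a threshold $p = p(\ve',\ve)>0$ such that it suffices to prove that, for $r$ small (depending only on $\ve$ and $\cM$) and all $\delta$ small, every random walk $\tilde X$ started at a vertex $\tilde y\in B_{\tilde M}(\tilde x, 2\ve')$ and run until it exits $D_0$ satisfies $\P_{\tilde y}(\ell(\tilde X_{\tau_{D_0}})\cap\cA\neq\emptyset)\leq p$. We bound the left side by the expected number of erased loops lying in $\cA$. For the walk to erase such a loop it must, inside a single excursion between two visits to a common vertex (which is revisited later), travel between scale $r$ and scale $\ve'$ around $\tilde x$; decomposing the walk into successive crossings of the dyadic annuli $A(\tilde x,2^j,2^{j+1})$ exactly as in the proof of \cref{lem:loop_crossing} (cf. Lemma 4.15 of \cite{BLR16}) and using the uniform crossing estimate, such an excursion must cross the $\asymp\log(\ve'/r)$ intermediate annuli, which costs a fixed power of $r/\ve'$; together with the fact that the number of macroscopic ($\geq\ve'/4$) excursions around $\tilde x$ the walk makes before leaving $D_0$ has finite expectation uniform in $\delta$, this yields
\[
\E_{\tilde y}\big[\#\{\text{erased loops in }\cA\}\big]\ \leq\ C\Big(\tfrac{r}{\ve'}\Big)^{c}\ +\ \frac{C}{\log(\ve'/r)},
\]
the first term bounding erased loops that do not separate $\tilde x$ from $\partial D_0$ (the case handled directly by \cref{lem:loop_crossing}) and the second those that do (each macroscopic turn around $\tilde x$ dips within distance $r$ of $\tilde x$ with probability $\lesssim 1/\log(\ve'/r)$). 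Both terms go to $0$ as $r\to 0$, uniformly in $\delta$ small and in $\tilde x$, so we can pick $r$ making the bound at most $p$; then \eqref{E:few loops} gives $\mass_{D_0}(\cA)\leq\ve$, and undoing the reduction gives $\mass(\{\ell\text{ contractible},\ \diam(\ell)\geq\ve,\ \ell\cap B_\cM(x,r)\neq\emptyset\})\leq\ve$. Finally, if $\ell$ is $\eta$-contractible for some path $\eta$ then $\ell$ is Wilson contractible rooted at the first point where $\eta$ meets it, so all its simple loop-erasure components are contractible and hence $\ell$ itself is contractible; thus $\{\ell: \ell\text{ is }\eta\text{-contractible for some }\eta\}\subseteq\{\text{contractible loops}\}$ and the same bound applies.

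The main obstacle is the a priori random-walk estimate: establishing that the probability that a single random walk creates a macroscopic erased loop passing through an arbitrarily small ball tends to $0$, uniformly over the possibly strongly distorted geometry near $\partial\cM$, and in particular controlling the erased loops that wind around $\tilde x$, which fall outside the clean statement of \cref{lem:loop_crossing}. The bookkeeping in the reduction to $\tilde M$ (isolating long loops and the region near $\partial\cM$) is routine but must be done carefully to keep all constants uniform in $\delta$ and in $x$.
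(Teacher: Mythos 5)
Your high-level plan (lift to the universal cover using $\mass_{p^{-1}(\Gamma^\d)}(\tilde\ell)=\mass_{\Gamma^\d}(p(\tilde\ell))$, then control the lifted loop measure via Schramm-finiteness style random-walk estimates) matches the paper, and your observation that every $\eta$-contractible loop is contractible, so it suffices to treat contractible loops, is exactly how the paper reduces the second claim to the first. However, there is a genuine gap in the step where you cut the universal cover down to a fixed bounded region $D_0$ and discard the loops that leave it.

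You assert that loops which travel far into $\tilde M$ towards $\partial\D$ are long and therefore ``carry negligible mass uniformly in $\delta$, since the random walk confined to any fixed bounded region has a $\delta$-independent spectral gap and exits it almost surely.'' The spectral gap of the walk \emph{conditioned} to stay in a fixed bounded region does not control the mass of loops that \emph{leave} that region. For a concrete sanity check, replace $\D$ by $\R^2$ with simple random walk: the confined walk in any ball also has a $\delta$-independent spectral gap and exits a.s., but $\mass(\cI(\{0\}))=\log G(0,0)=\infty$, so the loop mass of loops through a point is infinite. What kills this pathology in the hyperbolic case is precisely the \emph{transience} of the lifted walk towards $\partial\D$ (a walk started near $\partial\D$ rarely returns to a compact set), which is the content of \cref{lem:boundary_convergence} in the appendix. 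The paper invokes exactly this when it runs Wilson's algorithm (rooted at infinity) on the lifted graph, and it is the reason the statement is restricted to hyperbolic $\cM$. Your reduction to $\mass_{D_0}(\cA)$ is only legitimate once you have shown the mass of $\cA$-loops escaping $D_0$ is small, and for that you need \cref{lem:boundary_convergence}, not a spectral gap of the confined walk. Once that is in place, your use of \cref{T:loopsoupRW}(2) and the excursion/annulus decomposition to get the single-walk estimate is a more quantitative route than the paper's qualitative Schramm-finiteness argument (sample the annulus branches first, argue they rarely enter $B(\tilde x,r)$, and conclude no macroscopic erased loop threads $B(\tilde x,r)$), but it is not wrong — it is just heavier than needed and again leans on the same transience for the ``walk rarely hits $B(\tilde x,r)$'' input.
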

\begin{proof}First note that if a loop is not contractible, it also cannot be $\eta$-contractible for any $\eta$ so we only need to prove the first point. 

We consider separately the case where $\partial M \neq \emptyset$ and the case where $\partial M = \emptyset$. Suppose first that $\partial M \neq \emptyset$. In this case consider an annulus $A$ centered at $x$ and of radii $ \eps/4, \eps/2$ on $M$. Using Schramm's finiteness lemma to sample all the walks from $A$, and using the fact a fixed walk starting from $A$ has a probability to enter $B(x,r)$ which can be made arbitrarily small if $r$ is chosen small enough (because random walk converges to Brownian motion, which will hit $\partial M$ in finite time and not come close to $x$), we easily see in that case that if $r$ is small enough, with high probability none of the walks starting from $A$ enters $B(x,r)$; in particular no walks (including those starting close to $x$) generate a loop of diameter at least $\eps$ which also intersects $B(x,r)$. 

Now let us suppose that $\partial M \neq \emptyset$. Let $f$ be a fundamental polygon corresponding to $M \simeq \D / F$ where $F$ is the corresponding discrete subgroup of M\"obius maps (thus $f$ is a hyperbolic polygon with $2g$ sides for which opposite sides are identified). We may assume that $f$ lies at some fixed positive distance from $\partial \D$.
The key idea will be to perform Wilson's algorithm, but \emph{on the universal cover} to generate a wired USF of the graph lifted to the cover. It will be crucial to work in the hyperbolic case. Although there is no direct relation between the wired USF on the cover and the CRSF on ${\cM}$, nevertheless the mass of a loop on ${\cM}$ and of one representative in its cover are equal, which is why we are able to say something.

Let $p^{-1}(\Gamma^\d)$ denote the lift of $\Gamma^\d$ to the universal cover of ${\cM}$ taken to be the disk. By definition, a loop $\ell$ in ${\cM}$ is contractible if and only if its lift to the universal cover is still a loop. Furthermore, it is easy to see that if $\ell$ is a loop in $p^{-1} (\Gamma^\d)$, then
\begin{equation}\label{masscover}
\mass_{p^{-1}(\Gamma^\d)} (\ell) = \mass_{\Gamma^\d} (p(\ell)),
\end{equation}
where $\mass_{p^{-1}(\Gamma^\d)} $ refers to the measure on loops computed in the cover, whereas $\mass_{\Gamma^\d} $ refers to the measure on loops in the surface itself.
Of course, a given contractible loop in the surface has many representatives in the universal cover.


Let $\tilde B$ be a lift of $B = B(x,r)$ on its cover and assume without loss of generality that $\tilde B$ is entirely contained by $f$) then from \eqref{masscover} we have
\begin{equation}\label{masscover2}
\mass_{\Gamma^\d} (\{\ell : \ell \text{ is contractible }, \diam (\ell ) \geq \eps; \ell \cap B \neq \emptyset \}  )  \leq \mass_{p^{-1}(\Gamma^\d)} ( \ell \in \cI( \tilde B) : \diam (\ell ) \geq \tilde \eps )
\end{equation}
where $\tilde \eps> 0$ depends only on $\eps$ and the choice of $f$ (more precisely its distance to $\partial \D$). The inequality comes from the change in the diameter from the manifold to the cover and from the fact that several copies of the same loop are counted separately in the right hand side.

We essentially use the same argument as in the previous case. Let $A$ be an annulus on the unit disc centered at $ \tilde x = p^{-1} (x) \cap f$ and of radii $\tilde \eps/4, \tilde \eps /2$. Observe that a random walk starting from $A$ on $p^{-1} ( \Gamma^\d)$ has a probability to enter $\tilde B$ which can be made arbitrarily small if $r$ is small enough (uniformly in $\delta$ and $x$). This follows from convergence to Brownian motion (stopped when it leaves $\D$) while the walk remains at positive distance from $\partial \D$, and the fact proved in \cref{lem:boundary_convergence} that starting at distance at most $\eta$ from $\partial \D$, a walk is very unlikely to ever come back $(1- \eps) \D$ for $\eta$ small enough. We conclude as above, using Schramm's finiteness lemma (with Wilson's algorithm being rooted at infinity, which is possible since the walk is transient as a consequence of \cref{lem:boundary_convergence}; see e.g. Proposition 10.1 in \cite{LyonsPeres} for details on this algorithm on transient graphs).
\end{proof}

\subsection{Rough control on the $h$-transform }

Throughout this subsection, we work on the graph $\Gamma^\d$ satisfying the assumptions in \Cref{sec:setup} and collect several estimates of hitting probabilies of sets, primarily exploiting the uniform crossing assumption.

We start with a lemma which shows that avoiding a given path $\gamma$ in a ball of radius $R$ roughly speaking amounts to avoiding the entire ball of radius $R$, at least if we start at distance $2R$ from the center of the ball. {Throughout this subsection, we assume that we are in the setup of \cref{sec:setup}}. All the statements in this subsection are about random walk in simply connected discs, and hold for small enough $\delta$, we do not mention the dependancy on  $\delta$ in the statements for the sake of brevity.

\begin{lemma}\label{lem:transience_excursion}
There exists a constant $p > 0$ such that the following holds for all $x, R,n>2R$ and for all set $A \subset B( x, R)$ such that $A$ connects $x$ to $\partial B( x, R)$. For any $v\in B( x, 2R)^c$, the random walk started in $v$ and conditioned to leave $B(x, n)$ before hitting $A$ has a probability at least $p$ not to enter $B( x, R)$:
$$
\P_v \Big( \tau_{B(x,R)} > \tau_{B(x,n)^c} \Big| \tau_A > \tau_{B(x,n)^c}\Big) \ge p
$$
where we recall that for any set $B$, $\tau_B$ is the hitting time of $B$.
(Note in particular that $p$ does not depend on $A$ or $n$).
\end{lemma}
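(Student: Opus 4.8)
The plan is to rewrite the conditional probability as a ratio and to bound it through a renewal decomposition of the walk between the spheres $\partial B(x,R)$ and $\partial B(x,2R)$. Since $A\subset B(x,R)$ we have $\tau_A\ge \tau_{B(x,R)}$, hence $\{\tau_{B(x,R)}>\tau_{B(x,n)^c}\}\subset\{\tau_A>\tau_{B(x,n)^c}\}$ and the conditional probability in the statement equals $g(v)/h(v)$, where $g(v)=\P_v(\tau_{B(x,R)}>\tau_{B(x,n)^c})$, $h(v)=\P_v(\tau_A>\tau_{B(x,n)^c})$ and $g\le h$. So it suffices to prove $h(v)\le C\,g(v)$ for a constant $C$ depending only on the crossing constants of \cref{sec:setup}; then $p=1/C$. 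Applying the strong Markov property at the hitting time of $\partial B(x,2R)$ (and noting that if the walk exits $B(x,n)$ before reaching $\partial B(x,2R)$ it has entered neither $B(x,R)$ nor $A$, so those trajectories contribute equally to $g$ and $h$), it is enough to show $h(w)\le C\,g(w)$ for $w\in\partial B(x,2R)$.

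The argument rests on two inputs. First, a Harnack comparison: the functions $g$ and $h$, restricted to $\partial B(x,2R)$, are comparable up to a universal constant $C_1$, which follows from the uniform crossing assumption by a Harnack chain of $O(1)$ balls around $\partial B(x,2R)$ contained in the annulus $B(x,n)\setminus B(x,R)$ (the degenerate case where $n$ is very close to $2R$ being trivial, since then $g\approx 1$ everywhere on $\partial B(x,2R)$). Second, a Beurling-type lower bound: there is a universal $c'>0$ such that $\P_u(\tau_A\le\tau_{B(x,2R)^c})\ge c'$ for every $u\in\partial B(x,R)$. This is the step that uses the hypothesis that $A$ joins the centre $x$ to $\partial B(x,R)$: being connected and meeting $x$ and $\partial B(x,R)$, $A$ intersects every sphere $\partial B(x,r)$ with $0\le r\le R$ and in particular has diameter $\ge R$, and such a set is hit quickly by the walk — this is the lower-bound companion of \cref{lem:SRW_hit} and is obtained from the uniform crossing estimate exactly as in \cite{BLR16}.

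With these in hand, run the walk from $w\in\partial B(x,2R)$ and record its excursions into $B(x,R)$: let $T_1$ be the first hitting time of $B(x,R)$, $T_1'$ the first exit time of $B(x,2R)$ after $T_1$, and inductively $T_{j+1},T_{j+1}'$ the next such pair; let $N$ be the number of these excursions completed before $\tau_{B(x,n)^c}$. During each excursion the walk stays in $B(x,2R)\subset B(x,n)$, so it can only exit $B(x,n)$ between excursions, and therefore $\{N=k,\ \tau_A>\tau_{B(x,n)^c}\}$ is precisely the event that the first $k$ excursions all avoid $A$ and that after the $k$-th excursion the walk leaves $B(x,n)$ before re-entering $B(x,R)$. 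Peeling off factors via the strong Markov property at $T_1,T_1',\dots,T_k,T_k'$, each excursion avoids $A$ with conditional probability at most $1-c'$ by the Beurling bound, each ``re-enter $B(x,R)$'' step costs a factor $\le 1$, and the last factor is $g(X_{T_k'})\le\sup_{\partial B(x,2R)}g\le C_1 g(w)$ by Harnack. Hence $\P_w(N=k,\ \tau_A>\tau_{B(x,n)^c})\le C_1(1-c')^k g(w)$ for $k\ge 1$, while for $k=0$ this probability is exactly $g(w)$. Summing over $k$ gives $h(w)\le\bigl(1+C_1(1-c')/c'\bigr)g(w)$, which is the claim with $C=1+C_1(1-c')/c'$; note that $C$ depends neither on $A$ (only the connectivity of $A$ enters, through $c'$) nor on $n$ nor on the scale.

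The main obstacle is the Beurling-type lower bound: proving, using only the uniform crossing assumption and without any Green's function estimates, that a connected set running from the centre to the boundary of a ball is hit by the walk before it travels twice the radius, with a probability bounded below uniformly in the set. This is the natural counterpart of \cref{lem:SRW_hit} and is handled by the same crossing-estimate machinery, but it is the one genuinely analytic input. The remaining point that requires care is purely combinatorial: one must set up the excursion decomposition so that the single ``small-probability escape'' factor (the final $g$) is separated from the $O(1)$ per-excursion factors, which is what makes the geometric series close against $g(w)$ rather than merely producing an absolute constant bound on $h(w)$.
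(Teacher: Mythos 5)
Your proof is correct and follows essentially the same route as the paper's: reduce to a point on $\partial B(x,2R)$, apply Harnack there, and close a geometric series coming from an excursion decomposition between $\partial B(x,R)$ and $\partial B(x,2R)$, using that each excursion hits $A$ with probability bounded below (because $A$ connects $x$ to $\partial B(x,R)$, so the crossing assumption gives a uniform lower bound). The paper phrases the key per-excursion bound contrapositively — "the probability that a random walk reaches $2R$ without hitting $A$ is at most $p<1$" — but this is exactly your $1-c'$, and the rest of the bookkeeping is the same.
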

\begin{proof}
Let $h_A(v) = \P_{v}(\tau_A> \tau_{B(x,n)^c})$, and similarly write $h_R = h_{B(x,R)} (v) = \P ( \tau_{B(x,R)} > \tau_n)$. Recall that both are harmonic functions in the appropriate domain.

It is clear that there exists $C< \infty$ such that for all $u, v \in \partial B( x, 2R)$, $$\frac1C h(v) \leq h(u) \leq C h(v)$$
where $h$ is either $h_A$ or $h_R$: this is essentially Harnack's inequality, and can be proved for instance as in Lemma 4.4 of \cite{BLR16}.
 Furthermore, by uniform crossing, there exists $p < 1$ such that for all $u \in B( x, R)$, the probability that a random walk reaches $2R$ without hitting $A$ is at most $p$.

By considering successive crossings between $B(x, R)$ and $B(x, 2R)^c$ (and summing over the number $N$ of such crossings),
\begin{align*}
h_A(u) &\le \sum_{N \ge 0 } \left( \sup_{v \in B(x, 2R)^c} \P_v (\tau_{B(x,R)} < \tau_{B(x,n)^c})  \sup_{v \in B(x, R)} \P_v ( \tau_{B(x, 2R)^c} < \tau_A) \right)^N \sup_{v \in \partial B(x, 2R)} h_R(v)\\
& \le \left(\sum_{N \ge 0} p^N\right)  C h_R(u)
\end{align*}
Hence $h_R(u) / h_A(u) \ge c$ for some $c>0$, and this ratio is exactly the desired conditional probability so this concludes the proof.
\end{proof}

We will also use a version of the lemma for walks conditioned on leaving a domain through a particular point; this is essentially similar to the above by taking the point to play the role of $\infty$ in the previous lemma.

\begin{lemma}\label{lem:beurling_target}
For all $\eps>0$ there exists a $p = p(\eps)$ such that the following holds for all $x, R>0, n \ge 2R +1$. For all sets $A \subset B(x, R)^c$ connecting $\partial B(x, R)$ to $B(x,n)^c$ and for any $y \in A$ such that $\P_x( X_{\tau_A} = y ) >0$, for any $v \in B(x, R) \setminus B(x,R(1- \eps)$,
\[
\P_{v} \Big[ \tau_{B(x, R(1- 2\eps))} > \tau_A \Big| X(\tau_A) = y \Big] \geq p.
\]
\end{lemma}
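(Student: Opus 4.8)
Write $R'=R(1-\eps)$, $R''=R(1-2\eps)$, $W=B(x,R'')$ and $S=\partial B(x,R(1-\eps/8))$; note $v\in B(x,R)\setminus W$ with $\dist(v,\overline W)\ge\eps R$. Since $A\subset B(x,R)^c$, the function $h(u):=\P_u(X_{\tau_A}=y)$ is nonnegative and harmonic on $B(x,R)$, and the walk conditioned on $\{X_{\tau_A}=y\}$ is the Doob $h$-transform of the walk killed at $\tau_A$. Setting $g(u):=\P_u(\tau_A<\tau_W,\ X_{\tau_A}=y)\le h(u)$ (harmonic on $B(x,R)\setminus\overline W$, vanishing on $\partial W$), the standard $h$-transform identity gives
\[
\P_v\bigl(\tau_W>\tau_A\mid X_{\tau_A}=y\bigr)=\frac{g(v)}{h(v)},
\]
so the plan is to produce $p=p(\eps)>0$ with $g(v)\ge p\,h(v)$. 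Decomposing the walk from $v$ at the first entry time to $W$ and using the strong Markov property,
\[
h(v)=g(v)+\E_v\bigl[\mathbf 1_{\tau_W<\tau_A}\,h(X_{\tau_W})\bigr],\qquad X_{\tau_W}\in\partial W,
\]
which reduces the whole lemma to the single estimate $\sup_{\partial W}h\le C(\eps)\,g(v)$.

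For this estimate I would run an excursion/renewal argument exactly as in \cref{lem:transience_excursion}, powered by the uniform crossing estimate and Harnack's inequality; the new geometric input, replacing the radial barrier used there, is that $A$ — being connected and meeting both $\partial B(x,R)$ and $B(x,n)^c$ with $n\ge 2R+1$ — contains a connected piece crossing the annulus $B(x,2R)\setminus\overline{B(x,R)}$ from inner to outer boundary, so by planarity any nearest-neighbour path winding once around $x$ inside that annulus must hit $A$. Combining this with uniform crossing I get constants $\beta_0(\eps),\beta_1(\eps)>0$ such that (i) from any $w\in\partial W$ the walk reaches $S$ before re-entering $W$ with probability $\ge\beta_0$ (cross the bounded-modulus annulus $B(x,R(1-\eps/8))\setminus\overline W$ outward), and (ii) from any $z\in S$ the walk hits $A$ before re-entering $W$ with probability $\ge\beta_1$ (cross outward to $\partial B(x,R)$, then follow a loop winding once around $x$ inside $B(x,2R)\setminus\overline{B(x,R)}$; the whole route stays at radius $>R''$, so never meets $W$). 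Since a walk from $w\in\partial W$ that is to hit $A$ must first reach $S$, writing $h(z)=g(z)+\E_z[\mathbf 1_{\tau_W<\tau_A}h(X_{\tau_W})]\le g(z)+(1-\beta_1)\sup_{\partial W}h$ for $z\in S$ by (ii), and splitting according to whether the walk from $w$ reaches $S$ or returns to $W$ first, one gets
\[
h(w)\le\sup_{S}g+(1-\beta_0\beta_1)\sup_{\partial W}h\qquad(w\in\partial W),
\]
hence $\sup_{\partial W}h\le(\beta_0\beta_1)^{-1}\sup_{S}g$.

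The remaining step is to bound $\sup_{S}g$ by $C(\eps)\,g(v)$, and this is where Harnack's inequality enters as in \cite[Lemma~4.4]{BLR16}. The point of choosing the interior sphere $S=\partial B(x,R(1-\eps/8))$ is that every $z\in S$ lies at distance $\ge\eps R/8$ from $A$ and $\ge 15\eps R/8$ from $\overline W$, so $g$ (and $h$) satisfy a Harnack inequality on balls $B(z,\eps R/16)$ with constant depending only on $\eps$. Since $v\in B(x,R)\setminus W$, by uniform crossing the walk from $v$ can first move (if necessary) to radius slightly below $R$ and then wind around the bounded-modulus annular shell between radii $R''$ and $R$ to reach a ball of fixed relative size around any prescribed point of $S$, all the while staying at radius $<R$ (so avoiding $A$) and $>R''$ (so avoiding $W$), with probability bounded below in terms of $\eps$. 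Combining this with the Harnack comparison at the maximiser of $g$ on $S$ gives $g(v)\ge c(\eps)\sup_{S}g$, and chaining the three displays yields $g(v)\ge p(\eps)\,h(v)$.

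The main obstacle — and the only genuinely delicate point — is this last comparison: $A$ may approach $\partial B(x,R)$ arbitrarily closely, so Harnack cannot be applied directly at $\partial B(x,R)$. The remedy, exactly as in the corresponding step of \cref{lem:transience_excursion} and \cite{BLR16}, is to carry out all comparisons on the slightly interior sphere $S$, where $A$ stays a definite distance $\eps R/8$ away, and to absorb the short radial passage between $S$ and $\partial B(x,R)$ into the uniform crossing estimate; the rest of the argument is a routine excursion decomposition. (The reduction from the case $v\in B(x,R)\setminus B(x,R')$ to an interior point, needed to quote Harnack at $v$, is handled the same way: one more crossing step moves the walk from $v$ into the shell.)
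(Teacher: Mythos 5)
Your overall architecture matches the paper's: reduce to a comparison of $g(v) = \P_v(\tau_A<\tau_W,\ X_{\tau_A}=y)$ and $h(v)=\P_v(X_{\tau_A}=y)$ via the $h$-transform identity, use an excursion (geometric series) decomposition, bound the escape probability from the intermediate sphere by a winding/crossing argument in the annulus $A(x,R,2R)$ (this is exactly where $n\ge 2R$ enters, as you correctly note), and close with Harnack on an interior sphere. Your formulation $\sup_{\partial W}h\le(\beta_0\beta_1)^{-1}\sup_S g$ is a slightly different but equivalent packaging of the paper's ``summing over the number of crossings between distance $R(1-\eps)$ and $R(1-2\eps)$''.

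There is, however, a genuine gap in the last comparison $\sup_S g\le C(\eps)\,g(v)$, precisely in the case you flag as ``handled by one more crossing step''. Your proposed route is to move the walk from $v$ inward by uniform crossing before applying Harnack, ``all the while staying at radius $<R$''. But the lemma allows $v$ at graph distance from $\partial B(x,R)$, say $|v-x|=R-O(\delta)$, and then the probability of reaching the interior sphere $S$ (at radius $R(1-\eps/8)$) before exiting $B(x,R)$ vanishes like $\delta/(\eps R)$; once the walk leaves $B(x,R)$ it may hit $A$ (in the worst case $A$ occupies most of the annulus $A(x,R,2R)$ save a thin channel), so the crossing probability cannot be bounded below uniformly in $v$. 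The paper sidesteps this entirely with a cost-free reduction: apply the Markov property of the \emph{conditioned} walk at the first hitting time of $\partial B(x,R(1-\eps))$. If the $h$-transformed walk hits $y$ first, $\tau_W>\tau_A$ holds deterministically (no estimate needed); otherwise restart from $\partial B(x,R(1-\eps))$, where both $A$ and $W$ are at distance $\ge\eps R$ and Harnack (and your $\beta_0,\beta_1$ argument) apply cleanly. Equivalently, in your notation, decompose both $g(v)$ and $h(v)$ at $\tau_{S\cup A}$; the $\{\tau_A<\tau_S\}$ contribution is \emph{identical} in $g$ and $h$, so $h(v)-g(v)=\E_v[\mathbf 1_{\tau_S<\tau_A}(h-g)(X_{\tau_S})]\le(1-c)\,\E_v[\mathbf 1_{\tau_S<\tau_A}h(X_{\tau_S})]\le(1-c)h(v)$ once you know the bound on $S$. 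No crossing estimate at $v$ is required, and you should replace your ``one more crossing step'' with this decomposition. (Also, with your choice $S=\partial B(x,R(1-\eps/8))$, not every $v$ in the target shell $B(x,R)\setminus B(x,R(1-\eps))$ lies outside $S$, so $\tau_S<\tau_W$ is not automatic; taking the intermediate sphere at radius $R(1-\eps)$, as the paper does, removes this nuisance.)
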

\begin{proof}
The proof follows the same lines as \cref{lem:transience_excursion}. We may assume without loss of generality that $v \in \partial B(x, R(1-\eps))$ by applying the Markov property at the first hitting time of this set (if we hit $A$ before this set then there is nothing to show since we will also not enter $B(x, R(1- 2\eps))$). We use Harnack's inequality to show that
\[
h(v) \geq c \sup_{u \in \partial B( x, R(1-\eps) )} h(u)
\]
where $h(v) = \P_v[X(\tau_A) = y]$. On the other hand starting from any point in $\partial B(x, R(1-\eps))$, there is a probability $p$ uniformly less than 1 to hit $B(x, R(1- 2\eps))$ before $A$, because $A$ connects $\partial B(x,R)$ to $B(x, n)^c$ (so a full turn in the annulus $A(x, R, 2R)$ before touching $B(x, R(1-2\eps))$ would suffice -- this is why we assumed $n \ge 2R$.) We conclude in exactly the same manner as in \cref{lem:transience_excursion}, by summing over the number of crossings between distance $R(1-\eps)$ and $R(1- 2\eps)$.
\end{proof}

Finally we will need to say that a walk conditioned to avoid a path from $x$ to $\partial B(x, R)$ is unlikely to enter deep into $B(x, R)$, even if it starts adjacent from the endpoint of the path.
\begin{lemma}\label{lem:backtrack_excursion}
There exists constants $C , \alpha> 0$ such that the following holds for all $x$ and for all $r \leq R \leq n$ and for all set $A\subset B(x, R)$ connecting $x$ to $\partial B( x, R)$. For all $v \in B(x, R)^c$ (including $v$ adjacent to $A$)
\[
\P_v\Big[ \tau_{B(x, r)} \leq \tau_{B(x, n)^c} | \tau_A > \tau_{B(x, n)^c} \Big] \leq C \left(\frac{r}{R}\right)^\alpha
\]
The same bounds holds for a walk conditioned on hitting a set $B \subset B(x, n) \setminus B(x, R)$ at a particular point as in \cref{lem:beurling_target}.
\end{lemma}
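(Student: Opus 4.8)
We write $\tau_n:=\tau_{B(x,n)^c}$ and let $h(v):=\P_v[\tau_A>\tau_n]$ be the harmonic function defining the Doob transform, so the quantity to bound is exactly $N(v)/h(v)$ with $N(v):=\P_v[\tau_{B(x,r)}\le\tau_n,\ \tau_A>\tau_n]$; the goal is to show $N(v)\le C(r/R)^{\alpha}h(v)$ with $C,\alpha$ universal. One may assume $r\le R/10$ and $n\ge 4R$, the other cases being trivial (a probability is $\le 1$ and $(r/R)^\alpha\ge 10^{-\alpha}$, and when $n<4R$ the walk exits $B(x,n)$ so quickly that the same estimates apply with only notational changes). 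The main obstacle, which dictates the whole argument, is uniformity in $n$ and in the position of $v$: $h(v)$ can be as small as $\asymp 1/\log(n/R)$ (when $v$ is a bounded distance outside $\partial B(x,R)$) or even smaller (when $v$ is adjacent to $A$), and these factors are independent of $r$; hence one must not bound $h(X_{\tau_{B(x,r)}})\le 1$ or $h(v)\ge c$, but instead extract a compensating factor of $h(v)$ from inside $N(v)$.

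There are two quantitative inputs. \emph{Barrier crossing:} since $A$ is connected and joins $x$ to $\partial B(x,R)$, any closed curve winding once around $x$ inside a dyadic annulus $A(x,\rho,2\rho)$ with $2\rho<R$ must meet $A$ (the Jordan curve it contains separates $x\in A$ from a far point of $A$); by the uniform crossing assumption of \cref{sec:setup} a walk started anywhere in such an annulus completes such a loop inside the annulus before leaving it with probability $\ge\beta_0>0$, so it cannot cross the annulus in either direction without touching $A$ with probability $>1-\beta_0=:p_1$. Iterating over the $\asymp\log_2(R/r)$ scales between $r$ and $R$ gives, with $\alpha_1:=\log_2(1/p_1)$ and some $C_1$,
\[
\P_w[\tau_{\overline{B(x,r)}}<\tau_A]\le C_1(r/R)^{\alpha_1}\quad(w\in B(x,R)^c),\qquad \P_u[\tau_{\partial B(x,R)}<\tau_A]\le C_1(r/R)^{\alpha_1}\quad(u\in\overline{B(x,r)}).
\]
\emph{Beurling near $A$:} for $u\in\overline{B(x,r)}$ the connected piece of $A$ running from $x$ until it first exits $B(u,2R/5)$ has diameter $\ge 2R/5-r\gtrsim R$ and lies within distance $r$ of $u$; since the walk must leave $B(u,2R/5)\subset B(x,n)$ before reaching $\partial B(x,n)$, \cref{lem:SRW_hit} gives $h(u)\le C_3(r/R)^{c_1}$ for all $u\in\overline{B(x,r)}$.

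For the cancellation, first take a \emph{nice} starting point, i.e. $v$ with $|v-x|\asymp R$, $\dist(v,A)\ge R/10$ (for $v$ further out, run the $h$-transformed walk — which is a Markov chain, so the strong Markov property applies — until its first visit to $\partial B(x,2R)$, a nice point). Decompose $N(v)$ at the first entrance time to $\overline{B(x,R)}$ (which, on the event, precedes both $\tau_A$ and $\tau_{B(x,r)}$) and then at $\tau_{B(x,r)}$, obtaining $N(v)\le\sup_{w\in\partial B(x,R)}\E_w[1_{\{\tau_{B(x,r)}<\tau_A\wedge\tau_n\}}\,h(X_{\tau_{B(x,r)}})]$. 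For $u:=X_{\tau_{B(x,r)}}\in\overline{B(x,r)}$, first drive the walk out to $\partial B(x,R/2)$ (barrier crossing, factor $\le C_1(r/R)^{\alpha_1}$) and then bound $\sup_{\partial B(x,R/2)}h\le C_H\,h(v)$; this last step is a Harnack inequality for $h$, legitimate because $h$ is harmonic in $A(x,R/2,4R)\setminus A$, a region joining $\partial B(x,R/2),\partial B(x,2R)$ and $v$ by chains of balls (using $n\ge4R$ and that $A$ is a simple arc in the applications; points of $\partial B(x,R/2)$ within $R/10$ of $A$ are handled by first driving the walk to distance $R/10$ from $A$). Combined with $\P_w[\tau_{B(x,r)}<\tau_A]\le C_1(r/R)^{\alpha_1}$ this gives $N(v)\le C(r/R)^{2\alpha_1}h(v)$. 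For $v$ adjacent to $A$ (so $|v-x|\in[R,1.1R]$) one runs a scale induction: letting $a_j$ be the supremum of the conditioned probability over starting points on $\partial B(x,2^{-j}R)$ within $2^{-j}R/10$ of $A$, and splitting the conditioned walk according to whether it first reaches $\partial B(x,2^{-j+1}R)$ (a nice point, handled above) or $\partial B(x,2^{-j-1}R)$ (handled by $a_{j+1}$), the fact that the conditioning repels the walk from $A$ — a \cref{lem:transience_excursion}-type estimate — makes the latter probability $\le q<1$, so $a_j\le C2^{j\alpha}(r/R)^\alpha+q\,a_{j+1}$; since $a_m\le1$ at the bottom scale $2^{-m}R\asymp r$, choosing $\alpha\le\min(2\alpha_1,\tfrac12\log_2(1/q))$ makes the geometric sum close and yields $a_0\le C'(r/R)^\alpha$, which proves the lemma.

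Finally, for a walk conditioned to leave through a prescribed point $y$ of a set $B\subset B(x,n)\setminus B(x,R)$, the argument is identical: the Doob transform is now by $w\mapsto\P_w[X_{\tau_B\wedge\tau_n}=y]$, the barrier and Beurling estimates are unchanged, the Harnack comparison and the cancellation go through verbatim, and \cref{lem:beurling_target} plays the role of \cref{lem:transience_excursion}.
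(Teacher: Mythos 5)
Your treatment of "nice" starting points (decomposing $N(v)$ at the successive entrances to $\overline{B(x,R)}$ and then $\overline{B(x,r)}$, and reuniting with $h(v)$ via a barrier factor and Harnack at scale $R$) is correct, as is the reduction of far $v$ to $\partial B(x,2R)$ via the $h$-transformed Markov chain. But the scale induction for $v$ adjacent to $A$ has a genuine gap. You split the conditioned walk at the first exit of the annulus $A(x,2^{-j-1}R,2^{-j+1}R)$ and assert that an exit through the outer circle $\partial B(x,2^{-j+1}R)$ lands at a "nice point, handled above". For $j=0$ that is automatic ($\partial B(x,2R)$ is at distance $\ge R$ from $A\subset B(x,R)$), but for $j\ge 1$ the conditioned walk may well exit at a point of $\partial B(x,2^{-j+1}R)$ that is still within $2^{-j+1}R/10$ of $A$, in which case the nice-point bound does not apply and the correct contribution is $a_{j-1}$, not a closed quantity. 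The resulting recurrence is two-sided, $a_j\le c_j + q'a_{j-1} + qa_{j+1}$, with boundary data only at $j=0$ ($q'=0$) and $j=m$ ($a_m\le 1$); one cannot simply unfold downward as you do, and showing that such a recurrence stays controlled requires either a priori smallness of $q'$ (i.e.\ that the conditioned walk leaves the annulus on the nice part of $\partial B(x,2^{-j+1}R)$ with probability bounded away from $0$) or a more careful choice of stopping events. Neither is argued.

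The paper sidesteps this entirely by a cancellation at a single scale. Both the numerator $\P_v[\tau_{B(x,r)}\le\tau_n\le\tau_A]$ and the denominator $\P_v[\tau_n\le\tau_A]$ are decomposed at the exit of the small ball $B(v,R/3)$: both are shown to factor (up to constants and the barrier factor $(r/R)^\alpha$ for the numerator) as $\P_v[\tau_A\ge\tau_{B(v,R/3)^c}]$ times $\P_u[\tau_n\le\tau_A]$ for a fixed reference point $u$ at distance $\asymp R$ from $x$. The local escape factor $\P_v[\tau_A\ge\tau_{B(v,R/3)^c}]$ — the only place where the possibly tiny distance from $v$ to $A$ enters — cancels on division. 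The upper bound is essentially the barrier estimate you also use; the non-trivial input is the lower bound on the denominator, which uses Proposition 4.6 of \cite{uchiyama} (the conditioned walk exits $B(v,R/3)$ far from $A$ with probability bounded below) followed by \cref{lem:transience_excursion}. That is the step your proof is missing, and it is precisely what replaces your scale induction.
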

\begin{proof}
Note that without loss of generality we can suppose that $r \leq R/2$ by taking $C$ large enough. We also assume to simplify notations that $n \geq 2R$ and (by the strong Markov property) $|v-x| \leq 2R$.
We fix $u$ with $\tfrac{4}{3} R \leq |u-x| \leq \tfrac{5}{3} R$, we clearly have by Harnack's inequality and the Markov property
\[
\P_v[ \tau_{B(x, r)} \leq \tau_{B(x, n)^c} \leq \tau_{A}  ] \leq C \P_v[ \tau_{A} \geq \tau_{B(v, R/3)^c} ] (\frac{r}{R})^\alpha \P_u[ \tau_{B(x, n)^c} \leq \tau_{A} ].
\]
Indeed the walk must first go at least distance $R/3$ from its starting point avoiding $A$, after first reaching distance $r$ it has to come back to distance $R$ without making any full turn, which has probability less than $(r / R)^\alpha$ for some $\alpha$, and then it must exit from distance roughly $3R/2$ which has a probability comparable to the same exit probability from $u$ by Harnack.

On the other hand, using Proposition 4.6 in \cite{uchiyama}, we see that the walk conditioned on exiting $B(v, R/3)$ before hitting $A$ has a positive probability to do so far away from $A$, say at a point $v'$ with $|v' - x| \geq 11R/10$. From $v'$, by \cref{lem:transience_excursion}, the walk conditioned to exit $B(x, n)$ before hitting $A$ has a positive probability not to enter $B(x, R)$, in which case the overall trajectory does not enter $B(x, r)$, in other word, we have (using again Harnack inequality to compare $v'$ to $u$)
\[
\P_v( \tau_{B(x, n)^c} \leq \tau_A \wedge \tau_{B(x, r)} ) \geq \frac{1}{C} \P_v[ \tau_{A} \geq \tau_{B(v, R/3)^c} ]\P_u[ \tau_{B(x, n)^c} \leq \tau_{A} ].
\]
Putting together both bounds completes the proof of the first bound. The second is done similarly.
\end{proof}

%
%
%

\section{Wired scaling limit for a pair of paths}
\label{S:Lawler}

\subsection{Overview}


In this section, we work with the following setup. We have a simply connected surface $D$ which, without loss of generality we choose to be the unit disc. We also have a graph $D^\d$ embedded in it, and a neighbouring pair of vertices which in this section we call $x^1$ and $ x^2$ (these should not be confused with the choice of punctures $x_1, \ldots, x_{\mathsf{k}}$ in $M$). Consider the UST on $D^\d$, wired at the natural boundary $\partial$ coming from the embedding of $D^\d$ in $D$.
Recall that a branch of a wired UST started at $x$ is the path of the UST  in $D^\d$ started at $x$ and ending in the boundary of $D^\d$. (Alternatively, we may orient the wired UST towards the boundary of $D^\d$, then the branch starting at $x$ is the path obtained successively following the unique outgoing arrow coming out of the current vertex, starting from $x$.)

We assume that  $x^1, x^2 \in D^\d$ are at distance $O(\delta)$ from 0. We aim to show the existence of a scaling limit for the branches $\bs \eta = (\eta^1, \eta^2)$ in the wired UST on $D^\d$ emanating from $x^1$ and $x^2$ respectively, conditioned on the event $\cA$ that $\eta^1 $ and $\eta^2$ are disjoint.

Without loss of generality we assume $\delta$ is of the form $2^{-N}$ for some $N\ge 1$. Let $B_n = B(0,2^n\delta)$ where $1 \le  n \le N $.
Let $ \mu =  \mu_{x^1,x^2}$ be the  law of a pair of self-avoiding paths generated by loop-erasing two \emph{independent} simple random walks started from $x^1$ and $x^2$ respectively, run until leaving $B_N$. Let $\underline{ \lambda} =\underline{\lambda}^\d_{x^1,x^2}$ denote the law of $\mu$ but conditioned on the event that the random walk started from $x^1$ does not intersect the loop erasure of the random walk started from $x^2$. In other words, $\underline{\lambda}$ is the law of two branches of a wired UST in $D^\d$ started from $x^1,x^2$, conditioned on $\cA$.
\begin{figure}[h]
    \centering
    \includegraphics[scale = 0.5]{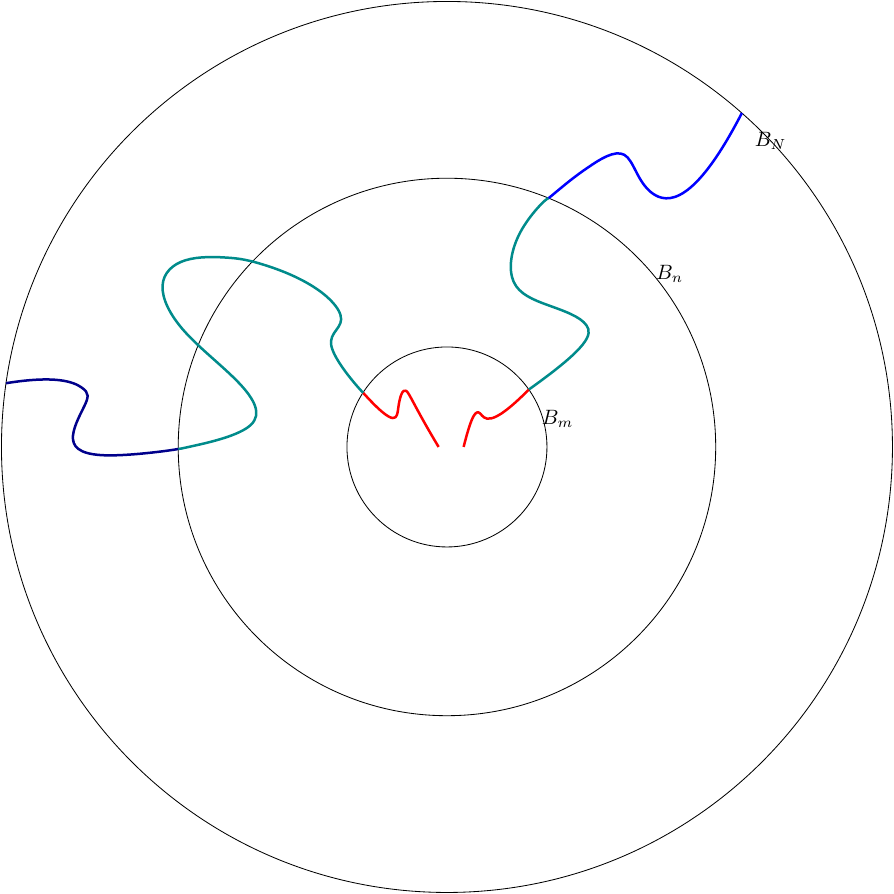}
    \caption{The red part is ${\bs \eta}_m$, the blue part is $\dot {\bs \eta}_{n,N}$ and the dark cyan part is $\bs \eta^*_{m,n}$.}
    \label{fig:path_cut}
\end{figure}

Given a pair of paths $\bs \eta \in \cA$, and given $1\le m \le n \le N$, we may decompose uniquely these paths into three portions (see \Cref{fig:path_cut}):
\begin{equation}\label{eq:pathdec}
\bs \eta = \bs \eta_m \oplus \bs \eta^*_{m,n} \oplus \dot {\bs \eta}_{n, N}
\end{equation}
where $\oplus$ denote the concatenation of adjacent paths, and this writing implicitly indicates that $\bs \eta_m$ is the restriction of $\bs \eta$ until its first exit of $B_m$; and $ \dot {\bs \eta}_{n, N}$ is the restriction of $\bs \eta$ after its last visit to $B_{n}$. Then $\bs \eta_n \in \cA_n$, where $\cA_n$ is the set of disjoint pairs of paths from $x^1, x^2$ to the first exit of $B_n$. Thus with this notation, $\cA = \cA_N$.

The main result we prove in the remainder of this section is the following, which says that the scaling limit exists, and does not depend in a significant way on the beginning of the paths: in fact, given the beginning of the paths $\bs \eta_m$, this has only an effect on at most $O(1)$ further scales. Also, the intermediate portion of the path is unlikely to stray far away from $B_n$.

\begin{prop}\label{prop:local_convergence}
There exists $\alpha>0$ such that the following holds. Uniformly over $\delta>0$, $1\le m \le N$, any pair of paths $\bs \gamma_m \in \cA_m$ and $\tilde { \bs  \gamma}_m \in \cA_m$, and $n = m+k$,
$$
d_{TV} \Big( \underline{\lambda}^\d (\dot{\bs\eta}_{n,N} | \bs \eta_m = \bs \gamma_m ) ;  \underline{\lambda}^\d (\dot{\bs\eta}_{n,N} | \bs \eta_m = \tilde {\bs \gamma}_m ) \Big) \le  e^{-\alpha k},
$$
Finally,
$$
\P ( \bs \eta^*_{m,n} \cap B_{n+k}^c \neq \emptyset) \le e^{-\alpha k}
$$
\end{prop}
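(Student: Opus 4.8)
The plan is to treat the two assertions of \cref{prop:local_convergence} by rather different means, combining the Radon--Nikodym identities of \cref{sec:law_marginal} with the loop-measure estimates of \cref{sec:finiteness} and the rough $h$-transform bounds \cref{lem:transience_excursion,lem:beurling_target,lem:backtrack_excursion}.

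\textbf{The no-return estimate.} This is the shorter part and does not really use the loop measure. If $\bs\eta^*_{m,n}$ meets $B_{n+k}^c$, then one of the branches, say $\eta_1$, reaches radius $2^{n+k}\delta$ and subsequently returns to $B_n$ before its last visit there; equivalently, $\eta_1$ exits $B_{n+k}$, re-enters $B_n$, and then leaves again towards $\partial$. Condition on $\eta_2$ and on the initial segment $\eta_1^\flat$ of $\eta_1$ up to its first exit of $B_{n+k}$; this is a connected set joining $x_1\in B(0,O(\delta))$ to $\partial B(0,2^{n+k}\delta)$. By the domain Markov property of loop-erased walk (a direct consequence of the constructions in \cref{sec:law_branch,sec:law_marginal}), the remainder of $\eta_1$ is the loop-erasure of a walk from the tip of $\eta_1^\flat$, conditioned to avoid $\eta_1^\flat\cup\eta_2$ and to reach $\partial$ (possibly at a prescribed exit point). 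Since $\eta_1^\flat$ connects the origin region to $\partial B(0,2^{n+k}\delta)$, \cref{lem:backtrack_excursion} (in the variant allowing the avoided set to extend outside the relevant ball and the walk to be conditioned on its exit data) applies with $R\asymp 2^{n+k}\delta$ and $r\asymp 2^{n}\delta$ and shows that this conditioned walk re-enters $B_n$ with probability at most $C(r/R)^\alpha=C2^{-\alpha k}$. On the complement, the last visit of $\eta_1$ to $B_n$ lies on $\eta_1^\flat$, hence precedes the excursion into $B_{n+k}^c$, so that excursion belongs to $\dot{\bs\eta}_{n,N}$ and not to $\bs\eta^*_{m,n}$. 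The symmetric bound for $\eta_2$ (whose tail given its initial segment is, by the same Markov property, a loop-erased excursion avoiding that segment) together with a union bound gives $\P(\bs\eta^*_{m,n}\cap B_{n+k}^c\neq\emptyset)\le 2C2^{-\alpha k}$.

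\textbf{The mixing estimate.} Here we run a coupling argument over the $k$ scales between $B_m$ and $B_n$, in the spirit of Lawler's separation-lemma technique \cite{Lawler2sided} but using only the crude estimates available to us. The input from \cref{prop:marginal_pair,C:conditionalpair} (valid here since $D$ is simply connected) is that, conditionally on $\bs\eta$ up to scale $2^j\delta$, the continuation of the pair over the next scale is a pair of independent loop-erased excursions from the current tips, reweighted by a loop-measure factor and $h$-transformed by the conditional probability of the global avoidance event. The dependence of this continuation on the configuration strictly inside $B_j$ enters only through (i) the requirement that the new excursions avoid the old path, controlled by \cref{lem:transience_excursion,lem:backtrack_excursion}; (ii) loop-measure terms for loops reaching from inside $B_j$ out to the new scale, whose mass is small by \cref{cor:long_loops,lem:loop_crossing} and a Beurling-type bound for the loop measure obtained by the Wilson-plus-Schramm mechanism of \cref{sec:finiteness} (a loop of a given scale that descends a factor $2^i$ below it pays $2^{-ci}$); and (iii) the global $h$-transform, whose ratio between the two chains, once the current shapes coincide, is controlled by Harnack's inequality. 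Using the uniform crossing estimate one shows that at each scale there is a uniformly positive chance of forcing the two evolving configurations into a common ``well-separated'' shape, so that within a positive fraction of the $k$ scales they have synchronised with probability $1-e^{-\alpha k}$; the geometric decay in (i)--(iii) then bounds the residual influence of the (still differing) deep configurations on the law of $\dot{\bs\eta}_{n,N}$ by $e^{-\alpha k}$, and the coupled tails can be made to agree with the claimed probability. Combined with the elementary observations that $\dot{\bs\eta}_{n,N}$ is a measurable function of $\bs\eta$ and that conditioning on the whole path up to any scale $\ge m$ already determines the head, this gives the total-variation bound.

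\textbf{Main obstacle.} The delicate point is to make (ii)--(iii) quantitative without Green's-function asymptotics: a single synchronised scale does not decouple, since $O(1)$ macroscopic loops still connect the deep past to the present, so one must arrange the coupling so that $\dot{\bs\eta}_{n,N}$ is separated from the differing heads by a positive fraction of the $k$ scales, across which both the loop-measure and the $h$-transform channels decay geometrically. Setting up the scale-by-scale coupling so that well-separatedness is preserved and so that all error terms remain summable is the technical heart of the argument.
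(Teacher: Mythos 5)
Your sketch of the total-variation bound follows the paper's plan at a high level --- a Markov chain over scales driven by \eqref{Markovchain}, quasi-independence of loops (\cref{lem:loop_independence_scale}) and of paths (\cref{lem:quasi_independence}), the separation lemma (\cref{lem:separation}), $h$-transform control (\cref{cor:separation}), and a renewal-type coupling \`a la \cite{Lawler2sided} --- and you correctly identify the ``Main obstacle'' (decay of the residual influence of differing heads across a positive fraction of the $k$ scales, preservation of separation). What you flag as the main obstacle is exactly what \cref{lem:lambda_b_compare,cor:lambda_b_compare,lem:couple_onestep,lem:couple_manystep,lem:coupling_quick} supply: the factorisation \eqref{Markovchain} separates a per-scale ``conditional measure'' $\lambda_{m+1}(\cdot\mid\bs\eta_m)$ (controlled to accuracy $e^{-cj}$ in \cref{lem:lambda_b_compare}(b) via \cref{lem:loop_crossing}) from the global $h$-transform (controlled only to accuracy $e^{-cj/2}$ in \cref{lem:lambda_b_compare}(c), which suffices), and the renewal chain $Z_k$ then gives exponential mixing. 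So on this front you have the right roadmap but not a proof.

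For the no-return estimate your route is genuinely different and has a gap. You condition on $\eta_2$ and on $\eta_1^\flat$ and then appeal to ``a variant of \cref{lem:backtrack_excursion} allowing the avoided set to extend outside the relevant ball''. That variant is not in the paper and is not automatic: the lower bound on the denominator in the proof of \cref{lem:backtrack_excursion} runs through \cref{lem:transience_excursion}, whose Harnack step requires the avoided set to lie inside $B(x,R)$; with $\eta_2$ in the avoided set, $\eta_2$ can cross $\partial B(x,2R)$ and disconnect it, breaking Harnack. A fjord-shaped $\eta_2$ that re-enters deep near the tip of $\eta_1^\flat$ could in principle force the conditioned walk back into $B_n$, and excluding this requires controlling the law (not just the existence) of $\eta_2$, which your uniform-over-$\eta_2$ bound cannot do. The paper instead derives the no-return estimate from the Markov chain: \cref{lem:transition_small_deep} bounds the probability that an increment $Y_{l+1}\setminus Y_l$ backtracks $j$ scales by $e^{-cj}$ given a good $h$-value, \cref{lem:couple_manystep} propagates good $h$-values inductively, and summing over the scale $l\geq n+k$ at which the excursion would occur gives the claimed exponential bound. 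This packages the influence of $\eta_2$ inside the quasi-independence and loop-measure estimates of \cref{sec:finiteness}, where it has been handled once and for all, rather than trying to control the conditioned walk directly against an arbitrary second path.
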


\begin{prop}\label{prop:scalinglimit_disc}
  As $\delta \to 0$, $\underline{\lambda}_{x^1,x^2}^\d$ converges in law (in the Hausdorff sense) towards a conformally invariant limiting law on pair of disjoint paths (except at their starting points).
\end{prop}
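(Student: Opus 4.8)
The plan is to deduce \cref{prop:scalinglimit_disc} from three ingredients: tightness, the mixing estimate \cref{prop:local_convergence}, and the already-established scaling limit of finitely many uniform spanning tree branches in a simply connected domain (\cref{sec:universal}), the conditioning on $\cA$ being handled through the loop-measure identities of \cref{sec:law_marginal} and the estimates of \cref{sec:finiteness}. First I would show that $\{\underline\lambda^\d_{x_1,x_2}\}_{\delta>0}$ is tight for the Hausdorff topology on pairs of closed subsets of $\bar\D$. For the pair of \emph{independent} loop-erasures $\mu$ this is exactly the Beurling-plus-Schramm-finiteness argument of \cref{sec:universal}, using \cref{lem:Beurling,lem:SRW_hit,lem:Schramm_finiteness}. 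To transfer it to the conditioned law one uses \cref{prop:marginal_pair}: on any fixed macroscopic scale $\underline\lambda^\d$ is comparable, up to a loop-measure factor $e^{\mass(\cdot)}$ which is bounded by \cref{T:loopsoupRW} and \cref{cor:long_loops}, to the law of a pair of excursions, for which the same Beurling/Schramm estimates apply; hence the configurations responsible for non-tightness (large oscillations, in particular near $x_1,x_2$) have $\underline\lambda^\d$-probability tending to $0$ uniformly. Thus subsequential limits exist, and it remains to prove uniqueness of the limit.

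Next I would reduce the problem to a statement about the macroscopic part of $\bs\eta$. Fix $\ve>0$ and a small macroscopic scale $\rho$, and apply \cref{prop:local_convergence} with $m\le n$ such that $2^m\delta\asymp\rho$ and $2^n\delta\asymp\rho^{3/4}$, so that $k=n-m\asymp\tfrac14\log_2(1/\rho)\to\infty$ as $\rho\to0$. The second bound of \cref{prop:local_convergence} gives that the intermediate portion $\bs\eta^*_{m,n}$ stays inside $B(0,\sqrt\rho)$ with probability at least $1-e^{-\alpha k}$; together with $\bs\eta_m\subset B(0,\rho)$ this shows that two realisations of $\bs\eta$ (possibly coming from different values of $\delta$) that share the same macroscopic portion $\dot{\bs\eta}_{n,N}$ are, on this good event, within $O(\sqrt\rho)$ of each other in Hausdorff distance, since $\dot{\bs\eta}_{n,N}$ starts on $\partial B(0,\rho^{3/4})\subset B(0,\sqrt\rho)$. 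It therefore suffices to prove that, for each fixed macroscopic scale, $\mathrm{law}_{\underline\lambda^\d}(\dot{\bs\eta}_{n,N})$ converges weakly as $\delta\to0$; coupling the limiting macroscopic portions over the various $\rho$ then shows $\{\underline\lambda^\d\}$ is Cauchy, hence convergent.

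For the convergence of $\dot{\bs\eta}_{n,N}$ I would condition on the pair of exit points $(w_1,w_2)\in(\partial B(0,\rho^{3/4}))^2$ and on the microscopic beginning $\bs\eta_m$: given these, $\dot{\bs\eta}_{n,N}$ is a pair of UST branches from $w_1,w_2$ in $\D$ conditioned to avoid each other and to avoid the already-drawn part of $\bs\eta$ (which, up to the controlled intermediate portion, lies in $B(0,\sqrt\rho)$). Reading the first bound of \cref{prop:local_convergence} as ``the macroscopic portion depends on the microscopic beginning only up to total variation $e^{-\alpha k}$'', this law is within $e^{-\alpha k}$ of the law of a pair of UST branches from $(w_1,w_2)$ in the \emph{fixed} domain $\D\setminus B(0,\rho^{3/4})$ (wired on $\partial\D$), conditioned to be disjoint, with $(w_1,w_2)$ converging as a macroscopic functional of the path. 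After rescaling by $\rho^{-3/4}$ and, if needed, finitely many further applications of \cref{prop:local_convergence} near the marked points, this is a pair of conditioned loop-erased branches in a fixed simply connected domain with marked points well separated relative to that domain. In this setting convergence follows by combining the unconditioned branch convergence of \cref{sec:universal} (\cref{thm:main_tech,lem:MC_branch}, giving convergence of finitely many branches to SLE$_2$-type curves) with the Radon--Nikodym identities \cref{prop:marginal_pair,C:conditionalpair}, which express the conditioned pair as a bounded reweighting of the independent pair by loop-measure terms, and the loop-measure estimates of \cref{sec:finiteness}; in particular one checks there that $\{\eta_1\cap\eta_2=\emptyset\}$ has limiting probability bounded away from $0$ and an almost surely continuous indicator under the limit law, so conditioning commutes with the limit. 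Feeding this back into the reduction step yields the convergence of $\underline\lambda^\d$. Conformal invariance is then automatic: the whole argument uses only the assumptions of \cref{sec:setup}, which depend solely on the conformal structure, and the continuum objects produced (SLE$_2$ curves and the Brownian loop measure) are conformally invariant; in particular the limiting law on $(\D,0)$ is rotation invariant, and running the construction on a general pointed simply connected domain $(D,p_0)$ produces the conformal image of this law.

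The main obstacle is the last step: controlling the conditioning on $\{\eta_1\cap\eta_2=\emptyset\}$ in the scaling limit of the two macroscopic branches, and ensuring that the recursive use of \cref{prop:local_convergence} near the successive pairs of marked points ($0$, then $(w_1,w_2)$, and so on) terminates after finitely many ``peelings'' rather than generating an infinite regress. This is precisely where the quantitative geometric decay $e^{-\alpha k}$ in \cref{prop:local_convergence} --- as opposed to a mere local-limit statement in the spirit of Lawler --- is essential, since it allows the errors from successive scales to be summed.
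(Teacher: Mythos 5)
Your proposal is broadly in the right spirit (tightness plus exponential mixing via \cref{prop:local_convergence} to reduce to a macroscopic portion), but there is a genuine gap at the crucial step, and you have yourself flagged it as "the main obstacle". To establish convergence of the macroscopic portion $\dot{\bs\eta}_{n,N}$ you propose to condition on the random exit points $(w_1,w_2)\in\partial B(0,\rho^{3/4})$ and treat the remainder as a pair of UST branches conditioned to be disjoint, "with marked points well separated", invoking if necessary "finitely many further applications of \cref{prop:local_convergence}". This does not close: $(w_1,w_2)$ are random and can be arbitrarily close, so the conditioning event is still potentially singular near the circle $\partial B(0,\rho^{3/4})$, and \cref{prop:local_convergence} is formulated for a single neighbouring pair at a fixed centre, not for two independent random marked points on a common circle. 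You are essentially re-posing the original problem one scale out, and the recursion you gesture at does not obviously terminate. To make this work you would at minimum need a separation statement showing that $(w_1,w_2)$ are at distance $\gtrsim\rho^{3/4}$ with probability converging to $1$ under $\underline\lambda^\d$ (which is the content of \cref{lem:separation} plus the \cref{cor:separation} estimates), and then argue that on that event the conditional law is a non-singular reweighting of the independent pair; but none of this is spelled out.

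The paper uses a different and substantially cleaner trick that removes the singularity in one stroke: replace the microscopic beginning $\bs\eta_m$ by a \emph{deterministic} pair of straight line segments $\bs\gamma$ from $x_1,x_2$, with distinct angles, out to radius $\epsilon^2$. The resulting auxiliary pair $\tilde{\bs\eta}$ (straight lines concatenated with loop-erased excursions) has the property that the disjointness event $\cA$ has probability bounded away from zero in the scaling limit, so one can pass to the limit for $\tilde{\bs\eta}$ directly from the known unconditioned SLE$_2$ convergence and then condition --- no singular conditioning ever needs to be analysed in the limit. \cref{prop:local_convergence} is then used exactly once, to couple the true law $\underline\lambda^\d$ with the law of $\tilde{\bs\eta}\mid\cA$ outside $B(0,\epsilon)$ up to total variation $\epsilon^\alpha$. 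Your argument would be correct if you replaced your random conditioning on $(w_1,w_2)$ by such a deterministic anchor at a fixed small scale; without it, the step you yourself identify as the obstacle remains unresolved.
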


The rest of this section is devoted to a proof of \cref{prop:local_convergence,prop:scalinglimit_disc}. {The proof borrows the major ideas from Lawler's construction of infinite two-sided loop-erased random walk on $\Z^d$ \cite{Lawler2sided}, combined with robust estimates of loop measures in our more general setting. Readers familiar with Lawler's argument may skip to \cref{sec:localglobal} admitting the results in this section.}


\medskip Let $\cL$ be the set of loops (in $D^\d$) as in \cref{sec:loop_measure}. For any set of vertices $V$ in $D^\d$ recall that $\cL(V) \subset \cL$ denotes the set of loops staying in $V$. We also denote by $\cI(\bs \eta)$ the set of loops $\ell \in \cL$ which intersect \emph{both} paths in $\bs \eta$ {(instead of $\cI(\eta_1 ) \cap \cI(\eta
_2)$; note that $\cI ( \bs \eta)$ differs from $\cI ( \eta_1 \cup \eta_2)$)}.
%
By \cref{lem:law_pair,C:conditionalpair},
the Radon-Nikodym derivative of $\underline{\lambda}$ with respect to $\mu$ is given by
\begin{equation}\label{lambdaprop}
\frac{d  \underline{\lambda} }{d \mu} (\bs \eta) \propto{\exp\Big(-
\Lambda ( \cI(\bs \eta))\Big)  1_{\bs \eta \in \cA} }
\end{equation}
Note that loops that surround the origin necessarily intersect $\bs \eta$. Therefore in the above expression we may also restrict to loops that do \textbf{not} surround the origin. We call $\masss$ the measure $\mass$ restricted to such loops. This is useful because it suppresses a very large (essentially infinite when $N = \infty$ ) term which occurs everywhere and is thus irrelevant.

\textbf{The Markov chain.} Following the strategy of Lawler in \cite{Lawler2sided}, it is convenient to view the pair of paths $\bs \eta$ as the terminal value of a Markov chain $(\bs \eta_1, \bs \eta_2, \ldots, \bs \eta_N)$ where for $1\le m \le N$, $\bs \eta_m$ is as in \eqref{eq:pathdec}. It is important to note right away that the transition probabilities of this chain (from $\bs \eta_m$ to $\bs \eta_{m+1}$, say) differ significantly from loop-erasing paths starting $\bs \eta_m$ until they reach $B_{m+1}^c$. Moreover, one cannot immediately use Wilson's algorithm to sample the transitions of this chain. Rather, we will understand it as an $h$-transform in the following manner.

For $1 \le m \le N$, we denote by ${\lambda_m}$ the following \textbf{un}normalised measure on paths reaching $B_m^c$:
\begin{equation}\label{lambdam}
{\lambda_m} (\bs \eta_m) = \exp\Big(-\masss( \cI_m(\bs \eta_m) \Big)  1_{\boldsymbol \eta_m \in \cA_m}  \mu(\bs \eta_m),
\end{equation}
where in the right hand side, $ \mu(\bs \eta_m)$ denote the law under $\mu$ of the first piece in the decomposition \eqref{eq:pathdec} and $\cI_m(\bs \eta_m)$ denotes the set of loops \textbf{in $B_m$} which intersect both paths of $\bs \eta_m$. Note that when $m = N$, the right hand side of \eqref{lambdam} corresponds to the right hand side of \eqref{lambdaprop}, and so is an unnormalised version of $\underline \lambda$. However when $m \neq N$, the restriction to loops that stay in $B_m$ is somewhat arbitrary and does not have a clear interpretation; however, on the algebraic level it is convenient to do so, as we will see.

We now write the transitions of the Markov chain $(\bs \eta_1, \ldots, \bs \eta_N)$ in terms of $\lambda_m$. First we need to introduce an unnormalised ``marginal distribution''  of $\lambda_n$ on $\cA_m$ (where $1\le m \le n$) by setting
\begin{equation}
  \label{marginallambda}
  \lambda_n(\bs \eta_m ) = \sum_{\bs \eta_n \succeq \bs \eta_m} \lambda_n (\bs \eta_n),
\end{equation}
where $\bs \eta_m \in \cA_m$ and the sum is over $\bs \eta_n \in \cA_n$ such that the initial portion of $\bs \eta_n$ is $\bs \eta_m$. We wrote ``marginal distribution'' with quotation marks because not only is it unnormalised, but also, if $n \neq N$, if we were to divide by its total mass, the corresponding probability measure would not correspond to the law of $\bs \eta_m$ under $\underline{\lambda}$, and in fact does not have any clear probabilistic interpretation (in terms of the probability measure $\underline \lambda$). To obtain the true marginal law of $\bs \eta_m$ under $\lambda_N$ (or equivalently, up to a scaling factor, $\underline{\lambda}$), we introduce the ratio
\begin{equation}
  \label{htransform}
  h(\bs \eta_m) = \frac{\lambda_N( \bs \eta_m)}{\lambda_m(\bs \eta_m)}.
\end{equation}
This quantity is denoted in Lawler's paper \cite{Lawler2sided} by $b(\bs \eta_m)$ (or rather $b(\bs \gamma_n)$ with Lawler's notation) but we choose the letter $h$ as $h$ is used as an $h$-transform, as we are about to make clear. Note also that informally, the quantity $h(\bs \eta_m)$ represents how difficult it is to achieve the rest of the constraint $\cA$, given the initial portion $\bs \eta_m$. The normalisation is chosen so that $h (\bs \eta_m)$ is typically of order $\exp ( - \text{const.}(N-m))$, i.e., it is a normalised probability, and the rest of the constraint has a cost of one per scale.

Given the measures $\lambda_m$ and $h$, we may now write the transition probabilities of the Markov chain as follows:
\begin{equation}
  \label{Markovchain}
  p (\bs \eta_m, \bs \eta_{m+1} ) = \frac{\lambda_{m+1} (\bs \eta_{m+1})}{\lambda_m (\bs \eta_m)} \frac{h( \bs \eta_{m+1})}{ h (\bs \eta_m)}.
\end{equation}
Note indeed that by definition of $\lambda_m$ and $h$, this really is the same as $\underline{\lambda} (\bs \eta_{m+1} | \bs \eta_m)$. In view of \eqref{Markovchain} it is convenient to introduce an ``unnormalised conditional probability''
\begin{equation}\label{lambdacond}
 \lambda_{m+1} (\bs \eta_{m+1} \mid \bs \eta_m): = \frac{\lambda_{m+1} (\bs \eta_{m+1})}{\lambda_m (\bs \eta_m)}
\end{equation}
so that \eqref{Markovchain} becomes
\begin{equation}
  \label{Markovchain2}
  p (\bs \eta_m, \bs \eta_{m+1} ) =  \lambda_{m+1} (\bs \eta_{m+1} \mid \bs \eta_m) \frac{h( \bs \eta_{m+1})}{ h (\bs \eta_m)}.
\end{equation}
Formally it is now clear from \eqref{Markovchain2} why we want to think of $h$ as an $h$-transform term, but we caution the reader that the term $ \lambda_{m+1} (\bs \eta_{m+1} \mid \bs \eta_m)$ is not really a conditional probability (although its total mass is typically of order 1).

\paragraph{Main steps of the proof.} Our aim is now to couple the Markov chain starting from two possibly different initial conditions $\bs \gamma_m$ and $\tilde{\bs \gamma}_m$. Roughly speaking, we need to show two things:

\begin{itemize}
  \item Whatever $\bs \gamma_m$ and $\tilde{ \bs \gamma}_m$, there is always a positive chance that we can couple them in the next step (meaning that they agree for a positive fraction of the last scale).

  \item If $\bs \gamma_m $ and $\tilde{\bs \gamma}_m$ agree on many scales, then we can couple them also at the next scale with very high probability in such a way that the agreement also includes the last scale.
\end{itemize}

Roughly speaking, to deal with the first point it will suffice to get up to constants estimates on $h$ and separately show that different scales are independent ``up to constant'' for the conditional measures \eqref{lambdacond}. A very useful tool for this will be a ``quasi-independence'' type estimate (\cref{lem:loop_independence_scale,lem:quasi_independence}), showing respectively decorrelation of scales for $\mu$ and for loop measures. This depends on being able to say that most of the relevant pairs of paths $\bs \eta_m$ are well separated (the two paths are not too close to one another in the last scales). We prove such a statement in two different versions, in Lemmas \ref{lem:separation} and \ref{lem:separation_end}.

To deal with the second point, we show that when two pairs of paths $\bs \gamma_m$ and $\tilde{\bs \gamma}_m$ agree on the last $j$ scales, then the ratio $h(\bs \gamma_m)/ h (\tilde{\bs \gamma}_m)$ is quite close to 1 (in fact, we allow the error to be quite small but at least $e^{- j/4}$) while the ratio of the conditional measures are even closer to 1 (by at least $e^{- j/2}$). This shows we can couple the Markov chains in the next step with exponentially high probability in $j$. This is stated in Lemma \ref{lem:lambda_b_compare}, but the crucial argument is the one saying that there are very few loops intersecting both $B_{m-j}$ and $B_m^c$ without surrounding the origin (Proposition \ref{lem:loop_crossing}).

\begin{remark} All in all, as transpires from this summary, the $h$-transform term $h$ is viewed as a perturbation for which relatively weak estimates are obtained. On the other hand, we need to control the conditional measures \eqref{lambdacond} quite precisely. But these only correspond to adding one scale, and can be handled by controlling the conditional law of $\mu$ (can be sampled with Wilson's algorithm) and loops of a single scale.
\end{remark}

\subsection{Quasi-independence of loops}
\label{S:QIloops}
In this section, we prove the analogue in our setup of Proposition 2.27 in \cite{Lawler2sided} (with a few technical differences, see \cref{R:problem}). If $\bs \eta = (\eta^1, \eta^2)$ is a pair of paths, recall that $\cI(\bs \eta)$ is the set of loops which intersect \textbf{both} paths in $\bs \eta$. It may also on occasion be useful to consider the set of loops which are only required to intersect the first or the second of these two paths: thus we introduce $\cI^1 (\bs \eta) = \cI( \eta^1)$ and $\cI^2 (\bs \eta) = \cI( \eta^2)$.

We now introduce \textbf{Separation events}, used throughout the rest of this proof, defined as follows. First, we let $\mathtt{Sep}_n = \mathtt{Sep}(\cA_n)$ to be the set of $\bs \gamma = (\gamma^1, \gamma^2)$ in $\cA_n$ (pairs of paths which do not intersect up to leaving $B_n$) such that the distance between $\gamma^1$ and $\gamma^2$ inside $B_{n} \setminus B_{n-1/2}$ is at least $2^{n-3}\delta$. Similarly we let $\dot{\sep}_{n, N}$ be the set of pairs of paths $\dot{ \bs \gamma}_{n, N}$ connecting the inside to the outside of the annulus $B(N) \setminus B(n)$ such that the distance between their restrictions to $B_{n+1/2} \setminus B_{n}$ is at least $2^{n-1} \delta$. Finally we let $\mathtt{Sep}_{m,n}$ be the set of pairs of paths $\bs \eta_N \in \cA_N$ such that if we write $\bs \eta_N = \bs \eta_m \oplus \bs \eta^* \oplus \bs \dot{\bs \eta}_{n, N}$ as in \eqref{eq:pathdec}, then
\begin{itemize}
  \item $\bs \eta^* \subset A: = A (x, 2^{m-1}, 2^{n+1})$;
  \item $\bs \eta_m \in \sep_m$ and $\dot{ \bs \eta}_{n, N} \in \dot{ \sep}_{n, N}$.
  \item $d ( \eta^{*,1} , \eta^2 ) \ge 2^{m-2}$ and vice-versa exchanging the role of $\eta^1$ and $\eta^2$. ($d(U,V) = \inf_{u\in U, v\in V} |u-v|$.)
\end{itemize}
As should be clear from the notations, $\sep_m$ should be thought of as saying that the paths are separated just \emph{before} scale $m$, $\dot{\sep}_{n,N}$ says that the paths are separated just \emph{after} scale $n$, and $\sep_{m,n}$ says that the paths are separated \emph{between} the two scales and do not make long excursions away from the scales.


\begin{lemma}\label{lem:loop_independence_scale}\label{L:QIloops}
There exists a constant $c>0$ such that for all $1\le  m < n\le  N $,
\begin{multline}
c \exp\Big(-\masss (\cI_m(\bs \eta_m))  -\masss (  \cI_{n}(\bs {\dot \eta_{m+1,n}}))\Big)
 1_{\mathtt{Sep}_{m,m+1}} \le \exp\left (-\masss( \cI_n(\bs \eta ) ) \right) \\
 \le 
 \exp\Big(-\masss (\cI_m(\bs \eta_m))  -\masss (  \cI_{n}(\bs {\dot \eta_{m+1,n}}))\Big).
\end{multline}
\end{lemma}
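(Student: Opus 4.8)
The plan is to prove the two inequalities separately: the right-hand inequality is elementary and needs no separation assumption, whereas the left-hand one carries all the work.

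\textbf{Upper bound.} The families $\cI_m(\bs\eta_m)$ and $\cI_n(\dot{\bs\eta}_{m+1,n})$ are disjoint subfamilies of $\cI_n(\bs\eta)$: a loop in the first lies inside $B_m$, while a loop in the second meets $\dot{\bs\eta}_{m+1,n}$, which is contained in the annulus $A(x,2^{m+1}\delta,2^n\delta)$ and hence meets $B_{m+1}^c$; and both families consist of loops that do not surround the origin. Since $\masss$ is a measure, $\masss(\cI_n(\bs\eta))\ge\masss(\cI_m(\bs\eta_m))+\masss(\cI_n(\dot{\bs\eta}_{m+1,n}))$, and exponentiating gives the right-hand inequality.

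\textbf{Lower bound.} Set $\cF:=\cI_n(\bs\eta)\setminus\bigl(\cI_m(\bs\eta_m)\cup\cI_n(\dot{\bs\eta}_{m+1,n})\bigr)$, so by the disjoint inclusion above
\[
\masss(\cI_n(\bs\eta))=\masss(\cI_m(\bs\eta_m))+\masss(\cI_n(\dot{\bs\eta}_{m+1,n}))+\masss(\cF),
\]
and it suffices to show $\masss(\cF)\le C$ on $\mathtt{Sep}_{m,m+1}$ for a universal constant $C$ (depending only on the crossing constants). The key claim is that on $\mathtt{Sep}_{m,m+1}$ every loop of $\cF$ is \emph{macroscopic at scale $m$}. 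First, on $\mathtt{Sep}_{m,m+1}$, after its first exit of $B_m$ each path $\eta^i$ stays outside $B_{m-1}$ until its last visit to $B_{m+1}$ and stays outside $B_{m+1}$ thereafter; hence every re-entry of $\eta^i$ into $B_m$ is confined to the annulus $A(x,2^{m-1}\delta,2^m\delta)$ and belongs to the middle portion $\eta^{i,*}_{m,m+1}$, which lies at distance at least $2^{m-2}\delta$ from the other path. Now take $\ell\in\cF$: $\ell\subset B_n$, $\ell$ does not surround the origin, $\ell$ meets both $\eta^1$ and $\eta^2$, and $\ell$ is excluded from the two subfamilies. I would split into two cases. (a) If $\ell$ crosses the thick annulus $A(x,2^{m-1}\delta,2^{m+2}\delta)$ from the inner to the outer boundary, the total mass of such loops (not surrounding the origin) is bounded by \cref{lem:loop_crossing} with $k=3$. (b) Otherwise $\ell\subset B_{m+2}$ or $\ell\subset B_{m-1}^c$; here one checks, using the three separation constraints in $\mathtt{Sep}_{m,m+1}$ — $\bs\eta_m\in\sep_m$, $\dot{\bs\eta}_{m+1,N}\in\dot{\sep}_{m+1,N}$, and $d(\eta^{i,*}_{m,m+1},\eta^{3-i})\ge 2^{m-2}\delta$ — together with the control on re-entries above, that $\ell$ cannot have diameter $<c2^m\delta$: a loop that small meeting both $\eta^1$ and $\eta^2$ and not already counted in $\cI_m(\bs\eta_m)$ or $\cI_n(\dot{\bs\eta}_{m+1,n})$ would force the two paths to come within $o(2^m\delta)$ of one another somewhere, contradicting one of the constraints. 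So $\ell$ has diameter at least $c2^m\delta$ and either lies in $B_{m+2}$ — bounded mass by \cref{cor:long_loops} — or meets $B_{m+2}$ without surrounding the origin — bounded mass by summing \cref{lem:loop_crossing} over dyadic scales $\ge m+2$. Summing the finitely many cases gives $\masss(\cF)\le C$, hence the lower bound with $c=e^{-C}$.

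\textbf{Main obstacle.} The work is entirely in case (b): turning the separation hypotheses, which involve \emph{only} the two scales $m$ and $m+1$ (and say nothing about intermediate scales, even though $n$ may be far larger), into the statement that every loop of $\cF$ is macroscopic at scale $m$. The delicate point is that a path may re-enter $B_m$ after leaving it, so a priori a microscopic loop could touch both paths near the origin while escaping detection by the initial segments $\bs\eta_m$; the first bullet of $\mathtt{Sep}_{m,m+1}$ (confining $\bs\eta^*_{m,m+1}$ to $A(x,2^{m-1}\delta,2^{m+2}\delta)$) is precisely what rules this out. Once these topological reductions are in place, the mass estimates are exactly \cref{cor:long_loops}, \cref{lem:loop_crossing} and (if needed) \cref{T:loopsoupRW}.
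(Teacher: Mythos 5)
Your proof is correct, uses the same two underlying estimates (\cref{cor:long_loops} and \cref{lem:loop_crossing}), and the upper bound is identical to the paper's. Where you differ is in how you organize the lower bound. The paper begins by \emph{explicitly} writing the set inclusion
\[
\cF \subset \Big[\bigcup_{j=1,2} \cI_n (\eta^*_j) \cap \cI_n(\eta_{3-j}) \Big]\cup \Big[ \cI_n( \bs \eta_m ) \setminus \cI_m( \bs \eta_m ) \Big] \cup \Big[ \cI_n ( B_m) \cap \cI_n( B_{m+1}^c)\Big],
\]
i.e.\ it decomposes $\cF$ by \emph{which portions of the paths} the loop touches, and then bounds each of the three families (subdividing by whether the loop leaves $B_{m+10}$). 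You instead decompose by \emph{spatial extent} — crossing vs.\ not crossing the annulus $A(x,2^{m-1}\delta,2^{m+2}\delta)$ — and then, in the non-crossing case, argue that \emph{every} loop of $\cF$ must have diameter $\gtrsim 2^m\delta$. Your framing emphasizes the key conceptual point (on $\mathtt{Sep}_{m,m+1}$ every residual loop is macroscopic at scale $m$) more cleanly, but it compresses all of the case-checking that the paper's three-family decomposition makes explicit into the sentence ``one checks~\dots~that $\ell$ cannot have diameter $<c2^m\delta$''. That check is real work: one must run through the possible locations of the two touching points — $\eta^j_m$ inside vs.\ outside $B_{m-1/2}$, $\eta^{j,*}$, $\dot\eta^j$ — and in each sub-case invoke the right one of the three bullets of $\mathtt{Sep}_{m,m+1}$, plus the exclusion from $\cI_m(\bs\eta_m)$ (for example, when both touching points are in $B_{m-1/2}$ neither $\sep_m$ nor the third bullet gives separation, and it is precisely $\ell\notin\cI_m(\bs\eta_m)$ that forces $\ell$ to exit $B_m$). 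You correctly flag this as the main obstacle and name the right ingredients; a full write-up would benefit from expanding this into the explicit sub-cases, which is essentially what the paper's three-term inclusion does.
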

%

\begin{proof}
For the upper bound, we simply observe that
$$
\cI_n(\bs \eta) \supset \cI_m (\bs \eta_m) \cup \cI_n(  \bs{ \dot\eta}_{m+1, n} ),
$$
and this union is in fact disjoint (in the second set, loops are forced to leave the ball of radius $2^m\delta$). Therefore,
$$
\masss( \cI_n(\bs \eta ) \ge \masss(\cI_m (\bs \eta_m)) + \masss( \cI_n(  \bs{ \dot\eta}_{m+1, n} )
$$
which leads to the desired upper bound.

For the lower bound, we note that the missing loops in the above upper bound must be of the following type: they either intersect the middle part but not the extremities, or they intersect the early part but leave the ball of radius $2^m \delta$, or stretch between the ball $B_m$ of radius $2^m\delta$ and the outside of $B_{m+1}$: thus
\begin{equation}\label{lowerbound_qi}
\cI_n(\bs \eta) \setminus (\cI_m (\bs \eta_m) \cup \cI_n(  \bs{ \dot\eta}_{m+1, n} ) ) \subset  \Big[\bigcup_{j=1,2} \cI_n (\eta^{*,j}) \cap \cI_n(\eta^{3-j}) \Big]\cup \Big[ \cI_n( \bs \eta_m ) \setminus \cI_m( \bs \eta_m ) \Big] \cup \Big[ \cI_n ( B_m) \cap \cI_n( B_{m+1}^c)\Big]
\end{equation}
We bound the mass of each of these three sets separately. For the first one, we note that $\eta^{*,j} \subset A = A (x, 2^{m-1}, 2^{n+1})$ and $\eta^j \cap A$ is macroscopically far from $\eta^{3- j}$ on $\mathtt{Sep}_{m,m+1}$. Therefore, loops in this set which also stay in $B_{m+10}$ have a bounded mass by \cref{cor:long_loops}. Loops that in this set that also leave $B_{m+10}$ have a mass (for $\masss$) which is bounded by \cref{lem:loop_crossing}.

Loops in the second set intersect both paths in $\bs \eta_m$ but leave $B_m$. If they also leave $A$ then they must be macroscopic and are handled as above (by a combination of \cref{cor:long_loops} and \cref{lem:loop_crossing}). Otherwise, they intersect at least one of the paths in $A$ and also the other path anywhere, but by assumption these loops have a diameter comparable to that of $B_m$ and so have bounded mass by \cref{cor:long_loops}.

The last term has bounded mass under $\masss$ directly by \cref{lem:loop_crossing}. This concludes the proof.
%
\end{proof}

\begin{remark}\label{R:problem}
  The analogous Proposition 2.27 in \cite{Lawler2sided} is different in a few details. One of them is that the proof of the upper bound in \cref{lem:loop_independence_scale} gives a slightly stronger statement than required ($\cI_m(\bs \eta_m)$ is replaced by $\cI_n( \bs \eta_m)$, so the union is not disjoint). Also, our definition of the event of separation $\sep_{m,n}$ is a little different since it is not sufficient to require $\eta^{*, 1}$ to be far from $\eta^2$ and vice-versa to get a bounded mass in the second term of \eqref{lowerbound_qi}: indeed, if the paths $\eta^1$ and $\eta^2$ get very close to one another and to the boundary of $B_m$ before separating, many loops may intersect both paths and leave $B_m$. Our definition prevents this from happening by adding extra cushioning in what is required to be separated.
\end{remark}

\subsection{Quasi-independence of pairs of paths}

We now focus on proving a similar quasi-independence lemma for the measure $\mu$ (independent pair of loop erased random walk).

\begin{lemma}[Quasi-independence]\label{lem:quasi_independence}\label{L:QIpaths}
 There exists constants $c, C>0$ such that for all  $\delta>0$, and for all $1\le m <  N $, we have
\begin{equation}\label{eq:qipaths}
c\mu(\bs \eta_m) \mu (\bs {\dot \eta_{m+1,N}}) \le \sum_{\bs \eta^*} \mu ( \bs \eta_m \oplus \bs \eta^* \oplus \bs {\dot \eta_{m+1,N}}) \le  C \mu(\bs \eta_m)  \mu (\bs {\dot \eta_{m+1,N}}).
\end{equation}
Furthermore, if $\bs \eta_m \in \mathtt{Sep}_m$ and if $\dot{\bs \eta}_{m+1, N} \in \dot{\sep}_{m+1, N}$, then
\begin{equation}\label{eq:qirestrict}
c\mu(\bs \eta_m) \mu (\bs {\dot \eta_{m+1,N}}) \le \sum_{\bs \eta^*\in \mathtt{Sep}_{m, m+1}} \mu ( \bs \eta_m \oplus \bs \eta^* \oplus \bs {\dot \eta_{m+1,N}}) \le  C \mu(\bs \eta_m)  \mu (\bs {\dot \eta_{m+1,N}})
\end{equation}
where the event $\mathtt{Sep}_{m, m+1}$ is defined at the beginning of \cref{S:QIloops}.
\end{lemma}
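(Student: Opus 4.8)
The plan is to follow Lawler's strategy in \cite{Lawler2sided}: first establish \eqref{eq:qipaths} for a single loop-erased walk, then deduce the separated version \eqref{eq:qirestrict} by a gluing (``separation'') argument based on the uniform crossing assumption. Since $\mu = \mu^1 \otimes \mu^2$, where $\mu^i$ is the law of the loop-erasure of an independent walk from $x_i$ killed on leaving $B_N = \D$, and since the decomposition $\bs\eta = \bs\eta_m \oplus \bs\eta^* \oplus \dot{\bs\eta}_{m+1,N}$ is performed coordinatewise, the sum in \eqref{eq:qipaths} factorises over the two coordinates. Hence \eqref{eq:qipaths} follows (with $c = c_0^2$, $C = C_0^2$) from the one-path statement: for a loop-erased walk $Y$ from a point $x$ deep inside $B_{m+1}$ to $\partial\D$, writing $Y = \alpha \oplus \eta^* \oplus \beta$ with $\alpha$ the part up to the first exit of $B_m$ and $\beta$ the part after the last visit to $B_{m+1}$, one has
\[
\mu^1\big(Y_m = \alpha,\ \dot\eta_{m+1,N} = \beta\big) \asymp \mu^1(Y_m = \alpha)\,\mu^1(\dot\eta_{m+1,N} = \beta)
\]
uniformly over $\delta$, over $m$, and over the valid paths $\alpha,\beta$. (For the finitely many microscopic scales $m$ below which the crossing estimate is vacuous, or for which $x$ is not yet deep inside $B_{m+1}$, the bounded density assumption makes all quantities comparable up to an absolute constant.)

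To prove the one-path estimate, write $y$ for the endpoint of $\alpha$ and $z$ for the start of $\beta$. By the domain Markov property of loop-erased walk (\cref{cor:conditional_single}), conditionally on $\{Y_m = \alpha\}$ the continuation of $Y$ is the loop-erasure of a random-walk excursion from $y$ avoiding $\alpha$ before reaching $\partial\D$, so $\mu^1(Y_m = \alpha,\ \dot\eta_{m+1,N} = \beta) = \mu^1(Y_m = \alpha)\,\nu_\alpha(\beta)$, with $\nu_\alpha(\beta)$ the probability that the final part of that excursion's loop-erasure is $\beta$. Using the exact formulas of \cref{density_exit} and \cref{prop:marginal_single}, both $\nu_\alpha(\beta)$ and $\mu^1(\dot\eta_{m+1,N}=\beta)$ equal $q(\beta)$ times an exponential of a loop-measure term times a one-point hitting probability, so in the ratio $\nu_\alpha(\beta)/\mu^1(\dot\eta_{m+1,N}=\beta)$ the factors $q(\beta)$ cancel and two contributions remain. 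The first is a ratio of loop-measure exponentials whose discrepancy consists only of loops meeting both $\alpha\subset\overline{B_m}$ and $\beta\subset\D\setminus B_{m+1}$ (equivalently, loops that cross and re-cross the one-scale annulus $B_{m+1}\setminus B_m$): the total mass of such loops is bounded uniformly by the scaling estimates of \cref{sec:finiteness} --- those not disconnecting the origin by \cref{lem:loop_crossing} with a one-scale gap, and those disconnecting it by a parallel argument using in addition that they must meet the confined path $\alpha$ and \cref{cor:long_loops}. The second is a ratio of hitting probabilities which is bounded above and below using Harnack's inequality (to move the starting point within $B_m$) together with the Beurling-type estimates \cref{lem:transience_excursion}, \cref{lem:beurling_target} and \cref{lem:backtrack_excursion}, which let one replace ``avoid the slit $\alpha$'' by ``avoid $B_m$'' up to constants and rule out deep backtracking. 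This yields $\nu_\alpha(\beta)\asymp\mu^1(\dot\eta_{m+1,N}=\beta)$, hence \eqref{eq:qipaths}.

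For \eqref{eq:qirestrict}, the upper bound is immediate since its left-hand side is a subsum of the one in \eqref{eq:qipaths}. For the lower bound, in view of the lower bound in \eqref{eq:qipaths} it suffices to show that, conditionally on $\{\,$early part $=\bs\eta_m$, final part $=\dot{\bs\eta}_{m+1,N}\,\}$ with $\bs\eta_m\in\sep_m$ and $\dot{\bs\eta}_{m+1,N}\in\dot{\sep}_{m+1,N}$, the middle part lies in $\sep_{m,m+1}$ with probability bounded below, uniformly in $\delta$, $m$ and the prescribed endpoints. Conditionally on the early and final parts, the two middle pieces are independent, each being the loop-erasure of a random walk from the endpoint of $\eta^i_m$ conditioned to avoid its slit and to end at the start of $\dot\eta^i_{m+1,N}$. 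Since $\bs\eta_m\in\sep_m$ its two strands end at distance $\gtrsim 2^m\delta$ near $\partial B_m$, and since $\dot{\bs\eta}_{m+1,N}\in\dot{\sep}_{m+1,N}$ its two strands start at distance $\gtrsim 2^m\delta$ near $\partial B_{m+1}$; one may therefore fix two disjoint ``corridors'' of width $\gtrsim 2^m\delta$ inside the annulus $A(0,2^{m-1},2^{m+2})$, one joining each pair of endpoints and staying clear of all slits. By the uniform crossing assumption \cref{crossingestimate} (chaining a bounded number of rectangle crossings, in the spirit of \cref{lem:uniform_avoidance}), each conditioned walk follows its corridor with probability bounded below; this probability survives the conditioning, since ``follow the corridor'' is already contained in the conditioning event (the corridor ends exactly at the prescribed point while avoiding the slit). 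On this event the two loop-erasures stay in their corridors, so $\bs\eta^*\subset A(0,2^{m-1},2^{m+2})$ and the strands remain $\gtrsim 2^{m-2}\delta$ apart from each other and from the opposite early/final strand, i.e.\ $\bs\eta^*\in\sep_{m,m+1}$.

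The delicate point is the lower bound in \eqref{eq:qirestrict}: one must guarantee that imposing $\sep_{m,m+1}$ on the middle part costs only a universal constant, which requires a genuine RSW-based gluing of the two walks through the annulus and a verification that the conditioning on the far-away early and final parts does not interfere. The other place where the absence of fine Green-function estimates is felt is the control, in \eqref{eq:qipaths}, of the loop-measure discrepancy and of the hitting probabilities --- precisely what the robust estimates of \cref{sec:finiteness} and of \cref{lem:transience_excursion,lem:beurling_target,lem:backtrack_excursion} are designed to deliver.
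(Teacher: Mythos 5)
Your overall route matches the paper's: reduce to a single path, use the exact LERW density formulas (Propositions~\ref{density_exit}, \ref{prop:marginal_single}, \ref{cor:conditional_single}) to express the joint, conditional and marginal laws, and compare the resulting loop-measure and hitting-probability factors; then for \eqref{eq:qirestrict}, use uniform crossing to show that the middle piece is separated with positive conditional probability. The part-\eqref{eq:qirestrict} argument is fine. However, there is a genuine gap in the part-\eqref{eq:qipaths} argument.

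You assert that the loop-measure discrepancy $\mass(\cI(\alpha)\cap\cI(\beta))$ is "bounded uniformly," handling loops that disconnect the origin by \cref{cor:long_loops}. This is false: loops that surround $0$ and cross the annulus $B_{m+1}\setminus B_m$ contribute $O(1)$ mass \emph{per scale}, so their total is $\asymp N-m$, which is unbounded. (Whether or not such a loop actually meets the wiggly slits $\alpha,\beta$ does not change this order of magnitude: most surrounding loops of any diameter $\geq 2^m\delta$ that also enter $\overline{B_m}$ do cross $\alpha$.) Likewise, the random-walk factor is not bounded: after Harnack and the excursion estimates, what remains is an escape probability of the form $\P_a(\text{reach }\partial B_N\text{ before re-entering }B_m)$, which in two dimensions decays like $1/(N-m)$ and is again not $\asymp 1$. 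Your proposal treats both of these as $O(1)$, which they are not.

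The paper's proof resolves this by \emph{not} trying to bound these factors directly. It decomposes the loop discrepancy into a dominant term $\mass(\cL\cap\cI(B_m)\cap\cI(B_{m+1}^c))$ (loops surrounding $0$ that cross the annulus) which is large but \emph{independent of $\alpha,\beta$}, plus correction terms bounded by \cref{lem:loop_crossing}, \cref{cor:long_loops} and \cref{T:loopsoupRW}. The random-walk ratio is similarly reduced to an $\alpha,\beta$-independent escape probability $\P_a(X_{\tau_{B_m}}\notin B_m)$ times bounded factors. This produces a comparison $\mu(\eta_-)\mu(\eta_+) \asymp C_{m,N}\sum_{\eta^*}\mu(\eta_-\oplus\eta^*\oplus\eta_+)$ with a constant $C_{m,N}$ that depends on $m,N,\delta$ but not on $\eta_\pm$, and which is \emph{a priori} unbounded. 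The decisive final step is then that both sides of \eqref{eq:qipaths} are probability measures in $\bs\eta$, so $C_{m,N}\asymp 1$ automatically. This normalization argument is the crux of the proof, and it is missing from your proposal. You should isolate the $\eta$-independent parts of both the loop-measure and random-walk discrepancies and only claim boundedness of the \emph{residual} factors; then invoke normalization to identify the constant.
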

\begin{proof}
Note that despite being stated for a measure on pair of paths, by independence we only have to prove the equivalent statement when $\eta$ is a single path.
We start with the explicit density of $\mu$. To lighten up notations, we will use $\eta_-$ and $ \eta_+$ for $ \eta_m$ and $\dot \eta_{m+1, N}$ respectively. We also call $x_-$ and $x_{+}$ respectively the last and first points of $\eta_-$, $\eta_{+}$ and we let $\tau_{\pm}$ be the hitting time of $\eta_\pm$ or $B(0, 2^N\delta)^c$, and in general $\tau_{A}$ will be the first hitting of $A \cup B(0, 2^N\delta)^c$. By \eqref{partial_single_SC} and \eqref{eq:marginal_single1} we have 
\begin{align}
\mu(\eta_-) &= q(\eta^-) e^{\mass(\cI(\eta_-))} \P_{x_-}[ X(\tau_{-}) \not \in \eta_- ] \label{E:decompositionloopsoup1} \\
 \mu(  \eta_{+} ) &= q(\eta^+) e^{\mass(\cI(\eta_+))} \P_{0}[X(\tau_{+} ) = x_{+}] \label{E:decompositionloopsoup2}\\
\sum_{ \eta^*}  \mu ( \eta_- \oplus \eta^* \oplus { \eta_{+}}) & = q(\eta^-) q( \eta^+) e^{\mass( \cI( \eta_- \cup \eta_+))} \P_{x_-} [X(\tau_{\eta_- \cup \eta_{+}}) = x_{+} ] \label{E:decompositionloopsoup}.
\end{align}
We point out that here we need to use $\mass$ and not $\masss$.


We will deal separately with the loop measure terms and the terms involving simple random walk. We start with the random walk terms: we aim to show that the product of the first two terms above is comparable to the third term, up to normalising factors independent of $\eta$.
Let $A= A(0, 2^{m+1/4}\delta, 2^{m+3/4} \delta )$ and fix a point $a$ in $A$. We will write $ p\asymp q$ to mean that there exists a constant $c$ (which does not depend on any of the parameters defining $p$ and $q$) such that $cp \leq q \leq c^{-1} p$.

Let us start with the first term. Clearly, by Harnack's inequality,
$$
\P_{x_-}[ X(\tau_{-}) \not \in \eta_- ]     \asymp  \P_{x_-}[X(\tau_{A\cup  \eta_-}) \not \in \eta_-] \P_{a}[ X(\tau_{-}) \not \in \eta_- ]
$$
Now note that by \cref{lem:transience_excursion}, the second term in the right hand side above is (up to constant) independent of $\eta$:
\begin{equation}
\P_{x_-}[ X(\tau_{-}) \not \in \eta_- ]     \asymp \P_{x_-}[X(\tau_{A\cup  \eta_-}) \not \in \eta_-] \P_{a}[ X(\tau_{B_m}) \notin B_m] \label{eq:debut}
\end{equation}
Now consider the third term \eqref{E:decompositionloopsoup2} and observe also that by Harnack's inequality,
\begin{equation}\label{eq:middle0}
\P_{0}[X(\tau_{+} ) = x_{+}]  \asymp \P_{a}[X(\tau_{+} ) = x_{+}]
\end{equation}
Now consider a random walk started from $a$ conditioned on $X(\tau_{+}) = x_+$.
By Lemma 4.4 in \cite{BLR16}, the conditioned walk still satisfies the uniform crossing estimate as long as it is away from $\eta_{+}$. Hence it has a fixed positive probability to go to distance $2^{m-1} \delta$ of $x_+$ without entering $B(0, 2^m\delta)$. Furthermore, once it is there, there is a positive probability $p$ to hit $x_+$ without entering $A$ by \cref{lem:beurling_target}. Hence
$$
\P_a ( \tau_+ < \tau_- | X_{\tau_+ } = x_+ ) \ge p
$$
and hence combining with \eqref{eq:middle0},
\begin{equation}\label{eq:middle}
\P_{0}[X(\tau_{+} ) = x_{+}] \asymp  \P_{a}[X(\tau_{+} ) = x_{+}  ; \tau_+ < \tau_-] = \P_a [X (\tau_{\eta_- \cup \eta_{+}})=x_{+}].
\end{equation}

Finally,  for the third term \eqref{E:decompositionloopsoup}, observe that (again by Harnack's inequality),
\begin{align}
 \P_{x_-} [X(\tau_{\eta_- \cup \eta_{+}}) = x_{+}  ]
 & \asymp \P_{x_-}[X(\tau_{A\cup \eta_-}) \not \in \eta_-] \P_{a} [X(\tau_{\eta_- \cup \eta_{+}})=x_{+}].\label{eq:fin}
\end{align}
Therefore, putting together \eqref{eq:debut}, \eqref{eq:middle} and \eqref{eq:fin}, we get
\begin{equation}\label{E:productprob}
\P_{x_-}[ X(\tau_{-}) \not \in \eta_- ] \P_{0}[X(\tau_{+} ) = x_{+}] \asymp \P_{x_-}[X(\tau_{\eta_- \cup \eta_{+}} ) = x_{+}] \P_a( X_{\tau_B} \not \in B)
\end{equation}
which is of the desired form.

We now turn to the loop measure terms. Recall that $\cI(\eta)$ denote the set of loops intersecting $\eta$. We evidently have
\begin{equation}\label{E:productloopsoup}
\mass ( \cI( \eta_-))  + \mass ( \cI(\eta_+))  - \mass ( \cI ( \eta_- \cup \eta_+)) =  \mass  ( \cI ( \eta_-) \cap \cI( \eta_+)).
\end{equation}
Note that because of possible very long loops the right hand side of the equation is not bounded in general. To account for these loops, we define $\cL$ as the set of loops that surround $0$ and $C$ as the complement of $B( 0, 2^{m+1}\delta)$ and we decompose the mass of loops intersecting $\eta_-$ \emph{and} $\eta_+$ as
\begin{align*}
\mass( \cI (\eta_-) \cap \cI ( \eta_+) ) & = \mass( \cL \cap \cI ( B) \cap \cI (C)) + \mass ( \cL^c \cap \cI ( \eta_-) \cap \cI( \eta_+)) \\
& \ \ - \mass ( \cL \cap \cI ( \eta_-) \cap \cI (C) \cap \cI( \eta_+)^c)\\
&\ \  - \mass ( \cL \cap \cI ( \eta_-)^c \cap \cI( B)  \cap \cI( \eta_+))\\
& \ \ + \mass ( \cL \cap \cI ( B) \cap \cI(C) \cap \cI(\eta_-)^c \cap \cI( \eta_+)^c).
\end{align*}

The first term in the first line is independent of $\eta_-$ and $\eta_{+}$ so can be factored into the partition function. The second term (still in the first line) is bounded by a constant which does not depend on $\eta_-$ or $\eta_+$ by \cref{lem:loop_crossing}, since all loops in this set intersect an annulus, cannot have a small diameter (since they must intersect both $\eta_-$ and $\eta_+$) and cannot surround the origin.
Finally, the last three terms are bounded by \cref{T:loopsoupRW} because all three sets require the loops to have a diameter $\gtrsim 2^m \delta$, and
a random walk started in $B(0,  2^{m+2} \delta)$ has a positive probability to hit either of $\eta_-$ and $\eta_{+}$ without creating any loop with such a diameter.

Overall, when we include the partition functions  (combining this argument with \eqref{E:decompositionloopsoup}, \eqref{E:productprob} and \eqref{E:productloopsoup}) we see that there exists a constant $C_{m,N}$ independent of $\eta_-, \eta_+$ (but which a priori depends on $N$ and $m$) such that
\[
\mu(\eta_-)  \mu(\eta_+)\asymp C_{m,N} \sum_{ \eta^*}  \mu ( \eta_- \oplus \eta^* \oplus { \eta_{+}}).
\]
This concludes the proof of \eqref{eq:qipaths} because both sides of the equation are probability measures, so $C_{m,N} \asymp 1$. For \eqref{eq:qirestrict}, observe that by \eqref{E:decompositionloopsoup}, it is clear that the conditional $\mu$-law of $\bs \eta^*$ given $\bs \eta_m$ and $\dot{\bs \eta}_{m+1, N}$ is given by the loop-erasure of a pair of random walks starting from $\bs x_-$ conditioned on $\bs X(\tau_{+}) = \bs x_+$ (this has to be interpreted coordinatewise, i.e. separately for each path). By our assumptions, on $\bs \eta_m$ and $\dot {\bs \eta}_{m+1, N}$, and by uniform crossing (which holds even conditional on the end point), it is clear that there is a positive probability for the event $\sep_{m, m+1}$ to hold: indeed, even the random walk paths have a positive chance of being separated and to stay within the annulus $A(0, 2^{m-1}\delta, 2^{m+2}\delta)$.
\end{proof}


\subsection{Separation lemma}

We recall $\mathtt{Sep}_n = \mathtt{Sep}(\cA_n)$ to be the set of $\bs \gamma = (\gamma^1, \gamma^2)$ in $\cA_n$ (pairs of paths which do not intersect up to leaving $B_n$) such that the distance between $\gamma^1$ and $\gamma^2$ inside $B_{n} \setminus B_{n-\frac{1}{10}}$ is at least $2^{n-1}\delta$.
\begin{lemma}[Separation Lemma]\label{lem:separation}
There exists $c>0$ such that for all $\delta>0$, for all $1\le n < m \le N$, for all $\bs \gamma_n \in \cA_n$
\begin{equation}
\sum_{\bs \gamma_m \in \mathtt{Sep}(\cA_{m})  : \bs \gamma_n \prec \bs \gamma_m}  { \lambda}_{m}(\bs \gamma_m) \ge c { \lambda}_{m}(\bs \gamma_n).
\end{equation}
\end{lemma}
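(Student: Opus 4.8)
The plan is to show that starting from any pair of paths $\bs\gamma_n \in \cA_n$, when we extend by the measure $\lambda_m$ one scale at a time, we pay at most a constant factor (per scale, and in total geometrically summable) to arrive at a \emph{separated} configuration in $\mathtt{Sep}(\cA_m)$. The key point is that separation is a ``local'' event near the boundary of $B_m$, so it should only cost a constant to achieve it, essentially independently of how entangled $\bs\gamma_n$ is. First I would reduce to a one-step statement: it suffices to prove that for some absolute $c_0>0$ and every $\bs\gamma_{m-1} \in \cA_{m-1}$,
\[
\sum_{\bs\gamma_m \in \mathtt{Sep}(\cA_m): \bs\gamma_{m-1} \prec \bs\gamma_m} \lambda_m(\bs\gamma_m) \ge c_0\, \lambda_m(\bs\gamma_{m-1}),
\]
because the general statement then follows by summing over intermediate scales and using that $\lambda_m(\bs\gamma_n) = \sum_{\bs\gamma_{m-1} \succeq \bs\gamma_n} \lambda_m(\bs\gamma_{m-1})$ (and the one-step bound with $c_0$ gives the full bound with the same $c_0$, since we only need a lower bound and can restrict the sum over $\bs\gamma_m$ to those whose restriction to $B_{m-1}$ is any fixed $\bs\gamma_{m-1} \succeq \bs\gamma_n$).

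For the one-step bound, I would use the explicit formula \eqref{lambdam} for $\lambda_m$, namely $\lambda_m(\bs\eta_m) = \exp(-\masss(\cI_m(\bs\eta_m)))\, 1_{\bs\eta_m \in \cA_m}\, \mu(\bs\eta_m)$, and the decomposition into the portion up to $\partial B_{m-1}$ and the portion in the last annulus $B_m \setminus B_{m-1}$. The $\mu$-marginal part is handled by the quasi-independence lemma \cref{lem:quasi_independence}: conditionally on $\bs\gamma_{m-1}$, the law of the extension into $B_m \setminus B_{m-1}$ is (up to constants) that of a pair of independent loop-erased walks from the endpoints of $\bs\gamma_{m-1}$, run until exiting $B_m$. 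By the uniform crossing estimate (\cref{crossingestimate}) — applied in the annulus $B_m \setminus B_{m-1}$, at the macroscopic scale $2^m\delta$, which is legitimate since this scale is $\ge \delta/\delta_0$ when $m$ is not too small, and the small-$m$ case is handled by a separate argument or absorbed in the constant — a pair of independent walks has uniformly positive probability to split apart: one walk can be routed to exit near the ``top'' of the annulus and the other near the ``bottom'', each staying in its own half, so that their traces in $B_m \setminus B_{m-1/10}$ are at distance at least $2^{m-1}\delta$. Since loop-erasure only shortens paths and cannot bring separated walk traces back together near the outer boundary, the loop-erased pair is also separated with uniformly positive probability. This gives $\sum_{\bs\gamma_m \in \mathtt{Sep}(\cA_m): \bs\gamma_{m-1}\prec\bs\gamma_m} \mu(\bs\gamma_m) \ge c_1 \mu(\bs\gamma_{m-1})$ with $c_1$ absolute; combined with \cref{lem:quasi_independence} restricted appropriately we keep the $\mu$-part under control.

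It remains to control the ratio of the loop-measure exponentials: we need $\exp(-\masss(\cI_m(\bs\gamma_m)))$ not to be much smaller than $\exp(-\masss(\cI_m(\bs\gamma_{m-1})))$ for the separated extensions $\bs\gamma_m$. Here I would invoke the quasi-independence of loops, \cref{lem:loop_independence_scale} (applied with scales $m-1 < m$), which shows
\[
\exp(-\masss(\cI_m(\bs\gamma_m))) \asymp \exp\big(-\masss(\cI_{m-1}(\bs\gamma_{m-1})) - \masss(\cI_m(\dot{\bs\gamma}))\big)
\]
on the separation event, where $\dot{\bs\gamma}$ is the last-annulus portion; and the extra mass $\masss(\cI_m(\dot{\bs\gamma}))$ of loops touching both coordinates of a \emph{separated} annulus-crossing pair is bounded by an absolute constant via \cref{cor:long_loops} and \cref{lem:loop_crossing} (such loops have diameter $\gtrsim 2^{m-1}\delta$ and do not surround the origin). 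Putting the three pieces together — the uniform crossing lower bound for the $\mu$-marginal, the quasi-independence for loops to separate the exponential, and the bounded extra loop mass — yields the desired $\lambda_m$-bound with an absolute constant, and summing over scales finishes the proof. The main obstacle I anticipate is the bookkeeping around the event $\mathtt{Sep}$ versus $\mathtt{Sep}_{m,m+1}$ and making sure the quasi-independence estimates are applied with precisely matching notions of separation (as flagged in \cref{R:problem}); in particular one must be careful that the ``cushioning'' in the definition of separation is enough to bound the loops that stretch from deep inside $B_{m-1}$ out past $B_m$, which is exactly what \cref{lem:loop_crossing} is designed for.
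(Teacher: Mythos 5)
The reduction to a one-step statement ($m=n+1$) is fine, and this is also what the paper's proof implicitly does. However, your proof of the one-step bound has a circularity problem and does not address the core difficulty, namely the case where the paths of $\bs\gamma_{m-1}$ exit $B_{m-1}$ with arbitrarily small mutual separation $2^{-j}2^{m-1}\delta$ (large $j$).

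The circularity: you invoke \cref{lem:loop_independence_scale} (quasi-independence of loops) ``on the separation event.'' But by construction, the event $\mathtt{Sep}_{m-1,m}$ in that lemma requires $\bs\eta_{m-1}\in\mathtt{Sep}_{m-1}$, i.e.\ the \emph{inner} part must already be separated. That is exactly what the Separation Lemma is supposed to produce. When $\bs\gamma_{m-1}$ is entangled near $\partial B_{m-1}$, \cref{lem:loop_independence_scale} gives no control: the extra loop mass $\masss(\cI_m(\bs\gamma_m))-\masss(\cI_{m-1}(\bs\gamma_{m-1}))$ can be of order $j$, coming from loops that intersect both paths near $\partial B_{m-1}$ and exit $B_{m-1}$. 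This is precisely the phenomenon flagged in \cref{R:problem}. Similarly, the uniform crossing estimate gives $\mu(\mathtt{Sep}\mid\bs\gamma_{m-1})\ge c_1$, but you need a bound for the $\lambda_m$-conditional probability, and the loop weighting reshuffles mass between $\mathtt{Sep}$ and non-$\mathtt{Sep}$ extensions by factors that, a priori, depend on $j$.

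The paper's proof handles exactly this via Lawler's iterative scheme. Let $\Delta(\bs\eta_{n+u})=2^{-j}$ measure the current separation. One shows that over $j^2$ ``mini-stages'' of length $2^{-j}$ each, the probability (under the conditional $\underline{\lambda}_{n+1}$) of failing to improve to $\Delta=2^{-j+1}$ at every mini-stage is at most $e^{-cj^2}$; simultaneously one must lower-bound the $h$-transform $\lambda_{n+1}(\bs\eta_{n+u})/\lambda_{n+u}(\bs\eta_{n+u})\ge e^{-cj}$ (which is \emph{not} a constant, but decays exponentially in $j$). Since $\sum_j j^2 2^{-j}$ and $\sum_j e^{-c'j^2}$ are both summable and small for $j\ge N_0$, one reaches fixed separation $2^{-N_0}$ within half a scale with $\underline{\lambda}_{n+1}$-probability $\ge 1/2$, regardless of the initial $j$, and then concludes. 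This doubling-plus-union-bound structure is what makes the constant uniform in $j$; your argument, which tries to jump from $2^{-j}$ to macroscopic separation in a single annulus crossing at constant cost, has no mechanism for this uniformity and would need something like the $e^{-c'j^2}$ failure probability from the mini-stage decomposition to compensate the $e^{-cj}$ cost of re-extending an entangled configuration.
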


This lemma is stated as Lemma 2.29 in \cite{Lawler2sided}. As stated in that paper, this lemma is in fact an adaptation of results written before in two slightly different but related contexts, see \cite{lawler2016convergence} and \cite{masson2009growth}. Lawler's paper \cite{Lawler2sided} also contains some elements of the proof in an appendix but that may not be sufficient for all readers to get an idea of the proof. We provide here a description of the main ideas in our context.

\begin{proof}[Sketch of proof] For $0 \le u \le 1$, let $\bs \eta_{n+u} \in \cA_{n+u}$. Let $\Delta (\bs \eta_{n+u}) $ be the largest value of $r>0$ such that each endpoint of $\bs \eta_{n+u}$ is at distance at least $r 2^{n}$ from the other path. Suppose that $\Delta (\bs \eta_{n+u}) = 2^{-j}$. The goal is to show that by increasing $u$ slightly to $u'$ this can be improved to $\Delta (\bs \eta_{n+u'}) = 2^{-j+1}$ with high probability (or rather with high mass for $\lambda_n$). We can then iterate this argument using a ``Markovian'' way of discovering the paths $\bs \eta_{n+1}$.
We will take $u' = u + j^2 2^{-j}$, and divide the interval $[ u, u' ] $ into $j^2 $ ``mini-stages'', each of length $2^{-j}$. Let us call a mini-stage $[v,v']$ (with $v' - v = 2^{-j}$) a mini-failure if $\Delta(\bs \eta_{n + v'}) < 2^{-j+1}$, and a mini-success otherwise. Therefore, if $F_{[v,v']}$ is the event of a mini-failure during $[v, v']$, then we have for each $\bs \eta_{n + v} \in \cA_{n + v}$,
$$
\sum_{\bs \eta_{n + v' } \in F_{[v,v']}} \lambda_{n+v'} (\bs \eta_{n + v'}) \le e^{-c} \lambda_{n + v} (\bs \eta_{n + v}).
$$
(We do not even require the paths to be disjoint in the left hand side above, so we get a much stronger bound than we need.)
Indeed, no-matter how small the separation is at its beginning of the mini-stage, the probability of a mini-failure (under $\mu$) is at most $e^{-c}$ for some $c>0$, since the spatial separation between the two boundaries of the annulus $2^{n+v'} - 2^{n+v} $ is greater (up to a constant) than the required separation for a mini-success, i.e., $2^{n-j +1}$ (as can be immediately seen with a Taylor expansion near zero of $x \mapsto 2^x$).
Furthermore, the loop measure term $e^{-\Lambda ( \cI_{n + v'} (\bs \eta_{n + v'}))}$ is smaller than the analogous one on the right.

Therefore, by iterating this, one can see that having only mini-failures at each of the $j^2$ mini-stages has a mass
\begin{equation}
\sum_{\bs \eta_{n + u'} \in \cA_{n+ u'}\cap F_j} \lambda_{n+ u' } ( \bs \eta_{n+ u'} ) \le \exp ( - cj^2) \lambda_{n+u} (\bs \eta_{n+u})
\end{equation}
where the $F_j$ in the sum indicates that $\bs \eta_{n+ u' }$ contains only mini-failures. Moreover, so long as $u' \le 1$, $\lambda_{n+ 1 }  (\bs \eta_{n+ u' }) \le \lambda_{n+u' } (\bs \eta_{n + u'} )$, hence trivially,
\begin{equation}
\sum_{\bs \eta' \in \cA_{n+ u'}\cap F_j} \lambda_{n+ 1 } ( \bs \eta_{n+ u'} ) \le \exp ( - cj^2) \lambda_{n+u} (\bs \eta_{n+u})
\end{equation}
On the other hand, it is clear\footnote{In reality, an annoying technical detail is that this only holds for those $\bs \eta_{n+u}$ which not only satisfy $\Delta( \bs \eta_{n+u} ) \ge 2^{-j}$ but also satisfy a similar separation in an annulus $B_{n+ u} \setminus B_{n+u - 2^{ -j}}$: otherwise there are boundary effects similar to the problem mentioned in Remark \ref{R:problem} in the loop terms. These terms would need to be included in the expression, but do not change the argument significantly.} that
\begin{equation}\label{survive}
\lambda_{n+1} (\bs \eta_{n+u} ) \ge   e^{- cj} \lambda_{n+u} (\bs \eta_{n+u})
\end{equation}
because one way to extend $\bs \eta_{n+u}$ up to scale $n+1$ is by doubling the separation repeatedly at scales
$$
u,u + 2^{ - j},u + 2 \cdot 2^{-j},u + 4 \cdot 2^{ - j}, \ldots, 1$$ (and note that this correspond to an iteration over at most $j $ many scales, and the last scale could be of width smaller than $2^{-j}$ which only helps).
Putting these two bounds together, we see that if $\underline{\lambda}_{n+1}$ is the measure $\lambda_{n+1}$ normalised to be a probability measure,
\begin{equation}\label{failure}
\underline{\lambda}_{n+1}  ( F_j \mid \bs \eta_{n+u}) \le e^{- c' j^2}.
\end{equation}
Since this is really a fixed probability measure, we can apply this argument inductively over $j$: as soon, as we have a mini-success at some mini-stage $[v,v']$, we are free to start again the same argument above with $u$ replaced by $v'$ and $j$ changed into $j-1$.
In this way, if we fix $N_0$ large enough such that
$$
\sum_{j \ge N_0} j^2 2^{-j} \le 1/2, \qquad \sum_{j \ge N_0} e^{ - c'j^2} \le 1/2
$$
we see that with $\underline{\lambda}_{n+1}$ probability greater than 1/2, the two paths are have a mini-success at stage $N_0$ no matter what the initial separation, and that this mini-success occurs at a mini-stage $v'$ satisfying $v' \le 1/2$.  (The first sum ensures that $v'$ is never greater than 1/2, and the second controls the probability of a failure at stage $j$ for any $j \ge N_0$).
From that point, clearly the $\underline{\lambda}_{n+1}$ probability for the two paths to be in $\mathtt{Sep}_{n+1}$ is positive.
\end{proof}

We also need a separation lemma for the end portions of the paths (or rather the beginning of the end of these paths). This corresponds to Lemma 2.30 in \cite{Lawler2sided}. As mentioned in this paper, this result is similar to the above separation lemma (Lemma \ref{lem:separation}). To see the similarity, it is helpful to perform a time reversal: requiring the beginning of $\dot {\bs \eta}_{m+1, N}$ to be separated is the same as requiring the end portion of the time reversal of $ \dot {\bs \eta}_{m+1, N} $ to be separated. This is exactly the sort of events that Lemma \ref{lem:separation} deals with. Alternatively, we can use the same proof but changing the meaning of $\mu$ to be the loop-erasure of a walk starting from 0 and conditioned to leave a certain ball by a given point. In common with \cite{Lawler2sided} we also do not provide an additional proof of this fact. (Note however that Lemma 2.30 in \cite{Lawler2sided} has a typo, as one cannot require the paths to be separated constantly between scales $m$ and $n$ with positive probability independent of $n - m$).
We recall the definition of $\mathtt{Sep}_{m,n}$ at the beginning of  \cref{S:QIloops}.

 \begin{lemma}[Separation lemma for end portions  of paths]\label{lem:separation_end}
There exist constant $c>0$ such that for all $\delta>0$ and $1\le m <n \le N -1$ the following is true. Write for any $\bs \eta_n \in \cA_n$, $\bs \eta_n   = \bs \eta' \oplus \dot {\bs \eta}_{m,n}$ where similar to \eqref{eq:pathdec}, $\dot {\bs \eta}_{m,n}$ denotes the paths $\bs \eta_n$ after their last visit to $B_m$.
\begin{equation}
\sum_{\bs \eta_n \in \cA_n \cap \mathtt{Sep}_{m,m+1}}  \mu(\bs \eta_n)
\exp \left (- \masss (\cI_n (\dot {\bs \eta}_{m+1,n}))\right)
\ge c \sum_{\bs \eta_n \in \cA_n}  \mu(\bs \eta_n)
\exp \left(- \masss (\cI_n (\dot {\bs \eta}_{m+1,n}))\right)
\end{equation}
\end{lemma}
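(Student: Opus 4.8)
The plan is to reduce this to a time‑reversed version of the separation lemma \cref{lem:separation}. First I would use the decomposition \eqref{eq:pathdec}, with $n$ there replaced by $m+1$, to write every $\bs\eta_n \in \cA_n$ as $\bs\eta_n = \bs\eta_m \oplus \bs\eta^* \oplus \dot{\bs\eta}_{m+1,n}$, so that membership in $\sep_{m,m+1}$ amounts to: $\bs\eta_m \in \sep_m$, $\bs\eta^* \subset A(x,2^{m-1},2^{m+2})$, the distance conditions between $\bs\eta^*$ and the opposite strand, and $\dot{\bs\eta}_{m+1,n} \in \dot{\sep}_{m+1,n}$. The crucial structural point is that the loop weight $e^{-\masss(\cI_n(\dot{\bs\eta}_{m+1,n}))}$ depends only on the tail $\dot{\bs\eta}_{m+1,n}$. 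Hence, summing first over the initial portion $\bs\eta_m \oplus \bs\eta^*$ with $\dot{\bs\eta}_{m+1,n}$ frozen, one sees that the $\mu$‑mass of the initial portions that additionally lie in $\sep_m$, stay inside $A(x,2^{m-1},2^{m+2})$, and satisfy the distance conditions, is at least a fixed multiple of the unconstrained $\mu$‑mass: this is a finite‑range event (a bounded number of scales near $m+1$) for a pair of independent loop‑erased walks conditioned to terminate at given, well separated points of $\partial B_{m+1}$, and it has uniformly positive probability by the uniform crossing estimate together with the fact that conditioned walks also satisfy uniform crossing away from the discovered paths (Lemma 4.4 in \cite{BLR16}). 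This reduces the claim to showing that $\sum_{\dot{\bs\eta}_{m+1,n} \in \dot{\sep}_{m+1,n}} e^{-\masss(\cI_n(\dot{\bs\eta}_{m+1,n}))}\,\nu(\dot{\bs\eta}_{m+1,n})$ is at least a constant fraction of the same sum without the constraint $\dot{\bs\eta}_{m+1,n} \in \dot{\sep}_{m+1,n}$, where $\nu(\dot{\bs\eta}_{m+1,n})$ is the $\mu$‑mass of the pairs in $\cA_n$ having $\dot{\bs\eta}_{m+1,n}$ as their portion after the last visit to $B_{m+1}$ (expressible, via the formulas of \cref{sec:law_branch,sec:law_marginal}, in terms of $q$ and loop‑measure factors).

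Next I would run the core argument, which is a transcription of the proof of \cref{lem:separation} after a time reversal. Reversing time, $\dot{\bs\eta}_{m+1,n}$ becomes a pair of paths grown \emph{inward} from $\partial B_n$ towards $\partial B_{m+1}$, and $\dot{\sep}_{m+1,n}$ is precisely the requirement that this pair be well separated at its final (innermost) scale $m+1$ — exactly the kind of event handled in \cref{lem:separation}. The relevant measure $e^{-\masss(\cI_n(\cdot))}\,\nu(\cdot)$ is again a loop‑weighted loop‑erased‑walk measure: the ``free'' part is the loop‑erasure of reversed walks conditioned to exit at prescribed points (which still satisfies uniform crossing by Lemma 4.4 in \cite{BLR16}), and the loop weight is monotone in the desired direction, since discovering more of the reversed inner path only enlarges $\cI_n(\dot{\bs\eta}_{m+1,n})$ and hence only decreases $e^{-\masss(\cI_n(\dot{\bs\eta}_{m+1,n}))}$. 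One then re‑runs the ``mini‑stage'' doubling‑of‑separation argument of \cref{lem:separation} going inward from a large fixed scale $N_0$ down to $m+1$: a mini‑failure has conditional mass at most $e^{-c}$ because the spatial width of a mini‑stage annulus dominates the separation required for success and because the loop weight of the unseparated extension is no larger; iterating over $j^2$ mini‑stages and then over $j \ge N_0$ produces a pair in $\dot{\sep}_{m+1,n}$ with conditional mass bounded below, uniformly in the starting configuration. The boundary‑effect caveat from the footnote of \cref{lem:separation} reappears here, because $\dot{\sep}_{m+1,n}$ is expressed through the thin annulus $B_{m+3/2}\setminus B_{m+1}$, and is dealt with in the same way, by also demanding the auxiliary separation in a companion annulus so the relevant loop terms stay comparable. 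Consistently with the typo remark about Lemma 2.30 of \cite{Lawler2sided}, we do \emph{not} (and cannot) ask for separation throughout all scales between $m$ and $n$ with a constant independent of $n-m$; only the localized requirements $\sep_m$, $\bs\eta^* \subset A(x,2^{m-1},2^{m+2})$, and $\dot{\sep}_{m+1,n}$ are imposed.

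The main obstacle, and the one place where one must go beyond the $\Z^2$ estimates of \cite{Lawler2sided}, is to make the loop‑soup term genuinely decouple across scales, so that restricting the inner portion to $\dot{\sep}_{m+1,n}$ does not, through the loop weight, impose hidden constraints on the far portion of the path near scale $n$. For this I would invoke the loop quasi‑independence estimate \cref{lem:loop_independence_scale}: the loops in $\cI_n(\dot{\bs\eta}_{m+1,n})$ split, up to a bounded‑mass error, into those confined near scale $m+1$ and those reaching to larger scales; the latter intersect both strands (so cannot be microscopic) and, by definition of $\masss$, do not surround the origin, whence their total mass is bounded by \cref{lem:loop_crossing} and \cref{cor:long_loops}. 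Consequently the loop weight contributes only a bounded multiplicative constant when comparing the constrained and unconstrained sums, and combining this with the mini‑stage bound yields the inequality with a constant $c>0$ uniform in $\delta$, $m$ and $n$. The careful accounting of these bounded‑mass error loops (together with the companion‑annulus fix for the thin‑annulus boundary effect) is the only genuinely tedious step; the rest is a line‑by‑line adaptation of the proof of \cref{lem:separation}.
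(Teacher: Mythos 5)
Your proposal follows the same route the paper itself sketches: time-reversal reducing the claim to a rerun of the mini-stage argument of \cref{lem:separation} with $\mu$ reinterpreted as the loop-erasure of a walk conditioned on its exit point, together with the loop quasi-independence estimates (\cref{lem:loop_independence_scale}, \cref{lem:loop_crossing}, \cref{cor:long_loops}) substituting for explicit Green-function bounds to control the loop-soup weight across scales. Note that the paper explicitly declines to give a full proof here (``we also do not provide an additional proof of this fact''), so your writeup is a more detailed but faithful expansion of that sketch — in particular your first paragraph makes explicit, via \eqref{eq:qirestrict}, the factoring-out of the constraints on the initial and middle portions that the paper leaves implicit.
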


We now state some useful corollaries which we will rely on for the rest of the argument. First of all, we need to introduce $h_m$ which can be thought of as the typical value (up to constants of order 1) of $h(\bs \eta_m)= \lambda_N( \bs \eta_m)/\lambda_m(\bs \eta_m)$ (under the marginal law of the Markov chain, say):
\begin{align*}
h_m &= \sum_{\bs \eta_N \in \cA} \exp \left( - \masss [\cI_N (\dot{\bs \eta}_{m, N} )] \right)\mu (\bs \eta_N) \\
& = \E_\mu  \exp \left( - \masss [\cI_N (\dot{\bs \eta}_{m, N} )] \right)
\end{align*}
In fact it is also typical under $\lambda_m$, and can also be viewed as the \emph{maximal} value of $h(\bs \eta_m)$, up to constants. The connection with typical and maximal values of $h(\bs \eta_m)$ will come from the estimates in  \cref{cor:separation} below. For now we also point out that we can view $h_m$ as a partition function of a measure $\lambda_{m,N}$ defined by  $\exp \left( - \masss [\cI_N (\dot{\bs \eta}_{m, N} )] \right)\mu (\dot{\bs \eta}_{m,N})$. It is this definition which is implicitly taken in \cite{Lawler2sided} with the notation $b_m$ instead of $h_m$ here\footnote{More precisely, $b_m$ in \cite{Lawler2sided} is explicitly taken to be a partition function but the definition of the measure is only implicit.}. More generally, by changing $N$ into $n$, with $1\le m < n \le N$, we could also define $h_{m,n}$ in this manner, but this won't be needed.

Let $Z({ \lambda}_m) $ be the partition function of the measure $\lambda_m$ defined in \eqref{lambdam}. The following corollary, which summarises the information we gather from the quasi-independence lemmas and the separation lemmas, corresponds to Proposition 2.31 in \cite{Lawler2sided} (but with $n = N$ in some cases). 
\begin{corollary}\label{cor:separation}
There exists constants $c_1,c_2 >0$ such that for all $\delta>0$, for $1\le m <n \le N$, the following is true
\begin{enumerate}[a.]
\item Fix $\bs \eta_m \in \mathtt{Sep}(\cA_m)$. Then ${\lambda}_{m+1} (\bs \eta_m) \ge c_1 {\lambda}_m (\bs \eta_m)$.
\item $c_1Z( \lambda_n) \le Z(\lambda_{n+1})  \le Z(\lambda_n)  $.

\item $c_1 Z(\lambda_N)/Z(\lambda_{m})   \ge h_{m}\ge c_2 Z( \lambda_N)/Z(\lambda_{m})$

\item
If $\bs \eta_m \in \mathtt{Sep}_m$ then $h(\bs \eta_m)\ge c_1 h_m$

\item If $\bs \eta_m \in \cA_m$ then $h(\bs \eta_m) \le c_1 h_m$.
\end{enumerate}
\end{corollary}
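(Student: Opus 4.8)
The plan is to prove the five items in the order (a), (b), then the two‑sided comparison (d)--(e), and finally (c), following the strategy of Proposition 2.31 in \cite{Lawler2sided}. All constants below depend only on the crossing constants of \cref{sec:setup}; at the finitely many smallest scales, where the uniform crossing estimate is not yet available, every quantity involved is $O(1)$ because there are $O(1)$ configurations, and I suppress this caveat. \emph{Item (a).} Fix $\bs\eta_m\in\sep(\cA_m)$ and lower‑bound $\lambda_{m+1}(\bs\eta_m)=\sum_{\bs\eta_{m+1}\succeq\bs\eta_m}\lambda_{m+1}(\bs\eta_{m+1})$ by keeping only ``good'' one‑scale extensions: those for which the two branches leave $B_{m+1}$ through disjoint, essentially radial tubes, stay inside $A(0,2^{m-1}\delta,2^{m+2}\delta)$, and remain at mutual distance $\gtrsim 2^{m-1}\delta$. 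Since the tips of $\bs\eta_m$ are already separated, the conditional law of the $\mu$‑continuation is a pair of (conditionally independent) loop‑erased excursions from those tips (\cref{cor:conditional_single}, applied coordinatewise), and by uniform crossing each follows its tube with probability bounded below; hence $\sum_{\text{good }\bs\eta_{m+1}}\mu(\bs\eta_{m+1})\ge c\,\mu(\bs\eta_m)$. On the good event, $\cI_{m+1}(\bs\eta_{m+1})\setminus\cI_m(\bs\eta_m)$ consists only of loops that intersect both branches and either stay near scale $m$ or leave $B_m$; all of them have diameter $\gtrsim 2^{m-1}\delta$ (the branches being separated there) and none surrounds the origin, so \cref{cor:long_loops,lem:loop_crossing} bound their total $\masss$‑mass by a constant, giving $e^{-\masss(\cI_{m+1}(\bs\eta_{m+1}))}\ge e^{-C}e^{-\masss(\cI_m(\bs\eta_m))}$. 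Multiplying the two bounds yields $\lambda_{m+1}(\bs\eta_m)\ge c\,\lambda_m(\bs\eta_m)$.

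\emph{Item (b).} Since $\cI_n(\bs\eta_n)\subseteq\cI_{n+1}(\bs\eta_{n+1})$ for every extension, the marginal obeys $\lambda_{n+1}(\bs\eta_n)\le e^{-\masss(\cI_n(\bs\eta_n))}\sum_{\bs\eta_{n+1}\succeq\bs\eta_n}\mu(\bs\eta_{n+1})\le\lambda_n(\bs\eta_n)$, and summing over $\bs\eta_n\in\cA_n$ gives $Z(\lambda_{n+1})\le Z(\lambda_n)$. For the reverse, summing \cref{lem:separation} (applied with the pair of scales $(n-1,n)$) over all $\bs\gamma_{n-1}\in\cA_{n-1}$ shows that $\sum_{\bs\eta_n\in\sep(\cA_n)}\lambda_n(\bs\eta_n)\ge c\,Z(\lambda_n)$, i.e.\ a uniformly positive fraction of the $\lambda_n$‑mass is separated (equivalently $\underline{\lambda}_n(\sep_n)\ge c$); I record this for later. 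Combining with (a), $Z(\lambda_{n+1})\ge\sum_{\bs\eta_n\in\sep(\cA_n)}\lambda_{n+1}(\bs\eta_n)\ge c_1\sum_{\bs\eta_n\in\sep(\cA_n)}\lambda_n(\bs\eta_n)\ge c_1 c\,Z(\lambda_n)$.

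\emph{Items (d)--(e).} Fix $\bs\eta_m\in\cA_m$ and expand $\lambda_N(\bs\eta_m)=\sum_{\bs\eta_N\in\cA_N,\,\bs\eta_N\succeq\bs\eta_m}e^{-\masss(\cI_N(\bs\eta_N))}\mu(\bs\eta_N)$, decomposing each $\bs\eta_N=\bs\eta_m\oplus\bs\eta^*\oplus\dot{\bs\eta}_{m+1,N}$ as in \eqref{eq:pathdec} with $n=m+1$. Then \cref{lem:loop_independence_scale} factorises the loop weight into $e^{-\masss(\cI_m(\bs\eta_m))}e^{-\masss(\cI_N(\dot{\bs\eta}_{m+1,N}))}$ (exactly as an upper bound; up to a constant as a lower bound once one restricts to $\sep_{m,m+1}$), and \cref{lem:quasi_independence} sums $\sum_{\bs\eta^*}\mu(\bs\eta_m\oplus\bs\eta^*\oplus\dot{\bs\eta}_{m+1,N})\asymp\mu(\bs\eta_m)\,\mu(\dot{\bs\eta}_{m+1,N})$, with a matching restricted version valid when $\bs\eta_m\in\sep_m$ and $\dot{\bs\eta}_{m+1,N}\in\dot\sep_{m+1,N}$. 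For (e), drop the constraints $\cA_N$ and $\sep_{m,m+1}$ (an upper bound) and use the unrestricted factorisations to get $\lambda_N(\bs\eta_m)\le C\,e^{-\masss(\cI_m(\bs\eta_m))}\mu(\bs\eta_m)\sum_{\dot{\bs\eta}_{m+1,N}}e^{-\masss(\cI_N(\dot{\bs\eta}_{m+1,N}))}\mu(\dot{\bs\eta}_{m+1,N})$, where the last sum is $\asymp h_m$ (passing from the tail after $B_{m+1}$ to the tail after $B_m$ in the definition of $h_m$ costs one further scale, again absorbed by \cref{lem:loop_independence_scale}); this is $h(\bs\eta_m)\le C h_m$. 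For (d), assume $\bs\eta_m\in\sep_m$ and retain only configurations with $\bs\eta^*\in\sep_{m,m+1}$; on this event the pair is automatically disjoint, so the constraint $\cA_N$ is free, the lower factorisations of \cref{lem:loop_independence_scale,lem:quasi_independence} apply, and what remains is to see that a positive fraction of the partition function $h_m$ is carried by tails $\dot{\bs\eta}_{m+1,N}$ with separated beginning — which is precisely \cref{lem:separation_end}. Hence $\lambda_N(\bs\eta_m)\ge c\,e^{-\masss(\cI_m(\bs\eta_m))}\mu(\bs\eta_m)\,h_m=c\,\lambda_m(\bs\eta_m)\,h_m$, i.e.\ $h(\bs\eta_m)\ge c\,h_m$.

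\emph{Item (c) and the main obstacle.} Summing $h(\bs\eta_m)\lambda_m(\bs\eta_m)=\lambda_N(\bs\eta_m)$ over $\bs\eta_m\in\cA_m$ gives $\E_{\underline{\lambda}_m}[h]=Z(\lambda_N)/Z(\lambda_m)$, where $\underline{\lambda}_m$ is $\lambda_m$ normalised. The upper bound from (e), valid for all $\bs\eta_m\in\cA_m$, then gives $Z(\lambda_N)/Z(\lambda_m)\le c_1 h_m$; the lower bound from (d) together with $\underline{\lambda}_m(\sep_m)\ge c$ (the fact recorded in (b)) gives $Z(\lambda_N)/Z(\lambda_m)\ge c\,c_1 h_m$, and rearranging yields both inequalities of (c). The main obstacle is the bookkeeping in (d)--(e): one must check that the disjointness constraint $\cA_N$ is genuinely harmless on the separation events (so it may be kept for free in the lower bound) and genuinely droppable in the upper bound, and that the tail $\dot{\bs\eta}_{m+1,N}$ produced by the scale decomposition can be re‑identified, up to multiplicative constants, with the tail $\dot{\bs\eta}_{m,N}$ defining $h_m$. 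This is exactly the step where one follows \cite{Lawler2sided}, with \cref{lem:loop_independence_scale,lem:quasi_independence,lem:separation,lem:separation_end} playing the role of Lawler's precise Green‑function estimates.
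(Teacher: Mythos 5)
Your overall plan — establish (a), (b), then attack (d) and (e) directly via the quasi‑independence Lemmas \ref{L:QIloops} and \ref{L:QIpaths}, and finally deduce (c) by integrating $h$ against $\underline\lambda_m$ — is a genuine reorganisation of the paper's argument. The paper runs (a)$\to$(b)$\to$(c)$\to$(d),(e): it manipulates the partition functions $Z(\lambda_m)$ directly to prove (c) first, and then (d)/(e) reduce, ``in view of (c)'', to showing $h(\bs\eta_m)\lesssim Z(\lambda_N)/Z(\lambda_m)$ (resp.\ $\gtrsim$ on $\sep_m$). Your route is attractive because the step from (d)/(e) to (c), via $\E_{\underline\lambda_m}[h]=Z(\lambda_N)/Z(\lambda_m)$ together with $\underline\lambda_m(\sep_m)\ge c$, is cleaner than the paper's manipulation of $h_{m+1}Z_m$. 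Your proofs of (a) and (b) are fine (with a minor imprecision about when the conditioned excursions inherit the crossing estimate, which the paper addresses by citing Uchiyama's Proposition 4.6).

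The gap is in the proof of (e) (and to a lesser extent (d)). After applying the quasi‑independence lemmas you arrive, correctly, at
\[
h(\bs\eta_m)\;\le\; C\,\sum_{\dot{\bs\eta}_{m+1,N}}e^{-\masss(\cI_N(\dot{\bs\eta}_{m+1,N}))}\mu(\dot{\bs\eta}_{m+1,N}),
\]
but this last sum is $h_{m+1}$, not $h_m$. You then assert that ``passing from the tail after $B_{m+1}$ to the tail after $B_m$ costs one further scale, absorbed by \cref{lem:loop_independence_scale}.'' That is not justified, and in fact the direction you need is non‑trivial: since $\dot{\bs\eta}_{m+1,N}\subseteq\dot{\bs\eta}_{m,N}$, one has $\masss(\cI_N(\dot{\bs\eta}_{m+1,N}))\le\masss(\cI_N(\dot{\bs\eta}_{m,N}))$ and hence $h_{m+1}\ge h_m$ \emph{trivially}; the inequality you need, $h_{m+1}\lesssim h_m$, controls the extra mass of loops pinched between the two strands in the annulus $B_{m+1}\setminus B_m$ and fails without a separation input (this is exactly the mirror of part (b), which required \cref{lem:separation}). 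Lemma~\ref{L:QIloops} compares loop masses within a \emph{fixed} decomposition and says nothing about $h_{m+1}$ versus $h_m$; and a direct proof of $h_{m+1}\lesssim h_m$ would amount to another instance of (c), making your argument circular as written. The clean fix is the one the paper uses: observe that your computation actually proves (c) at scale $m+1$, i.e.\ $Z(\lambda_N)/Z(\lambda_m)\asymp h_{m+1}$, then apply part (b) to replace $Z(\lambda_m)$ by $Z(\lambda_{m+1})$, which yields $Z(\lambda_N)/Z(\lambda_{m'})\asymp h_{m'}$ for every $m'\ge 2$ (the case $m'=1$ being covered by $O(1)$ configurations); (d) and (e) at scale $m$ then follow by combining $h(\bs\eta_m)\asymp h_{m+1}$ with (c) at scales $m$ and $m+1$.
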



\begin{proof}
Proof of a. If $\bs \eta_m \in \mathtt{Sep}_m$, reasoning as in \cref{lem:beurling_target} and using Proposition 4.6 in \cite{uchiyama}, there is a positive probability for two random walks starting from the endpoints of either path, and conditioned to avoid these paths until leaving $B_N$, to remain separate separated until hitting $B_{m+1}$. In that case the corresponding loop measure term $\exp [ - \masss ( \cI_{m+1} (\bs \eta_{m+1}) + \masss (\cI_m (\bs \eta_m) )]$ is bounded away from zero. This proves a.

Proof of b. The upper bound is trivial because $Z(\lambda_n)$ is trivially decreasing. For the lower bound, we use part a. and the Separation \cref{lem:separation}. More precisely,
$$
Z(\lambda_n) \lesssim \sum_{\bs \eta_n \in \mathtt{Sep}_n} \lambda_n(\bs \eta_n)  \lesssim \sum_{\bs \eta_n \in \mathtt{Sep}_n} \lambda_{n+1}(\bs \eta_n) \le Z(\lambda_{n+1}).
$$
For the first inequality, we use \cref{lem:separation} for $m=1$.
This proves b.

Proof of c. We start with the lower bound.
By \cref{L:QIpaths} and \cref{L:QIloops},
\begin{align*}
  Z(\lambda_N) & = \sum_{\bs \eta_N \in \cA_N} \mu(\bs \eta_N) \exp [ - \masss (\cI( \bs \eta_N)) ]\\
  & \lesssim \sum_{\bs \eta_m \in \cA_m, \dot{\bs \eta}_{m+1, N}} \mu ( \bs \eta_m) \mu(\dot {\bs \eta}_{m+1, N} ) \exp [ - \masss (\cI_m( \bs \eta_m)) ] \exp [ - \masss (\cI( \dot{\bs \eta}_{m+1,N})) ]\\
  & = Z(\lambda_m) h_{m+1}.
\end{align*}
Since $Z(\lambda_m) \asymp Z(\lambda_{m+1}) $ by part b., this is the required second inequality of c. In the other direction, it is easier to start by considering $h_{m+1} Z_m$ and write by
\begin{align*}
h_{m+1}Z_m & = \sum_{\bs \eta_N} \mu(\bs \eta_N) e^{- \masss (\cI(\dot{\bs \eta}_{m+1,N}))} \sum_{\bs \eta_m} \mu (\bs \eta_m) e^{-\masss (\cI_m( \bs \eta_m))}\\
& \lesssim \sum_{\bs \eta_N \in \mathtt{Sep}_{m, m+1}} \mu(\bs \eta_N) e^{- \masss (\cI(\dot{\bs \eta}_{m+1,N}))} \sum_{\bs \eta_m \in \mathtt{Sep}_m} \mu (\bs \eta_m) e^{-\masss (\cI_m( \bs \eta_m))} \text{ (by  Lemmas \ref{lem:separation_end} and \ref{lem:separation})} \\
& \le \sum_{\bs \eta_m \in \mathtt{Sep}_m } \sum_{\dot{\bs \eta}_{m+1,N} \in \dot{\mathtt{Sep}}_{m+1, N}  } \mu (\bs \eta_m)  \mu(\dot{\bs \eta}_{m+1,N}) 
e^{-\masss (\cI_m( \bs \eta_m))} e^{- \masss (\cI(\dot{\bs \eta}_{m+1,N}))}\\
& \lesssim \sum_{\bs \eta_N \in \mathtt{Sep}_{m, m+1}}\mu( \bs \eta_N) e^{- \masss (\cI(\bs \eta_N))} \text{ (by Lemmas \ref{L:QIloops} and \ref{L:QIpaths})}.
 \end{align*}
The latter is obviously smaller or equal to $Z(\lambda_N)$ as desired in first inequality of c.

Proof of d. and e. In view of c., it suffices to show that
$
h(\bs \eta_m) \le c_1 \tfrac{Z(\lambda_N)}{Z(\lambda_m)}
$ (resp. $\ge c_1 \tfrac{Z(\lambda_N)}{Z(\lambda_m)}$  if $\bs \eta_m \in \mathtt{Sep}_m$).
The proof is thus done similarly to c., and is a consequence of the quasi-independence \cref{L:QIloops,L:QIpaths}.
\end{proof}

\subsection{Coupling of pair of paths}

The arguments in these sections are essentially those of Lawler \cite{Lawler2sided} and do not require new input except for Lemma \ref{lem:couple_onestep}.
In this section, we will work with the Markov chain on the pair of non-intersecting self avoiding paths described by \eqref{Markovchain}. To avoid confusions it is useful to write the Markov chain as $(Y_1, \ldots, Y_N)$. Let us recall the transition probability of the chain
\begin{equation*}
p ( Y_m, Y_{m+1} ) = \frac{\lambda_{m+1} (Y_{m+1})}{\lambda_m (Y_m)} \frac{h( Y_{m+1})}{ h (Y_m)}.
\end{equation*}
The first step is to show that we can couple the Markov chains in one step with positive probability, uniformly over the initial condition.

\begin{lemma}[Separation] \label{cor:separation_chain}
There exists a constant $c >0$ such that for all $\delta>0$,  and $N_{\min } \le  n-1 \le N_{\max} -1$, and $\bs \gamma_n \in \cA_n$,
\begin{equation*}
\P( Y_{n+1} \in \mathtt{Sep}_{n+1} |  Y_n = \bs \gamma_n) \ge c
\end{equation*}
\end{lemma}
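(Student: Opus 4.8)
The plan is to read the claim directly off the transition formula \eqref{Markovchain}, using only the bounds on the $h$-transform from \cref{cor:separation} and the separation lemma \cref{lem:separation}; no new estimate is needed. First I would record the elementary identity, valid by the definitions \eqref{marginallambda} and \eqref{htransform}, that for $\bs\gamma_n\in\cA_n$,
\[
\lambda_n(\bs\gamma_n)\,h(\bs\gamma_n)=\lambda_N(\bs\gamma_n)=\sum_{\bs\eta_{n+1}\succ\bs\gamma_n}\lambda_N(\bs\eta_{n+1})=\sum_{\bs\eta_{n+1}\succ\bs\gamma_n}\lambda_{n+1}(\bs\eta_{n+1})\,h(\bs\eta_{n+1}),
\]
where the sums range over $\bs\eta_{n+1}\in\cA_{n+1}$ extending $\bs\gamma_n$. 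Since $\sum_{\bs\eta_{n+1}\succ\bs\gamma_n}p(\bs\gamma_n,\bs\eta_{n+1})=1$, dividing \eqref{Markovchain} by this total mass gives
\[
\P(Y_{n+1}\in\sep_{n+1}\mid Y_n=\bs\gamma_n)=\frac{\sum_{\bs\eta_{n+1}\in\sep_{n+1},\ \bs\eta_{n+1}\succ\bs\gamma_n}\lambda_{n+1}(\bs\eta_{n+1})\,h(\bs\eta_{n+1})}{\sum_{\bs\eta_{n+1}\succ\bs\gamma_n}\lambda_{n+1}(\bs\eta_{n+1})\,h(\bs\eta_{n+1})}.
\]

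Next I would eliminate the $h$-factors. Every path occurring in the numerator lies in $\sep_{n+1}$, so \cref{cor:separation}(d) bounds $h(\bs\eta_{n+1})\ge c_1 h_{n+1}$; every path occurring in the denominator lies in $\cA_{n+1}$, so \cref{cor:separation}(e) bounds $h(\bs\eta_{n+1})\le c_1 h_{n+1}$, \emph{with the same normalising quantity $h_{n+1}$ in both estimates}. Hence the right-hand side above is at least a constant multiple of
\[
\frac{\sum_{\bs\eta_{n+1}\in\sep_{n+1},\ \bs\eta_{n+1}\succ\bs\gamma_n}\lambda_{n+1}(\bs\eta_{n+1})}{\sum_{\bs\eta_{n+1}\succ\bs\gamma_n}\lambda_{n+1}(\bs\eta_{n+1})}=\frac{\sum_{\bs\eta_{n+1}\in\sep_{n+1},\ \bs\eta_{n+1}\succ\bs\gamma_n}\lambda_{n+1}(\bs\eta_{n+1})}{\lambda_{n+1}(\bs\gamma_n)},
\]
where the denominator is rewritten using the definition \eqref{marginallambda} of the marginal of $\lambda_{n+1}$ on $\cA_n$.

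Finally, the numerator of this last ratio is bounded below by $c\,\lambda_{n+1}(\bs\gamma_n)$: this is precisely the separation lemma \cref{lem:separation} applied with the scale pair $(n,n+1)$ in place of $(n,m)$. Chaining the three displays gives $\P(Y_{n+1}\in\sep_{n+1}\mid Y_n=\bs\gamma_n)\ge c$ for a constant $c>0$ that does not depend on $\delta$, on $n$ in the admissible range, or on $\bs\gamma_n$, which is the assertion. I do not expect any real obstacle here: the lemma is bookkeeping that combines \cref{cor:separation} and \cref{lem:separation}, and the only point deserving attention is the one just emphasised, namely that the $h$-transform terms appearing in the numerator and in the denominator are each comparable to the \emph{same} quantity $h_{n+1}$ — this is exactly why parts (d) and (e) of \cref{cor:separation} are stated relative to $h_m$ rather than relative to one another.
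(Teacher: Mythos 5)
Your proof is correct and matches the paper's own argument exactly: the paper dispenses with this lemma in one line, calling it "an immediate consequence of the Separation \cref{lem:separation} for the $\lambda$ term and \cref{cor:separation} parts d. and e. for the $h$ term." Your write-up is just the careful bookkeeping of that one-liner, and your remark about why parts (d) and (e) are normalised against the same quantity $h_{n+1}$ is precisely the point that makes the cancellation work.
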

\begin{proof}
This is an immediate consequence of the Separation \cref{lem:separation} for the $\lambda$ term and \cref{cor:separation} parts d. and e. for the $h$ term.
\end{proof}

The consequence is that we can couple paths for a given number of scales (say $j$) so that they agree at the end of these $j$ scales with positive probability. For any two path pairs $\bs \gamma_n ,  {\bs  \gamma}'_n$ in $\cA_n $ for $ n \ge j$, we say $$\bs \gamma_n =_j   {\bs \gamma}'_n \quad \quad  \text{ if } \quad \quad \dot{\bs \gamma}_{n-j,n}  = \dot{\bs \gamma}'_{n-j,n}.$$ That is,
the portion of $\bs \gamma$ from the last hit of $2^{n-j}\delta$ up until first hit of $2^n \delta$ is the same as that of  $\tilde {\bs \gamma}$. The next lemma is the analogue of Lemma 2.34 in \cite{Lawler2sided}.

\begin{lemma}\label{lem:couple_onestep}
For all $j \ge 1$, there exists a constant $a_j >0$ (which may depend on $j$ but not on anything else) such that for any $\delta > 0$, for all $1  \le n \le N - j-1$, for any $\bs \gamma_n ,  \bs {\gamma}'_n \in \cA_n$, we can couple the Markov chains $Y$ and $Y'$ (with initial conditions $\bs \gamma_n$ and $\bs \gamma'_n$ respectively) such that
 $$\P( Y_{n+j} =_{j -2}  { Y}'_{n+j},  Y_{n+j} \in \mathtt{Sep}_{n+j}   \mid  Y_n =\bs \gamma_n,  Y'_n =  {\bs \gamma}'_n ) \ge a_j.$$
\end{lemma}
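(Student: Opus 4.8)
The plan is to iterate the one-step separation result (\cref{cor:separation_chain}) together with the quasi-independence and separation estimates gathered in \cref{cor:separation} to obtain a coupling of the two chains after $j$ scales. First I would observe that it suffices to produce, in one scale, a coupling that with positive probability both lands the two chains in $\mathtt{Sep}$ and makes them agree on the last scale; a bounded number $j$ of iterations of such a step then yields agreement over the last $j-2$ scales (the ``$-2$'' absorbing the fact that one needs a scale of separation on each side before agreement can be enforced, exactly as in the buffer used in Lemmas \ref{lem:separation} and \ref{lem:separation_end}). So the core claim is: for $\bs\gamma_n, \bs\gamma'_n \in \cA_n$ there is $a_1>0$ (independent of $\delta, n, N$) and a coupling of $Y_{n+1}, Y'_{n+1}$ with
$$
\P\big( Y_{n+1} = Y'_{n+1} \text{ on the last scale},\ Y_{n+1}\in \mathtt{Sep}_{n+1} \mid Y_n = \bs\gamma_n, Y'_n = \bs\gamma'_n\big) \ge a_1 .
$$

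To prove this core claim I would compare, for a fixed target $\bs\delta_{n+1}\in\mathtt{Sep}_{n+1}$ (a completion of $\bs\gamma_n$, resp. $\bs\gamma'_n$, into the annulus $B_{n+1}\setminus B_n$ with a fixed separated configuration in the last half-scale), the one-step transition probabilities $p(\bs\gamma_n,\bs\delta_{n+1})$ and $p(\bs\gamma'_n,\bs\delta_{n+1})$ to a common reference value. Using \eqref{Markovchain2},
$$
p(\bs\gamma_n, \bs\delta_{n+1}) = \lambda_{n+1}(\bs\delta_{n+1}\mid \bs\gamma_n)\,\frac{h(\bs\delta_{n+1})}{h(\bs\gamma_n)} .
$$
For the $h$-factor, \cref{cor:separation} parts d.\ and e.\ give $h(\bs\delta_{n+1}) \asymp h_{n+1}$ (since $\bs\delta_{n+1}\in\mathtt{Sep}_{n+1}$) and $c_2 h_n \le h(\bs\gamma_n)\le c_1 h_n$ — wait, only the upper bound $h(\bs\gamma_n)\le c_1 h_n$ is free for arbitrary $\bs\gamma_n\in\cA_n$; the lower bound requires $\bs\gamma_n\in\mathtt{Sep}_n$. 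This is the point where I would first apply \cref{cor:separation_chain}: run each chain one preliminary scale to reach $\mathtt{Sep}_{n+1}$ with probability $\ge c$, and only then attempt the matching step, so that without loss of generality both initial conditions already lie in $\mathtt{Sep}$; then $h(\bs\gamma_n)\asymp h_n$ on both sides. For the $\lambda$-factor, \cref{lem:separation} (for the lower bound on the mass of separated completions) together with the quasi-independence of loops and paths (\cref{L:QIloops}, \cref{L:QIpaths}) shows that $\lambda_{n+1}(\,\cdot \mid \bs\gamma_n)$ restricted to separated completions in $B_{n+1}\setminus B_n$ is a measure of total mass $\asymp 1$ which depends on $\bs\gamma_n$ only through the position of its endpoints on $\partial B_n$ (up to a multiplicative constant), because all the loops that could ``see'' the detailed geometry of $\bs\gamma_n$ inside $B_n$ and also reach the last scale have bounded mass by \cref{lem:loop_crossing}/\cref{cor:long_loops}. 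Consequently there is a sub-probability measure $\nu$ on separated one-scale completions, of total mass $\ge a_1>0$ uniform in all parameters, such that $\nu$ is dominated by the transition kernel from $\bs\gamma_n$ and simultaneously by the one from $\bs\gamma'_n$; the maximal coupling associated to $\nu$ then agrees on the last scale and lands in $\mathtt{Sep}_{n+1}$ with probability $\ge a_1$.

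Finally I would iterate: starting from arbitrary $\bs\gamma_n, \bs\gamma'_n$, apply \cref{cor:separation_chain} to reach $\mathtt{Sep}_{n+1}$ on both sides (cost one scale, probability $\ge c^2$ for an independent coupling), then apply the core claim at scales $n+1, \ldots, n+j-1$; on each of these $j-2$ successive scales the two chains, now coupled, can be forced to agree on that scale and stay in $\mathtt{Sep}$ with probability $\ge a_1$. Multiplying these independent (conditionally on the past) successes gives $\P(Y_{n+j} =_{j-2} Y'_{n+j},\, Y_{n+j}\in\mathtt{Sep}_{n+j}) \ge c^2 a_1^{\,j-2} =: a_j$, which is the claim. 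The main obstacle is the core one-scale claim, and within it the verification that the transition kernels from two \emph{different} separated initial conditions share a common component of uniformly positive mass: this is exactly where one must combine the separation lemma (to guarantee enough separated completions), the quasi-independence lemmas (to factor the loop-soup weight across the new scale up to constants), and \cref{lem:loop_crossing} (to kill the dependence on the fine geometry deep inside $B_n$); the $h$-transform term is only controlled up to constants by \cref{cor:separation}, which is why one cannot hope for a coupling probability tending to $1$ here and must settle for the $j$-dependent constant $a_j$ (the route to probability close to $1$, needing $\bs\gamma_n =_j \bs\gamma'_n$ with $j$ large, is the content of the separate estimate \cref{lem:lambda_b_compare}).
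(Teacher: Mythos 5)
Your proposal shares the same opening move with the paper's proof (one preliminary scale, via \cref{cor:separation_chain}, to place both chains in $\mathtt{Sep}$, then a maximal total-variation coupling justified by an up-to-constants Radon--Nikodym estimate), but after that it diverges in an essential way. The paper compares the \emph{full} conditional laws of $\dot{\bs\eta}_{n+2,N}$ given $\bs\eta_{n+1}=\bs\gamma_{n+1}$ and given $\bs\eta_{n+1}=\bs\gamma'_{n+1}$, shows that the resulting Radon--Nikodym derivative is $\asymp 1$ (this is \eqref{RNcontrol}, using \eqref{lawconditional} together with \cref{L:QIloops} and \cref{L:QIpaths}), and then applies a single TV coupling on the set $\dot{\sep}_{n+2,N}$: in one stroke the two chains are made to agree on \emph{all} scales from $n+2$ to $N$ with probability $\ge c/C$, so the paper's constant can in fact be taken independent of $j$. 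You instead iterate a one-scale version of this estimate $j-1$ times, getting $a_j\sim c^2 a_1^{\,j-1}$. Both routes work, and both use exactly the same ingredients (quasi-independence of paths and loops across scales, the separation lemma, \cref{lem:loop_crossing}, and the $h$-transform controls of \cref{cor:separation}); the paper's single-shot version is cleaner because by taking the comparison all the way to scale $N$ the $h$-transform contributions collapse (since $h(\bs\eta_N)=1$), whereas the one-scale version must re-estimate the ratio $h(\bs\eta_{n+1})/h(\bs\eta_n)$ at each step, which is precisely why you need to impose $\mathtt{Sep}$ at every intermediate scale.

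Two small points to make your route airtight. First, the object you couple at each iterated step should be the \emph{entire increment} $\bs\eta_{k+1}\setminus\bs\eta_k$, not merely the terminal piece $\dot{\bs\eta}_{k,k+1}$: if the bridging portion $\bs\eta^{*}_{k,k+1}$ dips back into $B_{k-1}$, the relation $=_{m}$ at the next scale depends on that portion too. This is unproblematic once the endpoints of $\bs\eta_{k}$ and $\bs\eta'_{k}$ have been matched (after the first iteration), because then the increments start from the same vertex and the same $\asymp 1$ comparison applies to full increments. Second, your scale count is off by one ($j-1$ transitions from $n+1$ to $n+j$, not $j-2$, so you actually obtain $=_{j-1}$, which is more than needed); harmless since the lemma only asks for some $a_j>0$. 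Neither point is a gap in substance; it is a genuinely different, somewhat more laborious decomposition of the paper's argument, and it correctly avoids the cutpoint issue that the paper flags (both you and the paper couple the loop-erased chain states directly rather than the underlying walks).
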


\begin{proof} Lawler's argument in \cite{Lawler2sided} relies on the existence of cutpoints among other estimates to deduce that walks that can be coupled also have agreeing loop-erased random walks (in general it is otherwise possible to find examples of two paths $\eta$ and $\eta'$ which agree for sufficiently large time but their eventual loop-erasures are disjoint!) We therefore need to find a ``softer'' argument since existence of cutpoints are not known in our context.
By the previous lemma (\cref{cor:separation_chain}), we see that by letting $Y_n$ and $Y'_n$ evolve independently over one step, both $Y_{n+1} \in \mathtt{Sep}_{n+1}$ and $Y'_{n+1} \in \mathtt{Sep}_{n+1}$ with positive probability, which we now assume. Ìn order to couple $(Y_{n+2}, \ldots, Y_{n+j})$ and $(Y'_{n+2}, \ldots, Y'_{n+j})$ we will in fact prove the stronger statement that we can couple $(Y_{n+2}, \ldots, Y_{N})$ with $(Y'_{n+2}, \ldots, Y'_{N})$ in such a way that the resulting paths agree with positive probability after their first visit to $B_{n+2}^c$ (given $Y_{n+1},Y'_{n+1} \in \mathtt{Sep}_{n+1})$. To do this we will actually look at the Radon--Nikodym derivative of one law with respect to the other using the explicit formula:
\begin{equation}\label{lawconditional}
{\lambda_N}(\dot {\bs \eta}_{n+2, N} ; \bs \eta_{n+1} = \bs \gamma_{n+1}) = \sum_{\bs \eta^*} \exp ( - \masss (\cI( \bs \gamma_{n+1} \oplus \bs \eta^* \oplus \dot{\bs \eta}_{n+2, N} ))) \mu (\bs \gamma_{n+1} \oplus \bs \eta^* \oplus \dot{\bs \eta}_{n+2, N})
\end{equation}
and using the quasi independence lemmas for both $\mu$ and the loop measures. This will give us up-to-constant upper and lower bounds in which the dependence on $\bs \gamma_n$ can be factored out as desired.
Consider first the upper bound which is a bit easier: by the upper bound in \cref{L:QIloops} and in \cref{L:QIpaths},
\begin{align*}
  {\lambda_N}(\dot {\bs \eta}_{n+2, N} ; \bs \eta_{n+1} = \bs \gamma_{n+1}) & \lesssim \exp \Big( - \masss \cI_{n+1} (\bs \gamma_{n+1}) - \masss \cI_N ( \dot {\bs \eta}_{n+2, N} )\Big)\mu (\bs \gamma_{n+1}) \mu(\dot{\bs \eta}_{n+2, N}).
\end{align*}
For the lower bound, it will be convenient to assume that the two strands of $\dot {\bs \eta}_{n+2, N} $ are in $\dot{\sep}_{n+2, N}$. We may do so without loss of generality since this event has positive `probability' both for the measure on the left hand side of \eqref{lawconditional} and the same measure with $\bs \gamma_n$ replaced by $\bs \gamma'_n$.
We deduce from \eqref{lawconditional} that we can restrict the sum to intermediate portions $\bs \eta^*$ that are well separated in the sense of \cref{L:QIloops}, to get: 
\begin{align*}
  {\lambda_N}(\dot {\bs \eta}_{n+2, N} ; \bs \eta_{n+1} = \bs \gamma_{n+1}) & \ge \sum_{\bs \eta^* \in \sep_{n+1, n+2}} \exp ( - \masss (\cI( \bs \gamma_{n+1} \oplus \bs \eta^* \oplus \dot{\bs \eta}_{n+2, N} ))) \mu (\bs \gamma_{n+1} \oplus \bs \eta^* \oplus \dot{\bs \eta}_{n+2, N})\\
  & \gtrsim  \exp \Big( - \masss \cI_{n+1} (\bs \gamma_{n+1}) - \masss \cI_N ( \dot {\bs \eta}_{n+2, N} )\Big) \sum_{ \bs \eta^* \in \sep_{n+1, n+2}}   \mu (\bs \gamma_{n+1} \oplus \bs \eta^* \oplus \dot{\bs \eta}_{n+2, N})\\
  & \gtrsim \exp \Big( - \masss \cI_{n+1} (\bs \gamma_{n+1}) - \masss \cI_N ( \dot {\bs \eta}_{n+2, N} )\Big)\mu (\bs \gamma_{n+1}) \mu(\dot{\bs \eta}_{n+2, N}),
\end{align*}
where we used \eqref{eq:qirestrict} in the last line.
Putting these two bounds together, we obtain directly:
\begin{equation}\label{RNcontrol}
\frac{d \underline{\lambda}( \dot {\bs \eta}_{n+2, N}  \mid \bs \eta_{n+1} = \bs \gamma_{n+1}) }
{ d \underline{\lambda}( \dot {\bs \eta}_{n+2, N}  \mid \bs \eta_{n+1} = \bs \gamma'_{n+1})} \asymp 1.
\end{equation}
This would be enough to prove that the chains can be coupled so that the event that they agree and both are in $\mathtt{Sep}_{n+j}$ has positive probability $a_j$ depending only on $j$. {Indeed, if $X$ and $Y$ are  discrete random variables and there exists $c,C,A$ with $\P(X \in A) \ge c$, and $1/C \le \P(X=\omega)/\P(Y = \omega) \le C$ for all $\omega \in A$, then, using the optimal coupling for total variation (see e.g. \cite{MCMT}), \begin{equation*}\P(X=Y, X\in A) = \sum_{\omega \in A}  \min\{\P(X=\omega) , \P(Y=\omega)\} \ge \frac cC.\end{equation*}}
 \end{proof}


We now state a lemma analogous to the first three parts of Lemma 2.33 in \cite{Lawler2sided}, saying that if two paths are identical for $j$ scales before a scale $n$, then the transition probabilities of the Markov chain to the next scale are the same up to an error which is exponentially small in $j$. (We also added a part a' for reasons that will be clear later).

\begin{lemma}\label{lem:lambda_b_compare}
There exists constants $c_1,c_2 >0$ such that for all $\delta>0$,  and $1\le n \le N$, the following is true.
\begin{enumerate}[{(}a{)}.]
\item $$\sum_{\bs \eta \in \cA_{n,n+1}:  \bs \eta \cap B_{n-j/2} \neq \emptyset} \lambda_{n+1}(\bs \gamma_{n} \oplus \bs \eta \mid \bs \gamma_{n} )  \le c_2e^{-c_1j} $$
where we recall that $ \lambda_{n+1}(\bs \gamma_{n} \oplus \bs \eta \mid \bs \gamma_{n} )   =  \frac{\lambda_{n+1}(\bs \gamma_{n} \oplus \bs \eta) }{  \lambda_{n}(\bs \gamma_{n} ) }$,
the sum over all $\bs \eta \in \cA_{n, n+1}$ means the sum over $\bs \gamma_n$ such that $\bs \gamma_n \oplus \bs \eta \in \cA_{n+1}$.

\item[(a')] $$\sum_{\bs \eta \in \cA_{n,N}:  \bs \eta \cap B_{n-j/2} \neq \emptyset} \lambda_{N}(\bs \gamma_{n} \oplus \bs \eta \mid \bs \gamma_{n} )  \le c_2e^{-c_1j}  h_n $$

\item Fix $\bs \gamma_n =_j \tilde {\bs \gamma}_n$ both in $\cA_n$. For $\bs \eta $ such that $\bs \gamma_n \oplus \bs \eta \in \cA_{n+1}  $ and $\bs \eta  \cap B_{n-j/2} = \emptyset $,
$$
\left | \frac{\lambda_{n+1} (\bs \gamma_n \oplus \bs \eta  \mid \bs\gamma_n ) }{ \lambda_{n+1} (\tilde{\bs \gamma}_n \oplus \bs \eta \mid \tilde{\bs\gamma}_n ) } -1 \right| \le c_2 e^{-c_1j}
$$
\item Fix $\bs \gamma_n =_j \tilde {\bs \gamma}_n$ both in $\cA_n$. Then
$$
| h(\bs \gamma_n ) - h(\tilde {\bs \gamma}_n)| \le c_2e^{-c_1j} h_n.
$$
\end{enumerate}
\end{lemma}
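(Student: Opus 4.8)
The plan is to prove all four parts of \cref{lem:lambda_b_compare} from the quasi-independence and separation machinery already developed, treating parts (a), (a') as warm-ups and then (b), (c) as refinements. Throughout I would fix the notation that $\bs \gamma_n \in \cA_n$ is an initial pair of paths and, for a middle piece $\bs \eta$ connecting scale $n$ to scale $n+1$ (or $N$), write the unnormalised conditional $\lambda_{n+1}(\bs \gamma_n \oplus \bs \eta \mid \bs \gamma_n) = \lambda_{n+1}(\bs \gamma_n \oplus \bs \eta)/\lambda_n(\bs \gamma_n)$ and expand it using \eqref{lambdam} and \eqref{partial_single_SC}--\eqref{E:decompositionloopsoup} so that it becomes a product of a walk-transition term and a loop-measure term $\exp(-\masss(\cI))$, modulo the partition factors controlled by \cref{cor:separation}.

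For part (a), the point is that a middle excursion $\bs \eta$ reaching $B_{n-j/2}$ after leaving $B_n$ must backtrack. I would bound the walk-transition contribution: by \cref{lem:backtrack_excursion} (applied to the random walks, conditioned on their endpoints, that generate the loop-erasures under $\mu$), the probability that such an excursion re-enters $B_{n-j/2}$ after reaching scale $n$ is $\le C (2^{-j/2})^\alpha = C e^{-c_1 j}$, uniformly in $\bs \gamma_n$; the loop-measure term $\exp(-\masss(\cI))$ is bounded above by $1$, and summing over all such $\bs \eta$ while using \cref{cor:separation}(b) to compare $Z(\lambda_{n+1})$ with $Z(\lambda_n)$ gives the claimed bound. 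Part (a') is the same computation but now the excursion reaches all the way to scale $N$; after the backtracking event (cost $e^{-c_1 j}$) the rest of the excursion contributes a factor $\asymp h_n$ by the quasi-independence lemmas \cref{L:QIpaths,L:QIloops} together with \cref{cor:separation}(c), so the sum is $\le c_2 e^{-c_1 j} h_n$.

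For part (b), fix $\bs \gamma_n =_j \tilde{\bs \gamma}_n$ and a middle piece $\bs \eta$ avoiding $B_{n-j/2}$. Expanding both $\lambda_{n+1}(\bs \gamma_n \oplus \bs \eta \mid \bs \gamma_n)$ and $\lambda_{n+1}(\tilde{\bs \gamma}_n \oplus \bs \eta \mid \tilde{\bs \gamma}_n)$, the walk-transition terms involving only $\bs \eta$ are identical, and the only discrepancy comes from (i) the hitting-probability factors of the form $\P_{x_-}[\cdot]$ near the endpoints of $\bs \gamma_n$ versus $\tilde{\bs \gamma}_n$, and (ii) the difference in the loop-measure terms $\masss(\cI(\bs \gamma_n \oplus \bs \eta))$ versus $\masss(\cI(\tilde{\bs \gamma}_n \oplus \bs \eta))$. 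For (ii): since $\bs \gamma_n$ and $\tilde{\bs \gamma}_n$ agree on the last $j$ scales (i.e. inside $B_n \setminus B_{n-j}$) and $\bs \eta$ avoids $B_{n-j/2}$, any loop contributing to the symmetric difference of these two loop sets must intersect both $B_{n-j/2}$ (to reach the part of $\bs \gamma_n$, $\tilde{\bs \gamma}_n$ where they differ) and $B_n^c$ (or at least stretch across scale $n-j/2$ up to the middle piece) without surrounding the origin; by \cref{lem:loop_crossing} this has $\masss$-mass $\le C e^{-c j/2}$, so the ratio of the two exponentials is $1 + O(e^{-c_1 j})$. For (i), the hitting-probability factors are themselves ratios of harmonic functions that agree up to the boundary of $B_{n-j/2}$, and one gets a comparable exponential error by the Harnack-type/Beurling estimates (\cref{lem:transience_excursion,lem:beurling_target,lem:backtrack_excursion}); combining gives the stated bound. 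I expect this loop-measure discrepancy step — getting the exponential rate $e^{-c_1 j}$ rather than merely a constant — to be the main obstacle, and it is exactly where \cref{lem:loop_crossing} (loops crossing many scales without disconnecting the origin have exponentially small mass) is essential.

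Finally, part (c) follows by summing part (b) together with parts (a), (a'). Writing $h(\bs \gamma_n) = \sum_{\bs \eta} \lambda_N(\bs \gamma_n \oplus \bs \eta \mid \bs \gamma_n)$ where the sum is over all completions $\bs \eta$ of $\bs \gamma_n$ to scale $N$, I split this sum into those $\bs \eta$ meeting $B_{n-j/2}$ and those avoiding it. The first group contributes at most $c_2 e^{-c_1 j} h_n$ for each of $\bs \gamma_n$ and $\tilde{\bs \gamma}_n$ by part (a'). For the second group, the same computation as part (b) but carried out to scale $N$ rather than $n+1$ (using the quasi-independence lemmas across the full range of scales and \cref{cor:separation}) shows that each term differs between $\bs \gamma_n$ and $\tilde{\bs \gamma}_n$ by a multiplicative factor $1 + O(e^{-c_1 j})$; summing and using $h(\bs \gamma_n), h(\tilde{\bs \gamma}_n) \lesssim h_n$ (from \cref{cor:separation}(e)) yields $|h(\bs \gamma_n) - h(\tilde{\bs \gamma}_n)| \le c_2 e^{-c_1 j} h_n$ after adjusting the constants. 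This mirrors Lemma 2.33 in \cite{Lawler2sided}, the only substantive difference being that we replace fine Green-function estimates by the robust loop-mass bound \cref{lem:loop_crossing} and the crossing/backtracking estimates of \cref{sec:finiteness}.
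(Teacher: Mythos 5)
Your proposal follows the same overall approach as the paper's proof: express the conditional $\lambda$-terms as a product of a $\mu$-transition and a loop-measure correction, bound the $\mu$ backtracking probability via \cref{lem:backtrack_excursion}, control the loop-measure discrepancy via \cref{lem:loop_crossing}, and prove (c) by decomposing $h$ into backtracking and non-backtracking completions, handling them with (a') and a scale-$N$ version of (b) respectively, then invoking \cref{cor:separation}(e). The structure is the same, and your identification of \cref{lem:loop_crossing} as the key replacement for Green-function estimates is correct.

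One small imprecision in part (b): you say a distinguishing loop must intersect $B_{n-j/2}$ ``to reach the part where $\bs\gamma_n, \tilde{\bs\gamma}_n$ differ'' and $B_n^c$ ``to see the middle piece.'' This mislocates both constraints. Since $\bs\gamma_n=_j\tilde{\bs\gamma}_n$, the two agree after their last visit to $B_{n-j}$, so they differ inside $B_{n-j}$, not merely $B_{n-j/2}$; and since $\bs\eta$ avoids $B_{n-j/2}$, a distinguishing loop need only exit $B_{n-j/2}$ (not reach $B_n^c$) to see $\bs\eta$ or the $\cI_n \to \cI_{n+1}$ change. The correct constraint is thus ``enter $B_{n-j}$ and exit $B_{n-j/2}$,'' which is what the paper uses. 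Fortunately the scale gap in both your version and the correct one is $j/2$, so \cref{lem:loop_crossing} gives the same rate $e^{-cj/2}$ either way, and the error does not propagate. (Also, in part (a) you appeal to \cref{cor:separation}(b) to compare partition functions; this is unnecessary since $\lambda_{n+1}(\cdot\mid\bs\gamma_n)$ already incorporates the normalization $1/\lambda_n(\bs\gamma_n)$, but it does no harm.)
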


\begin{proof}
Proof of (a): We have
$$
\sum_{\bs \eta \in \cA_{n,n+1}:  \bs \eta \cap B_{n-j/2} \neq \emptyset} \lambda_{n+1}(\bs \gamma_{n} \oplus \bs \eta \mid \bs \gamma_{n} ) = \sum_{\bs \eta \in \cA_{n,n+1}:  \bs \eta \cap B_{n-j/2} \neq \emptyset} \mu (\bs \gamma_n \oplus \bs \eta ) e^{- (\masss \cI_{n+1}( \bs \gamma_n \oplus \eta) - \masss \cI_n (\bs \gamma_n) )}.
$$
The $\exp ( - \masss)$ term is trivially bounded by $1$, and by \cref{lem:backtrack_excursion}, we see that under $\mu$ the probability of a large backtrack is exponentially small.

Proof of (a') : By the quasi-independence lemma for loops \cref{L:QIloops} we can bound
$$
\sum_{\bs{\eta} \in \cA_{n,N}:  \bs \eta \cap B_{n-j/2} \neq \emptyset} \lambda_{N}(\bs{\gamma}_{n} \oplus \bs \eta \mid \bs{\gamma}_{n} ) \lesssim \sum_{\bs \eta \in \cA_{n,N}:  \bs \eta \cap B_{n-j/2} \neq \emptyset} \mu(\bs \gamma_{n} \oplus \bs \eta \mid \bs{\gamma}_{n} ) e^{-\masss \cI_{N} ( \dot {\bs \eta}_{n+1, N})}.
$$
By the quasi-independence lemma for paths \cref{L:QIpaths} (and more specifically by \eqref{eq:qipaths}), we see that $\mu(\bs{\gamma}_n \oplus \bs{\eta} \mid \bs{\gamma}_n ) \asymp \mu( \dot {\bs \eta}_{n+1, N}) \mu(\bs{\eta}^* \mid \bs{\gamma}_n, \dot{\bs\eta}_{n+1, N} )$. Using again \cref{lem:backtrack_excursion}, is easy to see that we can bound the conditional probability that $\bs \eta^*$ contains a backtrack, uniformly over $\gamma_n$ and $\dot{\bs\eta}_{n+1, N}$. Comparing $h_n$ and $h_{n+1}$ by \cref{cor:separation} concludes.

Proof of (b): We have
\[
\frac{\lambda_{n+1} (\bs \gamma_n \oplus \bs \eta  \mid \bs\gamma_n ) }{ \lambda_{n+1} (\tilde{\bs \gamma}_n \oplus \bs \eta \mid \tilde{\bs\gamma}_n ) } = \frac{\mu (\bs \gamma_n \oplus \bs \eta  \mid \bs\gamma_n ) }{ \mu (\tilde{\bs \gamma}_n \oplus \bs \eta \mid \tilde{\bs\gamma}_n ) } \frac{e^{-\masss \cI_{n+1} (\bs \gamma_n \oplus \bs \eta) + \masss \cI_{n}( \bs \gamma_n)}}{e^{-\masss \cI_{n+1} (\tilde{\bs \gamma}_n \oplus \bs \eta) + \masss \cI_{n}( \tilde{\bs \gamma}_n)}}.
\]
For the ratio of loop measures, note that for the contribution of one loop not to cancel out, it must simultaneously enter inside $B_{n-j}$ (to see the difference between $\bs \gamma$ and $\tilde{\bs\gamma}$) and exit from $B_{n-j/2}$ (to see $\eta$ or the change from $\cI_n$ to $\cI_{n+1}$). By \cref{lem:loop_crossing}, the mass of such loops is $O(e^{-cj})$. For the ratio of $\mu$, we can write it as ratios of escape probabilities from the endpoints of $\bs\gamma_n$ and $\bs\gamma_n \oplus \bs\eta$ together with a loop measure term as in the proof of \cref{lem:quasi_independence}. The escape probabilities are exponentially close in $j$ to one another by \cref{lem:backtrack_excursion} and the loop measure term is controlled exactly as above.

Proof of (c). First we note that in point (b), we can also consider a path $\eta$ in $\cA_{n, N}$ with the same proof. Putting together this version of point (b) and point (a') gives
\[
|h(\bs\gamma_n) - h(\tilde{\bs\gamma}_n) | \leq 2 c_2 e^{-c_1 j} h_n + c_2 e^{-c_1 j} h( \bs \gamma_n).
\]
We conclude by point (e) of \cref{cor:separation}.
\end{proof}

As a corollary, we see that the Markov chain is unlikely to backtrack many scales.
\begin{corollary}\label{lem:transition_small_deep}
There exists constants $c_1,c_2 >0$ such that for all $\delta>0$,  and $1\le n\le N -1$, the following is true.
If $\bs \gamma_n \in \cA_n$, with $h (\bs \gamma_n) \ge e^{-c_1j/2}  h_n $, then
$$
\P(( Y_{n+1} \setminus \bs \gamma_{n} ) \cap B_{n-j/2} \neq \emptyset |  Y_n = \bs \gamma_n) \le c_2e^{-c_1j/2}.
$$
\end{corollary}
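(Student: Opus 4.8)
The plan is to combine the two estimates of \cref{lem:lambda_b_compare}, namely parts (a) and (d). Recall that the transition probability of the Markov chain is
\[
p(\bs\gamma_n, \bs\gamma_n \oplus \bs\eta) = \lambda_{n+1}(\bs\gamma_n \oplus \bs\eta \mid \bs\gamma_n)\,\frac{h(\bs\gamma_n\oplus\bs\eta)}{h(\bs\gamma_n)}.
\]
So the probability in question is
\[
\P\big( (Y_{n+1}\setminus\bs\gamma_n)\cap B_{n-j/2}\ne\emptyset \mid Y_n=\bs\gamma_n \big)
= \sum_{\bs\eta:\ \bs\eta\cap B_{n-j/2}\ne\emptyset} \lambda_{n+1}(\bs\gamma_n\oplus\bs\eta\mid\bs\gamma_n)\,\frac{h(\bs\gamma_n\oplus\bs\eta)}{h(\bs\gamma_n)},
\]
the sum being over $\bs\eta\in\cA_{n,n+1}$ with $\bs\gamma_n\oplus\bs\eta\in\cA_{n+1}$ (so that this is indeed a genuine conditional probability of the chain).

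First I would bound the $h$-ratio in the numerator. By part (e) of \cref{cor:separation}, $h(\bs\gamma_n\oplus\bs\eta)\le c_1 h_{n+1}$, and by part (b) of the same corollary $h_{n+1}\asymp Z(\lambda_N)/Z(\lambda_{n+1}) \asymp Z(\lambda_N)/Z(\lambda_n) \asymp h_n$, so $h(\bs\gamma_n\oplus\bs\eta)\le C h_n$ for a universal constant $C$. On the other hand, by hypothesis $h(\bs\gamma_n)\ge e^{-c_1 j/2} h_n$. Hence
\[
\frac{h(\bs\gamma_n\oplus\bs\eta)}{h(\bs\gamma_n)} \le C e^{c_1 j/2}
\]
for every admissible $\bs\eta$. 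Substituting this uniform bound into the sum,
\[
\P\big( (Y_{n+1}\setminus\bs\gamma_n)\cap B_{n-j/2}\ne\emptyset \mid Y_n=\bs\gamma_n \big)
\le C e^{c_1 j/2} \sum_{\bs\eta:\ \bs\eta\cap B_{n-j/2}\ne\emptyset}\lambda_{n+1}(\bs\gamma_n\oplus\bs\eta\mid\bs\gamma_n).
\]
Now apply part (a) of \cref{lem:lambda_b_compare}: the remaining sum is at most $c_2' e^{-c_1' j}$ for some constants $c_1', c_2'>0$ coming from that lemma. Choosing the constant $c_1$ in the statement of the corollary small enough that $c_1/2 < c_1'/2$ (e.g. $c_1 = c_1'/2$), the product $C e^{c_1 j/2}\cdot c_2' e^{-c_1' j}$ is bounded by $c_2 e^{-c_1 j/2}$ for a suitable $c_2$, which is exactly the claimed bound.

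The only mild subtlety — and the one step worth being careful about — is the bookkeeping of constants: the $c_1$ appearing in the hypothesis $h(\bs\gamma_n)\ge e^{-c_1 j/2} h_n$ must be the \emph{same} $c_1$ as in the conclusion, so one has to fix $c_1$ at the outset to be, say, half the decay rate furnished by \cref{lem:lambda_b_compare}(a), and then verify that the gain $e^{-c_1' j}$ from that lemma dominates the loss $e^{c_1 j/2}$ from the $h$-ratio with room to spare. This is the reason the exponent in the conclusion is $c_1 j/2$ rather than $c_1 j$. Everything else is a direct quotation of already-established estimates; there is no genuine obstacle here, only the need to track which constant is which.
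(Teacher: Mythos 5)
Your proof is correct and is essentially the paper's argument spelled out: the paper's proof simply cites \cref{lem:lambda_b_compare}(a) and \cref{cor:separation}(b), (c), (e), which is exactly the combination you execute. One small remark on the constant bookkeeping: you do not actually need to shrink $c_1$; taking $c_1$ in the corollary to be the \emph{same} constant $c_1$ as in \cref{lem:lambda_b_compare}(a) already works, since then $e^{c_1 j/2}\cdot e^{-c_1 j}=e^{-c_1 j/2}$ on the nose, which is presumably why the paper uses matching names.
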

\begin{proof}
We apply part (a) of \cref{lem:lambda_b_compare} and \cref{cor:separation}, items (b), (c) and (e). 
\end{proof}

As a consequence, when $\bs \gamma_n =_j \bs \gamma'_n$, then $h(\bs \gamma_n)$ and $h (\bs \gamma'_n)$ are very close and the corresponding transition probabilities of the Markov chain are also very close:

\begin{corollary}\label{cor:lambda_b_compare}
There exists constants $c_1,c_2 >0$ such that for all $\delta>0$, and $1 \le n\le N-1$, the following is true.
If $\bs \gamma_n =_j \tilde {\bs \gamma}_n$, both being in $\cA_n$, and $h (\tilde{\bs \gamma}_n) \ge e^{-c_1j/2}  h_n$ then
\begin{equation*}
\left| \frac{h(\bs \gamma_n)}{h(\tilde {\bs \gamma}_n) } -1\right| \le c_2 e^{-c_1j/2}.
\end{equation*}
Moreover, for any $\bs \eta$ such that: $\bs \gamma_{n+1 } = \bs \gamma_n \oplus \bs \eta \in \cA_{n+1}$, $h (\bs \gamma_{n+1}) \ge e^{-c_1(j+1)/2}  h_{n+1}$, and $\bs \eta \cap B_{n-j/2} = \emptyset$,
\begin{equation}
\left | \frac{\P( Y_{n+1} = \bs \gamma_{n} \oplus \bs \eta |  Y_n = \bs \gamma_n) }{\P( Y_{n+1} = \tilde {\bs \gamma}_n \oplus \bs \eta |  Y_n = \tilde {\bs \gamma}_n)} -1\right| \le c_2e^{-c_1j/2}.
\end{equation}
\end{corollary}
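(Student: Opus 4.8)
The plan is to derive this corollary directly from \cref{lem:lambda_b_compare}, \cref{cor:separation}, and the transition formula \eqref{Markovchain2}, with no new probabilistic input. The one point that needs care is a bookkeeping of constants: the constant $c_1$ in the \emph{statement} of the corollary should be taken strictly smaller than the constant supplied by \cref{lem:lambda_b_compare} (call the latter $c_1'$, and write $c_2'$ for the matching second constant there) — concretely $c_1=c_1'/2$ works — and all the claimed estimates will be vacuous when $j$ is below some fixed threshold $j_\ast$, which we absorb by enlarging $c_2$. So throughout one may freely assume $j\ge j_\ast$.

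For the first assertion I would just divide. By \cref{lem:lambda_b_compare}(c), $|h(\bs\gamma_n)-h(\tilde{\bs\gamma}_n)|\le c_2' e^{-c_1' j}h_n$, while by hypothesis $h(\tilde{\bs\gamma}_n)\ge e^{-c_1 j/2}h_n$, so
\[
\Big|\frac{h(\bs\gamma_n)}{h(\tilde{\bs\gamma}_n)}-1\Big|=\frac{|h(\bs\gamma_n)-h(\tilde{\bs\gamma}_n)|}{h(\tilde{\bs\gamma}_n)}\le c_2' e^{-(c_1'-c_1/2)j}\le c_2' e^{-c_1 j/2},
\]
where the last step uses $c_1'-c_1/2=3c_1'/4\ge c_1'/4=c_1/2$.

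For the moreover assertion I would expand the transition ratio with \eqref{Markovchain2}. Writing $\bs\gamma_{n+1}:=\bs\gamma_n\oplus\bs\eta$ and $\tilde{\bs\gamma}_{n+1}:=\tilde{\bs\gamma}_n\oplus\bs\eta$,
\[
\frac{\P(Y_{n+1}=\bs\gamma_{n+1}\mid Y_n=\bs\gamma_n)}{\P(Y_{n+1}=\tilde{\bs\gamma}_{n+1}\mid Y_n=\tilde{\bs\gamma}_n)}=\underbrace{\frac{\lambda_{n+1}(\bs\gamma_{n+1}\mid\bs\gamma_n)}{\lambda_{n+1}(\tilde{\bs\gamma}_{n+1}\mid\tilde{\bs\gamma}_n)}}_{(I)}\cdot\underbrace{\frac{h(\bs\gamma_{n+1})}{h(\tilde{\bs\gamma}_{n+1})}}_{(II)}\cdot\underbrace{\frac{h(\tilde{\bs\gamma}_n)}{h(\bs\gamma_n)}}_{(III)}.
\]
Factor $(I)$ equals $1+O(e^{-c_1' j})=1+O(e^{-c_1 j/2})$ by \cref{lem:lambda_b_compare}(b), whose hypotheses $\bs\gamma_n=_j\tilde{\bs\gamma}_n$ and $\bs\eta\cap B_{n-j/2}=\emptyset$ are precisely ours; factor $(III)$ is $1+O(e^{-c_1 j/2})$ by the first assertion (which needs only $h(\tilde{\bs\gamma}_n)\ge e^{-c_1 j/2}h_n$, a hypothesis of the corollary). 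For $(II)$ I would first observe that, since $\bs\eta$ avoids $B_{n-j/2}$ and $\bs\gamma_n=_j\tilde{\bs\gamma}_n$, the last visit of $\bs\gamma_{n+1}$ to $B_{n-j/2}$ lies inside the common segment $\dot{\bs\gamma}_{n-j,n}$, so $\bs\gamma_{n+1}$ and $\tilde{\bs\gamma}_{n+1}$ coincide from that visit onwards; hence $\bs\gamma_{n+1}=_{j'}\tilde{\bs\gamma}_{n+1}$ with $j'=\lfloor j/2\rfloor+1\ge (j+1)/2$. Applying \cref{lem:lambda_b_compare}(c) at scale $n+1$ gives $|h(\bs\gamma_{n+1})-h(\tilde{\bs\gamma}_{n+1})|\le c_2' e^{-c_1' j'}h_{n+1}\le c_2' e^{-c_1'(j+1)/2}h_{n+1}$; dividing by the hypothesis $h(\bs\gamma_{n+1})\ge e^{-c_1(j+1)/2}h_{n+1}$ and using $c_1=c_1'/2$ shows this is at most $c_2' e^{-c_1'(j+1)/4}$, which is $<1/2$ for $j\ge j_\ast$. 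Thus $h(\tilde{\bs\gamma}_{n+1})\ge\tfrac12 h(\bs\gamma_{n+1})$, and therefore $|(II)-1|\le 2c_2' e^{-c_1'(j+1)/4}\le 2c_2' e^{-c_1 j/2}$. Multiplying the three factors gives $1+O(e^{-c_1 j/2})$, as desired.

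I expect the main obstacle to be purely this constant-juggling — one has to be careful that the decay rate $c_1'$ coming from \cref{lem:lambda_b_compare} genuinely dominates the rate $c_1$ that appears inside the corollary's own hypotheses, which is the reason $c_1$ must be chosen smaller, and one must track how the lower bound $h(\bs\gamma_{n+1})\ge e^{-c_1(j+1)/2}h_{n+1}$ transfers to $h(\tilde{\bs\gamma}_{n+1})$. A secondary, purely topological point (handled exactly as in \cite{Lawler2sided}) is checking that $\tilde{\bs\gamma}_n\oplus\bs\eta$ is a legitimate element of $\cA_{n+1}$ — so that the denominator of the transition ratio is nonzero — and that it really agrees with $\bs\gamma_{n+1}$ on the claimed $\approx j/2$ scales; this follows because $\bs\eta$ lives outside $B_{n-j/2}$, where $\tilde{\bs\gamma}_n$ coincides with $\bs\gamma_n$, once the separation structure is invoked to exclude the degenerate case of a long excursion outside $B_{n-j/2}$ preceding the last return of a strand to $B_{n-j}$.
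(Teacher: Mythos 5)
Your proof is correct and follows essentially the same route the paper takes (it is exactly what is meant by the paper's one-line proof invoking Lemma~\ref{lem:lambda_b_compare}(b),(c) and the first assertion). The added value of your write-up is that it makes explicit the two points the paper elides: that the middle factor $h(\bs\gamma_{n+1})/h(\tilde{\bs\gamma}_{n+1})$ requires applying Lemma~\ref{lem:lambda_b_compare}(c) at scale $n+1$ with the agreement depth dropping from $j$ to roughly $j/2$ (since $\bs\eta$ avoids only $B_{n-j/2}$), and that the lower bound on $h(\bs\gamma_{n+1})$ has to be transferred to $h(\tilde{\bs\gamma}_{n+1})$ before the ratio can be controlled — this forces the choice $c_1 = c_1'/2$ you made, which is the correct bookkeeping.
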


\begin{proof}
The first assertion follows from part c. of \cref{lem:lambda_b_compare} for the first point. The second point follows from part b. of \cref{lem:lambda_b_compare}. together with the first assertion.
\end{proof}

To iterate this lemma, it is useful to show that the assumptions of the above corollary are inductively satisfied:

\begin{lemma}\label{lem:couple_manystep}
There exists constants $b_1, b_2>0$ such that for any $\delta>0$, for any with $ 1\le n \le N-1$, any $\bs \gamma_n ,  \bs { \gamma}'_n \in \cA_{n} $ with $\bs \gamma_n =_{j}    \bs {\gamma}'_n $ and $h(\bs \gamma_n') \ge e^{-b_1 j }h_n$, we can couple $Y_{n+1}$ and $Y'_{n+1}$ such that
$$
\P(\cG | Y_{n} =\bs \gamma_n,    Y'_{n} = {\bs \gamma}'_n ) \ge 1 - b_2 e^{-b_1 j}.
$$
where $\cG$ is the event that
\begin{itemize}
\item $ Y_{n+1} =_{j+1} {Y}'_{n+1} $,
\item $(Y_{n+1}  \setminus Y_{n} ) \cap B_{n-j/2} = \emptyset$.
\item $h( Y_{n+1}) \ge e^{-b_1(j+1)} h_{n+1} $.
\end{itemize}
\end{lemma}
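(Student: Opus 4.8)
The strategy is to combine the one-step coupling result \cref{lem:couple_onestep} (applied with the parameter ``$j$'' there set to some fixed small constant, say $3$) with the closeness of transition probabilities from \cref{cor:lambda_b_compare}, and to maintain the three invariants of the event $\cG$ as an induction hypothesis that feeds back into the next scale. First I would unpack what is being assumed: the two path pairs agree on the last $j$ scales, $\bs\gamma_n =_j \bs\gamma'_n$, and the ``rest of the constraint'' cost of $\bs\gamma'_n$ (hence, by \cref{cor:lambda_b_compare}, also of $\bs\gamma_n$) is not much smaller than the typical value $h_n$, precisely $h(\bs\gamma'_n)\ge e^{-b_1 j} h_n$. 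Then $h(\bs\gamma_n)/h(\bs\gamma'_n)$ is within $c_2 e^{-c_1 j/2}$ of $1$ by the first assertion of \cref{cor:lambda_b_compare}, so both are comparable to $h_n$ up to a factor $1+O(e^{-c_1 j/2})$.

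Next I would handle the three items of $\cG$ in turn. For the second item --- that the new portion $Y_{n+1}\setminus Y_n$ does not reach deep into $B_{n-j/2}$ --- I would apply \cref{lem:transition_small_deep} to each of $\bs\gamma_n$ and $\bs\gamma'_n$ separately (its hypothesis $h(\bs\gamma_n)\ge e^{-c_1 j/2}h_n$ is met by the previous paragraph, adjusting constants), giving that with probability $\ge 1-c_2 e^{-c_1 j/2}$ neither chain backtracks past $B_{n-j/2}$. On this event, the new increments $\bs\eta$, $\bs\eta'$ of the two chains are both disjoint from $B_{n-j/2}$, and the second part of \cref{cor:lambda_b_compare} tells us the two conditional laws of the increment (given $\bs\gamma_n$ resp.\ $\bs\gamma'_n$) agree up to a multiplicative factor $1+O(e^{-c_1 j/2})$ --- provided we also restrict to increments satisfying $h(\bs\gamma_{n+1})\ge e^{-c_1(j+1)/2}h_{n+1}$. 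Using the optimal total-variation coupling (exactly the elementary fact quoted in the proof of \cref{lem:couple_onestep}: if $\P(X\in A)\ge c$ and densities on $A$ agree up to a factor $C$, then $\P(X=Y,X\in A)\ge c/C$), I can couple $Y_{n+1}$ and $Y'_{n+1}$ so that with probability $\ge 1-b_2 e^{-b_1 j}$ they take the \emph{same} increment $\bs\eta$ lying outside $B_{n-j/2}$; since $\bs\gamma_n$ and $\bs\gamma'_n$ already agree on the last $j$ scales and the common increment extends the agreement by one scale, we get $Y_{n+1}=_{j+1}Y'_{n+1}$, which is the first item of $\cG$. The only remaining point is to check that the increments we discard --- those violating $h(Y_{n+1})\ge e^{-c_1(j+1)/2}h_{n+1}$ --- carry small mass; this follows from \cref{lem:lambda_b_compare}(a') together with \cref{cor:separation}(c),(e) (the total $\lambda$-mass of increments with small $h$-value is exponentially small in $j$ compared to $h_n$), so it can be absorbed into the error $b_2 e^{-b_1 j}$. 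Then the third item, $h(Y_{n+1})\ge e^{-b_1(j+1)}h_{n+1}$, holds by construction on the surviving event, after choosing $b_1$ smaller than $c_1/2$ so that the inductive bound propagates.

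The main obstacle --- and the reason this is not quite immediate --- is the circularity inherent in maintaining the $h$-lower-bound invariant: to apply \cref{cor:lambda_b_compare} and \cref{lem:transition_small_deep} at scale $n$ we need $h(\bs\gamma_n)\gtrsim h_n$, but to conclude $\cG$ we must also guarantee $h(Y_{n+1})\gtrsim h_{n+1}$ for the \emph{coupled} value. This is resolved by noting that ``most'' increments preserve the invariant (the bad ones have exponentially small total $\lambda$-mass by \cref{lem:lambda_b_compare}(a')), and by being careful that the coupling is built on the event where the invariant survives, so the error terms from (i) backtracking, (ii) the TV-coupling defect, and (iii) the discarded small-$h$ increments all add up to something of the form $b_2 e^{-b_1 j}$ for suitable constants. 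A secondary technical nuisance is bookkeeping the constants: $b_1$ must be taken small enough relative to the $c_1$'s appearing in \cref{lem:transition_small_deep}, \cref{cor:lambda_b_compare} and \cref{lem:lambda_b_compare}, and $j$ large enough (for small $j$ the statement is vacuous since the right-hand side exceeds $1$, absorbed into $b_2$), but these are routine once the structure above is in place.
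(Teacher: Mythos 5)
Your overall strategy mirrors the paper's (control the three items of $\cG$ using \cref{lem:transition_small_deep} for the no-backtracking condition, \cref{cor:lambda_b_compare} plus the optimal total-variation coupling for agreement at scale $j+1$, and a separate estimate for the $h$-lower-bound invariant); the fact that you treat the items in a different order than the paper is cosmetic.

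However, there is a genuine gap in your treatment of the third item. You claim that the total mass of transitions to successor states with $h(Y_{n+1}) < e^{-c_1(j+1)/2}h_{n+1}$ is exponentially small in $j$, and cite \cref{lem:lambda_b_compare}(a') together with \cref{cor:separation}(c),(e). None of these gives the stated bound: \cref{lem:lambda_b_compare}(a') controls the $\lambda_N$-mass of extensions that reach deep into $B_{n-j/2}$ and says nothing about the $h$-value of the successor; \cref{cor:separation}(c) only compares $h_m$ to partition-function ratios; and \cref{cor:separation}(e) gives an \emph{upper} bound $h(\bs\gamma_m)\le c_1 h_m$, which is useless for preventing $h(Y_{n+1})$ from being too small. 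The estimate you need is a Markov-type inequality exploiting the $h$-transform structure of the chain, namely writing
$p(\bs\gamma_n, y_{n+1}) = \lambda_{n+1}(y_{n+1}\mid\bs\gamma_n)\,h(y_{n+1})/h(\bs\gamma_n)$ and bounding
\[
\P\big(h(Y_{n+1})\le \epsilon h_{n+1}\mid Y_n=\bs\gamma_n\big)
\le \epsilon\, h_{n+1}\sum_{y_{n+1}}\frac{\lambda_{n+1}(y_{n+1}\mid\bs\gamma_n)}{h(\bs\gamma_n)},
\]
then showing the sum on the right is $O(1/h_{n+1})$ by comparing it with $1=\sum \lambda_{n+1}(y\mid\bs\gamma_n)h(y)/h(\bs\gamma_n)$, restricting to $y\in\sep_{n+1}$ where $h(y)\gtrsim h_{n+1}$ by \cref{cor:separation}(d), and passing back to the unrestricted sum via the separation lemma \cref{lem:separation}. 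This argument does not appear in your proposal, and the lemmas you invoke do not substitute for it; note also that, unlike your version, this estimate for the third item does not actually require the hypothesis $h(\bs\gamma'_n)\ge e^{-b_1 j}h_n$ at all. One secondary remark: the opening mention of applying \cref{lem:couple_onestep} ``with $j=3$'' is not actually used in your proof (and is not used in the paper's either); only the optimal-TV-coupling fact from its last line is relevant.
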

\begin{proof}

Let us actually start by showing that the last item holds with overwhelming probability. (This does not in fact require $h(\bs \gamma_n) \ge e^{-b_1 j }h_n$.) Note that
\begin{align*}
\P ( h (Y_{n+1} ) \le e^{ - b_1 (j+1)} h_{n+1} \mid Y_n = \bs \gamma_n ) & \le \sum_{y_{n+1}} \lambda_{n+1} ( y_{n+1} \mid  \bs \gamma_n) \frac{ h (y_{n+1} )}{h (\bs \gamma_n)}1_{h(y_{n+1} ) \le e^{ - b_1 (j+1) } h_{n+1}}\\
& \le  e^{ - b_1 (j+1) } h_{n+1}  \sum_{y_{n+1}} \lambda_{n+1} ( y_{n+1} \mid  \bs \gamma_n) \frac{ 1}{h (\bs \gamma_n)}.
\end{align*}
On the other hand, the sum in the right hand side above is of order 1. Indeed,
\begin{align*}
  1 & = \sum_{y_{n+1}} \lambda_{n+1} ( y_{n+1} \mid  \bs \gamma_n) \frac{ h (y_{n+1} )}{h (\bs \gamma_n)} \\
  & \gtrsim \sum_{y_{n+1} \in \mathtt{Sep}_{n+1}} \lambda_{n+1} ( y_{n+1} \mid  \bs \gamma_n) \frac{ h_{n+1} }{h (\bs \gamma_n)}\\
  & \gtrsim \sum_{y_{n+1} } \lambda_{n+1} ( y_{n+1} \mid  \bs \gamma_n) \frac{ h_{n+1} }{h (\bs \gamma_n)}
 \end{align*}
 where the first line follows by restricting to paths $y_{n+1} \in \mathtt{Sep}_{n+1}$ and point d. in \cref{cor:separation}, and the last line follows by the Separation \cref{lem:separation}. Thus
 $$
 \P ( h (Y_{n+1} ) \le e^{ - b_1 (j+1)} h_{n+1} \mid Y_n = \bs \gamma_n ) \le e^{- b_1 (j+1)}.
 $$
 This concludes the proof of the corollary. {For the coupling step, we use the same argument as in the last line of \cref{lem:couple_onestep}.}
 
 Now the first point follows from \cref{cor:lambda_b_compare} and the last item we just proved. The second point follows from  \cref{lem:transition_small_deep}  and the last item.
\end{proof}

\begin{proof}[Proof of \cref{prop:local_convergence}] To finish the proof, we will use literally the same argument as in \cite{Lawler2sided} (the analogous result in \cite{Lawler2sided} is Proposition 2.32, which is proved at the end of Section 2). We include it for completeness. We try to couple in some given number $j= j_0$ of steps using Lemma \ref{lem:couple_onestep}, where $j_0$ is fixed and chosen so that agreeing on $j_0$ scales makes the probability of staying coupled forever after greater than 1/2. If at any point the coupling fails, we simply start again.
More precisely, suppose we start with two initial conditions $Y_m = \bs \gamma_m$, $ { Y'_m} = {\bs \gamma}'_m $
 for any two arbitrary $\bs \gamma_m, {\bs \gamma}'_m \in \cA_m$. Also fix $b_1,b_2$ as in \cref{lem:couple_manystep} and fix $j = j_0$ large enough so that
 $$\sum_{j \ge j_0-2}b_2e^{-b_1j} \le 1/2.$$

 Now we define a random sequence of integer-valued random variables $\{Z_k\}_{k \ge 0}$ with $Z_0 = 0$, and $Z_k$ will be a lower bound on the number of scales such that $Y_{m+kj_0}$ agrees $Y'_{m+k j_0}$.
 The precise recursive definition is as follows. Define $Z_k$ in such a way that $Z_k$ is measurable with respect to $Y_{m+ kj_0}$, and:
 \begin{itemize}
\item  If $Z_k = 0$, we try to couple as in \cref{lem:couple_onestep} over the next $j_0$ scales. If the coupling succeeds (which has probability at least $a_{j_0}$, which we will simply write as $a_0$), we take $Z_{k+1}=j_0 -2$. Otherwise, set $Z_{k+1} = 0$ and continue.
\item If $Z_k  = j >0$, then it will always be the case that $ Y_{m+j_0 k} =_{ j -2}   { Y}'_{m+j_0k}$. In this case, try to couple as in \cref{lem:couple_manystep} so that the event $\cG$ there occurs for $j_0$ many steps. If this coupling succeeds, we set $Z_{k+1} = j+j_0$. Otherwise, we set $Z_{k+1} = 0$. Notice that the coupling succeeds with probability at least $$ 1-\sum_{t = j }^{j+ j_0} b_2e^{-b_1t}
$$
via \cref{lem:couple_manystep}.
\item We stop if when $k$ reaches the value $k_{\max} := \max\{k: m+j_0 k  \le  N\}$.
  \end{itemize}
 Let $\tau   = \max \{k: Z_k = 0\}$ in the above chain. We now show that $\tau$ has an exponential tail uniformly in $\delta$. 
\begin{lemma}[\cite{Lawler2sided}]\label{lem:coupling_quick}
There exists a constant $u>0$ such that for any $\delta>0$, $\E(e^{u\tau}) <\infty$.
\end{lemma}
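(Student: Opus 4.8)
The plan is to extract from \cref{lem:couple_onestep} and \cref{lem:couple_manystep} the following structural bounds on the integer-valued process $(Z_k)$, all to be used \emph{conditionally on the past}: from the state $0$ the process jumps to $j_0-2$ with conditional probability at least $a_0$ (and otherwise stays at $0$); from a state $j>0$ it jumps up to $j+j_0$ with conditional probability at least $1-\sum_{t=j}^{j+j_0} b_2 e^{-b_1 t}$ (and otherwise drops to $0$); and it is non-decreasing as long as it stays positive. Since $a_0,b_1,b_2,j_0$ are uniform in $\delta$, it is enough to bound $\E(e^{u\tau})$ in terms of these constants. The one technical point to keep in mind is that $(Z_k)$ is not an honest Markov chain with prescribed transitions, only an adapted process with these one-sided conditional bounds — which is harmless because each estimate below is applied conditionally, the one-step and $j_0$-step couplings being concatenated using the Markov property of the underlying chain $Y$.

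The key observation is that once $(Z_k)$ reaches a value $j\ge j_0-2$, the conditional probability that it ever returns to $0$ is at most
\[
\sum_{t \ge j_0-2} b_2 e^{-b_1 t} \le \tfrac12 ,
\]
by a union bound over the ``drop to $0$'' events at all future steps, using that between these steps the value of $Z$ only increases (so the failure probabilities $b_2 e^{-b_1(\cdot)}$ only decrease) together with the choice of $j_0$. I would then decompose the trajectory of $(Z_k)$ into successive \emph{blocks}: a block starts at a visit to $0$ and consists of $G$ steps spent at $0$ before the first jump to $j_0-2$ (so $G$ is conditionally stochastically dominated by a geometric variable of parameter $a_0$, hence has an exponential tail), followed by a positive excursion; by the key observation this excursion never returns to $0$ with conditional probability at least $\tfrac12$, in which case the block is the last one, and otherwise it returns to $0$ after some number $R$ of steps and a new block begins. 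Because a failed excursion can only end by a drop from the $r$-th positive level, which has probability $\lesssim e^{-b_1 r j_0}$, the length $R$ of a failed positive excursion also has an exponential tail with constants depending only on $b_1,b_2,j_0$.

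It follows that $\tau$ is bounded by a sum of at most $B$ increments of the form $G_i+R_i$, each with an exponential tail uniform in $\delta$, where $B$ — the number of blocks — is stochastically dominated by a geometric variable of parameter $\tfrac12$ since each block fails to be the last one with conditional probability at most $\tfrac12$. A standard conditioning argument, peeling off one block at a time and using H\"older's inequality to combine the exponential tail of each $G_i+R_i$ with the geometric bound on $B$, then yields $\E(e^{u\tau})<\infty$ for all sufficiently small $u>0$, with a bound depending only on $a_0,b_1,b_2,j_0$ and hence uniform in $\delta$, which is the assertion of the lemma. The main obstacle, and the only place real care is needed, is precisely this dependence bookkeeping — ensuring that every probabilistic estimate is stated and applied conditionally on the past so that the block decomposition and the generating-function estimate go through despite $(Z_k)$ not being a genuine Markov chain; the substantive content is otherwise entirely contained in \cref{lem:couple_onestep} and \cref{lem:couple_manystep}.
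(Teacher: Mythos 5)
Your proof is correct and follows essentially the same excursion/block-decomposition route as the paper (which itself quotes Lemma 2.26 of Lawler \cite{Lawler2sided}): both use the two inputs you identify, namely the uniform $\ge a_0/2$ escape probability from $0$ and the exponential tail of return times, the paper packaging the bookkeeping via the $\ell$-th return time $\sigma_\ell$ with the balancing product $(1+a_0/2)^\ell(1-a_0/2)^\ell=(1-a_0^2/4)^\ell$, and you via a geometric number of blocks combined by conditioning and H\"older. One minor slip to watch: in $\tau \le \sum_{i\le B}(G_i+R_i)$ the \emph{last} excursion $R_B$ is the successful one and does not have an exponential tail, but since $\tau$ only involves $G_B$ from the final block (it records the last visit to $0$) this is harmless once you write $\tau = \sum_{i<B}(G_i+R_i) + G_B$ up to $O(B)$ corrections.
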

\begin{proof}
This is exactly Lemma 2.26 in Lawler \cite{Lawler2sided}; we provide the details for completeness. Let $\sigma_0 = 0 $ and $\sigma_{\ell} = \min \{j \ge \ell: Z_j = 0\}$ (with $\sigma_{\ell} = \infty$ if the minimum does not exist). Notice that by our choice of $j_0$, $\P(\sigma_1 = \infty) \ge a_0/2$. Consequently by the Markov property, $$\P(\sigma_\ell <\infty) \le (1-\frac{a_0}{2})^\ell.$$
 Also $\P(\sigma_1 = j | \sigma_1  < \infty) \le Ce^{-b_2j_0 j}$ for some constants $C$ depending only on $a_0, b_1, b_2$. Choosing $j_0$ larger if necessary, we can find a constant $u$ depending only on $j_0, a_0, b_1, b_2$ so that $$ \E(e^{uj_0 \sigma_1} | \sigma_1 <\infty) \le \left (1+\frac{a_0}{2}\right).   $$
 By the Markovian property, this means $\E(e^{uj_0 \sigma_\ell} | \sigma_\ell < \infty) \le \left (1+\frac{a_0}{2}\right) ^\ell.$ Consequently
 $$
 \E(e^{u j_0 \tau})  =\sum_{\ell = 1}^\infty \E(e^{uj_0 \sigma_\ell}1 _{\sigma_{\ell} <\infty} 1_{\sigma_{\ell+1} = \infty}) \le  \sum_{\ell = 1}^\infty \E(e^{uj_0 \sigma_\ell}1 _{\sigma_{\ell} <\infty}  ) \le   \sum_{\ell=1}^\infty (1-a_0^2/4)^\ell <\infty
 $$
 which completes the proof.
\end{proof}
It is clear that \cref{lem:coupling_quick} and the properties of the coupling completes the proof of \cref{prop:local_convergence}.
\end{proof}

We now explain how to use this to prove the desired scaling limit result (in the disc), namely \cref{prop:scalinglimit_disc}.

\begin{proof}[Proof of \cref{prop:scalinglimit_disc}]
Let $\bs \eta$ be the pair of paths obtained by performing Wilson's algorithm starting from the neighbouring points $x, x'$. Fix $\epsilon >0$. The idea is to describe the behaviour of the paths $\bs \eta$ conditioned on the event $\cA$ that they remain disjoint, outside the ball of radius $\epsilon$ around 0 (indeed, the rest of the paths is irrelevant in the Hausdorff sense).

Let $\bs \gamma$ be a pair of straight line segments with distinct angles emanating from $x, x'$ until they leave $B(0, \epsilon^2)$. Perform Wilson's algorithm (i.e., erase the loop in chronological order) to a pair of random walk excursions starting from the endpoints of $\bs \gamma$, conditioned not to return to $\bs \gamma$, and let $\tilde{ \bs \eta}$ be the concatenation of $\bs \gamma$ with these excursions.

By \cref{prop:local_convergence}, since $\epsilon^2 \ll \epsilon$, the behaviour of $\bs \eta$ conditioned on $\cA$ outside $B(0,\ve)$ is in fact very similar (in the sense that it can be coupled with probability greater than $1- \epsilon^\alpha$ for some $\alpha>0$) to that of $ \tilde{\bs \eta}$ conditioned on $\cA$.

Furthermore, for $\tilde {\bs \eta}$, it is clear that the scaling limit exists. Indeed, the event $\cA$ has positive probability for $ \tilde{ \bs \eta}$, and the scaling limit of each path is known to exist in the Hausdorff (\cite{LSW}). This concludes the proof of the existence of the scaling limit. The conformal invariance follows since our assumptions on the sequence of graphs are clearly still satisfied after conformal transformation.

To prove that the paths are almost surely disjoint first we note that the result of \cref{prop:local_convergence} still holds for the limit. Indeed if $\P_n \to \P$ and $\Q_n \to \Q$ weakly, then by the portmanteau theorem $d_{TV} ( \P, \Q ) \leq \liminf d_{TV} (\P_n, \Q_n)$. 
Now fix $\eps > 0$ and let us show that in the limit $\eta^1 \cap \eta^2 \cap B_\eps^c = \emptyset$ almost surely. Use the approximation $\bs{\tilde \eta}$ of $\bs \eta$ (where the paths are replaced by straight lines within $B(0, \epsilon^2)$) from the previous paragraph. Both paths in $\bs{\tilde \eta} $ are absolutely continuous with respect to SLE$_2$ and the law of $\tilde \eta^2$ given $\tilde \eta^1$ is simply SLE$_2$ in the slitted domain. Since SLE$_2$ does not touch the boundary the paths are disjoint. The total variation bound shows that the probability to see an intersection in $\bs \eta$ beyond distance $\eps$ is at most $\eps^\alpha$, which concludes the argument.
\end{proof}

\section{Local to global}\label{sec:localglobal}
Having established \cref{prop:scalinglimit_disc}, we extend the convergence result  on the whole manifold. The strategy is to compare the simply connected measure $\underline\lambda_N$ with the actual measure $\underline \lambda_{\cM}$ which involves the conditioning with the event $\cA_\cM$.

\subsection{Convergence of skeleton of Temperleyan forest}\label{sec:conv_special}
\label{S:local_global}

Recall the law of a CRSF conditioned to be Temperleyan, which we denote by $\Pwils$ and was defined in \eqref{eq:pwils}. Recall also from \cref{def:temperleyan} that the conditioning event of a CRSF being Temperleyan is written as follows:
\begin{equation}
\cA_{\cM}=\cA_{\cM}(\delta)  = \text{The skeleton $ \mathfrak s$ divides the manifold ${\cM}$ into annuli,}\label{eq:Am}
\end{equation}
where we recall that $\mathfrak{s}$ is the union of $2 {\mathsf{k}}$ branches of $\cT$, $\mathfrak{B}_{i,1} \cup \mathfrak{B}_{i,2}$ emanating from $u_i$ and $v_i$ respectively (on either side of each of the ${\mathsf{k}}$ punctures $x_1, \ldots, x_{\mathsf{k}}$), together with the paths $e_1, \ldots, e_{\mathsf{k}}$.

The main goal of this section is to reduce the above conditioning by $\cA_{\cM}$ to the simpler conditioning in simply connected neighbourhoods studied in Section \ref{S:Lawler}. This will be done in \cref{P:localglobal} below. We will also need to show that events that
are unlikely for a single loop-erased random walk in the surface and depend only on its behaviour away from the puncture, remain unlikely after the conditioning by $\cA_{\cM}$. This will be done in \cref{prop:quantitative,prop:quantitative_general}.

We now explain more precisely the setup for the above mentioned reduction. We will only do the proof in the case ${\mathsf{k}}=1$ as the rest is similar but just involves heavier notation (we will briefly comment on the general case ${\mathsf{k}} > 1$ at the end of the proof of \cref{P:localglobal}). Let $N$ be a fixed simply connected neighbourhood in ${\cM}$ of the puncture $x_1$.
%
Let us introduce some notations. Analogously to the setting of Section \ref{S:Lawler}, let $\underline{\lambda}_N$ denote the law of a pair of wired UST branches in $N$ on either side of the path $e_1$ (from now on we call $e_1$ a puncture with a slight abuse of terminology), sampled with Wilson's algorithm, and conditional on the event $\cA_N $ that the resulting loop erasures of the two walks ${\bs \eta}$ give disjoint paths connecting either side of the puncture to $\partial N$. Let also $\mu_N$ denote the law of two \emph{independent} loop-erased walks, from either side of the puncture $e_1$, to $\partial N$.

We also call $\underline{\lambda}_{\cM}$ the law of a pair of CRSF branches in the surface ${\cM}$ sampled using Wilson's algorithm starting from either endpoint of the puncture $e_1$, conditioned on the event $\cA_{\cM}$ above. (Therefore, $\underline{\lambda}_{\cM}$ coincides with the law of $\mathfrak{s}$ under $\Pwils$ in the case ${\mathsf{k}}=1$).
Let $\mu_{\cM}$ denote the law of two \emph{independent} loop-erased random walks in ${\cM}$ starting from either side of the puncture $e_1$, and ran until they form non-contractible loops.

In Section \ref{S:Lawler} (more precisely in \cref{prop:scalinglimit_disc}) we proved that $\underline{\lambda}_N$ has a scaling limit. We will now show that the convergence of $\underline{\lambda}_{\cM}$ follows.

\begin{prop}
  \label{P:localglobal} The law of $\mathfrak{s}$ given $\cA_{\cM}$ converges as $\delta \to 0$ in distribution under $\Pwils$ in the Schramm topology. The limit does not depend on the sequence of graphs chosen (subject to the conditions of \cref{sec:setup}) and is conformally invariant. Furthermore, the limit is almost surely a set of disjoint simple curves (except at the punctures where they overlap) which separate the hyperbolic manifold ${\cM}$ into disjoint annuli.
\end{prop}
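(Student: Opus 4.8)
The plan is to compare the global conditioned measure $\underline\lambda_{\cM}$ with the simply connected conditioned measure $\underline\lambda_N$ (which we already know converges by \cref{prop:scalinglimit_disc}), using the Radon--Nikodym expressions for CRSF branches developed in \cref{sec:law_marginal}, and showing that the two measures are mutually absolutely continuous with density converging to a bounded, well-behaved limit on the part of the skeleton away from the puncture. First I would set up the decomposition: given a small parameter $\eps>0$, let $N_\eps$ be a simply connected neighbourhood of the puncture $e_1$ of diameter $O(\eps)$, and decompose the pair of skeleton branches $\bs\eta = (\eta_1,\eta_2)$ into the initial portion $\bs\eta_{N_\eps}$ inside $N_\eps$ and the terminal portion $\bs\eta^{\mathrm{out}}$ outside. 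By \cref{prop:marginalM} (specifically \eqref{eq:start_surface3}) the law of $\bs\eta_{N_\eps}$ under $\underline\lambda_{\cM}$ is, up to a bounded multiplicative constant $C$ with $\log C$ controlled by the mass of contractible loops with non-contractible support meeting $\bs\eta_{N_\eps}$ (which by \cref{L:fewcontractible} tends to $1$ as $\eps\to 0$), equal to
$$
\frac{q(\cup\eta_i)}{\P(\bs Y\in\cA_\cM)} e^{\mass(\cI(\cup\eta_i)\cap\cC_{\bs\eta})}\,\P(\cA_\cM,\ \text{loop erasures compatible with }\bs\eta),
$$
and similarly \cref{prop:marginal_pair} gives the analogous expression for $\underline\lambda_N$ with the boundary $\partial N_\eps$ in place of the non-contractible conditioning.

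The key step is to show that the ratio of these two densities converges. Here one exploits the fact that, because $N_\eps$ is simply connected, every loop supported in $N_\eps$ is $\bs\eta$-contractible for any $\bs\eta$ (remark after \cref{defn:eta_contractible}), so the $\eta$-contractibility subtleties only enter through loops with non-contractible support, whose total mass meeting $\bs\eta_{N_\eps}$ is $O(\eps)$ by \cref{L:fewcontractible}. Thus the loop-measure terms match up to $e^{O(\eps)}$. The remaining difference is between the ``escape'' probability of two excursions leaving through $\partial N_\eps$ (appearing in $\underline\lambda_N$) and the probability that the two random walks, continued outside $N_\eps$, complete the global topological constraint $\cA_\cM$ without the second hitting the loop-erasure of the first (appearing in $\underline\lambda_{\cM}$). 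This is an $h$-transform type factor: conditionally on $\bs\eta_{N_\eps}$, one runs two walks from the endpoints of $\bs\eta_{N_\eps}$, and the two measures differ by the ratio of (a) the probability that these two walks, continued in ${\cM}$, produce a Temperleyan configuration, versus (b) the probability they simply exit $N_\eps$ avoiding each other; and crucially this ratio is a function of $\bs\eta_{N_\eps}$ only through where the walks exit $\partial N_\eps$ and their behaviour inside $N_\eps$. One shows this ratio is bounded above and below uniformly (using \cref{lem:transience_excursion}, \cref{lem:beurling_target}, \cref{lem:backtrack_excursion}, the uniform crossing estimate, and \cref{lem:uniform_avoidance} to guarantee the global constraint can be completed with uniformly positive probability from any separated exit configuration), and that it converges as $\delta\to 0$ because the exit distributions and the relevant hitting/avoidance probabilities all converge (invariance principle plus the control of the loop measure from \cref{sec:finiteness}). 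Combining: $\underline\lambda_{\cM}$ restricted to $\bs\eta^{\mathrm{out}}$ is absolutely continuous with respect to $\underline\lambda_N$ restricted to $\bs\eta^{\mathrm{out}}$ with a density bounded in $[c,C]$ and converging, hence the law of $\bs\eta^{\mathrm{out}}$ under $\underline\lambda_{\cM}$ converges; and the initial portion inside $N_\eps$ is irrelevant in the Schramm topology up to error $\eps$ since all remaining branches trapped there have diameter $O(\eps)$ (this is exactly the kind of argument used in the proof of \cref{thm:CRSF_universal}, via \cref{lem:Schramm_finiteness}). Letting $\eps\to 0$ gives convergence of the whole skeleton. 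Conformal invariance follows because the assumptions of \cref{sec:setup} are preserved under conformal maps and none of the above used the particular embedding; independence of the graph sequence follows because the limiting density and the limiting law $\underline\lambda_N$ are both sequence-independent.

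Finally, that the limit consists of disjoint simple curves separating ${\cM}$ into annuli: simplicity and disjointness of each branch and of the pair follow from \cref{lem:non-contractible_cont} and \cref{prop:scalinglimit_disc} (the pair is disjoint except at the puncture in the limit), transported through the absolute continuity; the fact that the complement is a union of annuli is a closed condition that holds at the discrete level (the configuration is Temperleyan) and passes to the limit because the branches converge in the Schramm (Hausdorff) sense and the limiting curves are non-self-crossing — one needs a small topological lemma (of the flavour of \cref{lem:sc}/\cref{L:simply_connected}) that Hausdorff limits of skeleta dividing ${\cM}$ into annuli still divide ${\cM}$ into annuli, using that the limiting curves are simple and their number is a.s.\ finite (\cref{lem:non-contractible_cont}).

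\textbf{Main obstacle.} I expect the hard part to be the uniform two-sided control (and convergence) of the $h$-transform factor comparing the global constraint $\cA_\cM$ to the local constraint $\cA_{N_\eps}$ --- i.e.\ showing that conditioning on the full topological event $\cA_\cM$ does not distort the law of the walks outside $N_\eps$ in an uncontrollable way. This is where the rough-control estimates on the loop measure (\cref{T:loopsoupRW}, \cref{lem:loop_crossing}, \cref{L:fewcontractible}) and on the $h$-transform (\cref{lem:transience_excursion}--\cref{lem:backtrack_excursion}) all have to be combined, and where the absence of fine Green's function estimates on the surface makes the argument delicate; the Temperleyan topological constraint (complement is a union of annuli) also has to be shown to be an ``asymptotically continuous'' event, which requires care about how noncontractible loops and branches interact globally.
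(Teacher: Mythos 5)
Your high-level plan — compare $\underline\lambda_{\cM}$ with $\underline\lambda_N$ via a Radon--Nikodym density computed from \cref{prop:marginalM}, use \cref{L:fewcontractible} to control the loop-measure part of the density, and bound the remaining ``$h$-transform'' ratio — is essentially the approach the paper takes, and most of your references (\cref{prop:marginalM}, \cref{L:fewcontractible}, \cref{lem:transience_excursion}--\cref{lem:backtrack_excursion}, \cref{prop:scalinglimit_disc}) are indeed the tools used. However, there are two real gaps in the execution.

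First, the claim that ``$\underline\lambda_{\cM}$ restricted to $\bs\eta^{\mathrm{out}}$ is absolutely continuous with respect to $\underline\lambda_N$ restricted to $\bs\eta^{\mathrm{out}}$'' is ill-posed: under $\underline\lambda_N$ the paths are killed on $\partial N_\eps$, so there is no terminal portion $\bs\eta^{\mathrm{out}}$ to speak of. The Radon--Nikodym derivative in the paper is written for the \emph{initial} portion $\bs\eta_r$ (paths up to first exit of a fixed small ball $B(x,r)\subset N$), where both measures live on the same space; the terminal portion is then handled by a disintegration argument (Step 4 of the paper's proof): one conditions on $\bs\eta_r$, expresses the conditional law $\underline\lambda_{\cM}(\cdot\mid\bs\eta_r)$ as a ratio of Wilson's-algorithm probabilities in $\cM\setminus\bs\eta_r$, and proves convergence of these conditional expectations via the Markov chain of \cref{sec:discreteMC}. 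Related to this, you do not invoke the quantitative estimates (\cref{prop:quantitative}, \cref{prop:quantitative_general}), which are needed precisely to truncate to a good event $\cG$ on which the Markov-chain description of the conditional law is usable.

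Second, the a priori two-sided bound on the ratio $R$ (what you call the $h$-transform factor) is much more delicate than the argument you sketch. The reason is topological: the branches $\mathfrak B_{i,1}$, $\mathfrak B_{i,2}$ of the skeleton can coalesce with each other and with branches from other punctures even though the initial pieces $\bs\eta_r$ are disjoint, so there is no single pair of disjoint paths to which you can apply \cref{lem:transience_excursion}-type estimates directly. The paper's Step~3 resolves this by a case decomposition ($\cP_1$, $\cP_2$ when $\mathsf{k}=1$, and a pigeonhole over $2\mathsf{k}$ nested annuli for general $\mathsf{k}$) that locates an annulus in which coalescence is guaranteed not to occur, and only there compares the event to the escape probability of independent walks. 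Without some version of this surgery, the upper bound on $R$ (and hence on the density $d\underline\lambda_{\cM}/d\underline\lambda_N$) does not go through, because the numerator $\P_{\bs x}(\cA_{\cM},\text{compatible})$ cannot be dominated by a product of single-walk escape probabilities when the walks merge. This is a substantive missing idea rather than a presentational matter.
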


\begin{proof}
As mentioned earlier, we write the key bounds used in the proof only in the case ${\mathsf{k}}=1$ to ease notations, which works for the general case in exactly the same way. We mention the key differences for $k>1$ at the end of step 3.

\textbf{Step 1.} Let $\eps>0$. Our first step is to write down the Radon--Nikodym derivative $d\underline{\lambda}_{\cM}/ d\underline{\lambda}_{N}$ for the marginal distribution of $\bs \eta_r$, which is simply the paths $\bs \eta$ considered up to their first exit of $B(x, r) \subset N$, where $r>0$ is fixed but small (and will later be chosen suitably depending on $\eps$), and $x$ is the midpoint of $e_1$. Let $\P_{\bs x}$ denote the law of two independent random walks started from two vertices $x^1$ and $x^2$ (writing $\bs x = (x^1,x^2)$). In what follows we need to consider the behaviour of the loop erased walk on the surface created by random walks run until specific stopping times dictated by  Wilson's algorithm, either on a surface or on a simply connected domain containing $\bs x$. However, we will think of these loop erasures as just functionals of the infinite walks with law $\P_{\bs x}$.  Recall \eqref{eq:RN_start2} and \eqref{eq:start_surface3}, written slightly differently as follows :
\begin{align*}
\underline{\lambda}_{N} (\bs \eta_r)   &= \frac{q(\eta^1_r \cup \eta_r^2)}{\P(\cA_N)}  \exp[ \mass \Big((\cI( \eta^1_r)\cup \cI( \eta^2_r))\Big)] \P_{\bs x}(\cA_N; \tau_{N} < \tau_{\bs \eta_r})\\
\underline{\lambda}_{\cM} (\bs \eta_r) &\asymp\frac{q(\eta^1_r \cup \eta_r^2)}{\P(\cA_{\cM})}\exp[ \mass \Big((\cI( \eta^1_r)\cup \cI( \eta^2_r)) \cap \cC_{\bs \eta}\Big)]\P_{\bs x} (\cA_{\cM}; \text{ compatible})
\end{align*}
where we recall the notation $A \asymp B$ means that $C^{-1}A \le B \le C A$. Here and in later use throughout this proof, $\log (C)$ can be taken to be the mass of all non-contractible loops in $M$ that intersect $\bs \eta$. Thus we find that 

\begin{equation}\label{RNlocal}
\frac{d\underline{\lambda}_{\cM}}{d\underline{\lambda}_N} (\bs \eta_r) \asymp  \frac1{{\mathsf Z_{M,N}}}\underbrace{\frac{\exp[ \mass \Big((\cI( \eta^1_r)\cup \cI( \eta^2_r)) \cap \cC_{\bs \eta}\Big)]}{\exp[ \mass \Big((\cI_N( \eta^1_r)\cup \cI_N( \eta^2_r)) \Big)]} }_{L}\overbrace{\frac{\P_{\bs x} (\cA_{\cM}; \text{ compatible})}{\P_{\bs x} (\cA_N; \tau_{N} < \tau_{\bs \eta_r})}}^{R}.
\end{equation}
where 
\begin{align}
    L &:= \frac{\exp[ \mass \Big((\cI( \eta^1_r)\cup \cI( \eta^2_r)) \cap \cC_{\bs \eta}\Big)]}{\exp[ \mass \Big((\cI_N( \eta^1_r)\cup \cI_N( \eta^2_r)) \Big)} \label{eq:L}\\
    R& := \frac{\P_{\bs x} (\cA_{\cM}; \text{ compatible})}{\P_{\bs x} (\cA_N; \tau_{N} < \tau_{\bs \eta_r})}\label{eq:R}\\
   {\mathsf Z_{M,N}} & :=  \frac{\P(\cA_{\cM})}{\P(\cA_N)}.
\end{align}


Also, recall that  the set $\cC_{\bs \eta}$ denotes the $\bs \eta$-contractible loops. Let us now recall the meaning of the quantity $R$. Run Wilson's algorithm using the walks generated by $\P_{\bs x}$, where the first branch is sampled from $x^1$ and the second from $x^2$ (whatever be the domain).
 We now define the pair of stopping times $\tau_N$ (resp. $\tau_{\bs \eta_r}$) as the hitting times of $\partial N$ (resp. $\bs \eta_r$) using the independent random walks generated by $\P_{\bs x}$. We write $\tau_N < \tau_{\bs \eta_r}$ to mean that the inequality holds for each coordinate.
The event $\cA_{\cM}$ is the event that when we apply Wilson's algorithm in $\cM_r:=\cM \setminus  (\bs \eta_r \cup \partial)$ to the walks, we create branches which separate the manifold into annuli. Similarly, if we run (the usual classical version of) Wilson's algorithm in $N_r:= N \setminus \bs \eta_r$, we define $\cA_N$ to be the event that the resulting branches are disjoint.  Also ``compatible'' has the same meaning as in \eqref{eq:start_surface3}: the loop erasures of the walks must produce paths which are compatible with being CRSF branches when we do Wilson's algorithm in $\cM_r $ (see \cref{fig:compatibility}). Note that for the event corresponding to $N$, the conditions in the event guarantee compatibility, so we do not mention it in the event.

%
\medskip

\textbf{Step 2.}  We choose $r$ small enough so that the implicit constant in \eqref{RNlocal}, and the term denoted $L$ in \eqref{RNlocal} are both close to 1 (see \cref{prop:marginalM,L:fewcontractible}). More precisely, observe that $L \ge 1$ because the loops counted in the denominator are also counted in the numerator (since $N$ is simply connected). Let $d = \diam (N)$ and suppose without loss of generality that $N$ is a ball centered at $x$. Then note further that the extra loops in the numerator compared to the denominator all have diameter greater than $d/4$ if $r< d/4$ and these loops also intersect the ball $B(x,r)$. Similarly there is a lower bound depending only on the manifold $\cM$ on the diameter of any loop whose support is non contractible.
Therefore, by choosing $r$ small enough (depending only on $\eps$, $\cM$ and the diameter of $N$), \cref{L:fewcontractible} shows that  only a small mass of such loops intersect $\eta_r$ and overall we get
\begin{equation}
\frac{(1-\ve)}{{\mathsf Z_{M,N}}} \frac{\P_{\bs x} (\cA_{\cM}; \text{compatible})}{\P_{\bs x} (\cA_N; \tau_{N} < \tau_{\bs \eta_r})}  \leq \frac{d\underline{\lambda}_{\cM}}{d\underline{\lambda}_N} (\bs \eta_r) \leq
\frac{(1+\eps)}{{\mathsf Z_{M,N}}} \frac{\P_{\bs x} (\cA_{\cM}; \text{compatible})}{\P_{\bs x} (\cA_N; \tau_{N} < \tau_{\bs \eta_r})}.\label{eq:RN_approx_L}
\end{equation}
\begin{figure}[h]
\centering
\includegraphics[width = 0.5\textwidth]{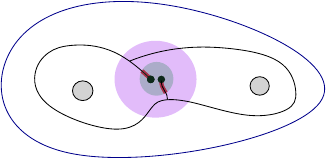}
\caption{The purple disc is $N$ and the blue disc is $B_r$. The red portions are $\bs \eta_r$. It is possible for the path started at $\bs \eta_r$ to stop even before leaving $N$.}\label{fig:merge_close}
\end{figure}

\textbf{Step 3.} We now concentrate on the term $R = R(\bs \eta_r)$ in \eqref{RNlocal}. We first obtain a lower bound on $R$ on a good event (and hence also a lower bound on ${\mathsf Z_{M,N}}$) by saying that one way of achieving $\cA_{\cM}$ (and also compatibility) is to first achieve $\tau_N < \tau_{\bs \eta_r}$, and arguing that given this, there is a good chance to also achieve $\cA_\cM$ (under $\ul_N$). (We remark however in passing that it is possible to achieve $\cA_{\cM}$ without achieving $\cA_N$ with positive probability in the limit -- see \cref{fig:merge_close}, so this lower bound is off by a constant factor. In fact, $\cA_{\cM}$ does not imply $\tau_N < \tau_{\bs \eta_r}$.)

We can rewrite $R$ as a ratio of conditional probabilities:
\begin{align}
R &\ge \frac{\P_{\bs x} (\cA_{\cM};\text{compatible} ; \tau_N < \tau_{\bs \eta_r})}{\P_{\bs x} (\cA_N ;  \tau_{N} < \tau_{\bs \eta_r})} \nonumber \\
& =
\frac{\P_{\bs x} (\cA_{\cM}; \text{compatible} | \tau_N < \tau_{\bs \eta_r})}{\P_{\bs x} (\cA_N |  \tau_{N} < \tau_{\bs \eta_r})} \label{Rprime}
\end{align}
In order to get a lower bound on $R$ we simply lower bound the numerator, and observe that this conditional probability is clearly at least constant depending only $r$ (and so only  on $\eps$) when $\bs \eta_r$ is separated. Also $\bs \eta_r$ is separated with positive probability under $\ul_N$ by results in Section \ref{S:Lawler}, see in particular \cref{cor:separation_chain}, which allows us to deduce
\begin{equation}\label{Zlb}
{\mathsf Z_{M,N}} \ge C_{\eps}^{-1},
\end{equation}
for some constant $C_\eps$ depending only on $r$ (and so only on $\eps$) but not on $\delta$.

We now prove a complementary \emph{a priori} uniform upper bound on $R$. A consequence of this upper bound is that ${\mathsf Z_{M,N}}$ is bounded.  We will rewrite both probabilities in $R$, as written in \eqref{RNlocal}, using Wilson's algorithm in the surface ${\cM}_r: = {\cM}\setminus \bs \eta_r$, so as to rephrase them in terms of independent loop-erasures of simple random walk, weighted by a loop soup term. This needs a bit of work. First recall from \eqref{eq:start_surface4}, we have
\begin{align}
  \P_{\bs x} ( \cA_{\cM}; \text{ compatible } ) & \asymp \sum_{\bs \eta = (\eta_1, \eta_2)}  e^{\mass_{{\cM}_r} ( \cI (\cup_i \eta_i) \cap \cC_{\bs \eta})}q(\cup_i \eta_i)1_{\{\bs \eta_r \oplus \bs \eta \in \cA_{\cM} \}},\label{num0}
\end{align}
\begin{figure}[h]
\centering
\includegraphics[width = 0.5\textwidth]{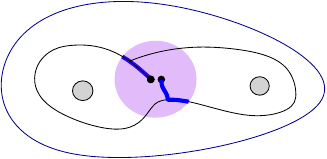}
\caption{The purple disc is the disc of radius $N/2$ and the blue paths are $\bs \gamma_{N/2}$.}\label{fig:merge_close2}
\end{figure}
We write $N/2$ with an abuse of notation for the ball with half the radius of $N$ and same center.
 Here it is important to think about the paths $\eta_i$ in order as they may coalesce to create complicated topologies. Let  $\eta_1$ be the branch started from $\eta_r^{1}$ ending when it hits $\bs \eta_r \cup \partial $ or creates a non-contractible loop, and $\eta_2$ is the path sampled from the tip of $\eta_r^2$ until it hits $\bs \eta_r \cup \eta_1 \cup \partial$, or creates a non-contractible loop. Note that although $\eta_2$ may intersect the whole path $\eta_1$ very close to the puncture, nevertheless it is the case that if a pair of path is in $\cA_\cM$, then the portions of the branches started from the endpoints of puncture  up to the first exits from $N/2$ must be disjoint, since $N/2$ is simply connected (we emphasise that the whole second branch might even contain portions of the $\eta_1$, see \cref{fig:merge_close2}). Call these paths $\bs \gamma_{N/2} = (\gamma_{N/2}^1, \gamma_{N/2}^2)$. In fact they are even disjoint until they exit $N$ which will be used later. We now divide the collection of paths $\{\bs \eta: \bs \eta_r \oplus \bs \eta \in \cA_{\cM}\}$ into two categories. In the first category, call it $\cP_1$, the first intersection of $ \eta_r^2 \oplus \eta_2$ with $\eta^1_r \oplus \eta_1$ is inside $N/2$. The second category is the complement of $\cP_1$ so that $\cA_\cM$ is still satisfied, call it $\cP_2$. We treat sum over each term of the expression in \eqref{num0} separately and upper bound them.

Roughly, the idea we employ is as follows. To employ our upper bound on random walks, we need the paths to be disjoint in a certain macroscopic region, and we need to ensure the topological constraints are such that this is forced. For $\cP_1$, we take the macroscopic region to be the annulus bounded between $N/2$ and $N$, and for $\cP_2$, we take it to be that bounded between $N/4$ and $N/2$. This culminates in the final bound \eqref{num1}.

First consider $\cP_1$.
Let $\cA_{N/2}$ the set of disjoint pairs of paths started from the punctures up to $\partial (N/2)$. We now decompose the expression \eqref{num0} on $\cP_1$ as follows:
 \begin{equation}
  \sum_{\bs \eta_r \oplus \bs \gamma_{N/2} \in \cA_{N/2}} q(\gamma_{N/2}^1) q(\gamma_{N/2}^2) e^{\mass_{{\cM}_r} \Big( \cI (\cup_i \gamma_{N/2}^i) \cap \cC_{\bs \gamma_{N/2}}\Big)} h_{N/2 \to \infty}(\bs \gamma_{N/2})\label{eq:coalesce2}
 \end{equation}
 where
\begin{align}
 h_{N/2 \to \infty}(\bs \gamma_{N/2}) = &  \sum_{\bs \eta \sim\bs \gamma_{N/2} , \bs \eta \in \cP_1}  e^{\mass_{{\cM}_r} ( \cI (\cup_i \eta_i) \cap \cC_{\bs \eta}) - \mass_{{\cM}_r} \Big( \cI (\cup_i \gamma_{N/2}^i) \cap \cC_{\bs \gamma_{N/2}}\Big) }q(\cup_i \eta_i \setminus \cup_i \gamma_{N/2}^i)1_{\{\bs \eta_r \oplus \bs \eta \in \cA_{\cM} \}}\nonumber\\
  \asymp& \sum_{\bs \eta \sim\bs \gamma_{N/2} , \bs \eta \in \cP_1}  e^{\mass_{{\cM}_r \setminus \bs \gamma_{N/2}} ( \cI (\cup_i \eta_i) \cap \cC_{\bs \eta})  }q(\cup_i \eta_i \setminus \cup_i \gamma_{N/2}^i)1_{\{\bs \eta_r \oplus \bs \eta \in \cA_{\cM} \}}\label{eq:coalesce1}\\
  \asymp &\  \P_{\bs \gamma_{N/2}}(\cA_\cM, \cP_1, \text{ compatible paths started from tips of $\bs \gamma_{N/2}$ })\label{eq:Coalesce}
 \end{align}
 and the sum in \eqref{eq:coalesce1} is over all $\bs \eta \in \cP_1$ which is compatible with having $\bs \gamma_{N/2}$ as the initial portion of the path. Exactly as in \eqref{num0}, the expression is another way of writing the probability of the event of obtaining branches in $\cP_1$, but starting from the tips of $\gamma_{N/2}^i$ in order. Here by a slight abuse of notation, we denote by $\P_{\bs \gamma_{N/2}}$, the law of independent random walks started from the tips of $\bs \gamma_{N/2}$. Now notice that $\eta_r^1 \oplus \eta_1$ coalesces with $\eta_2$ inside $N/2$. Therefore, the paths in $\cup_i \eta_i \setminus \cup_i \gamma_{N/2}^i$ started from the tips of $\gamma_{N/2}^i$ exits $N$ without intersecting (since they are both parts of  $\eta_1$).  Thus we can upper bound the event in \eqref{eq:Coalesce} by the event that two independent random walks from the tips of $\gamma_{N/2}^i$ exits $N$ without hitting $\gamma_{N/2}^i$.
 Call this last event $\cD_{N/2,N}$. 

Thus overall, we get
 \begin{equation}
 \P_{\bs x} ( \cA_{\cM}; \text{ compatible }, \cP_1 ) \le  \sum_{\bs \eta_r \oplus \bs \gamma_{N/2} \in \cA_{N/2}} q(\gamma_{N/2}^1) q(\gamma_{N/2}^2) e^{\mass_{{\cM}_r} \Big( \cI (\cup_i \gamma_{N/2}^i) \cap \cC_{\bs \gamma_{N/2}}\Big)} \P_{\bs \gamma_{N/2}}(\cD_{N/2,N}). \label{eq:coalesce5}
 \end{equation}

We employ a similar tactic to upper bound $\cP_2$, where $\eta_r^2 \cup \eta_2$ does not hit $\eta_1$ inside $N/2$. Here we cut off the paths at $N/4$ to allow us some room. Define $\bs \gamma_{N/4} = (\gamma_{N/4}^1, \gamma_{N/4}^2)$ in an analogous way, and use a similar decomposition

  \begin{equation}
  \sum_{\bs \eta_r \oplus \bs \gamma_{N/4} \in \cA_{N/4}} q(\gamma_{N/4}^1) q(\gamma_{N/4}^2) e^{\mass_{{\cM}_r} \Big( \cI (\cup_i \gamma_{N/4}^i) \cap \cC_{\bs \gamma_{N/4}}\Big)} h_{N/4 \to \infty}(\bs \gamma_{N/4})\label{eq:coalesce3}
 \end{equation}
 with an obvious analogous expression for $h_{N/4 \to \infty}(\bs \gamma_{N/4})$.
 Notice here by definition that the paths  $\gamma_{N/4}^i$ exits $N/2$ without intersecting. Thus we can similarly upper bound \eqref{eq:coalesce3} on $\cP_2$. Overall we get
\begin{equation}
 \P_{\bs x} ( \cA_{\cM}; \text{ compatible }, \cP_1 ) \le \sum_{\bs \eta_r \oplus \bs \gamma_{N/4} \in \cA_{N/4}} q(\gamma_{N/4}^1) q(\gamma_{N/4}^2) e^{\mass_{{\cM}_r} \Big( \cI (\cup_i \gamma_{N/4}^i) \cap \cC_{\bs \gamma_{N/4}}\Big)} \P_{\bs \gamma_{N/4}}(\cD_{N/4,N/2}).\label{eq:coalesce4}
\end{equation}

Let $\mu_{N_r} $ denote the law of two \emph{independent} loop erased random walks started from the tips of $\bs \eta_r$ stopped when they hit $\bs \eta_r$ or exit $N$. Now simply using \cref{prop:marginal_single}, we can upper bound both \eqref{eq:coalesce5} and \eqref{eq:coalesce4} by
\begin{equation}
\P_{\bs x} ( \cA_{\cM}; \text{ compatible }) \lesssim \sum_{\bs \eta_r \oplus \bs \eta_{N/4} \in \cA_{N/4}} e^{-\mass_{N_r} (\cI (\bs \eta_{N/4}))}\mu_{N_r} (\bs \eta_{N/4}) \label{num1}
\end{equation}
where the constant in the inequality comes from the fact that we are ignoring loops which intersect both $N/4$ and the complement of $N/2$, which has bounded mass. Notice above that we switched to the original dummy variable involving $\bs \eta$ rather than $\bs \gamma$ as we do not need to distinguish them any more.

Having upper bounded the numerator of $R$ in \eqref{RNlocal},  we now lower bound the denominator. This term is actually simpler to handle as there is no topological complexity here, we can directly write it as
\begin{equation}
  \label{den1}
  \P_{\bs x} ( \cA_{N}; \tau_{N} < \tau_{\bs \eta_r} )  = \sum_{\bs \eta_{N/4}} e^{ - \mass_{N_r} (\cI_{N} (\bs \eta_{N/4} ))} \mu_{N_r} (\bs \eta_{N/4}) h_{N/4 \to N} (\bs \eta_{N/4}) 1_{\{ \bs \eta_r \oplus \bs \eta_{N/4} \in \cA_{N/4}   \}}
\end{equation}
where $h_{N/4 \to N}$ is an $h$-transform term, which explicitly may be written as
\begin{align*}
h_{N/4 \to N} (\bs \eta_{N/4}) &= \sum_{\bs \eta_N \succeq \bs \eta_{N/4}} e^{-\mass_{N_r} (\cI_{N} (\bs \eta_{N})  \setminus \cI_N (\bs \eta_{N/4}))} \mu_{N_r} (\bs \eta_N \mid \bs \eta_{N/4}) 1_{\{ \bs \eta_r \oplus \bs \eta_{N} \in \cA_{N}   \}}\\
& = \E_{\mu_{N_r}}\left(e^{-\mass_{N_r} (\cI_{N} (\bs \eta_{N})  \setminus \cI_N (\bs \eta_{N/4}))} 1_{\{ \bs \eta_r \oplus \bs \eta_{N} \in \cA_{N}   \}}| \bs \eta_{N/4} \right).
\end{align*}
On the event that $\bs \eta_{N/4} \in \mathtt{Sep}_{N/4}$, this $h$-transform term is uniformly bounded below: indeed, when $\eta_{N/4} \in \cA_{N/4}$ and is separated, the event $\cA_N$ has positive probability, and by uniform crossing the paths remain separated with positive probability so that the mass of loops intersecting both paths is then bounded.
Therefore, restricting \eqref{den1}  to separated paths $\bs \eta_{N/4}$,
\begin{align}
  \P_{\bs x} ( \cA_{N}; \tau_{N} < \tau_{\bs \eta_r} ) & \gtrsim \sum_{\bs \eta_{N/4} \in \mathtt{Sep}_{N/4}} e^{ - \mass_{N_r} (\cI_{N} (\bs \eta_{N/4} ))} \mu_{N_r} (\bs \eta_{N/4})  1_{\{ \bs \eta_r \oplus \bs \eta_{N/4} \in \cA_{N/4}   \}}\nonumber \\
  & \gtrsim \sum_{\bs \eta_{N/4} \in  \mathtt{Sep}_{N/4} } e^{ - \mass_{N_r} (\cI_{N/4} (\bs \eta_{N/4} ))} \mu_{N_r} (\bs \eta_{N/4})  1_{\{ \bs \eta_r \oplus \bs \eta_{N/4} \in \cA_{N/4}   \}} \label{den3}\\
  & \gtrsim  \sum_{\bs \eta_{N/4} } e^{ - \mass_{N_r} (\cI_{N/4} (\bs \eta_{N/4} ))} \mu_{N_r} (\bs \eta_{N/4})  1_{\{ \bs \eta_r \oplus \bs \eta_{N/4} \in \cA_{N/4}   \}} \label{den2}
\end{align}
where \eqref{den3} follows from the fact that the extra mass of loops are uniformly bounded as the paths are separated. Then \eqref{den2} follows from    \cref{lem:separation} with $n = N/4$ and $\bs \gamma_m = \bs \eta_r$.
Comparing with \eqref{num1} and a trivial bound that the loop mass is positive we get that $R$ is upper bounded.

 {Overall, we deduce (with \eqref{Zlb}) that there is a constant $C_\eps$ such that
\begin{equation}\label{aprioriStep3}
R \le C_{\eps}; \quad \quad Z_{\eqref{RNlocal} }\ge C_{\eps}^{-1}; \text{ hence } \frac{d\underline{\lambda}_{\cM}}{d\underline{\lambda}_{N}} (\bs \eta_r) \le 2 C_\eps^2.
\end{equation}
}

{Suppose now that ${\mathsf{k}}>1$. Let $\{\underline{\lambda}_{N_i}\}_{1 \le i \le {\mathsf{k}}}$ be an independent collection of probability measures, each with the analogous law as $\underline{\lambda}_{N}$.   A similar line of argument gives for each collection $\bs \eta_r := \{\bs \eta_r^{(i)}\}_{1 \le i \le {\mathsf{k}}}$ of disjoint pairs of curves,
\begin{equation}
\frac{d\underline{\lambda}_{\cM} }{d\big(\prod_{1 \le i \le {\mathsf{k}}}\underline{\lambda}_{N_i} \big) }(\bs \eta_r) \le C_{\ve,{\mathsf{k}}} \label{aprioriStep3'}
\end{equation}

We now outline the places where the argument differs, which mainly boils down to some extra topological cases to consider.
\begin{itemize}
\item Steps 1 and 2 go through without any major changes. Firstly, \cref{prop:marginalM} is written for arbitrary number of punctures, which ensures that \eqref{RNlocal}, as well similar upper and lower bounds on $L$ is valid. The lower bound of $R$ in \eqref{Rprime} also works similarly. We restrict to only separated paths $\bs \eta_r^{(i)}$ and appeal to the fact that the random walk excursions have a positive chance of achieving $\cA_\cM$ in Wilson's algorithm after they hit $\partial N_i$.
\item The upper bound of $R$ (step 3) is the topologically complex step, where we now need to look at a more refined partition of the paths rather than looking at just $\cP_1, \cP_2$ as we did in step 3. Let $A_j(i)$ be the annulus of inner radius $N_i/2^{j+1}$ and outer radius $N_i/2^{j}$ for $0 \le j \le 2{\mathsf{k}}-1$ (with a similar abuse of notation used before). We sample the paths $(\eta_{i1}, \eta_{i2}, 1 \le i \le {\mathsf{k}})$ in this order. Consider one such path $\eta = \eta_{ij}$ and we look at the ordered sequence of points $\Xi_{ij}$ where $\eta$ coalesces with the previously sampled paths (e.g. $\eta$ might coalesce with some previously sampled path $\eta'$ which might itself coalesce with some other path $\eta''$ sampled even further before, and so on). Recalling the argument in step 3, we need some room where no such coalescence take place, so we can upper bound by crude events involving independent random walks.
By pigeonhole, there must be a $j$ so that none of $(A_j(i))_{1 \le i \le {\mathsf{k}}}$ contain any point of $\cup_{i,j} \Xi_{ij}$ (the first path do not coalesce with anything and there are $2{\mathsf{k}}$ annuli at each puncture). We define $\gamma_{ij}$ to be path $\eta_{ij}$ cut off when they enter the smallest such annuli for the first time (the reader may check that this surgery matches with that done in step 3 for ${\mathsf{k}}=1$). This allows now partition $\{\bs \eta: \bs \eta_r \oplus \bs \eta \in \cA_{\cM}\}$ into a finite number (in fact ${\mathsf{k}}$ many) of categories depending on which $j$ the paths are cut off. Nevertheless, the same sequence of arguments in step 3 yields

\begin{equation}
\P_{\bs x} ( \cA_{\cM}; \text{ compatible }) \lesssim \prod_{i=1}^{\mathsf{k}}\sum_{\bs \eta_r \oplus \bs \eta_{N_{\mathsf{k}}} \in \cA_{N_{\mathsf{k}}}} e^{-\mass_{(N_i)_r} (\cI (\bs \eta_{N_{\mathsf{k}}}))}\mu_{(N_i)_r} (\bs \eta_{N_{\mathsf{k}}})
\end{equation}
where $N_{\mathsf{k}} = N/2^{2{\mathsf{k}}}$ is the inner radius of the smallest annulus. 

Step 3 works with essentially no change for the denominator as well  the paths are independent for each puncture.
\end{itemize}
}

\textbf{Step 4.}
We combine the previous steps to conclude. As before, we write this step for ${\mathsf{k}}=1$ for simplicity but the proof for ${\mathsf{k}} > 1$ is in fact exactly the same.
Fix $F$ a bounded continuous, nonnegative functional on pairs of paths $\bs \eta$ (where continuity refers to the Hausdorff sense). Fix $\ve>0$. We first choose $r>0$ small enough so that the exponential of the mass of noncontractible loops which intersect the ball of radius $r$ around the punctures is in $[1, 1+\ve)$. Since the weak limit of $\bs \eta$ under $\underline{\lambda}_{N}$ consists of a.s. disjoint curves (except for their starting point), let us choose $\alpha >0$ sufficiently small such that for all sufficiently small $\delta>0$,
$$
\underline{\lambda}_{N}( \cS^c ) \le \eps/((1+\ve)2 C_\eps^2 \| F\|_{\infty}), \text{ where } \cS :=\{ B(x^i, \alpha r) \cap \eta_r^{3- i}  = \emptyset ,i = 1,2\}.
$$
Here $x^i$ denotes the tip of $\eta_r^i$.
In words, the (separation) event $\cS$ says that each endpoint of $\bs \eta_r$ is far from the other path.  Let $\cG $ be the set of paths where $\mathfrak B_i \setminus \eta_{r,i}$ does not intersect $B(x^{3-i}, \alpha r/M )$ for $i=1,2$.

 We prove later in  \cref{prop:quantitative_general} that $\underline{\lambda}_{\mathcal{M}}( \cG^c)$ probability can be made arbitrarily small for $M$ large enough. For the convenience of the reader, we mention quickly that this proposition allows us to bound $\underline{\lambda}_{\mathcal{M}}( \cG^c)$ in term of the probability of the same event for nice SRW excursions\footnote{We remark that while \cref{prop:quantitative} is located later in the paper, it does not use convergence result of \cref{P:localglobal}. More precisely, it does use the a priori bound of step 3 but not step 4 so there is no circular argument.}. In turn, these are small by Uchiyama's theorem (\cref{lem:scaling_limit_main}) and \cite[Lemma 5.3]{BLR_Riemann1}. 
  We can choose a large $M =M(\ve)$ so that $\underline \lambda_{\cM}(\cG^c) \le \ve/\|F\|_\infty$, observe that $|\E_{\underline{\lambda}_{\cM}} (F1_{\cG}) - \E_{\underline{\lambda}_{\cM}} (F)| \le \ve$. Thus it is enough to show that $\E_{\underline{\lambda}_{\cM}} (F 1_{\cG})$ converges.
Using \eqref{eq:RN_approx_L} and \eqref{aprioriStep3},
\begin{align}
  \E_{\underline{\lambda}_{\cM}} (F(\bs \eta)1_{\cG}) & = \E_{\underline{\lambda}_{\cM}} \Big(   \E_{\underline{\lambda}_{\cM}}( F(\bs \eta) 1_{\cG} \mid \bs \eta_r) \Big) \nonumber\\
  & \le \E_{\underline{\lambda}_{N}} \Big( (1+\ve)\frac{R}{{\mathsf Z_{M,N}}}  \E_{\underline{\lambda}_{\cM}}( F(\bs \eta)1_{\cG} \mid \bs \eta_r) \Big) \nonumber \\
  & \le \underline{\lambda}_{N}( \cS^c) \|F\|_{\infty}  (2(1+\ve)C_\eps^2) +  \frac{(1+\eps)}{{\mathsf Z_{M,N}}} \E_{\underline{\lambda}_N} \Big( R 1_{\cS}\E_{\underline{\lambda}_{\cM}}( F(\bs \eta)1_{\cG} \mid \bs \eta_r) \Big) \label{limitEsp}
\end{align}

Similarly using the lower bound in \eqref{eq:RN_approx_L},
\begin{align}
 \E_{\underline{\lambda}_{\cM}} (F(\bs \eta)1_{\cG}) \ge \frac{(1-\eps)}{{\mathsf Z_{M,N}}} \E_{\underline{\lambda}_N} \Big( R 1_{\cS}\E_{\underline{\lambda}_{\cM}}( F(\bs \eta)1_{\cG} \mid \bs \eta_r) \Big) \label{limitEsplower}
\end{align}

Using \cref{prop:scalinglimit_disc}, we know that $\underline{\lambda}_{N}= (\underline{\lambda}_{N})^\d$ converges weakly, we may assume by Skorokhod's representation theorem that this convergence holds almost surely. Let $\bs \eta^\d_r$ be a sequence of discrete paths satisfying $\cS$ and which converge  as $\delta \to 0$ to a limit path $\bs \eta_r$ which satisfies $\cS$ (or rather, $\cS$ with $\alpha$ replaced by $\alpha/2$, but with an abuse of notation we will not explicitly mention this below).

We now argue that the conditional expectation term in \eqref{limitEsp} converges almost surely. Using an appropriate version of \eqref{eq:start_surface3}, we can write for any ${\bs \eta}_r \in \cS$
\begin{align}
\E_{\underline{\lambda}_{\cM}}( F(\bs \eta)1_{\cG} \mid \bs \eta_r) & \le (1+ \eps) \frac{\E_{\bs x}( F(\bs \eta) 1_{\cG \cap \cA_{\cM}}) }{\P_{\bs x}(\cA_\cM)}\nonumber \\
& \le (1+ \eps) \frac{\E_{\bs x}( F(\bs \eta) 1_{\cG \cap \cA_{\cM}}) }{\P_{\bs x}(\cA_\cM \cap \cG)}\label{eq:RN_upper_bound}\\
& = (1+ \eps) \frac{\E_{\bs x}( F(\bs \eta) 1_{\cG \cap \cA_{\cM}} | \tau_{r(1+\alpha/2M)} < \tau_{\bs \eta_r} )}{\P_{\bs x}(\cA_\cM \cap \cG | \tau_{r(1+\alpha/2M)} < \tau_{\bs \eta_r})}\nonumber
\end{align}
where we interpret $\E_{\bs x}, \P_{\bs x}$ as running Wilson's algorithm from the tips of $\bs \eta_r$, first from $x^1$ and then from $x^2$ (as always stopping when the walk either intersects $\bs \eta_r$, $\partial$ or closes a non-contractible loop). The last equality follows since we are on the good event $\cG$ on which the random walks have to exit $B_{r(1+\alpha/2M)}$ before hitting any of $\bs \eta_r$ in Wilson's algorithm in order to achieve $\cA_{\cM}$. Both the numerator and the denominator are now in the setup of our Markov chain in \cref{sec:discreteMC}, which converges almost surely. 

We now lower bound the conditional expectation in \eqref{limitEsp} by an expression which is close to the ratio in the last line above. Note that for any subcollection of paths $\cS' \subset \cS$,

\begin{equation}
\E_{\underline{\lambda}_{\cM}}( F(\bs \eta)1_{\cG} \mid \bs \eta_r)1_{{\bs \eta_r} \in \cS} \ge \E_{\underline{\lambda}_{\cM}}( F(\bs \eta)1_{\cG} \mid \bs \eta_r) 1_{{\bs \eta}_r \in \cS'} \ge (1-\ve)\E_{\bs x}(F(\bs \eta)1_{\cG} | \cA_\cM)1_{\bs \eta_r \in \cS'}\label{eq:RN_lower1}
\end{equation}
where the latter inequality  has the same interpretation as in the upper bound.
Thus, it is enough to show that there is a good collection of paths $\cS'$ in $\cS$ with
\begin{equation}
 \P_{\bs x}(\cG | \cA_\cM)1_{\bs \eta_r \in \cS'}  >(1-2\sqrt{\ve}) \text{ and }\underline{\lambda}_{\cM}((\cS')^c)  \le \frac{\sqrt{\ve}}{\|F\|_\infty}.\label{eq:RN_Markov}
\end{equation}
The first  inequality above allows us to write the rightmost expression in \eqref{eq:RN_lower1} in terms of the ratio in \eqref{eq:RN_upper_bound} up to a multiplicative factor very close to 1. The second inequality in \eqref{eq:RN_Markov} makes sure that the set of paths on which this approximation does not work has a small probability in terms of $\ve$. We leave these details to the reader and explain how to obtain the desired set $\cS'$.

The existence of $\cS'$ follows from the fact that $\cG$ was chosen with $\underline{\lambda}_\cM (\cG^c) \le \ve/\|F\|$ and  Markov's inequality. Indeed, Markov's inequality yields $\underline{\lambda}_\cM(\underline{\lambda}_\cM(\cG  | \bs \eta_r) \le 1-\sqrt{\ve}) \le \sqrt{\ve}/\|F\|$. Thus we choose $\cS':= \{\bs \eta: \underline{\lambda}_\cM(\cG  | \bs \eta_r) >1-\sqrt{\ve}\}$. Using the same upper bound as in \eqref{eq:RN_upper_bound},  $\underline{\lambda}_\cM(\cG  | \bs \eta_r) $ is well approximated by  $\P_{\bs x}(\cG | \cA_\cM)$ up to a multiplicative factor close to 1.

Combining all this, we obtain that almost surely, for all $\delta, \delta'$ small enough

\begin{equation*}
|\E^\d_{\underline{\lambda}_{\cM} }(F({\bs \eta^\d})1_{\cG} | \bs \eta^\d_r)  -\E^{\d'}_{\underline{\lambda}_{\cM} }(F({\bs \eta^{\d'}})1_{\cG} | \bs \eta^{\d'}_r)|= O(\sqrt{\ve}).
\end{equation*}


The convergence of $R= R^\d$ is analogous: we can restrict to $\cG$ in the event of the numerator by losing $\ve$ in the probabilities, again using \cref{prop:quantitative_general}. Note that, the random walk from both the tips hit $\partial B(x^i, \alpha/2M)$ before hitting $\eta_{r,i}$, both on $\cA_\cM \cap \cG$ and $\{\cA_N, \tau_N <\tau_{\bs \eta_r}\}$. Thus we can condition both the numerator and the denominator by this event. These conditional probabilities are uniformly positive by uniform crossing. Thus, using the convergence of a similar Markov chain as in \cref{sec:discreteMC}, we can conclude that $R$ converges.
Taking $F =1$ this gives a lower bound on ${\mathsf Z_{M,N}}$.

Overall, we obtain that for $\delta, \delta'$ small enough,
\[
| \E_{\underline{\lambda}_{\cM}^\d} (F (\bs \eta)) - \E_{\underline{\lambda}_{\cM}^{\d'}}( F( \bs \eta )) |=  O( \sqrt{\eps} )
\]
since both terms are well approximated by the same expression defined in the right hand side of \eqref{limitEsp} with the same value of $r$ depending only on $\eps$. This proves \cref{P:localglobal} in the case $k =1$ as desired. 

For ${\mathsf{k}}>1$, we simply need to run the above argument by replacing $\ul_N$ by independent copies around small neighbourhoods of the punctures. As mentioned at the beginning of the proof, appropriate modifications of the good event $\cG, \cS$, $\cS'$, and the a priori bound \eqref{aprioriStep3'} completes the proof of \cref{P:localglobal}.
\end{proof}
\begin{proof}[Proof of \cref{thm:main_Temp_CRSF}]Recall that \cref{thm:main_Temp_CRSF} contains 4 assertions: the Temperleyan CRSF on $\Gamma^\d$ has a scaling limit (in Schramm topology) under $\Pwils$; the limit does not depend on the sequence; it is conformally invariant; and a uniform bound on the exponential moment of the number of non-contractible loops under $\Pwils$ (implying the convergence of the Temperleyan forest under $\Ptemp$).

The scaling limit under $\Pwils$ is the main point and it was proved in two parts, \cref{P:localglobal,thm:CRSF_universal}:
 (For $\chi=0$ (torus and annulus), we are simply done using \cref{thm:CRSF_universal}. In the remaining general case, one can apply \cref{P:localglobal} to first obtain convergence and conformal invariance of the skeleton. Conditioned on these branches the remaining manifold is divided into annuli. On each annulus, apply \cref{thm:CRSF_universal}. For the uniform bound on exponential moments, note that the skeleton can only create a bounded number of annuli. Therefore it follows directly from the corresponding bound in \Cref{thm:CRSF_universal} (or \cref{lem:non-contractible_cont}). Since $\frac{d\Ptemp}{d \Pwils} \propto 2^{K^\dagger}$, the scaling limit under $\Ptemp$ then follows (again \cref{lem:non-contractible_cont} proves that the number on contractible loops converges).


The fact that the limit law does not depend on the sequence of approximation is an immediate consequence of the fact that, if we are given two sequences $(\Gamma_1^\d)_{\delta>0}$ and $(\Gamma_2^\d)_{\delta>0}$ satisfying the assumptions of Section \ref{sec:setup}, then the reunion of these two sequences also fulfills the same assumptions, therefore there cannot be two different limits. 

The conformal invariance of the scaling limit then follows from the fact that our assumptions are conformally invariant (\cref{lem:conf_inv_assumption}) and that conformal maps are compatible with the topology of convergence (\cref{rmk:schramm}).
\end{proof}

\subsection{Quantitative estimates for the skeleton}\label{sec:quant_special}

In this section, we show that branches in the skeleton satisfy certain regularity conditions (which are also used in \cite{BLR_Riemann1}).
The main result we show in this section is that if an event (such as visiting a given point, or making a lot of winding) is unlikely for a branch of the CRSF, then it is also unlikely for the branches of the skeleton of a Temperleyan CRSF. We are content with proving such statements for portions of branches macroscopically away from the punctures as this is the input we need in \cite{BLR_Riemann1}.
To this end, we prove the following general bound, with the role of the unlikely event played by a generic event $E$. We use the notation $\bs \eta_r := \{\bs \eta_r^{(i)} = (\eta^{(i)}_{r,1},\eta^{(i)}_{r,2})\}_{1 \le i \le {\mathsf{k}}}$ to denote the initial bits of the branches in the skeleton as in \cref{sec:conv_special}. Recall $\mathfrak s =(\mathfrak B_{i1}, \mathfrak B_{i2})_{1\le i \le {\mathsf{k}}}$ denotes the skeleton (here with a small abuse of notation we do not include the path $e_i$).

We will first write the estimate for ${\mathsf{k}}=1$ as it is easier to state. The ${\mathsf{k}}>1$ case will be handled after that. For $r < r'$ and a fixed $\bs \eta_r$, let $\tilde \P_{\cM_{r,r', \bs \eta_r}}$ denote the law of two independent simple random walks started from the tips of $\bs \eta_{r,j}$, conditioned not to hit $\eta_{r,j}$ before the random walk first exits the ball of radius $r'$ (i.e., it is an excursion from the tip till the first exit of the ball of radius $r'$, after which it is allowed to proceed unconditionally). Sometimes we drop the $\bs \eta_r$ from the subscript whenever it is clear from the context. Let $\bs x = (x^{1},x^2)$ denote the tips of $\bs \eta_r$ as before. Let $\tilde \mu_{\cM_{r,r'}} $ be the law induced by the loop erasures of the walks sampled from $\tilde \P_{\cM_{r,r', \bs \eta_r}}$, when we run Wilson's algorithm in $\cM$ first from $\eta_{r,1}$ and then from $\eta_{r,2}$. That is, Let $\gamma^1_r$ denote the portion of the loop erasure from the tip of $\eta_{r,1}$ until it hits either $\bs \eta_r \cup \partial$ or creates a non-contractible loop. Let $\gamma^2_r$ denote the portion of the loop erasure from the tip of $\eta_{r,2}$ until it hits $\bs \eta_r \cup \partial \cup \gamma^1_r$ or creates a non-contractible loop. Note that in order to create some $\bs \eta$ using Wilson's algorithm in $\cM_r$, $\gamma^1_r$ is exactly the portion that has to come from the first walk and $\gamma_r^2$ is the portion from the second walk, i.e,
\begin{equation}
{\tilde \mu}_{r, r'}( \bs \eta) = \P_{\bs x}( LE( X_j) = \gamma^j_r| \tau^j_{\partial B_{r'}} < \tau^j_{\eta_r^j})\label{eq:tildemu}
\end{equation}

Let $E$ be an event measurable with respect to the branches $\mathfrak s \setminus \bs \eta_r$. We warn the reader that it is possible for $(\mathfrak B_{i1} \setminus \eta^{(i)}_{r,1},  \mathfrak B_{i2} \setminus \eta^{(i)}_{r,2})_{1 \le i \le {\mathsf{k}}}$ to have common parts (see \cref{fig:cases_merge}).


\begin{prop}\label{prop:quantitative}
Suppose we have only one puncture (i.e. ${\mathsf{k}}=1$).
There exists a constant $C$ depending only on $r$ and $\mathcal{M}$ such that for all $\delta>0$ sufficiently small, for all events $E$ measurable with respect to $\mathfrak s \setminus \bs\eta_r$,
\begin{equation*}
	\underline{\lambda}_{\cM}( E) \leq C \left(\sqrt{ \sup_{\bs\eta_{r/2}} \tilde \mu_{r/2,r}(E) + \sup_{\bs\eta_{r/4}} \tilde \mu_{r/4,r/2}(E) } \right)
	\end{equation*}
	where the supremum is over all possible $\bs \eta_{r/2}$ (resp. $\bs \eta_{r/4}$) in $\cA_{r/2}$ (resp. $\cA_{r/4}$).
\end{prop}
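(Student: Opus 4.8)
The strategy is to take the Radon--Nikodym estimates already developed in the proof of \cref{P:localglobal}, which compare $\underline{\lambda}_{\cM}$ to the simply connected measures, and push them one step further to compare to the independent excursion measures $\tilde\mu_{r/2,r}$ and $\tilde\mu_{r/4,r/2}$. First I would use the a priori bound from Step 3 of \cref{P:localglobal} (specifically \eqref{aprioriStep3}, which does not rely on the convergence statement, as noted in the footnote there) to write, for any event $E$ measurable with respect to $\mathfrak{s}\setminus\bs\eta_r$,
\begin{equation*}
\underline{\lambda}_{\cM}(E) = \E_{\underline{\lambda}_{\cM}}\big(\underline{\lambda}_{\cM}(E\mid\bs\eta_r)\big) \le 2C_\eps^2\,\E_{\underline{\lambda}_N}\big(\underline{\lambda}_{\cM}(E\mid\bs\eta_r)\big),
\end{equation*}
so that it suffices to bound $\underline{\lambda}_{\cM}(E\mid\bs\eta_r)$ for a typical $\bs\eta_r$ under $\underline{\lambda}_N$ (in particular for $\bs\eta_r$ which is well separated, since by \cref{cor:separation_chain} separation holds with positive probability under $\underline\lambda_N$). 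The point is that $\underline{\lambda}_N$-typical paths $\bs\eta_r$ are separated and stay in a controlled region, so we lose only a constant by restricting to them.

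Next, using an appropriate version of \eqref{eq:start_surface3} (or rather its consequence \eqref{eq:RN_upper_bound} from Step 4), I would write $\underline{\lambda}_{\cM}(E\mid\bs\eta_r) \le (1+\eps)\P_{\bs x}(E\cap\cA_{\cM})/\P_{\bs x}(\cA_{\cM})$ where $\P_{\bs x}$ runs Wilson's algorithm from the tips of $\bs\eta_r$. The denominator $\P_{\bs x}(\cA_{\cM})$ is bounded below by a constant depending only on $r$ and $\cM$ when $\bs\eta_r$ is separated (as in the lower bound on $R$ in Step 3). For the numerator, the key topological input is exactly the one used in Step 3: the event $\cA_{\cM}$ forces the two branches, up to their first exit of $N/2$, to be disjoint, and by the pigeonhole argument there must be some annulus among $B(x,r/2^{j})\setminus B(x,r/2^{j+1})$, $j\in\{0,1,2\}$ at scales between $r/4$ and $r$ where no coalescence has yet happened. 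I would decompose $\P_{\bs x}(E\cap\cA_{\cM})$ accordingly: the beginning of the branches up to the cut-off scale ($r/2$ or $r/4$) are disjoint loop-erased walks, whose law is exactly $\tilde\mu_{r/2,r}$ or $\tilde\mu_{r/4,r/2}$ (cf. \eqref{eq:tildemu}), and the event $E$ depends only on $\mathfrak{s}\setminus\bs\eta_r$. Combining with the Cauchy--Schwarz-type trick used repeatedly in the paper (the square root appears because $E$ may depend on both strands and we split the contributions of the two halves of the path pair via $\P(E_1\cap E_2)\le \sqrt{\P(E_1)}\sqrt{\P(E_2)}$ after conditioning) gives the claimed bound, with the two terms corresponding to the two possible cut-off scales.

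The main obstacle is the topological bookkeeping: one must carefully argue that after cutting the branches at the pigeonhole scale, the remaining portions which realise $E$ can be coupled with (or dominated by) independent excursions started from the tips at that scale, taking into account that the second branch may run along the first one near the puncture (as in \cref{fig:merge_close2}, \cref{fig:cases_merge}). This is handled exactly as in Step 3 of \cref{P:localglobal} by upper-bounding the coalescence event by the cruder event $\cD$ that two independent walks from the tips of $\bs\gamma$ exit $N$ without meeting, and then comparing loop-measure terms (all the extra or missing loops have non-contractible support or cross a fixed annulus, hence bounded mass by \cref{lem:loop_crossing} and \cref{cor:long_loops}). The appearance of the square root, and of the two scales $r/2$ and $r/4$, is forced by this surgery; once the decomposition is set up, the remaining estimates are routine applications of the quasi-independence lemmas and the a priori bounds already established.
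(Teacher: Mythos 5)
The general shape of your argument is on the right track: you correctly identify that the proof is a Radon--Nikodym comparison of $\underline{\lambda}_{\cM}$ to $\tilde\mu_{r/2,r}$ and $\tilde\mu_{r/4,r/2}$, that the dichotomy $\cP_1/\cP_2$ (coalescence inside vs.\ outside $B_{r/2}$) drives the two scales, and that the surgery of Step~3 of \cref{P:localglobal} handles the topological issues. However, your explanation of where the square root comes from is wrong, and this masks the real gap in the proposal. The square root does \emph{not} come from a Cauchy--Schwarz split of $E$ into events on the two strands; $E$ is a completely arbitrary event measurable with respect to $\mathfrak{s}\setminus\bs\eta_r$ and there is no reason it should factor.

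The actual mechanism in the paper is a threshold/Markov-inequality argument which your proposal is missing. The Radon--Nikodym derivative $\frac{d\underline{\lambda}_\cM(\cdot\mid\bs\eta_{r/2})}{d\tilde\mu_{r/2,r}}$ is controlled by the ratio $\prod_j\P_{x^j}(\tau^j_{\partial B_r}<\tau_{\eta_{r,j}})/\P_{\bs x}(\cA_\cM,\text{compatible})$, and this ratio is \emph{not} uniformly bounded: it blows up precisely when $\bs\eta_{r/2}$ is poorly separated (so $\P_{\bs x}(\cA_\cM)$ is small). Your plan bounds the denominator only on the separated set and then appeals to \cref{cor:separation_chain}, but ``separated with positive probability under $\underline{\lambda}_N$'' does nothing to control the integrand on the complementary, non-separated set, which you need to do since you are taking a full expectation $\E_{\underline{\lambda}_N}[\underline{\lambda}_\cM(E\mid\bs\eta_r)]$. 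The paper resolves this by introducing the good set $\cG_{s,t}=\{\bs\eta_s:\ \text{ratio}\le 1/\sqrt{p_r}\}$ (where $p_r$ is the sum of suprema on the right-hand side); on $\cG$ the Radon--Nikodym bound gives $\underline{\lambda}_\cM(E\cap\cP_i\cap\cG)\le(1/\sqrt{p_r})\cdot p_r=\sqrt{p_r}$, and a separate Markov-type computation (using the $\underline{\lambda}_N$-density, \cref{lem:ratioP} and \cref{lem:partition_function}) gives $\underline{\lambda}_\cM(\cG^c)\le C_r^3\sqrt{p_r}$. It is the trade-off between these two bounds at the threshold $1/\sqrt{p_r}$ that produces the square root. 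Without the good set and the bound on its complement, your argument would at best give a bound in terms of $p_r$ \emph{times} a quantity that is not bounded uniformly over $\bs\eta_{r/2}$, and the Cauchy--Schwarz substitute does not repair this.

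A minor point: your pigeonhole over $2{\mathsf{k}}$ annuli with $j\in\{0,1,2\}$ is the mechanism for the general-${\mathsf{k}}$ version \cref{prop:quantitative_general}; for ${\mathsf{k}}=1$ the paper uses only the two-way split $\cP_1/\cP_2$, which is why exactly two scales $r/2$ and $r/4$ appear in the statement.
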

In typical applications, one should think of the choice of $r$ to be $O(1)$.
Before proving \cref{prop:quantitative}, we need two lemmas.
 Let $\P_{\bs x}$ denotes the law of independent random walks in $\cM$ started from these tips. Also define the stopping times $\tau_{N} = (\tau^1_{N}, \tau^2_{N})$ as in \cref{sec:conv_special}.

\begin{lemma}\label{lem:ratioP}
For all $r > 0$, there exists a constant $C_r > 0$ such that for all $\delta$ small enough and for all $\bs \eta_r \in \cA_r$
\begin{equation*}
\frac{\P_{\bs x}( \cA_{N}, \tau_{N} < \tau_{\bs{\eta}_r})}{\prod_{j} \P_{x^j}( \tau^j_{N} < \tau_{\eta_{r,j}})} \leq C_r \frac{\P_{\bs x}( \cA_\cM, \text{compatible})}{\prod_{j} \P_{x^j}( \tau_{\partial B_{2r}} < \tau_{\eta_{r,j}})}.
\end{equation*}
\end{lemma}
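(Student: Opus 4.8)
The plan is to recognise \cref{lem:ratioP} as a quantitative repackaging of the a priori estimate obtained in Step 3 of the proof of \cref{P:localglobal}, combined with two elementary Harnack-type comparisons for the escape probabilities appearing in the two denominators. Throughout, write $x$ for the midpoint of the puncture $e_1$ and $\bs x = (x^1, x^2)$ for the tips of $\bs\eta_r$, which sit at distance $\asymp r$ from $x$; we take $r$ small enough that $B(x,4r)\subset N$ and that the metric on $B(x,4r)$ is comparable to the Euclidean one, and we abbreviate $B_{2r}=B(x,2r)$.

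First I would reduce the statement to the single inequality
\begin{equation}\label{eq:reducedratioP}
	\P_{\bs x}(\cA_N, \tau_N < \tau_{\bs\eta_r}) \le C_r\, \P_{\bs x}(\cA_\cM, \text{compatible}).
\end{equation}
Indeed, applying the strong Markov property to the $j$-th walk at its first exit of $B_{2r}$ (which it must make before reaching $\partial N$, since $B_{2r}\subset N$) and using uniform crossing — from any point of $\partial B_{2r}$ there is probability at least $c_r>0$ to cross the annular region $N\setminus B(x,r)$ to $\partial N$ without entering $B(x,r)\supset \eta_{r,j}$ — one gets $c_r\,\P_{x^j}(\tau_{\partial B_{2r}}<\tau_{\eta_{r,j}}) \le \P_{x^j}(\tau^j_N<\tau_{\eta_{r,j}}) \le \P_{x^j}(\tau_{\partial B_{2r}}<\tau_{\eta_{r,j}})$, so the two products of escape probabilities agree up to a factor depending only on $r$. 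Combining this with \eqref{eq:reducedratioP} yields the lemma.

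To prove \eqref{eq:reducedratioP} I would lower bound the right-hand side by restricting $\{\cA_\cM,\text{compatible}\}$ to a convenient sub-event. The law $\P_{\bs x}(\,\cdot \mid \cA_N, \tau_N<\tau_{\bs\eta_r})$ is, by \eqref{LEnbis} and the definition of $\cA_N$, precisely that of a pair of disjoint wired-UST branches in the simply connected domain $N_r = N\setminus\bs\eta_r$ conditioned to reach $\partial N$, i.e. an instance of the conditioned-disjoint-pair measure studied in \cref{S:Lawler}; hence the Separation Lemma (\cref{lem:separation}, cf. \cref{cor:separation_chain}) shows that, with probability bounded below by an absolute constant $c$, the two branches exit $N$ at macroscopically separated points of $\partial N$. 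On that event, the continuations prescribed by Wilson's algorithm in $\cM_r$ have a further probability bounded below by a constant depending only on $\cM$ and $N$ (hence on $r$) of staying in $\cM\setminus\overline N$, forming two disjoint non-contractible loops that cut $\cM$ into annuli, and never re-entering $N$ — this is the usual consequence of the uniform crossing assumption used to steer the walk along a prescribed topological path, exactly as in \cref{lem:uniform_avoidance}. On this sub-event the loop-erasures inside $N$ coincide with those produced in the definition of $\cA_N$, so $\{\cA_\cM,\text{compatible}\}$ holds, and \eqref{eq:reducedratioP} follows.

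The main point — as in Step 3 of \cref{P:localglobal} — is the topological step: checking that, once the two branches exit $N$ at separated points, the configuration can be completed to $\cA_\cM$ with probability uniform in $\delta$, and that this completion can be arranged so as not to disturb the part of the picture already built inside $N$ (so that $\cA_N$ and compatibility are genuinely implied on the sub-event). This uses no new idea beyond \cref{lem:uniform_avoidance} and the finiteness estimates of \cref{sec:finiteness}, while the escape-probability comparison above is routine; I therefore expect the proof to be short given the machinery already in place, and do not anticipate a serious obstacle.
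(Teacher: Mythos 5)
Your proof is correct and follows essentially the same route as the paper: the denominator comparison is handled by uniform crossing across the annulus between $\partial B_{2r}$ and $\partial N$, and the numerator bound $\P_{\bs x}(\cA_N,\tau_N<\tau_{\bs\eta_r})\le C_r\,\P_{\bs x}(\cA_\cM,\text{compatible})$ is obtained by first invoking the separation estimate (\cref{cor:separation_chain}) to get the loop-erasures macroscopically separated at $\partial N$ with uniformly positive probability, then steering the continuations into a Temperleyan configuration by uniform crossing, exactly as in the paper. Your version spells out a few intermediate steps (the explicit comparison $c_r\,\P_{x^j}(\tau_{\partial B_{2r}}<\tau_{\eta_{r,j}})\le\P_{x^j}(\tau^j_N<\tau_{\eta_{r,j}})$, the identification of the conditioned law with the Section~\ref{S:Lawler} measure, the appeal to \cref{lem:uniform_avoidance}) that the paper leaves implicit, but there is no substantive difference.
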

\begin{proof}
By \cref{cor:separation_chain}, if we do Wilson's algorithm in $N$, conditioned on $\tau_{N} < \tau_{\bs{\eta}_r}$ and $\cA_{N}$, the loop-erasures are separated with positive probability. On that event, $\cA_\cM$ can also happen with positive probability by uniform crossing, so $$\P_{\bs x}( \cA_N, \tau_N < \tau_{\bs{\eta}_r}) \leq C_r \P_{\bs x}( \cA_\cM, \text{ compatible}).$$  The denominator is easier to lower bound, simply note that once the random walk leaves $B_{2r}$, there is a $C_r$ chance of exiting $N$ before hitting $\eta_{r,j}$ by crossing assumption.
\end{proof}

Recall $\mu_N$ is the law of two independent simple random walks started from the punctures, and stopped when they exit $N$.
\begin{lemma}\label{lem:partition_function}
	For all $r > 0$, there exists a constant $C_r > 0$ such that for all $\delta$ small enough 
	\[
	\int \frac{e^{- \mass_{N}( \cI( \bs{\eta}_r))}}{Z(\lambda_N)} d \mu_N (\bs \eta_r) \leq C_r.
	\]
\end{lemma}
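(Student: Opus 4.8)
The plan is to recognise that $Z(\lambda_N)$ is, up to the combinatorial factor coming from the loop-soup identity, comparable to the probability that two independent loop-erased walks from the punctures are disjoint, and that the integrand $e^{-\mass_N(\cI(\bs\eta_r))}\,d\mu_N(\bs\eta_r)$ is, after integrating over $\bs\eta_r$, bounded by the same quantity up to a constant. Concretely, first recall from \eqref{lambdam} (with $m=N$) that $Z(\lambda_N)=\sum_{\bs\eta_N\in\cA_N}e^{-\masss(\cI_N(\bs\eta_N))}\mu(\bs\eta_N)$, and from \eqref{lambdaprop} that this is (a constant multiple of) $\P(Y_1\cap Y_2=\emptyset)$ where $Y_1,Y_2$ are the two wired-UST branches in $N$ from the punctures; here one uses \cref{lem:law_pair} together with the fact, noted right after \eqref{lambdaprop}, that loops surrounding the origin automatically intersect both strands so passing from $\mass$ to $\masss$ only changes the partition function by an overall ($\bs\eta$-independent) factor which cancels.

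Second, I would bound the numerator. By the quasi-independence of loops across scales (\cref{L:QIloops}, i.e.\ \cref{lem:loop_independence_scale}) and of paths (\cref{L:QIpaths}, i.e.\ \cref{lem:quasi_independence}), together with the fact that the initial portion $\bs\eta_r$ lies at a fixed scale $r=O(1)$ while $N$ is macroscopic, one has
\begin{equation*}
e^{-\mass_N(\cI(\bs\eta_r))}\,\mu_N(\bs\eta_r)\asymp \mu(\bs\eta_r)\cdot e^{-\masss(\cI_r(\bs\eta_r))},
\end{equation*}
up to a constant depending only on $r$ and $\cM$; summing over $\bs\eta_r\in\cA_r$ the left side is $\le$ (a constant times) $Z(\lambda_r)$, and then by \cref{cor:separation} part b.\ (telescoping $Z(\lambda_r)\asymp Z(\lambda_{r+1})\asymp\cdots\asymp Z(\lambda_N)$ over the $O(1)$ intervening scales, since $r$ is a fixed macroscopic scale) we get $\sum_{\bs\eta_r}e^{-\mass_N(\cI(\bs\eta_r))}\mu_N(\bs\eta_r)\le C_r\, Z(\lambda_N)$. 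Dividing by $Z(\lambda_N)$ gives the claim. Alternatively, and perhaps more cleanly, one observes directly that $\int e^{-\mass_N(\cI(\bs\eta_r))}d\mu_N(\bs\eta_r)=\sum_{\bs\eta_r}\sum_{\bs\eta_N\succeq\bs\eta_r}(\text{positive terms})$ is dominated by $\sum_{\bs\eta_N}e^{-\masss(\cI_N(\bs\eta_N))}\mu(\bs\eta_N)\cdot(\text{bounded }h\text{-type factor})=Z(\lambda_N)\cdot O(1)$, where the bounded factor is exactly the ``cost per scale is $O(1)$'' content of \cref{cor:separation}.

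The main obstacle I anticipate is the bookkeeping at the boundary scale $r$: as in \cref{R:problem}, naively comparing $\mass_N(\cI(\bs\eta_r))$ with $\masss(\cI_r(\bs\eta_r))$ picks up loops that intersect $\bs\eta_r$ but leave $B_r$ without surrounding the origin, and one must check their mass is bounded uniformly — this is precisely what \cref{lem:loop_crossing} (Proposition) provides, combined with \cref{cor:long_loops} for macroscopic loops confined near scale $r$. One also needs that the two strands of $\bs\eta_r$ are typically separated at scale $r$ (so that these extra loops genuinely have bounded mass); this is supplied by the Separation Lemma \ref{lem:separation} and \cref{cor:separation} part a., allowing one to restrict the sum to $\bs\eta_r\in\sep_r$ at the cost of a constant. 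Everything else is a routine application of \cref{lem:law_pair}, the loop-soup description \cref{T:loopsNC}, and the quasi-independence estimates already established.
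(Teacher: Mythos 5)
Your proof reaches the right conclusion by essentially the same route as the paper (dominate the integral by $Z(\lambda_r)$, then bound $Z(\lambda_r)/Z(\lambda_N)$ via \cref{cor:separation}), but the first step is considerably over-engineered. The paper's argument is a one-line monotonicity observation: since $B_r\subset N$, one has $\mass_N(\cI(\bs\eta_r))\ge\mass_{B_r}(\cI(\bs\eta_r))$, hence $e^{-\mass_N(\cI(\bs\eta_r))}\le e^{-\mass_{B_r}(\cI(\bs\eta_r))}$, and the integral is at once $\le Z(\lambda_r)$; then \cref{cor:separation} (the paper invokes part~(c), you invoke part~(b) --- either works, telescoping over the $O_r(1)$ scales separating $r$ from $N$) gives $Z(\lambda_r)/Z(\lambda_N)\le C_r$. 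None of the quasi-independence machinery (\cref{L:QIloops}, \cref{L:QIpaths}), nor \cref{lem:loop_crossing}, nor the separation arguments you anticipate needing, is actually required, because only a one-sided bound is at stake.

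Be careful also that your intermediate estimate $e^{-\mass_N(\cI(\bs\eta_r))}\mu_N(\bs\eta_r)\asymp\mu(\bs\eta_r)\,e^{-\masss(\cI_r(\bs\eta_r))}$ is false as a two-sided statement: $\mass_N(\cI(\bs\eta_r))$ carries the full mass of loops in $N$ that surround the origin (every such loop hits both strands of $\bs\eta_r$), and this mass diverges as $\delta\to 0$, so the ratio of the two sides tends to $0$, not to a constant. Since the offending factor is $\le 1$ it does not endanger the $\le$ direction you actually use, but the $\asymp$ cannot be repaired by estimates on loops near scale $r$. In short, the mechanism is right but the write-up would benefit from replacing the quasi-independence detour with the direct monotonicity inequality, which both shortens the proof and avoids the incorrect two-sided claim.
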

\begin{proof}
	We first bound
\[
	\int \frac{e^{- \mass_{N}( \cI( \bs{\eta}_r))}}{Z(\lambda_N)} d \mu_N \leq \int \frac{e^{- \mass_{B_r}( \cI( \bs{\eta}_r))}}{Z(\lambda_N)} d \mu_N = \frac{Z(\lambda_r)}{Z(\lambda_N)}.
\]
The lemma is then a direct consequence of point (c) in \cref{cor:separation}.
\end{proof}
\begin{proof}[Proof of \cref{prop:quantitative}]
Let $B_r$ denotes all points within distance $r$ from the puncture.
Let
$$
p_r := \sup_{\bs\eta_{r/2}} \tilde \mu_{{r/2,r}}(E) + \sup_{\bs\eta_{r/4}} \tilde \mu_{{r/4,r/2}}(E)
$$
\begin{figure}[h]
\centering
\includegraphics[scale = 1]{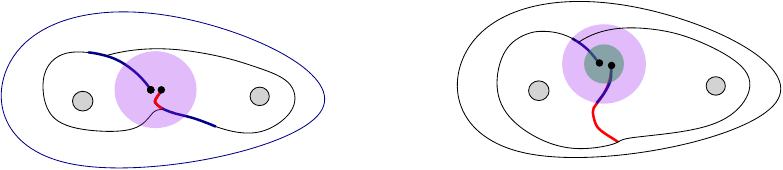}
\caption{The purple disc is the ball of radius $r/2$ and the green disc is the ball of radius $r/4$. The black curve is $\eta_1$ and the red curve is $\eta_2$. The left is the case $\cP_1$ while the right is case $\cP_2$. The dark blue curve is necessarily disjoint in the annulus with inner radius $r/2$ (resp. $r/4$) and outer radius $r$ (resp. $r/2$) in the left (rep. right). }\label{fig:cases_merge}
\end{figure}

We need to divide the set of paths in $\cA_{\cM} $ into several categories as in \cref{sec:conv_special}. We write $\eta_j = \mathfrak B_j$ for the branches of the skeleton in this proof.
Let $\cP_1$ be the set of paths where $\eta_2$ coalesces with $\eta_1$ inside distance $r/2$ from the puncture. Notice that in this case, the portion of the branches between the first hit at distance $r/2 $ to the first hit of distance $r$ are necessarily disjoint. Let $\cP_2$ be the complement of $\cP_1$ in the set $\cA_\cM$. Define
a set of good paths $\bs \eta_s$  in $\cA_s$ to be
$$
\cG_{s,t}:= \Big\{\bs \eta_s : \frac{\prod_{j=1,2}\P_{x^j} (\tau^j_{\partial B_{t}} < \tau_{{\eta}_{s,j}} )}{\P_{\bs x}( \cA_\cM, \text{ compatible}) }\leq \frac{1}{\sqrt{p_r}} \Big\}.	
$$
where $\bs x = (x^1,x^2)$ is the tip of $\bs \eta_s$. The broad idea is to bound $\underline{\lambda}_{\cM}(E)$ on the set of good paths $\cG_{r/2,r}$, show $\cG_{r/2,r}^c$ has small probability, and employ Markov's inequality. Because of similar topological reasons as explained in Step 3 of the proof of \cref{P:localglobal}, we need to carefully do a surgery to make sure certain portions of the branches are disjoint. This explains why we need $\cG_{r/4,r/2}$ as well and there are two terms in the definition of $p_r$.

Recall from \eqref{eq:tildemu}, for any $r'>r$, $\tilde \mu$ can be expressed as 
\[
{\tilde \mu}_{r, r'}( \bs \eta) = \P_{\bs x}( LE( X_j) = \gamma^j_r| \tau^j_{\partial B_{r'}} < \tau^j_{\eta_r^j})
\]
where $LE( X_j)$ denotes the loop-erasure following Wilson's algorithm and $\gamma^j_r$ is as defined just before \eqref{eq:tildemu}.
 Note that $\gamma^j_r$ could be very different from $\eta_j \setminus \eta_{j,r}$ (cf. \cref{fig:cases_merge}).

We will first bound the probability of $E \cap \cP_1 \cap \cG_{r/2,r}$. Observe that, for $\bs \eta \in \cP_1$, the event $\{\tau^j_{\partial B_{r}} < \tau^j_{\eta_{r/2}^j}\}$ is included in then event $\{LE( X_j) = \gamma^j_{r/2}\}$ (as long as $\gamma^j_{r/2}$ satisfies $\cA_\cM$), therefore we have
\begin{align*}
\tilde \mu_{{r/2,r}}( \bs\eta)  &= \frac{\P_{\bs x}( LE( X_j) = \gamma^j_{r/2},j=1,2)}{\prod_{j=1,2} \P(\tau^j_{\partial B_{r}} < \tau^j_{\eta_{r/2}^j})}=\frac{q (\cup \gamma^j_{r/2}) \exp(\mass_{\cM_{r/2}} ((\cup \gamma^j_{r/2}) \cap \cC_{\bs \gamma_{r/2}}))}{\prod_{j=1,2} \P(\tau^j_{\partial B_{r}} < \tau^j_{\eta_{r/2}^j})}.
\end{align*}
Note that outside of $\cP_1$, the formula does not hold because one would also need to include the event $\tau^2_{\partial B_{r}} < \tau^2_{\eta_{r/2}^2}$ in the numerator when $\gamma^2_{r/2}$ does not reach $ \partial B_{r}$.
Using this we have
\begin{align*}
\underline{\lambda}_\cM (E \cap \cP_1 \cap \cG_{r/2,r})& = \E_{\ul_{\cM} } [ 1_{\cG_{r/2,r} } \E_{\ul_{\cM}} ( 1_{E\cap  \cP_1} | \bs \eta_{r/2}) ] \\
& = \E_{ \underline{\lambda}_{\cM}} \left[1_{\cG_{r/2,r}} \int_{\bs \eta \succeq \bs \eta_{r/2}} \frac{\underline{\lambda}_\cM( \bs\eta | \bs\eta_{r/2})}{\tilde \mu_{\cM_{r/2,r}}( \bs\eta)}1_{E \cap \cP_1} d \tilde\mu_{\cM_{r/2,r}}( \bs\eta)\right] \\
&  \asymp
 \E_{\underline{\lambda}_{\cM}} \Big[1_{\cG_{r/2,r}} \int_{\bs \eta \succeq \bs \eta_{r/2}}	\frac{e^{\mass_{\cM_{r/2}} (\cI(\cup_j \gamma^j) \cap \cC_{\bs \eta}) }}{ e^{\mass_{\cM_{r/2}} ( \cI( \cup_j \gamma^j) \cap \cC_{\bs \gamma})}}
 \frac{\prod_{j=1,2}\P_{x^j} (\tau^j_{\partial B_{r}} < \tau_{{\eta}_{r,j}} )}{\P_{\bs x}( \cA_\cM, \text{ compatible}) } 1_{E \cap \cP_1} d \tilde\mu_{{r/2,r}}( \bs\eta) \Big] .
\end{align*}
%
%
%
%

As usual the ratio of loop measure is bounded by some $C_r$ (bounding by mass of loops of non  contractible support). By definition of $\cG_{r/2,r}$, the ratio of the probabilities is bounded by $1/\sqrt{p_{r}}$. Thus we get
\begin{equation*}
\underline{\lambda}_\cM (E \cap \cP_1 \cap \cG_{r/2,r}) \le \frac1{\sqrt{p_{r}}} \E_{\underline{\lambda}_{\cM}}(1_{\cG_{r/2,r}} \tilde\mu_{r/2,r} (E \cap  \cP_1)) \le \sqrt{p_r} .
\end{equation*}
since $\tilde\mu_{r/2,r} (E \cap  \cP_1)\le p_r$ uniformly over the choice of $\bs \eta_{r/2}$ by definition.

We employ a similar tactic on $\cP_2$ with a slight difference in the surgery step: we cut the paths at $r/4$ instead of $r/2$. Then we restrict to $\cG_{r/4,r/2}$ and we look at the Radon--Nikodym derivative with respect to $\tilde \mu_{r/4,r/2}$. This allows us a similar series of estimates which reduce to
\begin{equation*}
\underline{\lambda}_\cM (E \cap \cP_2 \cap \cG_{r/4,r/2}) \le  \frac1{\sqrt{p_{r}}} \E_{\underline{\lambda}_{\cM}}(1_{\cG_{r/4,r/2}} \tilde\mu_{r/4,r/2} (E \cap  \cP_2)) \le \sqrt{p_r}.\end{equation*}
Now we claim that
\begin{equation}
\underline{\lambda}_\cM (\cG_{r/2,r}^c \cup \cG^c_{r/4,r/2}) \le 2C_r^3 \sqrt{p_r} \label{eq:complementG}
\end{equation}
To see this, we now take the Radon--Nikodym derivative with respect to $\underline{\lambda}_N$ and use \cref{aprioriStep3}. We only show how to upper bound $\cG_{r/2,r}^c$ (as the other term is similar). To that end, note that
	\begin{align*}
	\underline{\lambda}_\cM (\cG_{r/2,r}^c) & = \int \frac{\ul_{\cM}(\bs{\eta}_{r/2})}{\ul_{N}(\bs{\eta}_{r/2})} 1_{\cG_{r/2,r}^c} d \ul_{N}(\bs{\eta}_{r/2}) \\
	& \leq C_r \int 1_{\cG_{r/2,r}^c} d \ul_{N}(\bs{\eta}_r) \\
	& \leq C_r\int 1_{	\big\{\frac{\P_{\bs x}( \cA_{N}, \tau_{N} < \tau_{\bs{\eta}_r)}}{\prod_{j} \P_{x^j}( \tau^j_{N} < \tau_{\eta_{r,j}})} \le C_r\sqrt{p_r}\big\}} \frac{\underline{\lambda}_N(\bs\eta_r)}{\mu_N(\bs\eta_r)} d \mu_N(\bs\eta_r) \\
	 &\leq C_r\int 1_{	\big\{\frac{\P_{\bs x}( \cA_{N}, \tau_{N} < \tau_{\bs{\eta}_r)}}{\prod_{j} \P_{x^j}( \tau^j_{N} < \tau_{\eta_{r,j}})} \le C_r\sqrt{p_r}\big\}} \frac{e^{- \mass_N (\cI(\bs \eta_r))}}{Z(\lambda_N)}  \frac{\P_{\bs x}( \cA_{N}, \tau_{N} < \tau_{\bs{\eta}_r)}}{\prod_{j} \P_{x^j}( \tau^j_{N} < \tau_{\eta_{r,j}})}d \mu_N(\bs\eta_r) \\
	& \leq C_r \int \frac{e^{- \mass_N (\cI(\bs \eta_r))}}{Z(\lambda_N)} C_r \sqrt{p_r} d \mu_N(\bs\eta_r) \\
	& \leq C_r^3 \sqrt{p_r}.
	\end{align*}
In the third line above, we used \cref{lem:ratioP}. The proof is complete for ${\mathsf{k}}=1$ by combining \eqref{eq:complementG} with the previous estimates.
\end{proof}
We now state \Cref{prop:quantitative} for general ${\mathsf{k}}\ge 1$.
\begin{prop}\label{prop:quantitative_general}
There exists a constant $C$ depending only on $r$ and $\mathcal{M}$ such that for all $\delta>0$ sufficiently small, for all events $E$ measurable with respect to $\mathfrak s \setminus \bs\eta_r$,
\begin{equation*}
	\underline{\lambda}_{\cM}( E) \leq C \left(\sqrt{ \sum_{j=1}^{2{\mathsf{k}}}\sup_{\bs\eta_{r/2^j}} \tilde \mu_{\mathcal{M}_{r/2^j,r/2^{j-1}}}(E)} \right)
	\end{equation*}
	where the supremum is over all possible $\bs \eta_{r/2^j}$.
\end{prop}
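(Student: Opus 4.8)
The plan is to reduce \cref{prop:quantitative_general} to \cref{prop:quantitative} by following the exact same strategy but keeping track of the extra topological cases that arise when there is more than one puncture, precisely as was already sketched at the end of the proof of \cref{P:localglobal} (Step 3 for ${\mathsf{k}}>1$). First I would set up the notation: for each puncture $x_i$, let $B_{r}^{(i)}$ denote the ball of radius $r$ around $x_i$, and consider the nested annuli $A_j(i) = B_{r/2^j}^{(i)} \setminus B_{r/2^{j+1}}^{(i)}$ for $0 \le j \le 2{\mathsf{k}}-1$. The role played by $\cP_1,\cP_2$ (cut at $r/2$ or $r/4$) in the ${\mathsf{k}}=1$ proof is now played by a partition of $\cA_{\cM}$ into ${\mathsf{k}}$ (or $2{\mathsf{k}}$) categories indexed by which scale the branches must be cut at: by the pigeonhole argument already used in \cref{P:localglobal}, sampling the $2{\mathsf{k}}$ skeleton branches in order and recording all coalescence points $\Xi$, there is some $j\in\{0,\dots,2{\mathsf{k}}-1\}$ such that \emph{none} of the annuli $A_j(i)$, $1\le i\le {\mathsf{k}}$, contains a point of $\Xi$. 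Let $\cP_j$ be the event that $j$ is the smallest such index, so $\cA_{\cM} = \bigsqcup_{j=0}^{2{\mathsf{k}}-1}\cP_j$ (this is where the sum $\sum_{j=1}^{2{\mathsf{k}}}$ in the statement comes from).

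Next, for each category $\cP_{j-1}$ I would define the good event $\cG_{r/2^j,r/2^{j-1}}$ exactly as in the single-puncture proof but with the product over all endpoints of all $2{\mathsf{k}}$ branches and all punctures:
\begin{equation*}
\cG_{s,t} := \Big\{ \bs\eta_s : \frac{\prod_{i,\ell}\P_{x^{(i,\ell)}}(\tau_{\partial B^{(i)}_t} < \tau_{\eta^{(i)}_{s,\ell}})}{\P_{\bs x}(\cA_{\cM},\text{ compatible})} \le \frac{1}{\sqrt{p_r}}\Big\},
\end{equation*}
where $p_r := \sum_{j=1}^{2{\mathsf{k}}}\sup_{\bs\eta_{r/2^j}}\tilde\mu_{\cM_{r/2^j,r/2^{j-1}}}(E)$. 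Then, on $\cP_{j-1}$, by construction the branch portions between the first hit of $\partial B_{r/2^j}^{(i)}$ and first hit of $\partial B_{r/2^{j-1}}^{(i)}$ are disjoint for every $i$ (no coalescence in $A_j(i)$), so the formula $\tilde\mu_{r/2^j,r/2^{j-1}}(\bs\eta) = \P_{\bs x}(LE(X)=\gamma | \tau_{\partial B}<\tau_{\eta})$ of \eqref{eq:tildemu} applies coordinatewise and one gets, exactly as in the ${\mathsf{k}}=1$ case, the bound $\underline\lambda_{\cM}(E\cap\cP_{j-1}\cap\cG_{r/2^j,r/2^{j-1}}) \lesssim \sqrt{p_r}$ using that the loop-measure ratio is bounded by the mass of loops with non-contractible support (\cref{L:fewcontractible}, \cref{prop:marginalM}) and the defining inequality of $\cG$. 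Summing over $j$ gives $\underline\lambda_{\cM}(E\cap\bigcup_j\cG_{r/2^j,r/2^{j-1}}) \lesssim \sqrt{p_r}$.

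Then I would bound $\underline\lambda_{\cM}(\cG_{r/2^j,r/2^{j-1}}^c)$ for each $j$ by transferring to the independent reference measure $\prod_i\underline\lambda_{N_i}$ via the a priori Radon--Nikodym bound \eqref{aprioriStep3'}, then writing the resulting integral against $\prod_i\mu_{N_i}$ and using \cref{lem:ratioP} (suitably generalised, with the product over all endpoints and all punctures) together with \cref{lem:partition_function} and \cref{cor:separation}(c), exactly mimicking the displayed chain of inequalities at the end of the proof of \cref{prop:quantitative}; this yields $\underline\lambda_{\cM}(\cG_{r/2^j,r/2^{j-1}}^c) \le C_r^3\sqrt{p_r}$. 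A union bound over the $2{\mathsf{k}}$ values of $j$ then gives $\underline\lambda_{\cM}(\bigcup_j \cG_{r/2^j,r/2^{j-1}}^c) \le 2{\mathsf{k}} C_r^3\sqrt{p_r}$, and combining with the previous paragraph finishes the proof with $C = C(r,\cM,{\mathsf{k}})$ (and ${\mathsf{k}}$ can be absorbed since it depends only on $\cM$). The main obstacle is purely bookkeeping: making the pigeonhole/coalescence surgery precise so that on each $\cP_{j-1}$ all the relevant branch portions are genuinely disjoint in the chosen annulus (so that the excursion-decomposition formula \eqref{eq:tildemu} is valid coordinatewise and the loop-measure error stays controlled by non-contractible-support loops), and checking that \cref{lem:ratioP}, \cref{lem:partition_function} go through unchanged with products over all punctures --- none of which requires genuinely new ideas beyond what is already in \cref{P:localglobal} and the ${\mathsf{k}}=1$ argument.
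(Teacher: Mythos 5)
Your proposal follows exactly the same approach as the paper: the pigeonhole argument over the $2{\mathsf{k}}$ nested annuli around each puncture to locate a coalescence-free scale, the surgery cutting each branch at that scale, and the transfer to the ${\mathsf{k}}=1$ machinery via the good events and the a priori Radon–Nikodym bound \eqref{aprioriStep3'}. The paper's own proof is only a brief sketch that "leaves the details to the reader," and your version fills in precisely the bookkeeping (the partition $\cP_j$, the product over endpoints and punctures in $\cG_{s,t}$, the union bound over the $2{\mathsf{k}}$ scales) that the paper elides — so it is a faithful, slightly more explicit rendering of the intended argument.
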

\begin{proof}
We sketch the proof as the idea is similar to the proof of \eqref{aprioriStep3'}.  Define $\eta_{11}$ to be the first branch, and iteratively define $\eta_{ij}$ to be the branch until the first hit of a previously sampled branch for $1 \le i \le \mathsf{k}, j=1,2$. 

Let $\Xi$ the set of triple points defined in the  skeleton $\mathfrak{s}$, which we can think of as all the points where one loop-erased path merges into another. Recalling the argument in the ${\mathsf{k}}=1$ case, we need some room where no such coalescence take place. As in the proof of \eqref{aprioriStep3'}, define a set $\mathsf{A}$ of annuli of exponentially increasing radii, finite but sufficiently large that, by the pigeonhole principle, at least one annuli contains no point in $\Xi$. Say that it is $A( r/2^j, r/2^{j-1})$, we can now proceed exactly as for \cref{prop:quantitative} by cutting the paths at $r/2^j$ to bound $\underline{ \lambda}_\cM$ by $\tilde{\mu}_{r/2^j, r/2^{j-1}}$ on that event. We leave the details to the reader.
\end{proof}

\appendix

\section{Random walk on the universal cover}\label{app:RW}

The purpose of this section is to prove the following lemma, saying that the random walk on the universal cover is transient, uniformly over the mesh size (in a specific sense).

Let $\cM$ be a hyperbolic manifold with no boundary and let $p^{-1}$ denote the associated conformal lift from $\cM$ to the universal cover of $\cM$ taken to be the unit disc. Let $\Gamma^\d$ be a sequence of graphs on $\cM$ satisfying the assumption of \cref{sec:setup}. Here we denote by $X$ the random walk on the universal cover, i.e on $p^{-1}( \Gamma^\d) = \tilde \Gamma^\d$. We emphasise here that $\tilde \Gamma^\d$ is not the universal cover of the graph $\Gamma^\d$, but the pre-image of the embedding of $\Gamma^\d$ under the map $p$.

\begin{prop}\label{lem:boundary_convergence}
For all $\eps > 0$, there exists $\eta > 0$ such that for all $\delta$ small enough, for all $v \in \tilde\Gamma^\d $ with $|v| \ge 1- \eta$, 
\[
\P_v(  X[0, \infty] \cap B(0, 1 - \eps) = \emptyset) \geq 1 - \eps.
\]
\end{prop}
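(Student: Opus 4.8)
The hypothesis ``$\cM$ hyperbolic, no boundary'' means $\cM$ is a closed hyperbolic surface, so $\tilde M = \D$ and the Fuchsian group $F$ is cocompact; fix a compact fundamental polygon $\mathfrak f\ni 0$ for $F$. The only features I will use are: (i) the walk on $\tilde\Gamma^\d$ and the graph itself are invariant under $F$; (ii) applying \cref{crossingestimate} with $K=\cM$ (so $\tilde K=\D$), the uniform crossing estimate holds for Euclidean rectangles of every scale $c\ge\delta/\delta_0$ sitting anywhere inside $\D$; and (iii) by \cref{InvP} (and \cref{rmk:start_point}) the invariance principle holds, uniformly over starting points, on every fixed compact subset of $\D$. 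I will bound $\P_v(\tau_{\mathrm{in}}<\infty)$, where $\tau_{\mathrm{in}}$ is the hitting time of $B(0,1-\eps)=\{z:|z|\le 1-\eps\}$, for all $v$ with $1-|v|\le\eta$ and $\eta$ small; without loss of generality $\eps=2^{-k_0}$ and $\eta=2^{-K}$. The whole point is that, in the coordinate $-\log_2(1-|z|)$ (the dyadic ``shell index''), getting from $v$ back to $B(0,1-\eps)$ is a one–dimensional gambler's ruin run against a wall ($\partial\D$) that is only distance $\eta$ away.

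The core estimate is a scale–uniform \emph{biased} crossing estimate. Write $S_k=\{z: 2^{-k-1}\le 1-|z|\le 2^{-k}\}$. I claim there are $C_1\in\N$, $\alpha_1>\tfrac12$ and $\Lambda<\infty$, depending only on $\cM$ and the crossing constants, such that for all sufficiently small $\delta$, all $k$ with $k_0\le k\le \log_2(1/(\Lambda\delta))$ and all $v\in\tilde\Gamma^\d\cap S_k$,
\[
\P_v\big(\tilde X \text{ reaches }\{1-|z|\le 2^{-k-C_1}\}\text{ before }\{1-|z|\ge 2^{-k+C_1}\}\big)\ \ge\ \alpha_1 .
\]
For the ``inner'' shells ($k$ of order $k_0$, i.e.\ $1-|z|\asymp\eps$) this is a fixed compact of $\D$ and the statement is just the invariance principle together with the harmonic function $\log|z|$. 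For the ``outer'' shells ($1-|z|$ comparable to $\delta$), which is where the difficulty is, I transport the problem to a fixed compact: choose $f\in F$ with $f^{-1}v\in\mathfrak f$; since $f$ is a deck transformation it is a graph automorphism of $\tilde\Gamma^\d$, so the walk from $v$ is the $f$–image of the walk from $f^{-1}v$, and since $f$ is a hyperbolic isometry the concentric annulus $\{2^{-k-C_1}\le 1-|z|\le 2^{-k+C_1}\}$ (of bounded hyperbolic size) is mapped by $f^{-1}$ into a fixed compact $K_*\subset\D$ depending only on $C_1$ and $\mathfrak f$. On $K_*$ the invariance principle applies, uniformly over the starting point; and since the trace of Brownian motion on $\D$ killed on $\partial\D$ is conformally invariant, the hitting probability above converges as $\delta\to0$ to the harmonic–measure quantity $\tfrac{2^{C_1}-1}{2^{C_1}-2^{-C_1}}$, which exceeds $\tfrac12$ once $C_1$ is large — uniformly in $k$ and in the direction of $v$. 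Absorbing the invariance–principle error gives the claim. \emph{This transport step is the main obstacle}: the invariance principle as stated only controls fixed compacts of $\D$, whereas the estimate is needed at the $\delta$–dependent Euclidean scale $1-|v|$ near $\partial\D$, and combining $F$–equivariance with conformal invariance of the Brownian trace is the device that makes ``near $\partial\D$ the walk looks like Brownian motion at hyperbolic scale one'' precise and uniform in $\delta$; some care is also needed to match the Euclidean dyadic shells with $F$–translates of the fixed region.

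Granting the biased estimate, the proof concludes by a routine gambler's ruin. Start the walk from $v$ with $1-|v|\le 2^{-K}$ and introduce the ladder times $\theta_0=0$ and $\theta_{j+1}=\inf\{n>\theta_j:\ -\log_2(1-|\tilde X_n|)\ \text{differs from its value at }\theta_j\text{ by }\pm C_1\}$, stopped if $Z_j:=-\log_2(1-|\tilde X_{\theta_j}|)$ leaves $[k_0,\log_2(1/(\Lambda\delta))]$; each $\theta_j$ is a.s.\ finite by the crossing estimate. By the claim, as long as $Z_j$ is in the valid range it dominates a random walk on $\Z$ with increments $\pm C_1$ that moves up with probability $\ge\alpha_1>\tfrac12$, so an exponential–supermartingale optional–stopping argument (with $w\mapsto((1-\alpha_1)/\alpha_1)^{w/C_1}$) shows that, started at level $\ge K$, it reaches level $k_0$ before leaving the valid range with probability at most $\rho^{K-k_0}$ for some $\rho=\rho(\alpha_1,C_1)<1$; and if instead it leaves the valid range, then in order ever to hit $B(0,1-\eps)$ it must first return to level $\log_2(1/(\Lambda\delta))$, from which the same bound gives probability at most $\rho^{\log_2(1/(\Lambda\delta))-k_0}$ of then reaching $k_0$. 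Hence $\P_v(\tau_{\mathrm{in}}<\infty)\le \rho^{K-k_0}+\rho^{\log_2(1/(\Lambda\delta))-k_0}$: choosing $K$ (equivalently $\eta=2^{-K}$) large depending only on $\eps$ makes the first term at most $\eps/2$ uniformly in $\delta$, and then taking $\delta$ small makes the second term at most $\eps/2$. Since this holds for every $v$ with $1-|v|\le\eta$, the proposition follows.
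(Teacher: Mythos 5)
Your overall route is the one the paper's appendix takes: use $F$-equivariance of $\tilde\Gamma^\d$ to transport a near-boundary crossing problem to a fixed compact fundamental polygon, apply the invariance principle there, read off an outward drift, and finish by gambler's ruin. (The paper tracks the hyperbolic distance $D_n=d_\H(0,\tilde X_{\tau_n})$ and shows $\E(D_{n+1}-D_n\mid\tilde X_{\tau_n})\ge c>0$ via \cref{cor:disc_convergence_uniform}; you track $-\log_2(1-|\tilde X|)$ and prove a biased crossing estimate. These are essentially equivalent, and both rest on the same deck-transformation trick.)

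There is, however, a genuine gap in your concluding paragraph. You claim the biased crossing estimate only for levels $k\le\ell:=\log_2(1/(\Lambda\delta))$, and argue that if the walk leaves this ``valid range'' it must return to level $\ell$, from which reaching $k_0$ costs at most $\rho^{\ell-k_0}$. But $\rho^{\ell-k_0}$ bounds the probability of reaching $k_0$ \emph{before returning to the deep region}; the walk may shuttle between $\{Z_j>\ell\}$ and the valid range arbitrarily many times and get a fresh attempt each time, and the recursion (with $a\le\rho^{\ell-k_0}$ the single-attempt probability and $p$ the probability of ever succeeding) only gives $p\le a+(1-a)p$, i.e.\ $p\le1$. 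Closing it would require a priori transience in the region $\{1-|z|<\Lambda\delta\}$, which is what you are trying to prove. The repair is that the cutoff $\ell$ is unnecessary: your own transport step already works at \emph{every} level. For any vertex $v$, however close to $\partial\D$, picking $f\in F$ with $f^{-1}v\in\mathfrak f$ turns the crossing problem for the walk from $v$, restricted to a bounded hyperbolic neighbourhood of $v$, into one inside a fixed compact $K_*\ni f^{-1}v$, to which the invariance principle applies for $\delta$ small depending only on $K_*$, uniformly in $v$; the restriction $c\ge\delta/\delta_0$ in the crossing assumption is a statement about Euclidean scales inside $K_*$ (which are $O(1)$), not about $1-|v|$. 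This is precisely why \cref{cor:disc_convergence_uniform} holds for all $v\in\tilde\Gamma^\d$ with no lower bound on $1-|v|$. With the bias available at every level $\ge k_0$, the gambler's ruin bound $\P_v(\inf_j Z_j\le k_0)\le\rho^{(K-k_0)/C_1}$ follows outright and the proposition is proved by taking $\eta=2^{-K}$ small depending only on $\eps$. (A smaller imprecision in the same spirit: the full Euclidean shell $\{2^{-k-C_1}\le1-|z|\le2^{-k+C_1}\}$ wraps around $\D$ and has infinite hyperbolic diameter, so its $f^{-1}$-image is not contained in a fixed compact; one should localise to the piece of the shell within bounded hyperbolic distance of $v$ and control long angular excursions via the crossing estimate, but this is routine and not the source of the gap.)
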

{Observe that if the manifold has a boundary, then the set of points in $\partial D$ which is not a lift of $\partial M$ has Lebesgue measure 0. Then by the Beurling type estimate (\cref{lem:Beurling}), the estimate in \cref{lem:boundary_convergence}  is clear. Thus we focus only on the case where $\cM$ has no boundary in the rest of this appendix. Observe that in this case, there is a fundamental domain of $\cM$ which is a compact subset of $\D$.}
	Note that the Invariance principle assumption (assumption (\ref{InvP}) in \cref{sec:setup}) does not directly imply the result since it only covers convergence in compact subsets of the disc. 

	The first step is to prove some uniform convergence starting from a compact set. Let $B_{\H}(z)$ denote the hyperbolic ball of (hyperbolic) centre $z$ and (hyperbolic) radius $1$ for $z \in \D$ and let $\tau_z$ denote the exit time from this ball. Also let $\phi_z$ be any conformal map sending $\D$ to itself and $z$ to $0$.
\begin{lemma}\label{lem:hitting_centre}
For any compact set $K \subset \D$ and any $\eps>0$, there exists $\delta_0>0$ such that for all $\delta < \delta_0$ and for all $v \in \tilde\Gamma^\d \cap K$, there exists a coupling between $X( \tau_{v})$ and a uniform variable $U$ on $\partial B_\H(0)$ such that
\[
\P_v( | \phi_v( X_{\tau_v}) - U | > \eps ) \leq \eps.
\]
\end{lemma}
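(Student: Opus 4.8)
The plan is to upgrade the invariance principle (\cref{InvP}), which a priori only gives convergence in law for curves stopped at the exit of a fixed compact subset of $\D$, to a \emph{uniform} coupling statement: uniform both over the starting point $v$ in the compact set $K$ and (after a conformal change of coordinates) over the target distribution on the boundary of a hyperbolic ball. The first observation is that, since $K$ is compact in $\D$, for $v\in K$ the hyperbolic ball $B_\H(v)$ of hyperbolic radius $1$ stays inside a slightly larger compact set $K'\Subset\D$; in particular $p$ is injective on a neighbourhood of $K'$ (if $K'$ is small enough; otherwise one covers $K$ by finitely many such pieces and argues on each one, which does not affect the statement) and the invariance principle applies to the walk killed on exiting $K'$. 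The maps $\phi_v$ depend continuously (indeed smoothly) on $v\in K$, and can be chosen to do so, so $\phi_v(B_\H(v))=B_\H(0)$ and $\phi_v$ is uniformly bi-Lipschitz on $B_\H(v)$ with constants depending only on $K$. Hence it suffices to prove the coupling statement with $X(\tau_v)$ in place of $\phi_v(X(\tau_v))$, i.e. to couple the exit position of the walk from $B_\H(v)$ with the exit position of a Brownian motion from the same ball, uniformly in $v$, and then push forward by $\phi_v$ (the exit distribution of Brownian motion from $B_\H(v)$ is exactly harmonic measure, which is mapped by $\phi_v$ to the uniform measure on $\partial B_\H(0)$ by conformal invariance of harmonic measure; the $o(1)$ error is transported with a uniformly bounded distortion).

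The core step is therefore a compactness/continuity argument for the exit measures. First I would note that \cref{rmk:start_point} already upgrades \cref{InvP} to the statement that, for each fixed $v$, the walk started from $v^\d$ converges in law (up to time change) to Brownian motion started from $v$, as curves up to the exit of any compact set; in particular $X(\tau_{v^\d})$ converges in law to the exit position of Brownian motion from $B_\H(v)$. To get uniformity over $v\in K$, I would argue by contradiction: if the claim failed, there would be $\eps>0$, a sequence $\delta_n\to0$, and points $v_n\in\Gamma^{\delta_n}\cap K$ such that no coupling achieves the bound; passing to a subsequence, $v_n\to v_\infty\in K$. The exit position of Brownian motion from $B_\H(v)$ depends continuously on $v$ in the weak topology (this is standard: harmonic measure on a smoothly varying family of domains varies continuously; one can also see it by coupling two Brownian motions started at nearby points). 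Combined with the single-point convergence for $v_\infty$ and a "two-scale" argument (run the walk from $v_n^{\delta_n}$ until it is close to $v_\infty$, which happens with probability tending to $1$ by the crossing estimate — exactly the mechanism of \cref{rmk:start_point} — then apply convergence from near $v_\infty$), one deduces that $X(\tau_{v_n})$ does converge in law to the exit position from $B_\H(v_\infty)$, hence (by the portmanteau theorem and the fact that convergence in law to a fixed continuous-distribution target on a Polish space is equivalent to the existence of a coupling with small error) a coupling with error $<\eps$ exists for $n$ large, a contradiction. Equivalently, and perhaps more cleanly, one can invoke a uniform version of the invariance principle: the family of laws of $(\tilde X_t)_{0\le t\le \tau_{K'}}$ started from $v^\d$, $v\in K$, is tight, any subsequential limit is Brownian motion (by the single-point statement and continuity of BM in its starting point), and the convergence is therefore uniform over the compact index set $K$ by a standard Arzelà–Ascoli-type argument on the space of probability measures on curves.

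The main obstacle I expect is purely the \emph{uniformity in $v$}: the invariance principle as stated is pointwise in the starting vertex, and one must extract a modulus of convergence that does not depend on $v\in K$. This is not deep but requires care — the cleanest route is the contradiction/compactness argument above, using that $K$ is compact, that Brownian exit measures vary continuously, and that the crossing assumption lets one "restart" the walk from any point near a limit point with uniformly positive probability. A secondary, genuinely minor point is to make the choice $v\mapsto\phi_v$ measurable/continuous and with uniformly controlled distortion on $B_\H(v)$ for $v\in K$; this is elementary since the Möbius maps of $\D$ form a finite-dimensional Lie group acting smoothly, and $B_\H(v)$ has fixed hyperbolic radius, so all relevant quantities depend continuously on $v$ and are bounded on the compact set $K$. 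Once these are in place, transporting the $\eps$-coupling from $X(\tau_v)$ through $\phi_v$ (shrinking $\eps$ by the distortion constant, which is harmless) completes the proof.
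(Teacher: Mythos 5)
Your overall approach matches the paper's: reduce to exit measures, use compactness of $K$ together with the pointwise invariance principle, and upgrade to a uniform-over-$K$ statement via some equicontinuity argument coming from the crossing assumption, then push forward through $\phi_v$. The paper does this directly rather than by contradiction, but that is a cosmetic difference.

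The genuine gap is in the mechanism you invoke for the key step, namely showing that for a diagonal sequence $v_n\to v_\infty$, $\delta_n\to 0$, the exit distribution of the walk started from $v_n$ on $\Gamma^{\delta_n}$ converges to the Brownian exit distribution from $v_\infty$. You appeal to (i) continuity of continuum Brownian exit measures in the starting point, and (ii) a ``restart near $v_\infty$'' argument modelled on \cref{rmk:start_point}. Neither of these by itself bridges the gap. Point (i) is a purely continuum statement and says nothing about the discrete approximations being close to each other. Point (ii) is designed for a different purpose: it propagates the invariance principle from one \emph{fixed} starting point to another \emph{fixed} starting point via the strong Markov property, and gives only a one-sided (positive probability) comparison; it does not give the two-sided comparison of exit distributions from nearby starting points on the same lattice $\Gamma^{\delta_n}$, uniformly as $\delta_n\to 0$, which is what you actually need. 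What does close this gap, and what the paper invokes, is the (discrete) Harnack inequality for harmonic functions, which does follow from the uniform crossing estimate: it yields that the exit distribution from a fixed ball is equicontinuous in the starting vertex \emph{uniformly in $\delta$}, and then the pointwise invariance principle plus compactness of $K$ give the uniform statement. You should replace the ``restart'' step with an explicit appeal to Harnack (or a two-point coupling of walks on the same lattice that it enables); the continuum continuity of harmonic measure then only enters to handle the fact that the ball $B_\H(v_n)$ itself varies, a secondary and comparatively harmless point.
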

\begin{proof}
	By the invariance principle assumption, we know that for any $R$, for any sequence $(v^\d)_{\delta}$ of points converging to some $z$ in the interior of $\D$, the law of $ X_{\tau_z}$ under $\P_{v^\d}$ converges to the law of the exit point position from $B_\H(z)$ by a Brownian motion started at $z$. On the other hand, since the uniform crossing estimate implies a Harnack's inequality, it is clear that the law of the exit position from a ball is equicontinuous in $\delta$ when one changes the starting point. Overall if we fix a compact $K \subset D$ and $\eps > 0$, we can see that for all $\delta$ small enough, for all sequences $(v^\d)_{\delta} \in K \cap \tilde\Gamma^\d$ converging to $z$, the law of $ X_{\tau_{v^\d}}$ is $\eps$-close to the law of the exit point from $B_\H(z)$ for a Brownian motion. It is clear that we can choose to measure this distance by mapping back the centre to $0$ if we wish to do so, which proves the lemma.
\end{proof}

Using the periodicity of $\tilde \Gamma^\d$, we can then `get rid' of the compact set $K$. This is the content of the next corollary.
\begin{corollary}\label{cor:disc_convergence_uniform}
	For any $\eps > 0$, there exists $\delta_0$ such that for all $\delta < \delta_0$ and for all $v \in \tilde\Gamma^\d$, there exists a coupling between $X( \tau_{v})$ and a uniform variable $U$ on $\partial B_\H(0)$ such that
	\[
	\P_v( | \phi_v(  X_{\tau_v}) - U | > \eps ) \leq \eps.
	\]
\end{corollary}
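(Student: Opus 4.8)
The plan is to deduce the statement from the previous lemma (the compact case) by exploiting the $F$-invariance of the weighted graph $\tilde\Gamma^\d$ and of the walk $\tilde X$ on it. Since $\cM$ is a hyperbolic manifold with no boundary, it admits a compact fundamental polygon $f_0 \subset \D$; set $K := \overline{f_0}$, a compact subset of $\D$ depending only on $\cM$. Applying the previous lemma with this choice of $K$ produces a threshold $\delta_0 = \delta_0(\eps)$ (depending only on $\eps$ and $\cM$, since $K$ is fixed) such that the asserted coupling exists for every vertex $v \in \tilde\Gamma^\d \cap K$ and every conformal automorphism of $\D$ sending that vertex to $0$.

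Now fix an arbitrary $v \in \tilde\Gamma^\d$. Because $K$ meets every $F$-orbit, we may choose $g \in F$ with $w := g^{-1}(v) \in K$; note $w$ is again a vertex of $\tilde\Gamma^\d$. Since $\tilde\Gamma^\d$ and its edge weights are $F$-invariant and $g$ acts as a graph automorphism, the law of $(\tilde X_t)_{t \ge 0}$ started from $v$ is the pushforward under $g$ of the law of $(\tilde X'_t)_{t \ge 0}$, the walk started from $w$. Moreover $g$ is a Möbius automorphism of $\D$, hence a hyperbolic isometry, so it maps $B_\H(w)$ onto $B_\H(v)$; consequently the exit time $\tau_v$ of $B_\H(v)$ by $\tilde X$ corresponds to the exit time $\tau_w$ of $B_\H(w)$ by $\tilde X'$, and
\[
\tilde X_{\tau_v} \overset{d}{=} g(\tilde X'_{\tau_w}).
\]
Finally, $\phi_v \circ g$ is a conformal automorphism of $\D$ sending $w$ to $0$, hence a legitimate choice of the map "$\phi_w$" in the previous lemma, and it maps $B_\H(w)$ onto $B_\H(0)$. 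Therefore $\phi_v(\tilde X_{\tau_v}) \overset{d}{=} (\phi_v \circ g)(\tilde X'_{\tau_w})$, and the previous lemma applied to $w \in K$ with this map gives a coupling of $(\phi_v \circ g)(\tilde X'_{\tau_w})$ with a uniform variable $U$ on $\partial B_\H(0)$ satisfying the $\eps$–$\eps$ bound. Transporting this coupling through the distributional identity above yields the desired coupling for $v$, with the same $\delta_0(\eps)$.

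The argument is essentially bookkeeping and I do not anticipate a genuine obstacle; the only points needing a little care are (i) that the group action really transports the law of the weighted walk, which is immediate from $F$-invariance of the weights; (ii) that we are free to replace $\phi_w$ by $\phi_v \circ g$, harmless since the conclusion is insensitive to post-composition with a rotation of $\partial B_\H(0)$, the uniform law being rotation-invariant; and (iii) that a single compact set $K$ (the closed fundamental polygon) suffices — which is exactly where the standing hypothesis that $\cM$ has no boundary, guaranteeing compactness of the fundamental domain, enters.
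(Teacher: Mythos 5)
Your proof is correct and takes essentially the same route as the paper: both rely on the compactness of a fundamental domain (available since $\cM$ has no boundary), the $F$-invariance of $\tilde\Gamma^\d$ and its walk to transport the law from an arbitrary $v$ to a representative in $K$, and the fact that $F$ consists of hyperbolic isometries so that the ball and the map $\phi_v$ transform compatibly. Your handling of the rotation ambiguity (choosing $\phi_v \circ g$ directly and invoking rotation-invariance of $U$) is a mild tidying of the paper's "choose the rotations in $\phi_v,\phi_{v'}$" step, but it is the same argument.
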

\begin{proof}
	Fix $\eps > 0$ and find $\delta_0$ associated to the previous lemma for any $K$ containing at least one full Dirichlet fundamental domain. Now fix an arbitrary $v \in \tilde \Gamma^\d$ and let $v'$ be a point in $K$ such that $p(v) = p(v')$, i.e., $v$ and $v'$ are two copies of the same vertex from $\Gamma^\d$. By definition, this means that there is an element of the Fuchsian group, $g\in G$, such that $v = g (v')$. Note furthermore that $g$ leaves $\tilde \Gamma^\d$ invariant, since if $e'$ is an edge of $\tilde \Gamma^\d$ and $e = g(e')$ then $p(e) = p(e')$ is an edge of $\Gamma^\d$. Therefore, if $X^{v'}$ denotes a random walk on $\tilde \Gamma^\d$ starting from $v'$ then $g(X^v)$ is a random walk on the same graph starting from $v$.

Let $\tilde \phi_v = \phi_{v'} \circ g^{-1}$, which is a M\"obius map sending $v$ to 0, $\D$ to $\D$, and so is a valid choice for $\phi_v$. Note that $\tilde \phi_v (X^v) = \phi_{v'} ( X^{v'}) $ and hence (since M\"obius maps are isometries) $\tilde \phi_v( X^v_{\tau_v})  = \phi_{v'} ( X^{v'}_{\tau_{v'}})$. Applying Lemma \ref{lem:hitting_centre} we can thus find a coupling such that 
\begin{equation}\label{eq:appendix}
	\P_v( | \tilde \phi_v(  X_{\tau_v}) - U | > \eps ) \leq \eps.
\end{equation}
 Since the law of $U$ is invariant under rotations of the unit disc around zero, we see that \eqref{eq:appendix} does not depend on the choice of $\phi_v$, thereby establishing the result. 
\end{proof}

\begin{proof}[Proof of \cref{lem:boundary_convergence}]
As mentioned before, we only focus on the case when $\cM$ does not have a boundary. 
	Fix $\eta$ to be chosen small enough later and $v=v_0 \in \D \setminus B( 0, 1-\eta)$. Also assume that $\delta$ is small enough for \cref{cor:disc_convergence_uniform} to hold for some $\eps$ which we will also choose later. The idea is to look at the random walk the sequence of stopping time $\tau_n$ defined by $\tau_{n+1} = \inf\{ t > \tau_n :  X_{t} \not \in B_\H(  X_{\tau_n})\}$ and to consider the sequence of hyperbolic distance $D_n$ between $ X_{\tau_n}$ and $0$.
	
	We start by estimating $\E( D_1 - D_0)$. Since the hyperbolic distance is invariant by M\"obius transform, we will apply the function $\phi_v$ which sends $0$ to a point close to the boundary and $v$ to $0$. To simplify notations and without loss of generality, let us assume that $\phi_v(0) = 1-\eta$ and let us write $\rho = \frac{e-1}{e+1}$ which is the Euclidean radius of the hyperbolic unit ball centred at 0, i.e., the Euclidean radius of $B_\H(0)$. We have
	\begin{align*}
	\E( D_1 - D_0) & =\E( d_{\H} ( \phi_v ( X_{\tau_v}), 1-\eta) - d_{\H}( 0, 1-\eta)) \\
	& = \E( d_{\H}( U, 1- \eta)) -\E(d_{\H}(0, 1-\eta)) + O(\eps)\\
	& = \int_{0}^{2\pi} \arcch \left( 1+ \frac{2|\rho e^{i\theta} - 1 + \eta |^2 }{(1 - (1- \eta)^2)(1 - \rho^2)} \right) - \ln \frac{2 - \eta}{\eta} \frac{d \theta}{2\pi} + O(\eps)
	\end{align*}
	where in the second line we use \cref{cor:disc_convergence_uniform} and the third line is just the exact expression of the hyperbolic distance in terms of usual functions.

	Since we are still free to choose $\eta$ small, let us analyse the asymptotic of this expression as $\eta$ goes to $0$. We note that $\arcch (x) = \ln (2x) + O( \frac{1}{x})$ so we have
\begin{align*}
	\E( D_1 - D_0) & = \int_{0}^{2\pi} \ln(  2\frac{2|\rho e^{i\theta} - 1 |^2 }{2\eta(1 - \rho^2)}(1+ O(\eta)) ) - \ln \big(\frac{2}{\eta}(1+O(\eta)) \big) \frac{d \theta}{2\pi}  +O(\eps)\\
	 & = \int_{0}^{2\pi} \ln( \frac{|\rho e^{i\theta} - 1 |^2}{(1 - \rho^2)} ) \frac{d \theta}{2\pi}  + O(\eta) + O(\eps).
\end{align*}
As a function of $\rho$, the integral above has value $0$ at $0$ and it is straightforward to check that its derivative is positive for $0 < \rho < 1$. Indeed, the derivative in $\rho$ of the integral is $$\int \frac{4\rho - \rho^2(4 \cos \theta + 2 \sin \theta) + 2 \sin \theta}{(1 - \rho^2)(1 + \rho^2 - 2 \rho \cos \theta)}.$$ The $\sin \theta$ terms in the numerator have $0$ integral by symmetry so the numerator becomes $4 \rho - 4 \rho^2 \cos \theta$ which is positive for $0< \rho < 1$. Therefore
we can find $c > 0$ and a choice of $\eta$ and $\eps$ such that $\E( D_1 - D_0) > c$ uniformly over all choices of $v \in \D \setminus B( 0, (1-\eta))$. Since the result is uniform in $v$, it clearly also applies to $\E( D_{n+1}- D_n | X_{\tau_n})$ as long as $X_{\tau_n} \not \in B( 0, (1-\eta))$.  We can therefore see $D_n$ as a submartingale with increments of conditional expectation at least $c$ up to the first entry into $B(0, 1-\eta)$ with is also the first time where $D_n \leq \ln( \frac{2-\eta}{\eta})$. Since $D_n$ has bounded increments, it is easy to see that by taking $D_0$ to be large enough, we can make $\P( \inf_n D_n \leq K)$ as small as we want, which concludes the proof.
\end{proof}

\bibliographystyle{abbrv}
\bibliography{torus}
\end{document}